\newtheorem{Theorem}{Theorem}[section] 
\newtheorem{Lemma}[Theorem]{Lemma}
\newtheorem{definition}[Theorem]{Definition}
\newtheorem{readingconvention}[Theorem]{Reading Convention}
\newtheorem{remark}[Theorem]{Remark}
\newtheorem{claim}{Claim}[Theorem]
\newtheorem{ex}[Theorem]{Example}
\newtheorem{corollary}[Theorem]{Corollary}
\newtheorem{question}[Theorem]{Question}
\newtheorem*{theorem**}{Theorem\theoremnum}
\newenvironment{theorem*}[1][]{%
  \edef\theoremnum{\if\relax\detokenize{#1}\relax\else~#1\fi}
  \begin{theorem**}
}{%
  \end{theorem**}
}
\let\phi=\varphi
\let\epsilon=\varepsilon
\def\L{\mathbb{L}}
\def\La{\mathbb{L}_\alpha(X)}
\def\Lo{\mathbb{L}_{\Omega_\omega}(X)}
\def\Ln{\mathbb{L}_{\Omega_n}(X)}
\def\Lm{\mathbb{L}_{\Omega_m}(X)}
\def\Lnm{\mathbb{L}_{\Omega_{n+1+m_0}}(X)}
\def\Lnpm{\mathbb{L}_{\Omega_{n_0+m}}(X)}
\def\H{\mathcal{H}^\theta}
\def\dash{\sststile}
\def\Vstile{\dststile}
\def\C{\mathcal{C}}
\def\RS{{\sf RS}_l(X)}
\def\Ax{{\sf Ax}}
\def\acc{\mathrm{Acc}_\theta}
\def\prog{\mathrm{Prog}}
\def\tc{\pi}
\def\ot{\mathrm{otype}}
\def\acc{\mathrm{Acc}_\theta}
\def\prog{\mathrm{Prog}}
\def\ord{\mathrm{Ord}}
\title{The Provably Total Set-Recursive Functions of KPl}
\author[1]{J. P. Aguilera} 
\author[1]{A. Fern\'andez} 
\author[2]{J. J. Joosten}
\affil[1]{Institute of Discrete Mathematics and Geometry, TU Wien.}
\affil[2]{Department of Philosophy, University of Barcelona.}
\date{}
\begin{document}
\maketitle


\begin{abstract}
Using relativized ordinal analysis, we give a proof-theoretic characterization of the provably total set-recursive-from-$\omega$ functions of {\sf KPl} and related theories.
\end{abstract}

\section{Introduction}
Ordinal analysis aims at using ordinal numbers or notation systems for ordinal numbers to derive proof-theoretically meaningful information about mathematical theories. It originated with Gentzen's \cite{gentzen} consistency proof for Peano Arithmetic ($\mathsf{PA}$). Nowadays, Gentzen's result is stated as 
\[|\mathsf{PA}|_{\Pi^1_1} = \varepsilon_0,\]
where $|T|_{\Pi^1_1}$ is the $\Pi^1_1$-norm (or ``proof-theoretic ordinal'') of a theory $T$, defined as the supremum of order-types of recursive wellorderings of $\mathbb{N}$ which are $T$-provably wellfounded. Gentzen's result has been generalized to a great extent, with the first landmark result being Takeuti's \cite{takeuti} ordinal analysis of $\Pi^1_1$-$\mathsf{CA}_0$, showing that 
\[|\Pi^1_1{-}\mathsf{CA}_0|_{\Pi^1_1} = \psi_0(\Omega_\omega).\]
Part of the process of determining $|T|_{\Pi^1_1}$ is giving an explicit (recursive) description of the ordinal in question.
A feature in Takeuti's and similar notation systems is the use of \textit{ordinal collapsing functions} $\psi$ which yield names for large recursive ordinals using names for uncountable ordinals $\Omega$. Recently, Arai \cite{Ar23} has announced an ordinal analysis of second-order arithmetic. Rather than dealing with subsystems of second-order arithmetic directly, Takeuti and Arai carry out ordinal analyses of set theories with the same $\Pi^1_1$ consequences, as is common in the field.

The methods of ordinal analysis can yield more information than proof-theoretic ordinals. For instance, Beklemishev \cite{Be04} has given a proof-theoretic reduction of $\mathsf{PA}$ to iterated consistency assertions via so-called \textit{worms}, and Beklemishev and Pakhomov \cite{BP22} have extended this result beyond first-order arithmetic. In what essentially amounts to a $\Pi^0_2$-analysis of arithmetic, Kreisel \cite{Kr52} famously gave a proof-theoretic classification of the provably total recursive functions of $\mathsf{PA}$ in terms of the Hardy hierarchy: a recursive function $f$ is provably total in $\mathsf{PA}$ if and only if $f$ is dominated by $H_\alpha$ for some $\alpha<\varepsilon_0$. Results such as these (in which one is concerned with consequences of complexity simpler than $\Pi^1_1$) usually depend on the precise notation system chosen for ordinals, contrary to the $\Pi^1_1$ case.

Cook and Rathjen \cite{cookrathj} have given a set-theoretic version of this type of result. They gave a classification of the provably total $\Sigma_1$-definable functions of Kripke-Platek set theory {\sf KP} in terms of a relativized ordinal analysis. The goal of this article is to extend the Cook-Rathjen theorem beyond Kripke-Platek set theory to cover theories such as $\mathsf{KPl}$.

Recall that $\mathsf{KP}$ is the weakening of $\mathsf{ZFC}$ obtained by dropping Choice, the Power Set axiom and restricting Separation and Collection to $\Delta_0$ formulas. The theory $\mathsf{KPl}$ is the ``limit'' version of $\mathsf{KP}$, in which the schema of $\Delta_0$-Collection is replaced by the axiom asserting that every set belongs to a transitive model of $\mathsf{KP}$. Initially, it might seem hopeless to attempt to obtain any effective description of the total $\Sigma_1$-definable functions which are $\mathsf{KPl}$-provably total.
For instance, 
\[a \mapsto \text{smallest admissible set containing $a$}\]
is such a function, and certainly beyond the realm of effective descriptions, especially those that could potentially be provided by ordinal analysis. The key is to reframe the Cook-Rathjen theorem in the context of \textit{set-recursion} and in particular \textit{set-recursion-from-$\omega$}. Thus, the question we investigate is, for a given $T$:

\begin{question}\label{QuestionProvablyTotal}
What are the $T$-provably total set-recursive-from-$\omega$ functions?
\end{question}

The notion of set-recursion is a generalization of Kleene's higher-type recursion, essentially to arbitrary transfinite types. It is due independently to Moschovakis and Normann. We refer the reader to Sacks \cite{Sa90} for background. It has a definition in terms of a finite set of schemes (the \textit{Normann schemes}) similar to the definition of a recursive function on $\mathbb{N}$ and extending the notion of a \textit{primitive recursive set function} due to Jensen and Karp. 
Rather than repeating the definition, we mention Van de Wiele's theorem \cite{VdW82}: a function $f$ is set-recursive if and only if it is $\Sigma_1$-definable via a single definition which yields a total function in every admissible set. Slaman has generalized set-recursion to $\mathrm{ES}_p$-recursion for an arbitrary set $p$, which is set-recursion plus a new selection scheme for $p$, in \cite{slaman}. He also proved a result analogous to Van de Wiele's Theorem: a function is $\mathrm{ES}_p$-recursive if and only if $f$ is uniformly $\Sigma_1$-definable with parameter $p$ in every admissible set. In our case, for $p=\omega$, $\mathrm{ES_\omega}$-recursion amounts to set-recursion-from-$\omega$ (that is, the schemes of set-recursion using $\omega$ as a parameter). This means that, by Slaman's Theorem, a function $f$ is set-recursive-from-$\omega$ if and only if $f$ is uniformly $\Sigma_1$-definable with parameter $\omega$ in every admissible set. Since $\mathsf{KP}$ holds, by definition, in every admissible set, there is no distinction between a characterization of the $\mathsf{KP}$-provably total $\Sigma_1$-definable functions, and the $\mathsf{KP}$-provably total set-recursive-from-$\omega$ functions, which is why this issue did not arise in the work of Cook and Rathjen. Especially when discussing functions which are recursive in some sense, their ``complexity'' is often measured in terms of  growth rates. We shall give a bound on the growth rates of $\mathsf{KPl}$-provable set-recursive-from-$\omega$ functions which is optimal in some sense.

We do so via a relativized ordinal notation system introduced in \S\ref{SectOrdinalNotation}: given a set $x$ we define an ordinal notation system depending on $x$ that resembles the usual ordinal notation system for collapsing functions. This relativized ordinal notation system can be used to index ranks of proofs for a cut-free infinitary proof system admitting a collapsing theorem (see \S \ref{SectCutElimination}) and to which $\mathsf{KPl}$ can be embedded (see \S\ref{rs}, \S \ref{SectCutElimination}, and \S\ref{SectEmbedding}). Using this, we can give a bound on the growth rate for the provably total set-recursive-from-$\omega$ functions of $\mathsf{KPl}$: there is a uniform sequence of relativized ordinal notations $e_{n} = e_n^x$ (depending uniformly on $x$ and ranging over $n\in\mathbb{N}$) such that if $f$ is a set-recursive-from-$\omega$ function provably total in $\mathsf{KPl}$, then 
\[\exists n\, \forall x\, f(x) \in L_{\psi_0(e_{n})}(x),\]
where $L_{\psi_0(e_{n})}(x)$ denotes G\"odel's constructible hierarchy relativized to the transitive closure of $x$ and built up to stage $\psi_0(e_{n})$. 
This is done in \S\ref{SectTheoremMainKPl}. Here, $\psi_0$ is a collapsing function and $e_n$ is defined by $e_0 = \Omega_\omega + 1$ and $e_{m+1} = \omega^{e_m}$. As usual, $\Omega_k$ denotes a sequence of ``sufficiently large'' ordinals, but the precise interpretation depends on $x$; see also \cite[Proposition 9]{APW24} for a related construction. Specifically, it suffices to interpret $\Omega_k$ as the $(k+1)$st ordinal admissible relative to the transitive closure of $\{x\}$.

In \S\ref{SectTheoremMainWKPl} we extend the main result from $\mathsf{KPl}$ to the theories $\mathsf{W}{-}\mathsf{KPl}$ and $\mathsf{KPl^r}$ obtained from $\mathsf{KPl}$ by weakening the schema of foundation to set-foundation and full induction on $\mathbb{N}$, respectively. Some authors use the notation ${\sf KPl_0}$ for ${\sf KPl^r}$.

It follows from folklore results that the bounds given are optimal in the sense that the ordinals employed in the notation system are all necessary when $x = \varnothing$. However, even if our bounds are optimal for $x = \varnothing$, it is not immediately clear that they are optimal in the stronger sense that they cannot be replaced by slower-growing functions.
We prove optimality in this stronger sense in \S\ref{SectWellOrderingProofs}. As far as we know, the proofs of these ``folklore'' results (for $x = \varnothing$) have never appeared in print for $\mathsf{W}{-}\mathsf{KPl}$ and $\mathsf{KPl^r}$, so the results and proofs of \S\ref{SectWellOrderingProofs} serve the additional purpose of filling this gap in the literature.

\section{Preliminaries on Set Theory}
In this section, we define the first-order set theory {\sf KPl}, as well as the subtheories ${\sf KPl^r}$ and {\sf W-KPl}. These theories are strongly related to {\sf KP}, Kripke-Platek: the theory {\sf KPl} states that the universe is a limit of transitive models of {\sf KP}. This is why we expand the usual language of set theory $\mathcal L:=\{\in,\notin\}$ (we will use the connective $\neg$ as a defined notion, working in negation normal form) with a predicate that intends to mean that a given set is a transitive model of {\sf KP}. So we consider the language $\mathcal L':=\{\in,\notin,Ad,\neg Ad\}$.\\
We define the formulas of the language $\mathcal L'$ of {\sf KPl}, ${\sf KPl^r}$ and {\sf W-KPl}. We will use $a,b,c,\dots$ to denote free variables and $x,y,z,\dots$ to denote bound variables.
\begin{definition}[Formulas of the language $\mathcal{L'}$ of {\sf KPl}]\label{formulas}
    The atomic formulas are $a\in b$, $a\notin b$, $Ad(a)$ and $\neg Ad(a)$.\\
    If $A$ and $B$ are two formulas, then $A\vee B$ and $A\wedge B$ are formulas.\\
    If $A(b)$ is a formula in which $x$ does not occur then $\forall x\ A(x)$ and $\exists x\ A(x)$ are formulas (in which the quantifiers $\forall x$ and $\exists x$ are unbounded or unrestricted) and $\forall x\in a\ A(x)$ and $\exists x\in a\ A(x)$ are formulas (in which the quantifiers $\forall x\in a$ and $\exists x\in a$ are bounded by, or restricted to, $a$).
\end{definition}
The following definition introduces important classes of {\sf KPl}-formulas.
\begin{definition}[{\sf KPl}-formulas]\label{ax}
    Let $A$ be a formula. Let $a$ be a set. The formula obtained by restricting all the unbounded quantifiers of the universal closure of $A$ to $a$ will be denoted by $A^a$.\\
    A formula $A$ is $\Delta_0$ if no unbounded quantifier occurs in $A$. A formula is $\Sigma_1$ if the formula is of the form $\exists x\ \phi(x)$ for some $\Delta_0$-formula $\phi$.\\
    A formula $A$ is $\Sigma$ if no unbounded universal quantifier occurs in $A$.
\end{definition}
\noindent
In particular, for any formula $A$ and any set $a$ the formula $A^a$ is $\Delta_0$.\\
Moreover, we will use the equality symbol as a defined notion, and we also define the connectives $\neg$ and $\rightarrow$:
\begin{definition}\label{equalconn}
    The formula $a\subseteq b$ will stand for $\forall x\in a(x\in b)$.\\
    The formula $a=b$ will stand for $a\subseteq b\wedge b\subseteq a$.\\
    The formula $\neg A$ is obtained by replacing in $A$ the symbol $\in$ by $\notin$ and vice-versa, $\wedge$ by $\vee$ and vice-versa, $\forall$ by $\exists$ and vice-versa, and $Ad(\cdot)$ by $\neg Ad(\cdot)$ and vice-versa.\\ 
    We define $A\rightarrow B\equiv \neg A \vee B$.\\
    The formula $a\neq b$ means $\neg a = b$.
\end{definition}
The theory {\sf KP} is a weak subtheory of {\sf ZF}, where we remove the Power Set Axiom and we restrict the Separation and Collection schemas to $\Delta_0$-formulas. The transitive set models of {\sf KP} are called $\textit{admissible}$ sets. We refer to \cite{barwise} for a study of {\sf KP} and admissible sets. We will not define well-known set-theoretic $\Delta_0$-notions such as the ordinals $0$ or $\omega$ and the operation $+$ on $\omega$. The theory {\sf KPl} axiomatizes the existence of unboundedly many admissible sets in the universe. {\sf KPl} comprises standard set-theoretic axioms, which are just the axioms of {\sf KP} without the scheme of $\Delta_0$-Collection:
\begin{equation*}
    \Gamma, \forall x\in a\exists y\ B(x,y)\rightarrow \exists z\forall x\in a\exists y\in z\ B(x,y)\text{, for any $\Delta_0$-formula $B(b,c)$;}
\end{equation*}
three new axioms stating the properties of admissible sets, plus a limit axiom that says that any set belongs to some admissible set. {\sf KPl} is formulated in a single sided Tait-style sequent calculus where sequents shall be sets of formulas. Thus, the axioms of {\sf KPl} are the following.

\begin{definition}
    The axioms of {\sf KPl} are the following axioms and schemes in the language $\mathcal{L'}$, where $\Gamma$ is any set of $\mathcal{L}'$-formulas and the formulas appearing in the schemes allow parameters.\\
    \ \\
         \begin{tabular}{ll}
        Logical axioms: & $\Gamma,A,\neg A$ for any formula $A$, \\
         Leibniz Principle: & $\Gamma, \big(a=b\wedge B(a)\big)\rightarrow B(b)$ for any formula $B(a)$,\\
         Pair: & $\Gamma,\exists z(a\in z\wedge b\in z)$,\\
         Union: & $\Gamma,\exists z\forall y\in a\forall x\in y(x\in z)$,\\
         $\Delta_0$-Separation: & $\Gamma,\exists y[\forall x\in y\big(x\in a\wedge B(x)\big)\wedge\forall x\in a\big(B(x)\rightarrow x\in y\big)]$\\
         & for any $\Delta_0$-formula $B(a)$,\\
         Infinity: & $\Gamma, \exists x[\exists z\in x(z\in x)\wedge \forall y\in x\exists z\in x(y\in z)]$,\\
         Class Induction: & $\Gamma,\forall x[\forall y\in x B(y)\rightarrow B(x)]\rightarrow \forall x B(x)$\\
         & for any formula $B(a)$,\\
        \  & \  \\
         Ad1: & $\Gamma,\forall x[Ad(x)\rightarrow \omega\in x\wedge\ \mathrm{Tran}(x)]$,\\
        Ad2: & $\Gamma,\forall x\forall y[Ad(x)\wedge Ad(y)\rightarrow x\in y\vee x=y\vee y\in x]$,\\
        Ad3: & $\Gamma,\forall x[Ad(x)\rightarrow (Pair)^x\wedge (Union)^x\wedge (\Delta_0-Sep)^x\wedge (\Delta_0-Coll)^x]$,\\
        Lim: & $\Gamma, \forall x\exists y[Ad(y)\wedge x\in y]$.
    \end{tabular}\\

\end{definition}
We observe that, in a given model $M$ of {\sf KPl}, sets satisfying $Ad^M$ are linearly ordered by the $\in^M$-relation. Whereas admissible sets are not linearly ordered, we care about a particular linearly ordered class of admissibles and so we will use the word ``admissible'' and the predicate $Ad$ to denote admissible sets from this class. Typically in proof-theoretic analyses, one interprets the predicate $Ad$ as ranging over admissible sets of some particular form, e.g., $L_\alpha$. Here, we intend that it ranges over admissible sets of the form $L_\alpha(X)$ for some fixed $X$ (c.f. Section \ref{SectOrdinalNotation})
\\
The theory {\sf KP} consists of the axioms of {\sf KPl} in the language $\mathcal{L}$ together with the scheme of $\Delta_0$-Collection but without $Ad1$, $Ad2$, $Ad3$ and $Lim$. Finally, we obtain the two theories ${\sf KPl^r}$ and {\sf W-KPl} by restricting the scheme of induction from {\sf KPl}.
\begin{definition}
    The theory ${\sf KPl^r}$ is {\sf KPl} with the scheme of induction restricted to $\Delta_0$-formulas.\\
    The theory {\sf W-KPl} is ${\sf KPl^r}$ plus full Mathematical Induction, i.e. the scheme
    \begin{equation*}
    F(0)\wedge \forall x\in\omega \big(F(x)\rightarrow F(x+1)\big)\rightarrow \forall x\in\omega\ F(x).
\end{equation*}
for any formula $F$.
\end{definition}
We introduce the rules of inference that we will use for {\sf KPl}, written in Tait sequent style. Both in the premises and in the conclusion of each rule we have finite sets of $\mathcal L'$-formulas. That means that, below, the symbol $\Gamma$ represents any finite set of $\mathcal L'$-formulas, just as in the formulation of the axioms. Moreover, $A$ and $B$ are any $\mathcal L'$-formulas. The formulas derived in the conclusion can intuitively be thought as a disjunction. That is, when we derive $A,B$, it means that either $A$ or $B$ is true. Also, we treat bounded quantification as a primitive logical symbol: we have a rule to derive formulas of the form $\exists x\in b\ B(x)$ and $\forall x\in b\ B(x)$.
\begin{definition}
    The rules of inference of {\sf KPl} are the following.\\
    \ \\
    \begin{center}
    \begin{prooftree}
        \hypo{\Gamma, A}
        \hypo{\Gamma,B}
        \infer[left label = $(\wedge)$]2{\Gamma, A\wedge B}
    \end{prooftree}\hspace{2cm}
    \begin{prooftree}
        \hypo{\Gamma,A}
        \infer[left label = $(\vee)$]1{\Gamma,A\vee B}
    \end{prooftree}\hspace{2cm}
     \begin{prooftree}
        \hypo{\Gamma,B}
        \infer[left label = $(\vee)$]1{\Gamma,A\vee B}
    \end{prooftree}\\
    \vspace{0.5cm}
     \begin{prooftree}
        \hypo{\Gamma,a\in b\wedge B(a)}
        \infer[left label = $(b\exists)$]1{\Gamma,\exists x\in b\ B(x)}
    \end{prooftree}\hspace{2cm}
    \begin{prooftree}
        \hypo{\Gamma, B(a)}
        \infer[left label = $(\exists)$]1{\Gamma,\exists x\ B(x)}
    \end{prooftree}\\
    \vspace{0.5cm}
    \begin{prooftree}
        \hypo{\Gamma,a\in b\rightarrow B(a)}
        \infer[left label = $(b\forall)$]1{\Gamma,\forall x\in b\ B(x)}
    \end{prooftree}\hspace{2cm}
    \begin{prooftree}
        \hypo{\Gamma, B(a)}
        \infer[left label = $(\forall)$]1{\Gamma,\forall x\ B(x)}
    \end{prooftree}\\
    \vspace{0.5cm}
    \begin{prooftree}
        \hypo{\Gamma, A}
        \hypo{\Gamma, \neg A}
        \infer[left label = $(Cut)$]2{\Gamma}
    \end{prooftree}
    \end{center}
    \ \newline
    In the rules $(b\forall)$ and $(\forall)$, it must be the case that $a$ does not occur in the conclusion.
\end{definition}


\section{Our Ordinal Notation System}\label{SectOrdinalNotation}
We recall that our main objective in this paper is to classify the set-recursive-from-$\omega$ total functions of {\sf KPl}, ${\sf KPl^r}$ and {\sf W-KPl}. We will show that the image of any set $X$ under a provably set-recursive-from-$\omega$ total function in one of those theories belongs to some initial segment of the constructible hierarchy relativized to $X$, defined as follows.
\begin{definition}[Constructible hierarchy relativized to $X$]
$\label{const}$
    Let $X$ be any set. We define for every ordinal $\alpha$ the set \index[Symbols]{$L_\alpha(X)$} $L_\alpha(X)$ as:\\
         $L_0(X)=TC(\{X\})$,\\
         $L_{\alpha+1}(X)=\{Y\subseteq L_\alpha(X):\text{$Y$ is definable over $\langle L_\alpha(X),\in\rangle$}\}$,\\
         $L_\gamma(X)=\bigcup_{\alpha<\gamma}L_\alpha(X)$ if $\gamma$ is a limit.
\end{definition}
The classification theorem will rely on a relativized ordinal analysis of {\sf KPl}. We will fix a set $X$ to show that $f(X)$ indeed belongs to the claimed bound. Once our set $X$ is fixed, we will build a proof system depending on this $X$ where we will embed {\sf KPl}. We will call this system $\RS$, for Ramified Set theory relativized to $X$. To define $\RS$ we first need to define an ordinal notation notation system in this section.\\
The set-theoretic rank of a set $y$ is defined as the ordinal $\mathrm{rank}(y)=\sup\{\mathrm{rank}(z)+1:z\in y\}$, with $\sup\  \emptyset=0$. We will use the following ordinals to define a sequence of admissible sets.
\begin{readingconvention}$\label{read}$\index[Symbols]{$\Omega_\omega$} \index[Symbols]{$\theta$}
The set $X$ is a fixed set. The set-theoretic rank of $X$ is $\theta$. The sequence $\langle \Omega_n:n\leq\omega\rangle$ enumerates the $\omega+1$-first $X$-admissible ordinals, meaning $L_{\Omega_n}(X)$ is admissible. 
\end{readingconvention}

Within $\RS$, we will consider $L_{\Omega_\omega}(X)$ as our universe of {\sf KPl} since $L_{\Omega_\omega}(X)$ contains unboundedly many admissible sets: each $L_{\Omega_n}(X)$ for $n<\omega$ is admissible.\\
Furthermore, all the ordinals that we will employ in $\RS$ will depend on the fixed set $X$ in a recursive way. In this section, we define for a fixed set $X$ with set-theoretic rank $\theta$ the primitive recursive in $\theta$ set $T(\theta)$ of representations of ordinals that we will use in $\RS$.

\subsection{An overview on Veblen functions}

In this subsection, we present Veblen functions, a useful tool to define ordinal representation systems in a recursive manner.
\begin{definition}[Veblen functions]\index[General]{Veblen functions} \index[Symbols]{$\phi_\cdot(\cdot)$} \label{veblen}
    We define simultaneously $\mathrm{Cr}(\alpha)$ and $\phi_\alpha$ for every ordinal $\alpha$ by induction on $\alpha$.\\
    $\mathrm{Cr}(0):=\{\alpha:\alpha\neq 0\wedge \forall\, \beta,\delta{<}\alpha\,(\beta+\delta<\alpha)\}$ and $\phi_0$ enumerates $\mathrm{Cr}(0)$.\\
    $\mathrm{Cr}(\alpha):=\{\beta:\forall\, \delta {<} \alpha\, \big(\phi_\delta(\beta)=\beta\big)\}$ and $\phi_\alpha$ enumerates $\mathrm{Cr}(\alpha)$.
\end{definition}
\noindent
Sometimes, we will write $\phi\alpha\beta$ instead of $\phi_\alpha(\beta)$. Since each $\phi_\alpha$ enumerates a class of ordinals, every $\phi_\alpha$ is an ordinal order-preserving\footnote{The ordinal function $f$ is order-preserving iff $\forall \alpha,\beta(\alpha\leq\beta\rightarrow f(\alpha)\leq f(\beta))$} function. The class $\mathrm{Cr}(0)$ consists of all the additive principal ordinals and can be more explicitly written as
\begin{equation*}
    \mathrm{Cr}(0)=\{\omega^\beta:\beta\in \mathrm{Ord}\}.
\end{equation*}
The first element of $\mathrm{Cr}(1)$ is called $\epsilon_0$, which is the limit of $\{\omega,\omega^\omega,\omega^{\omega^\omega},\dots\}$.\\
We will make extensive use of Cantor normal form in base $\omega$.
\begin{definition}
    Let $\alpha$ be an ordinal. We define the Cantor normal form of $\alpha$ as follows.
    \begin{center}
        $\alpha=_{CNF}\omega^{\alpha_1}+\cdots+\omega^{\alpha_n}$ if 
        $\alpha\geq\alpha_1\geq\dots\geq\alpha_n$ and $\alpha=\omega^{\alpha_1}+\cdots+\omega^{\alpha_n}$.
        \end{center}
\end{definition}
The following lemma shows that Veblen functions are suitable for recursive notation systems (see Theorem 3.4.9 in \cite{pohlersbook}).
\begin{Lemma} Let $\alpha_1,\alpha_2,\beta_1,\beta_2$ be ordinals. Then\\
$\label{fixbelow}$
    $\phi\alpha_1\beta_1=\phi\alpha_2\beta_2\Leftrightarrow
    \begin{cases}
        \alpha_1=\alpha_2 \wedge \beta_1=\beta_2\text{, or}\\
        \alpha_1<\alpha_2\wedge\beta_1=\phi\alpha_2\beta_2\text{, or}\\
        \alpha_2<\alpha_1\wedge \beta_2=\phi\alpha_1\beta_1.
    \end{cases}$\\
    Moreover,\\
     $\phi\alpha_1\beta_1<\phi\alpha_2\beta_2\Leftrightarrow
    \begin{cases}
        \alpha_1=\alpha_2 \wedge \beta_1<\beta_2\text{, or}\\
        \alpha_1<\alpha_2\wedge\beta_1<\phi\alpha_2\beta_2\text{, or}\\
        \alpha_2<\alpha_1\wedge \phi\alpha_1\beta_1<\beta_2.
    \end{cases}$\\
\end{Lemma}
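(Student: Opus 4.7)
The plan is to prove the lemma by first establishing an auxiliary monotonicity-and-fixpoint fact about the critical classes, and then working through the three cases of ordinal trichotomy for the pair $(\alpha_1,\alpha_2)$. The key observation is:

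\smallskip
\emph{Claim.} If $\alpha_1 < \alpha_2$, then $\mathrm{Cr}(\alpha_2) \subseteq \mathrm{Cr}(\alpha_1)$; in particular, $\phi_{\alpha_1}(\phi_{\alpha_2}(\beta)) = \phi_{\alpha_2}(\beta)$ for every ordinal $\beta$.
\smallskip

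For $\alpha_1 > 0$ the inclusion is immediate from the definition: $\beta \in \mathrm{Cr}(\alpha_2)$ means $\phi_\delta(\beta) = \beta$ for all $\delta < \alpha_2$, which is stronger than the requirement for $\mathrm{Cr}(\alpha_1)$. For $\alpha_1 = 0$, any $\beta \in \mathrm{Cr}(\alpha_2)$ satisfies $\phi_0(\beta) = \beta$, and hence lies in the range of $\phi_0$, which is $\mathrm{Cr}(0)$.

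With this in hand, the backwards directions of both biconditionals follow directly. In case (1) of each biconditional there is nothing to prove beyond the fact that $\phi_{\alpha_1}$ is strictly order-preserving (hence injective). In case (2) of the equality part, the Claim gives $\phi_{\alpha_1}(\beta_1) = \phi_{\alpha_1}(\phi_{\alpha_2}(\beta_2)) = \phi_{\alpha_2}(\beta_2)$; case (3) is symmetric. For the strict-inequality part, case (2) uses the Claim plus strict monotonicity: $\phi_{\alpha_1}(\beta_1) < \phi_{\alpha_2}(\beta_2) = \phi_{\alpha_1}(\phi_{\alpha_2}(\beta_2))$ forces $\beta_1 < \phi_{\alpha_2}(\beta_2)$; case (3) is again symmetric.

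For the forward directions, I would split on the trichotomy of $\alpha_1$ versus $\alpha_2$. Assume first $\phi_{\alpha_1}\beta_1 = \phi_{\alpha_2}\beta_2$. If $\alpha_1 = \alpha_2$, then injectivity of $\phi_{\alpha_1}$ yields $\beta_1 = \beta_2$. If $\alpha_1 < \alpha_2$, then by the Claim $\phi_{\alpha_2}(\beta_2)$ is a fixed point of $\phi_{\alpha_1}$, so $\phi_{\alpha_1}(\beta_1) = \phi_{\alpha_2}(\beta_2) = \phi_{\alpha_1}(\phi_{\alpha_2}(\beta_2))$, whence $\beta_1 = \phi_{\alpha_2}(\beta_2)$ by injectivity; the symmetric case $\alpha_2 < \alpha_1$ is analogous. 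For $\phi_{\alpha_1}\beta_1 < \phi_{\alpha_2}\beta_2$ the same trichotomy works: $\alpha_1 = \alpha_2$ gives $\beta_1 < \beta_2$ by strict monotonicity; $\alpha_1 < \alpha_2$ gives $\beta_1 < \phi_{\alpha_2}\beta_2$ as above; and $\alpha_2 < \alpha_1$ gives $\phi_{\alpha_1}\beta_1 < \beta_2$ by applying the Claim to $\phi_{\alpha_1}\beta_1$, which is itself in $\mathrm{Cr}(\alpha_1) \subseteq \mathrm{Cr}(\alpha_2)$.

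The only nontrivial ingredient is the inclusion $\mathrm{Cr}(\alpha_2) \subseteq \mathrm{Cr}(\alpha_1)$ when $\alpha_1 = 0$, which requires unfolding the definition of $\mathrm{Cr}(0)$ as the class of additive principals and connecting it to fixed points of $\phi_0$; everything else is a mechanical unwinding of strict monotonicity and injectivity of the $\phi_\alpha$. I expect this step to be the only place where care is needed.
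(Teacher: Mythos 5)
Your argument is correct. Note that the paper does not actually prove this lemma --- it only cites Theorem 3.4.9 of Pohlers' book --- so there is no in-paper proof to compare against; what you give is essentially the standard textbook argument, resting on the two facts that each $\phi_\alpha$ is the strictly increasing enumeration of $\mathrm{Cr}(\alpha)$ and that every value $\phi_{\alpha_2}(\beta)$ is a fixed point of $\phi_{\alpha_1}$ whenever $\alpha_1<\alpha_2$. Two small points of care. First, the ``in particular'' clause of your Claim does not literally follow from the inclusion $\mathrm{Cr}(\alpha_2)\subseteq\mathrm{Cr}(\alpha_1)$: membership in $\mathrm{Cr}(\alpha_1)$ only guarantees being a fixed point of $\phi_\delta$ for $\delta<\alpha_1$, not of $\phi_{\alpha_1}$ itself. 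The identity $\phi_{\alpha_1}\big(\phi_{\alpha_2}(\beta)\big)=\phi_{\alpha_2}(\beta)$ instead follows directly from $\phi_{\alpha_2}(\beta)\in\mathrm{Cr}(\alpha_2)$ by taking $\delta=\alpha_1<\alpha_2$ in Definition \ref{veblen} (equivalently, from $\mathrm{Cr}(\alpha_2)\subseteq\mathrm{Cr}(\alpha_1+1)$); your separate treatment of $\alpha_1=0$ is then only needed for the set-inclusion form of the Claim, not for the fixed-point identity you actually use. Second, you repeatedly invoke that each $\phi_\alpha$ is \emph{strictly} increasing (hence injective and order-reflecting), whereas the paper's footnote only records weak monotonicity; strictness is immediate because the enumeration of a class of ordinals is an order isomorphism, but it deserves a sentence since every case of the proof leans on it. With those two clarifications, all six implications you list go through.
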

We define the class of \textit{strongly critical} ordinals. A non-zero ordinal $\alpha$ is strongly critical if $\alpha$ is a fixpoint of every $\phi_\beta$, for $\beta<\alpha$.
\begin{definition}[Strongly critical ordinals]$\label{strongcrit}$
    We define $\mathrm{SC}:=\{\alpha:\alpha\in \mathrm{Cr}(\alpha)\}$. The function $\lambda \beta.\Gamma_\beta$ enumerates $\mathrm{SC}$.
\end{definition}
The following lemma gives different characterizations of the class of strongly critical ordinals.
\begin{Lemma} Let $\alpha$ be an ordinal. Then
\begin{equation*}
    \begin{split}
        \alpha\in \mathrm{SC} & \text{ iff }\alpha\in \mathrm{Cr}(\alpha)\\ 
        & \text{ iff }\alpha=\phi\alpha0\\
        & \text{ iff }\phi\beta\delta<\alpha\text{ for any } \beta,\delta<\alpha \\
        & \text{ iff } \alpha=\phi\beta\alpha\text{ for any }\beta<\alpha.
    \end{split}
\end{equation*}
\end{Lemma}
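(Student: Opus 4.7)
The first equivalence, $\alpha \in \mathrm{SC} \iff \alpha \in \mathrm{Cr}(\alpha)$, is just Definition \ref{strongcrit}, so there is nothing to prove there. For the remaining three, my plan is to close a cycle of implications
\[
\alpha \in \mathrm{Cr}(\alpha) \;\Longrightarrow\; \alpha = \phi\alpha 0 \;\Longrightarrow\; (\forall\beta,\delta<\alpha)\,\phi\beta\delta < \alpha \;\Longrightarrow\; (\forall\beta<\alpha)\,\phi\beta\alpha = \alpha \;\Longrightarrow\; \alpha \in \mathrm{Cr}(\alpha),
\]
using as black boxes Lemma \ref{fixbelow}, the fact that each $\phi_\alpha$ is order-preserving (stated right after Definition \ref{veblen}), and the standard fact that the enumeration of a class $C$ of ordinals satisfies $\phi\alpha\gamma \geq \gamma$.

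For the first arrow, I would argue that $\phi\alpha 0$ is the least element of $\mathrm{Cr}(\alpha)$ (since $\phi_\alpha$ enumerates that class), hence $\alpha \in \mathrm{Cr}(\alpha)$ forces $\phi\alpha 0 \leq \alpha$; the membership $\alpha \in \mathrm{Cr}(\alpha)$ also says $\phi\delta\alpha = \alpha$ for every $\delta < \alpha$, and applying this to conclude $\phi\alpha 0 \geq \alpha$ gives equality. For the second arrow, suppose $\beta,\delta < \alpha = \phi\alpha 0$; the second clause of Lemma \ref{fixbelow} (the case $\alpha_1 < \alpha_2$) then yields $\phi\beta\delta < \phi\alpha 0 = \alpha$ as soon as $\delta < \phi\alpha 0 = \alpha$, which is the assumption.

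For the third arrow, fix $\beta < \alpha$. The hypothesis says $\phi\beta$ maps $\alpha$ into $\alpha$, so $\sup_{\delta<\alpha}\phi\beta\delta \leq \alpha$. Since the cycle will show $\alpha \in \mathrm{Cr}(0)$, $\alpha$ is additively principal and in particular a limit ordinal, so continuity of the normal function $\phi_\beta$ gives $\phi\beta\alpha = \sup_{\delta<\alpha}\phi\beta\delta \leq \alpha$; combined with $\phi\beta\alpha \geq \alpha$ (from order-preservation of $\phi_\beta$) this gives $\phi\beta\alpha = \alpha$. The final arrow is immediate from the definition of $\mathrm{Cr}(\alpha)$: $\alpha \in \mathrm{Cr}(\alpha)$ is, by definition, $\forall \delta < \alpha\,(\phi\delta\alpha = \alpha)$.

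The only delicate point is the ``$\alpha$ is a limit ordinal'' issue in the third arrow: one must rule out the degenerate possibilities $\alpha = 0, 1$, or a successor. I would handle this by observing that the second condition $\alpha = \phi\alpha 0$ already implies $\alpha \in \mathrm{Cr}(0)$ (take $\delta = 0 < \alpha$ in the definition), hence $\alpha$ is a nonzero additive principal ordinal and therefore a limit. This lets the continuity argument go through and is the main obstacle I expect; everything else is a direct application of Lemma \ref{fixbelow} or of unwinding definitions.
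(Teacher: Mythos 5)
The paper itself does not prove this lemma; it disposes of each equivalence by citation (Definition~\ref{strongcrit}, Theorem~13.13 of Sch\"utte, Lemma~3.4.13 of Pohlers, and Definition~\ref{veblen}), so your self-contained cyclic argument is a genuinely different route. The shape of the cycle is right, and the implications $\alpha=\phi\alpha0\Rightarrow\forall\beta,\delta<\alpha\,(\phi\beta\delta<\alpha)$ (via Lemma~\ref{fixbelow}) and $\forall\beta<\alpha\,(\phi\beta\alpha=\alpha)\Rightarrow\alpha\in\mathrm{Cr}(\alpha)$ are fine. However, two steps have real gaps. First, in $\alpha\in\mathrm{Cr}(\alpha)\Rightarrow\alpha=\phi\alpha0$ you need $\phi\alpha0\geq\alpha$, but the black box you invoke ($\phi\alpha\gamma\geq\gamma$) only yields $\phi\alpha0\geq0$; what is actually needed is the separate standard fact $\alpha\leq\phi\alpha\beta$ (Pohlers, Lemma~3.4.12), which does not follow from the enumeration property alone and deserves at least a citation or a short inductive argument.

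Second, and more structurally, your proof of $\forall\beta,\delta<\alpha\,(\phi\beta\delta<\alpha)\Rightarrow\forall\beta<\alpha\,(\phi\beta\alpha=\alpha)$ needs $\alpha$ to be a limit in order to apply continuity of $\phi_\beta$, and you justify this by appealing to the \emph{second} condition $\alpha=\phi\alpha0$. Inside a cycle that hypothesis is not available when you are assuming only the third condition; as written you have only shown $(2\wedge3)\Rightarrow4$, which leaves condition~3 implied by the others but not implying them. The worry is not vacuous: conditions 3 and 4 hold trivially at $\alpha=0$ while $0\notin\mathrm{SC}$, so the nonzero hypothesis implicit in the surrounding text must be invoked, and for $\alpha>0$ one must rule out successors using condition~3 alone. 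This is repairable: if $\alpha=\gamma+1$ satisfied condition~3, then $\phi\beta\gamma\leq\gamma$ together with $\phi\beta\gamma\geq\gamma$ gives $\phi\beta\gamma=\gamma$ for all $\beta\leq\gamma$, so $\gamma\in\mathrm{Cr}(\gamma)$ and $\phi\gamma\gamma=\gamma$; the already-established implication $1\Rightarrow2$ then gives $\phi\gamma0=\gamma=\phi\gamma\gamma$, contradicting injectivity of $\phi_\gamma$ unless $\gamma=0$, and $\alpha=1$ fails condition~3 outright since $\phi00=1$. Finally, note that you also use continuity (normality) of $\phi_\beta$, whereas the paper only records order-preservation; this is standard (the classes $\mathrm{Cr}(\beta)$ are closed) but should be flagged as an additional black box.
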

\begin{proof}
    The first equivalence is Definition $\ref{strongcrit}$. The second is Theorem 13.13 in \cite{schutte}. The third is Lemma 3.4.13 in \cite{pohlersbook}. The last equivalence holds by Definition $\ref{veblen}$.
\end{proof}
So strongly critical ordinals have two closure properties: if $\Gamma_\beta$ is a strongly critical ordinal, then for any $\delta,\gamma<\Gamma_\beta$ we have $\delta+\gamma<\Gamma_\beta$ and
 $\phi\delta\gamma<\Gamma_\beta$.

A deeper study of Veblen functions can be found in \cite{pohlersbook} or in \cite{schutte}.

\subsection{Our relativized ordinal notation system}

Now, we will construct the ordinal notation system $T(\theta)$ that we will use to build the $\RS$-logic. We fix a set $X$ and follow Reading Convention $\ref{read}$.\\
There will be some initial ordinals represented in $T(\theta)$, and $T(\theta)$ will be closed under addition, Veblen functions, and the collapsing functions $\psi_n$ for $n<\omega$. We define the $\psi_n$ functions as follows.
\begin{definition}$\label{psi}$\index[Symbols]{$B_n(\alpha)$} We define simultaneously the sets $B_n(\alpha)$ of ordinals and the ordinal function $\psi_n(\alpha)$. For every $n<\omega$, we define $B_n(\alpha)=\bigcup_{k<\omega}B^k_n(\alpha)$, where $B_n^k(\alpha)$ is defined by double induction on $n$ and $k$ as follows.\\
\begin{itemize}
    \item $B_0^0(\alpha)=\{0\}\cup\{\Gamma_\beta:\beta\leq\theta\}\cup\{\Omega_m:m\leq\omega\}$,
    \begin{flalign*}
        B_0^{k+1}(\alpha)=B_0^k(\alpha) & \cup\{\delta+\delta',\phi\delta\delta':\delta,\delta'\in B_0^k(\alpha)\} &\\
        & \cup\{\psi_m(\delta):\delta\in B_0^k(\alpha)\wedge\delta<\alpha\wedge m<\omega\}. &
    \end{flalign*}
    \item $B_{n+1}^0(\alpha)=\Omega_n\cup\{\Omega_m:m\leq\omega\}$,
    \begin{flalign*}
        B_{n+1}^{k+1}(\alpha)=B_{n+1}^k(\alpha) & \cup\{\delta+\delta',\phi\delta\delta':\delta,\delta'\in B_{n+1}^k(\alpha)\} &\\
        & \cup\{\psi_m(\delta):\delta\in B_{n+1}^k(\alpha)\wedge\delta<\alpha\wedge m<\omega\}. &
    \end{flalign*}
\end{itemize}
    \ \\
    \index[Symbols]{$\psi_n(\alpha)$} The ordinal collapsing function $\psi_n$ is defined as $\psi_n(\alpha)=\min\{\beta:\beta\notin B_n(\alpha)\}$.
\end{definition}
We will sometimes write $\psi_n\alpha$ instead of $\psi_n(\alpha)$. Now, we study the sets $B_n(\alpha)$ and the collapsing functions $\psi_n$. The next lemma shows that for any natural number $n$ the function $\psi_n(\alpha)$ collapses any ordinal to some ordinal below $\Omega_n$: this is an important property, since, in our system $\RS$, we will have to collapse some ordinal bounds below some $\Omega_n$ (see Theorem $\ref{collapsing}$). In the following lemma, we write $\tc(B)$ for a set of ordinals $B$ to denote the \textit{transitive collapse} of $B$, defined as the unique transitive set $\in$-isomorphic to $B$. Since $B$ is a set of ordinals, $\tc(B)=\ot(B)$.
\begin{Lemma} For each $n<\omega$, the following is provable in ${\sf KPl^r}$. For each set $X$ and each ordinal $\alpha$, we have: 
    \begin{enumerate}
        \item $\psi_n(\alpha)$ is a strongly critical ordinal,  
        \item $\tc(B_0(\alpha))<\Omega_0$ and $\Omega_n<\tc(B_{n+1}(\alpha))<\Omega_{n+1}$,
        \item $\Gamma_{\theta+1}\leq\psi_0(\alpha)<\Omega_0$ and $\Omega_n<\psi_{n+1}(\alpha)<\Omega_{n+1}$.
 
    \end{enumerate}
\end{Lemma}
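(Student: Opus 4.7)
My plan is to prove the three items simultaneously by transfinite induction on $\alpha$, with $n$ treated externally. Fix $X$ with rank $\theta$ and take as induction hypothesis that the lemma holds for every $\delta < \alpha$; in particular, $\psi_m(\delta)$ is a well-defined ordinal for every $m < \omega$ and every $\delta < \alpha$. This gives meaning to the occurrences of $\psi_m(\delta)$ appearing inside the definition of $B_n^{k+1}(\alpha)$.

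For item (1), the identity $\psi_n(\alpha) = \min(\mathrm{Ord} \setminus B_n(\alpha))$ implies that every $\beta < \psi_n(\alpha)$ already lies in $B_n(\alpha)$. Since $B_n(\alpha) = \bigcup_k B_n^k(\alpha)$ is closed under $+$ and under the binary Veblen operation $\phi$ by construction, any decomposition $\psi_n(\alpha) = \beta + \gamma$ or $\psi_n(\alpha) = \phi\beta\gamma$ with $\beta, \gamma < \psi_n(\alpha)$ would place $\psi_n(\alpha)$ inside $B_n(\alpha)$, contradicting its definition. Together with the characterization of strongly critical ordinals established above, this gives $\psi_n(\alpha) \in \mathrm{SC}$.

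For item (2), the core is a cardinality bound. By a secondary induction on $k$ one shows $|B_0^k(\alpha)| \leq |\theta| + \omega$ and $|B_{n+1}^k(\alpha)| \leq \Omega_n$: the base cases follow from the explicit initial clauses, and the successor step adjoins at most $|B_n^k(\alpha)|$-many values through each of $+$, $\phi$, and $\psi_m$ (summed over $m < \omega$), so cardinality is preserved. Taking the union over $k$ retains the bound. By Reading Convention~\ref{read}, the $\Omega_n$ are chosen large enough that $(|\theta|+\omega)^+ \leq \Omega_0$ and $\Omega_n^+ \leq \Omega_{n+1}$; since a well-ordered set of cardinality $\kappa$ has order type strictly below $\kappa^+$, we obtain $\pi(B_0(\alpha)) < \Omega_0$ and $\pi(B_{n+1}(\alpha)) < \Omega_{n+1}$. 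The lower bound $\Omega_n < \pi(B_{n+1}(\alpha))$ is immediate from $\Omega_n \subseteq B_{n+1}^0(\alpha)$ and $\Omega_n \in B_{n+1}^0(\alpha)$, which forces $\pi(B_{n+1}(\alpha)) \geq \Omega_n + 1$.

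Item (3) then combines (1) and (2). The inclusion $\{\Gamma_\beta : \beta \leq \theta\} \subseteq B_0^0(\alpha)$ forces $\psi_0(\alpha) > \Gamma_\theta$, and strong criticality from (1) upgrades this to $\psi_0(\alpha) \geq \Gamma_{\theta+1}$, since the least strongly critical ordinal above $\Gamma_\theta$ is $\Gamma_{\theta+1}$. The upper bound $\psi_0(\alpha) < \Omega_0$ and the bounds $\Omega_n < \psi_{n+1}(\alpha) < \Omega_{n+1}$ then follow from the order-type bounds of (2), using that $B_n(\alpha)$ contains every ordinal strictly below $\psi_n(\alpha)$. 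The \emph{main obstacle} is the cardinality control in (2): the closure schemes allow $\psi_m$ to be applied for every $m < \omega$ simultaneously, so a careful bookkeeping is needed to confirm that the contributions at all levels collectively preserve cardinality; this is exactly where the side condition $\delta < \alpha$ and the outer induction on $\alpha$ interact, and where the largeness properties of the $\Omega_n$ built into Reading Convention~\ref{read} are needed to translate the cardinality bound into the sharper order-type bound.
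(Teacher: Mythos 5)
Your treatments of items (1) and (3) essentially match the paper's: strong criticality from closure of $B_n(\alpha)$ under $+$ and $\phi$, the lower bound $\Gamma_{\theta+1}\leq\psi_0(\alpha)$ from $\{\Gamma_\beta:\beta\leq\theta\}\subseteq B_0^0(\alpha)$ plus strong criticality, and the upper bounds inherited from (2) via $\psi_n(\alpha)=B_n(\alpha)\cap\Omega_n$. The problem is item (2), on which everything else leans.

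Your proof of (2) is a cardinality argument, and it rests on a premise that is not available: you assert that by Reading Convention~\ref{read} the $\Omega_n$ satisfy $(|\theta|+\omega)^+\leq\Omega_0$ and $\Omega_n^+\leq\Omega_{n+1}$. But Reading Convention~\ref{read} defines $\langle\Omega_n:n\leq\omega\rangle$ as the first $\omega+1$ ordinals $\beta$ with $L_\beta(X)$ admissible --- not as regular uncountable cardinals. For $X=\varnothing$ one has $\Omega_0=\omega_1^{CK}$, a countable ordinal, so $(|\theta|+\omega)^+=\aleph_1>\Omega_0$ and the step ``cardinality $\kappa$ implies order type below $\kappa^+\leq\Omega_0$'' collapses: a countable set of ordinals can have any countable order type, far above $\omega_1^{CK}$. (The lemma is moreover claimed to be provable in ${\sf KPl^r}$, which cannot even prove that uncountable cardinals exist, so no choice of interpretation rescues the cardinal-arithmetic route.) The paper's proof replaces cardinality by \emph{admissibility}: the operations $+$, $\phi$ and $\psi_m$ are $\Sigma_1$-definable and total, so $L_{\Omega_0}(X)$ is closed under them by $\Sigma_1$-recursion; one then shows $\tc\big(B_0^k(\alpha)\big)<\Omega_0$ by induction on $k$, starting from $\tc\big(B_0^0(\alpha)\big)=1+\theta+\omega<\Omega_0$, and similarly for $B_{n+1}^k(\alpha)$ below $\Omega_{n+1}$. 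You correctly flagged the cardinality control as the main obstacle; the missing idea is that the control must come from $\Sigma_1$-definability of the closure operations inside the admissible levels $L_{\Omega_n}(X)$, not from regularity of cardinals.
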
 
\begin{proof}
    By an inspection of Definition \ref{psi}, we notice that $\psi_n(\alpha)$ is closed under $+$ and $\phi$, showing 1.\\
    \ \\
    2. We show that $\tc \big(B_0^k(\alpha)\big)<\Omega_0$ for all $k<\Omega_0$ by induction on $k$. First, since $\Omega_0$ is $X$-admissible, we have that $\omega,\theta<\Omega_0$, and so $\tc\big(B_0^0(\alpha)\big)=1+\theta+\omega<\Omega_0$.\\
    Now, we assume that $\tc\big(B_0^k(\alpha)\big)<\Omega_0$. Since $\Omega_0$ is admissible, we have by $\Sigma_1$-recursion that
    \begin{equation*}
        L_{\Omega_0}(X)\vDash \forall \beta,\delta\ (`\beta+\delta\text{ exists'}\wedge `\phi\beta\delta\text{ exists'}),
    \end{equation*}
    as those functions have a $\Sigma_1$ definition.
    Hence, given $\beta,\delta\in \tc\big(B_0^k(\alpha)\big)<\Omega_0$, we have $\beta+\delta,\phi\beta\delta<\Omega_0$. Moreover, we observe that the operations $\psi_k$ are also $\Sigma_1$-definable. This means that 
    \begin{equation*}
        L_{\Omega_0}(X)\vDash \forall \beta\ \forall k<\omega (`\psi_k(\beta)\text{ exists'}).
    \end{equation*}
    Therefore, given $\beta\in \tc\big(B_0^k(\alpha)\big)$, we have that $\psi_k(\beta)<\Omega_0$ for any $k<\Omega_0$.\\
    Hence, we conclude that $\tc\big(B_0^{k+1}(\alpha)\big)<\Omega_0$.\\
    Finally, since the transitive collapse of a set is $\Sigma_1$-definable, it follows that $\tc\big(B_0(\alpha)\big)$ is still bounded by $\Omega_0$.\\
    \ \\
    The proof showing $\tc\big(B_{n+1}(\alpha)\big)<\Omega_{n+1}$ is analogous. Since $\Omega_{n+1}$ is admissible, $L_{\Omega_{n+1}}(X)$ satisfies $\Sigma_1$-recursion, from which we derive that $\tc\big(B_{n+1}(\alpha)\big)<\Omega_{n+1}$. The fact that $\Omega_n<\tc\big(B_{n+1}(\alpha)\big)$ follows from $\Omega_n\subset B_{n+1}(\alpha)$.\\
    \ \\
    3. We first study the case $n=0$. By the first item of this lemma, we have that $\psi_0(\alpha)$ is strongly critical. Moreover, by definition $\{\Gamma_\beta:\beta\leq\theta\}\subseteq B_0(\alpha)$. This means that the first strongly critical ordinals up to and including $\Gamma_\theta$ must be below $\psi_0(\alpha)$. Therefore, $\Gamma_{\theta+1}\leq\psi_0(\alpha)$. Furthermore, $\psi_0$ is defined in a $\Sigma_1$ way from ordinals that belong to $L_{\Omega_0}(X)$, and so by $\Sigma_1$-recursion $\psi_0(\alpha)$ exists in $L_{\Omega_0}(X)$. Therefore, $\psi_0(\alpha)<\Omega_0$\\
    For the second part of Item 3. we observe that, by definition, for any $n<\omega$ we have that $\Omega_n\subseteq B_{n+1}(\alpha)$ and so $\Omega_n< \psi_{n+1}(\alpha)$. Moreover, by the same argument as in the previous case, we have that $\psi_{n+1}(\alpha)$ always belongs to $L_{\Omega_{n+1}}(X)$, and thus $\psi_{n+1}(\alpha)<\Omega_{n+1}$. 
\end{proof}

The next lemma states more results about the sets $B_n(\alpha)$ and the functions $\psi_n$. In particular, Items 6., 7., and 8. provide a criterion for when the functions grow or stabilize.
\begin{Lemma}$\label{311}$ For any ordinals $\alpha$ and $\beta$ and for any natural number $n$, we have:
    \begin{enumerate}
        \item If $\beta<\alpha$ then $B_n(\beta)\subseteq B_n(\alpha)$ and $\psi_n(\beta)\leq\psi_n(\alpha)$,
        \item If $\beta\in B_n(\alpha)\cap\alpha$ then $\psi_n(\beta)<\psi_n(\alpha)$,
        \item If $\beta\leq\alpha$ and $[\beta,\alpha)\cap B_n(\alpha)=\emptyset$ then $B_n(\beta)=B_n(\alpha)$,
        \item $B_n(\alpha)\cap\Omega_n=\psi_n(\alpha)$,
        \item If $\alpha$ is a limit then $B_n(\alpha)=\bigcup_{\beta<\alpha}B_n(\beta)$ and $\psi_n(\alpha)=\sup\{\psi_n(\beta):\beta<\alpha\}$,
        \item $\psi_n(\alpha+1)\in\{\psi_n(\alpha),\big(\psi_n(\alpha)\big)^\Gamma\}$, where $\delta^\Gamma$ is defined as the first strongly critical ordinal above $\delta$,
        \item If $\alpha\in B_n(\alpha)$ then $\psi_n(\alpha+1)=\big(\psi_n(\alpha)\big)^\Gamma$,
        \item If $\alpha\notin B_n(\alpha)$ then $\psi_n(\alpha+1)=\psi_n(\alpha)$.
    \end{enumerate}
\end{Lemma}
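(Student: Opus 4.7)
The plan is to prove the eight items in an order that respects their interdependencies, since the naive order is not the logical one. First I would establish item~1, then item~4, and then items~2, 3, and 5 as quick consequences; items~6--8 are handled jointly at the end.

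\textbf{Items 1 and 4.} For item~1, I would show by induction on $k$ that $B_n^k(\beta) \subseteq B_n^k(\alpha)$ whenever $\beta < \alpha$: the base clause is identical on both sides, and in the inductive step the side condition ``$\delta < \beta$'' occurring in the $\psi_m$-clause trivially implies ``$\delta < \alpha$''; the inequality $\psi_n(\beta) \leq \psi_n(\alpha)$ then follows from the $\min$-definition. For item~4, the inclusion $\psi_n(\alpha) \subseteq B_n(\alpha)$ is immediate from the definition of $\psi_n$. For the converse, assume for contradiction that $\gamma$ is the least element of $B_n(\alpha) \cap \Omega_n$ with $\gamma \geq \psi_n(\alpha)$ and do a case analysis on how $\gamma$ enters $B_n(\alpha)$: the initial stage only contributes ordinals either below $\Gamma_{\theta+1} \leq \psi_0(\alpha)$ or of the form $\Omega_m \geq \Omega_n$ (by the preceding lemma); the $+$ and $\phi$ clauses preserve the bound $\psi_n(\alpha)$ because this ordinal is strongly critical, while any component $\geq \psi_n(\alpha)$ violates minimality; and the $\psi_m$-clause with $m \neq n$ contributes values outside $[\psi_n(\alpha), \Omega_n)$, whereas $m = n$ gives $\psi_n(\delta) \leq \psi_n(\alpha)$ by item~1, with equality impossible since $\psi_n(\alpha) \notin B_n(\alpha)$.

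\textbf{Items 2, 3, and 5.} Item~2 is then a one-line consequence: given $\beta \in B_n(\alpha) \cap \alpha$, the closure of $B_n(\alpha)$ under $\psi_n$ places $\psi_n(\beta) \in B_n(\alpha)$, while the previous lemma guarantees $\psi_n(\beta) < \Omega_n$, so item~4 gives $\psi_n(\beta) < \psi_n(\alpha)$. For item~3, item~1 already yields $B_n(\beta) \subseteq B_n(\alpha)$; for the reverse inclusion I would show $B_n^k(\alpha) \subseteq B_n^k(\beta)$ by induction on $k$, the only nontrivial case being $\psi_m(\delta)$ with $\delta \in B_n^{k-1}(\alpha) \cap \alpha$, where the hypothesis $[\beta,\alpha) \cap B_n(\alpha) = \emptyset$ forces $\delta < \beta$. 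Item~5 is similarly a standard continuity argument: for a limit $\alpha$, induct on $k$ to show $B_n^k(\alpha) \subseteq \bigcup_{\beta<\alpha} B_n^k(\beta)$, picking $\beta < \alpha$ large enough both to contain $\delta$ in the $\psi_m$-clause and to satisfy $\delta < \beta$; the identity $\psi_n(\alpha) = \sup_{\beta<\alpha}\psi_n(\beta)$ then falls out by intersecting with $\Omega_n$ and applying item~4.

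\textbf{Items 6--8.} Set $\pi := \psi_n(\alpha)$. By item~1, $\pi \leq \psi_n(\alpha+1)$. For item~8, if $\alpha \notin B_n(\alpha)$, I would prove $B_n(\alpha+1) = B_n(\alpha)$ by induction on stages: in the $\psi_m$-clause the side condition $\delta < \alpha+1$ means $\delta \leq \alpha$, but $\delta = \alpha$ combined with the inductive hypothesis $\delta \in B_n(\alpha)$ would contradict $\alpha \notin B_n(\alpha)$, so $\delta < \alpha$ and the clause is already available at stage $\alpha$. For items 6 and 7, assume $\psi_n(\alpha+1) > \pi$; then $\pi \in B_n(\alpha+1) \cap \Omega_n$ by item~4 applied to $\alpha+1$, whence the closure of $B_n(\alpha+1)$ under $+$ and $\phi$ forces $[\pi, \pi^\Gamma) \subseteq B_n(\alpha+1)$ (since every ordinal below the next strongly critical is built from $\pi$ by $+$ and $\phi$), giving $\psi_n(\alpha+1) \geq \pi^\Gamma$. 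Conversely, $\pi^\Gamma \notin B_n(\alpha+1)$: its strong criticality rules out the $+$ and $\phi$ clauses (with parameters properly below $\pi^\Gamma$), the base clause is excluded because $\pi \geq \Gamma_{\theta+1}$, and the $\psi_m$-clause is excluded because $\psi_m(\delta)$ either lies outside $[\pi,\pi^\Gamma)$ (when $m\neq n$) or satisfies $\psi_n(\delta) \leq \psi_n(\alpha) = \pi < \pi^\Gamma$ by item~1 (when $m = n$ and $\delta \leq \alpha$). Since $\pi \in B_n(\alpha+1)$ iff $\alpha \in B_n(\alpha+1)$ iff $\alpha \in B_n(\alpha)$ (using item~8 in one direction and monotonicity in the other), this also yields item~7.

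The main obstacle is the careful case analysis inside item~4 (showing that $\psi_n(\alpha)$ really is a clean boundary inside $\Omega_n$) and the dual argument that $\pi^\Gamma$ cannot sneak into $B_n(\alpha+1)$ through the $\psi_m$-clause; both are genuinely dependent on strong criticality and on the monotonicity bound from item~1, so getting the order of proof right is the crux.
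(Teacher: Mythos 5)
Your plan is correct and follows the standard route for such collapsing hierarchies; the paper itself gives no proof of this lemma, deferring to the analogous Lemma 2.6 of Cook--Rathjen, and your decomposition (monotonicity first, then the identity $B_n(\alpha)\cap\Omega_n=\psi_n(\alpha)$ by stagewise analysis, then the remaining items as consequences) is essentially what that reference does. One small point to tidy when writing it out: in the least-counterexample argument for item 4 (and the dual argument for $\pi^\Gamma$), a component of a $+$ or $\phi$ term can equal $\gamma$ itself without violating minimality of $\gamma$, so it is cleaner to induct on the stage $k$ at which $\gamma$ first enters $B_n^k(\alpha)$, as you already do for items 3 and 5.
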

The proof of this lemma is analogous to the proof of Lemma 2.6 in \cite{cookrathj}. Using Lemma \ref{311} we can define the \textit{normal form} of an ordinal $\alpha$. This normal form is an extension of Cantor normal form that gives representations to some strongly critical ordinals.
\begin{definition}$\label{nfdef}$
    \index[General]{Normal Form}Let $\alpha$ be an ordinal. We define the normal form of $\alpha$ as follows.
    \begin{enumerate}
        \item $\alpha=_{\sf NF}\alpha_1+\cdots+\alpha_n$ iff $\alpha=\alpha_1+\cdots+\alpha_n$, $n>1$, where the ordinals $\alpha_1,\dots,\alpha_n$ are additive principal and $\alpha_1\geq\dots\geq \alpha_n$,
        \item $\alpha=_{\sf NF}\phi\alpha_1\alpha_2$ iff $\alpha=\phi\alpha_1\alpha_2$ with $\alpha_1,\alpha_2<\alpha$,
        \item $\alpha=_{\sf NF}\psi_n(\alpha_1)$ iff $\alpha=\psi_n(\alpha_1)$ with $\alpha_1\in B_n(\alpha_1)$.
    \end{enumerate}
\end{definition}
The idea is that, starting with some basic ordinals (the ordinal $0$, the first strongly critical ordinals up to $\Gamma_\theta$ and the $\Omega_n$ ordinals), we will construct ordinals using normal forms, so that each ordinal constructed this way has a unique representation. This is the reason why we add the condition $\alpha_1\in B_n(\alpha_1)$ in Item 3.: it may be the case that $\alpha=\psi_n(\beta)=\psi_n(\beta+1)=\dots=\psi_n(\alpha_1)$, but the condition $\alpha_1\in B(\alpha_1)$ forces $\psi_n(\alpha_1+1)$ to increase, by Lemma $\ref{311}$.7. Therefore, $\alpha_1$ is the greatest ordinal with image $\alpha$, and we choose $\psi_n(\alpha_1)$ to represent $\alpha$. This also motivates the next lemma that states that an ordinal is in some $B_n(\alpha)$ whenever its ``normal form components'' are.
\begin{Lemma} Let $\alpha,\gamma$ be any ordinals and let $m$ be any natural number.
$\label{normal form}$
    \begin{enumerate}
        \item If $\alpha=_{\sf NF}\alpha_1+\cdots+\alpha_n$ then $[\alpha\in B_m(\gamma)$ iff $\alpha_1,\dots,\alpha_n\in B_m(\gamma)]$,
        \item If $\alpha=_{\sf NF}\phi\alpha_1\alpha_2$ then $[\alpha\in B_m(\gamma)$ iff $\alpha_1,\alpha_2\in B_m(\gamma)]$,
        \item If $\alpha=_{\sf NF}\psi_m(\alpha_1)$ then $[\alpha\in B_m(\gamma)$ iff $\alpha_1\in B_m(\gamma)\cap\gamma]$.
    \end{enumerate}
\end{Lemma}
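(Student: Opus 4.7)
The plan is to prove both directions of all three items together, treating the nontrivial ``only if'' direction by induction on the stage $k$ such that $\alpha \in B_m^k(\gamma)$. The three ``if'' directions follow immediately from closure of $B_m(\gamma)$ under ordinal addition, under $\phi$, and under each $\psi_j$ applied to arguments below $\gamma$ (note that Item~3 requires the hypothesis $\alpha_1<\gamma$ for this closure to apply, iterated a finite number of times).

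For the base case $k=0$, every element of $B_m^0(\gamma)$ is either $0$, a seed ordinal $\Gamma_\beta$ with $\beta\le\theta$, some $\Omega_j$, or (when $m\ge 1$) an ordinal below $\Omega_{m-1}$. The first three kinds are additive principal, and by Lemma~\ref{311}.3 the range of $\psi_0$ lies at or above $\Gamma_{\theta+1}$ while the range of $\psi_{j+1}$ lies strictly above $\Omega_j$, so none of them admits a non-trivial NF of any of the three shapes; Items~1,~2,~3 are therefore vacuous. If instead $\alpha<\Omega_{m-1}$, then all NF-components are also below $\Omega_{m-1}\subseteq B_m^0(\gamma)$, and the conclusion is immediate.

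In the inductive step I would split on which clause introduces $\alpha$ into $B_m^{k+1}(\gamma)$. If $\alpha=\beta+\beta'$, then for Item~1 the IH applied to $\beta$ and $\beta'$ places their additive-principal parts in $B_m(\gamma)$, and the NF-terms of $\alpha$ form a subsequence of these; for Items~2 and~3 the hypothesized NF forces $\alpha$ to be additive principal, hence $\alpha$ coincides with $\beta$ or $\beta'$ and the IH at stage $k$ finishes the case. If $\alpha=\phi\beta\beta'$, Item~1 is vacuous as $\alpha$ is additive principal; Item~2 is resolved by comparing $\phi\beta\beta'=\phi\alpha_1\alpha_2$ via Lemma~\ref{fixbelow}, which in each of the three subcases either identifies $\alpha_i$ with one of $\beta,\beta'$ directly or reduces to a stage-$k$ instance (the case $\alpha_2=\phi\beta\beta'=\alpha$ is impossible since NF demands $\alpha_2<\alpha$); for Item~3, strong criticality of $\alpha$ together with $\phi\beta\beta'=\alpha$ forces $\beta'=\alpha$ or ($\beta=\alpha$ with $\beta'=0$), again reducing to IH.

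The main obstacle is the case $\alpha=\psi_j(\beta)$ with $\beta\in B_m^k(\gamma)\cap\gamma$. Items~1 and~2 are vacuous since $\alpha$ is strongly critical (Lemma~\ref{311}.1). For Item~3, disjointness of the $\psi_j$-ranges (Lemma~\ref{311}.3) immediately forces $j=m$. It then remains to show that the canonical NF-argument $\alpha_1$---uniquely determined by $\psi_m(\alpha_1)=\alpha$ together with $\alpha_1\in B_m(\alpha_1)$---satisfies both $\alpha_1<\gamma$ and $\alpha_1\in B_m(\gamma)$. I would argue as follows: since $\alpha=\psi_m(\alpha_1)<\Omega_m$, Lemma~\ref{311}.4 gives $\alpha\in B_m(\gamma)\cap\Omega_m=\psi_m(\gamma)$, i.e., $\alpha<\psi_m(\gamma)$. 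If $\alpha_1\ge\gamma$, then monotonicity of $\psi_m$ (Lemma~\ref{311}.1) would give $\alpha=\psi_m(\alpha_1)\ge\psi_m(\gamma)>\alpha$, a contradiction; hence $\alpha_1<\gamma$, and monotonicity of $B_m$ then yields $\alpha_1\in B_m(\alpha_1)\subseteq B_m(\gamma)$, closing the proof.
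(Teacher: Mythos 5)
Your proof is correct and follows the route the paper intends: the paper gives no argument for this lemma but defers to Lemma 2.8 of Cook--Rathjen, whose proof is exactly the induction on the generation stage $k$ of $B_m^k(\gamma)$ that you carry out, with the ``if'' directions read off from the closure clauses and the $\psi$-case of the ``only if'' direction handled via $B_m(\gamma)\cap\Omega_m=\psi_m(\gamma)$ and monotonicity, just as you do. The only (harmless) imprecision is in your base case for $m\ge 1$: for $\alpha<\Omega_{m-1}$ the NF-argument $\alpha_1$ of Item~3 need not itself lie below $\Omega_{m-1}$, but that subcase is vacuous anyway because $\psi_m$ takes values strictly above $\Omega_{m-1}$, a fact you already invoke when discussing the ranges of the collapsing functions.
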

Again, the proof of this result is analogous to the proof of Lemma 2.8 in \cite{cookrathj}.\\
At this point, we can do a first step in the definition of $T(\theta)$: we construct $R(\theta)$, the set of ordinals that the strings of $T(\theta)$ will intend to denote. We also define the complexity $C\alpha$ of an ordinal $\alpha\in R(\theta)$ in order to perform induction on this complexity in the following results.
\begin{definition}\label{setrtheta}
    We inductively define the set $R(\theta)$ together with the complexity $C\alpha\in\omega$ of its elements.
    \begin{enumerate}
        \item $0\in R(\theta)$ and $C0:=0$.
        
        \item For every $n\leq\omega$, $\Omega_n\in R(\theta)$ and $C\Omega_n :=0$.
        
        \item For every $\beta\leq\theta$, $\Gamma_\theta\in R(\theta)$ and $C\Gamma_\theta:=0$.\\
        

        \item If $\alpha_1,\dots,\alpha_n\in R(\theta)$ and $\alpha=_{\sf NF}\alpha_1+\cdots+\alpha_n$ then $\alpha\in R(\theta)$ and $C\alpha=\mathrm{max}(C\alpha_1,\dots,C\alpha_n)+1$.

        \item If $\alpha_1,\alpha_2\in R(\theta)$, $\alpha_1,\alpha_2<\Omega_\omega$ and $\alpha=_{\sf NF}\phi\alpha_1\alpha_2$ then $\alpha\in R(\theta)$ and $C\alpha:=\mathrm{max}(C\alpha_1,C\alpha_2)+1$.
        
        \item If $\alpha_1\in R(\theta)$, $\alpha_1>\Omega_\omega$ and $\alpha=_{\sf NF}\phi0\alpha_1$ then $\alpha\in R(\theta)$ and $C\alpha:=C\alpha_1+1$.
       
        \item If $\alpha_1\in R(\theta)$ and $\alpha=_{\sf NF}\psi_n\alpha_1$ then $\alpha\in R(\theta)$ and $C\alpha:=C\alpha_1+1$.
    \end{enumerate}
\end{definition}

Any ordinal in $R(\theta)$ is either among $0$, $\Gamma_\beta$ for some $\beta\leq\theta$ and $\Omega_n$ for some $n\leq\omega$, or is included in $R(\theta)$ due to exactly one of the rules of this definition by Lemma $\ref{normal form}$. This shows that $C\alpha$ is uniquely determined for any $\alpha\in R(\theta)$. 
Now, we want to transform $R(\theta)$ into a recursive-in-$X$ representation system. The first problem is that we have to computably deal with the condition $\alpha_1\in B_n(\alpha_1)$ in Definition $\ref{nfdef}.3$. To do this, we define for each $n<\omega$ and each $\alpha\in R(\theta)$ the set of ordinals $\mathrm{Arg}_n(\alpha)$, that consists in all the ordinals that occur in the normal form of $\alpha$ as an argument of the $\psi_n$ function.

\begin{definition}
    Let $n<\omega$. We define for each $\alpha\in R(\theta)$ the set of ordinals $\mathrm{Arg}_n(\alpha)$ by induction on $C\alpha$ as follows
    \begin{enumerate}
        \item $\mathrm{Arg}_n(0)=\mathrm{Arg}_n(\Gamma_\beta)=\mathrm{Arg}_n(\Omega_m)=\emptyset$ for all $\beta\leq\theta$ and all $m\leq\omega$,
        \item If $\alpha=_{\sf NF}\alpha_1+\cdots+\alpha_m$ then $\mathrm{Arg}_n(\alpha)=\mathrm{Arg}_n(\alpha_1)\cup\dots\cup \mathrm{Arg}_n(\alpha_m)$,
        \item If $\alpha=_{\sf NF}\phi\alpha_1\alpha_2$ then $\mathrm{Arg}_n(\alpha)=\mathrm{Arg}_n(\alpha_1)\cup \mathrm{Arg}_n(\alpha_2)$,
        \item If $\alpha=_{\sf NF}\psi_m(\alpha_1)$ with $m\neq n$ then $\mathrm{Arg}_n(\alpha)=\mathrm{Arg}_n(\alpha_1)$,
        \item If $\alpha=_{\sf NF}\psi_n(\alpha_1)$ then $\mathrm{Arg}_n(\alpha)=\{\alpha_1\}\cup \mathrm{Arg}_n(\alpha_1)$.
    \end{enumerate}
\end{definition}
An easy induction on $C\alpha$ shows the next lemma, that gives a recursive equivalence to the condition $\alpha_1\in B_n(\alpha_1)$ in Definition $\ref{nfdef}$.
\begin{Lemma}
    Let $\alpha,\beta\in R(\theta)$. Let $n<\omega$. Then,
    \begin{center}
        $\alpha\in B_n(\beta)$ iff $\forall \delta\in \mathrm{Arg}_n(\alpha)\big(\delta<\beta\big)$.
    \end{center}
\end{Lemma}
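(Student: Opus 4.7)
The plan is to induct on the complexity $C\alpha$, with case analysis mirroring the clauses of Definition 3.15. For the base case $C\alpha = 0$, the ordinal $\alpha$ is one of $0$, $\Gamma_\delta$ for $\delta \leq \theta$, or $\Omega_m$ for $m \leq \omega$; in each case the first clause of the $\mathrm{Arg}_n$ definition gives $\mathrm{Arg}_n(\alpha) = \emptyset$, so the right-hand side holds vacuously, while a direct inspection confirms $\alpha \in B_n^0(\beta) \subseteq B_n(\beta)$ (using $\Gamma_\delta < \Omega_0 \leq \Omega_{n-1}$ for $n \geq 1$, which follows from Lemma 3.10.3, together with $\{\Omega_m : m \leq \omega\} \subseteq B_n^0(\beta)$).

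For the inductive step in the cases $\alpha =_{\sf NF} \alpha_1 + \cdots + \alpha_k$ and $\alpha =_{\sf NF} \phi\alpha_1\alpha_2$, Items 1 and 2 of Lemma 3.12 reduce the membership $\alpha \in B_n(\beta)$ to the conjunction of memberships of the NF components, while the defining clauses for $\mathrm{Arg}_n$ on these forms give the corresponding unions; the induction hypothesis applied to each component then closes these cases. In the case $\alpha =_{\sf NF} \psi_n(\alpha_1)$ (matching subscript), Item 3 of Lemma 3.12 gives $\alpha \in B_n(\beta)$ iff $\alpha_1 \in B_n(\beta) \cap \beta$; applying the IH to $\alpha_1$ yields $\alpha_1 \in B_n(\beta)$ iff $\forall \delta \in \mathrm{Arg}_n(\alpha_1)\,(\delta < \beta)$, and conjoining with $\alpha_1 < \beta$ produces $\forall \delta \in \{\alpha_1\} \cup \mathrm{Arg}_n(\alpha_1)\,(\delta < \beta)$, which is precisely the required $\mathrm{Arg}_n(\alpha) \subseteq [0, \beta)$.

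The remaining subcase $\alpha =_{\sf NF} \psi_m(\alpha_1)$ with $m \neq n$ is the main obstacle, since Lemma 3.12.3 only handles the matching-subscript case. Here $\mathrm{Arg}_n(\alpha) = \mathrm{Arg}_n(\alpha_1)$, so via the IH the target reduces to $\psi_m(\alpha_1) \in B_n(\beta)$ iff $\alpha_1 \in B_n(\beta)$. For $m < n$, the left-hand side holds automatically because $\psi_m(\alpha_1) < \Omega_m \leq \Omega_{n-1} \subseteq B_n^0(\beta)$, and one must verify that the right-hand side is likewise forced by the NF structure of $\alpha$. For $m > n$, we argue from the definition of $B_n$ directly: the ordinal $\psi_m(\alpha_1)$ is strongly critical (Lemma 3.10.1), so it cannot arise as a sum or Veblen value, and therefore must enter $B_n(\beta)$ via the $\psi_m$-closure clause; the NF condition $\alpha_1 \in B_m(\alpha_1)$ together with Lemma 3.11.7 pins $\alpha_1$ as the canonical witness among the preimages of $\psi_m(\alpha_1)$, which reduces the question to whether $\alpha_1$ itself lies in $B_n(\beta)$, as required.
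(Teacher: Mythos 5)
The paper itself offers no argument here beyond the remark that the lemma follows by an easy induction on $C\alpha$, and your induction does handle the base case, sums, Veblen terms and the matching case $\alpha=_{\sf NF}\psi_n(\alpha_1)$ exactly as one would expect. The problem is the cross-index case $\alpha=_{\sf NF}\psi_m(\alpha_1)$ with $m\neq n$, which you rightly single out as the main obstacle but do not actually close --- and it cannot be closed as described, because the equivalence fails there with $\mathrm{Arg}_n$ as defined. For $m<n$ you assert that the right-hand side is ``forced by the NF structure''; it is not. Take $n=1$ and $\alpha=\psi_0\big(\Omega_1+\psi_1(\Omega_1)\big)$, which lies in $R(\theta)$: one checks $\Omega_1+\psi_1(\Omega_1)\in B_0\big(\Omega_1+\psi_1(\Omega_1)\big)$ using $\Omega_1\in B_0^0$ and $\Omega_1<\Omega_1+\psi_1(\Omega_1)$. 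Then $\mathrm{Arg}_1(\alpha)=\mathrm{Arg}_1\big(\Omega_1+\psi_1(\Omega_1)\big)=\{\Omega_1\}$, yet $\alpha<\Omega_0\subseteq B_1^0(\beta)\subseteq B_1(\beta)$ for every $\beta$; with $\beta=\Omega_0$ the left side holds and the right side does not. For $m>n$ the defect is symmetric: to generate $\psi_m(\alpha_1)$ inside $B_n(\beta)$ the closure clause needs a preimage $\gamma$ with $\gamma\in B_n(\beta)$ \emph{and} $\gamma<\beta$, and your appeal to $\alpha_1$ being the canonical (largest) preimage ignores the second condition entirely. Concretely, for $\alpha=\psi_2(\Omega_2)$, $n=1$, $\beta=\Omega_0$ one has $\mathrm{Arg}_1(\alpha)=\mathrm{Arg}_1(\Omega_2)=\emptyset$, so the right side holds vacuously; but every $\gamma<\Omega_1$ satisfies $\gamma\in\Omega_1\subseteq B_2^0(\gamma)$, hence $\psi_2(\gamma+1)>\psi_2(\gamma)$, so $\psi_2$ is strictly increasing on $[0,\Omega_1]$ and no $\gamma<\Omega_0$ collapses to $\psi_2(\Omega_2)$; since $\psi_2(\Omega_2)$ is strongly critical and lies in $(\Omega_1,\Omega_2)$, it cannot enter $B_1(\Omega_0)$ any other way, and the left side fails.

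What this shows is that the gap is not merely in your write-up: the clause ``$\mathrm{Arg}_n(\psi_m(\alpha_1))=\mathrm{Arg}_n(\alpha_1)$ for $m\neq n$'' is the wrong definition for the stated equivalence. The standard coefficient sets in this setting take $\mathrm{Arg}_n(\psi_m(\alpha_1))=\emptyset$ when $m<n$ (such terms denote ordinals below $\Omega_{n-1}$ and so lie in the base $B_n^0(\beta)$ regardless of what occurs inside them) and $\mathrm{Arg}_n(\psi_m(\alpha_1))=\{\alpha_1\}\cup\mathrm{Arg}_n(\alpha_1)$ when $m\geq n$ (the argument itself must be below $\beta$ for the closure clause to fire). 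With that amendment your induction scheme goes through, but even then the direction ``$\psi_m(\alpha_1)\in B_n(\beta)$ implies $\alpha_1\in B_n(\beta)\cap\beta$'' for $m>n$ still requires an argument ruling out witnesses $\gamma<\alpha_1$ with $\psi_m(\gamma)=\psi_m(\alpha_1)$, i.e.\ a cross-index generalization of item 3 of the normal-form lemma, which the paper only states for matching indices. You should flag both issues rather than leave them as ``one must verify''.
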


We define $T(\theta)$ as the set of unique representations of ordinals in $R(\theta)$.
\begin{definition}[Ordinal notation system]\index[Symbols]{$T(\theta)$}
    We define $T(\theta)$ as the set of strings in the language $\{0,+,\phi\}\cup\{\Gamma_\beta:\beta<\theta\}\cup\{\Omega_n:n\leq\omega\}\cup\{\psi_n:n<\omega\}$ corresponding to ordinals in $R(\theta)$ written in normal form, as in Definition $\ref{nfdef}$.\\
    \ \\
    The order of strings in $T(\theta)$ is induced from the ordering of ordinals in $R(\theta)$. Let $\prec$ denote this order.
\end{definition}
Again, the following theorem can be proved by induction on $C\alpha$.
\begin{Theorem}
The mapping $\theta \mapsto T(\theta)$ is a primitive recursive set function.
\end{Theorem}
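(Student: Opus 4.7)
The plan is to exhibit $T(\theta)$ as the output of a primitive recursive closure stratified by the complexity measure $C$. Concretely, I would define by primitive recursion on $k < \omega$ an $\omega$-sequence of sets $T_k(\theta)$ of formal strings and then set $T(\theta) = \bigcup_{k<\omega} T_k(\theta)$. The base case $T_0(\theta)$ collects the constants of complexity $0$, namely $0$, $\Omega_n$ for $n \leq \omega$, and $\Gamma_\beta$ for $\beta \leq \theta$; this is primitive recursive in $\theta$ once one knows (by a straightforward separate primitive recursion) that $\beta \mapsto \Gamma_\beta$ is a primitive recursive set function.

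The inductive step is to pass from $T_k(\theta)$ to $T_{k+1}(\theta)$ by formally adjoining strings coming from clauses 4--7 of Definition \ref{setrtheta}: sums $\alpha_1+\cdots+\alpha_n$ of elements of $T_k(\theta)$ in weakly descending additive-principal order, Veblen terms $\phi\alpha_1\alpha_2$ with the appropriate side condition, and collapsing terms $\psi_n\alpha_1$ with the normal-form side condition. To make each of these tests decidable by a primitive recursive set function, I would simultaneously recurse on $k$ to build the ordering $\prec$ of $T_k(\theta)$: for sums this is lexicographic on descending decompositions, for Veblen terms it is Lemma \ref{fixbelow} applied term-by-term, and for $\psi_n$-terms one uses the estimates of Lemma 3.9 together with $\prec$ on the arguments. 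Thus $\prec$ on $T_k(\theta)$ and membership in $T_{k+1}(\theta)$ are determined by bounded primitive recursive checks over the finite subterm structure of each candidate string.

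For the crucial $\psi_n$-clause, the literal condition $\alpha_1 \in B_n(\alpha_1)$ is not evidently primitive recursive, but the lemma preceding the theorem rewrites it as the primitive recursive test $\forall\delta \in \mathrm{Arg}_n(\alpha_1)\,(\delta \prec \alpha_1)$, and $\mathrm{Arg}_n$ is itself obtained by primitive recursion on the subterm structure via Definition 3.12. Hence the inductive step defining $T_{k+1}(\theta)$ from $T_k(\theta)$ is indeed primitive recursive; iterating this $\omega$ times and taking the union (both admissible in the primitive recursive set-function schemes) yields $T(\theta)$.

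The main obstacle is arranging all of these simultaneous recursions---$\Gamma_\beta$, $T_k(\theta)$, $\prec$ on $T_k(\theta)$, and $\mathrm{Arg}_n$---to compose into a single primitive recursive set function of $\theta$ without appealing to $\Sigma_1$-recursion or unrestricted collection. The key point is that every recursion in sight is either a primitive recursion on an ordinal (handled by the standard schemes) or a recursion on $k < \omega$ with finitary bookkeeping on hereditarily finite string data built from the constants $\{\Gamma_\beta : \beta \leq \theta\} \cup \{\Omega_n : n \leq \omega\}$, which stays within the primitive recursive set functions. Once this composition is verified, the theorem follows.
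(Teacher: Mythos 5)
Your proposal is correct and is essentially the same argument the paper gives: the paper simply cites Cook--Rathjen (Theorem 2.13), whose procedures \textbf{A)} (membership) and \textbf{B)} (ordering) are exactly the simultaneous recursion on term complexity that you spell out, with the $\mathrm{Arg}_n$ criterion replacing the condition $\alpha_1\in B_n(\alpha_1)$ and Lemma \ref{311} handling the comparison of $\psi$-terms. You have merely unwound the cited proof in more detail than the paper does.
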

\begin{proof}
    The proof consists basically of formalizing all the previous definitions and results. We refer to \cite{cookrathj} (Theorem 2.13), where the authors give a $\theta$-primitive recursive procedure \textbf{A)} which decides for a string whether $a\in T'(\theta)$ and a $\theta$-primitive recursion procedure \textbf{B)} which decides for $a,b\in T'(\theta)$ with $a\neq b$ whether $a\prec b$ or $b\prec a$ for an ordinal notation system $T'(\theta)$ similar to ours. The proof should be adapted naturally, e.g., by adding conditions to \textbf{B)} to compare $\psi_n(a)$ and $\psi_m(a)$ by means of Lemma $\ref{311}$.
\end{proof}
\section{The infinitary proof system \texorpdfstring{$\RS$}{RSl(X)}}$\label{rs}$
In this section, we present our infinitary system $\RS$ and prove a cut-elimination result for it.
\subsection{The terms and formulas of \texorpdfstring{$\RS$}{RSl(X)}}
We fix a set $X$, and we follow Reading Convention $\ref{read}$. We will be using ordinals to a great extent in the construction of $\RS$, and we will only consider ordinals represented in $T(\theta)$.
\begin{readingconvention}$\label{ordnot}$
    All ordinals belong to $R(\theta)$, and thus are represented by strings in $T(\theta)$. We refrain from distinguishing between ordinals and notations.
\end{readingconvention}
We define the set $\mathcal{T}$ of $\RS$-terms, together with the level of a term, as follows.
\begin{definition}[Terms of $\RS$]$\label{term}$ 
\index[General]{$\RS$-terms}
\begin{itemize}
    \item $\overline{u}\in \mathcal{T}$ for every $u\in TC(\{X\})$ and $|\overline{u}|=\Gamma_{\mathrm{rank}(u)}$. Those are called basic terms.
    \item $\mathbb{L}_\alpha(X)\in\mathcal{T}$ for every $\alpha\leq\Omega_\omega$ and $|\mathbb{L}_\alpha(X)|=\Gamma_{\theta+1}+\alpha$.
    \item $[x\in\mathbb{L}_\alpha(X):B(x,s_1,\dots,s_n)^{\mathbb{L}_\alpha(X)}]\in\mathcal{T}$ for every $\alpha<\Omega_\omega$, for every {\sf KPl}-formula $B(x,y_1,\dots,y_n)$ and for every $s_1,\dots,s_n\in \mathcal{T}$ with $|s_1|,\dots,|s_n|<\Gamma_{\theta+1}+\alpha$. Moreover, $\big|[x\in\mathbb{L}_\alpha(X):B(x,s_1,\dots,s_n)^{\mathbb{L}_\alpha(X)}]\big|=\Gamma_{\theta+1}+\alpha$.
\end{itemize}
\end{definition}
Usually, we will just write $[x\in\mathbb L_\alpha(X):B(x)]$ for terms of the third kind. We observe that the cardinality of $\mathcal{T}$ is $\max\{\aleph_0,|\theta|\}$, by Reading Convention $\ref{ordnot}$.\\
Now, we define the $\RS$-formulas, together with their \textit{type}. The type of a formula is related to the rules of the system (see Definition $\ref{rules}$): a derivable formula will have $\bigvee$-type whenever we can derive it from a single derivable premise, and a derivable formula will have $\bigwedge$-type whenever we need (possibly infinitely) many derivable premises to derive the formula.
\begin{definition}[Formulas of $\RS$]\index[General]{$\RS$-formulas}
The $\RS$-formulas are exactly the {\sf KPl}-formulas replacing free variables by $\RS$-terms and restricting all unbounded quantifiers to $\RS$-terms.\\
\index[Symbols]{$\bigvee$-type and $\bigwedge$-type}The $\RS$-formulas of the form $\overline{u}\in\overline{v}$ or 
$\overline{u}\notin\overline{v}$ are called basic.\\
Moreover, each $\RS$-formula of the form $A\vee B$, $Ad(t)$, $\exists x\in t\ G(x)$ and non-basic $s\in t$ has $\bigvee$-type. Likewise, each $\RS$-formula of the form $A\wedge B$, $\neg Ad(t)$, $\forall x\in t\ G(x)$ and non-basic $s\notin t$ has $\bigwedge$-type.
\end{definition}
We observe that, by definition, there are no free variables nor unrestricted quantifiers in the $\RS$-formulas. From now on, we will call $\RS$-terms and $\RS$-formulas simply terms and formulas.
\begin{definition}\label{sigman}
    A formula $A(s_1,\dots,s_n)^{\mathbb{L}_{\Omega_n}(X)}$ is 
\index[Symbols]{$\Sigma^{\Omega_n}$-formula}$\Sigma^{\Omega_n}$ iff $A(x_1,\dots,x_n)$ is a {\sf KPl} $\Sigma$-formula and $|s_1|,\dots,|s_n|<\Omega_n$.
\end{definition}
We recall that the term $\Ln$ represents the set $L_{\Omega_n}(X)$ and, whenever a term has level below $\Omega_n$, then the term represents an element of $L_{\Omega_n}(X)$. Therefore, in essence, if a formula is $\Sigma^{\Omega_n}$ then it is a $\Sigma$ statement of $L_{\Omega_n}(X)$.\\
We want to keep track of the terms that appear in the formulas. This motivates the following definition.
\begin{definition}\index[Symbols]{$k(A)$} $\label{k}$
    For a formula $A$, we define
    \begin{center}
        $k(A)=\{|t|:t\text{ occurs in $A$ including subterms}\}$.
    \end{center}
    For a finite set of formulas $\Gamma$, we define
    \begin{equation*}
        k(\Gamma)=\bigcup_{A\in \Gamma}k(A).
    \end{equation*}
\end{definition}
For example, we have $k\big([x\in\Ln: x\in\Lm]\in \Lm\big)=\{\Omega_n,\Omega_m\}$.\\
To simplify notation, we use the following abbreviations as in \cite{cookrathj}.
\begin{definition}
$\label{dotin}$
Let $s$ and $t$ be terms such that $|s|<|t|$. For $\circ\in\{\wedge,\rightarrow\}$,\index[Symbols]{$\dot\in$} we define
 \begin{equation*}
    s\ \dot \in\  t\circ A(s,t):=\begin{cases}
        \overline{u}\in\overline{v}\circ A(\overline{u},\overline{v}) & \text{if $s\in t\equiv\overline{u}\in\overline{v}$},\\
        A(s,t) & \text{if $t=\mathbb{L}_\alpha(X)$},\\
        B(s)\circ A(s,t) & \text{if $t=[x\in\mathbb{L}_\alpha(X):B(x)]$}.
        
    \end{cases}
\end{equation*}
Furthermore, we recover the notation from Definition $\ref{equalconn}$ to write $s=t$, $\neg A$, $s\neq t$ and $A\rightarrow B$ in $\RS$.
\end{definition}
We will exhibit how the symbol $\dot\in$ is used in Example $\ref{exC}$.
\subsection{Operator-controlled derivations}

Now, we define the derivations of the $\RS$-proof system. Derivations will be controlled by operators, that are functions between sets of ordinals (as in Reading Convention $\ref{ordnot}$). We recall Reading Convention $\ref{read}$ so that the definition below depends on the set-theoretic rank $\theta$ of the fixed set $X$. 
\begin{definition}[Operator]\index[General]{Operator}\index[Symbols]{$\H$}
    Consider the set $\mathcal{P}\big(T(\theta)\big)=\{Y:Y\text{ is a set of ordinals in } T(\theta)\}$. An operator is a function $\H:\mathcal{P}\big(T(\theta)\big)\to \mathcal{P}\big(T(\theta)\big)$ such that for every $Y,Y'\in\mathcal{P}\big(T(\theta)\big)$ the following conditions are satisfied.
    \begin{enumerate}
        \item $\{0\}\cup\{\Gamma_\beta:\beta\leq\theta+1\}\cup\{\Omega_i:i\leq\omega\}\subseteq\H(Y)$.
        \item Let $\alpha=_{\sf NF}\alpha_1+\cdots+\alpha_n$. Then, $\alpha\in\H(Y)$ iff $\alpha_1,\dots,\alpha_n\in\H(Y)$.
        \item Let $\alpha=_{\sf NF}\phi\alpha_1\alpha_2$. Then, $\alpha\in \H(Y)$ iff $\alpha_1,\alpha_2\in\H(Y)$.
        \item $Y\subseteq \H(Y)$.
        \item If $Y\subseteq \H(Y')$ then $\H(Y)\subseteq\H(Y')$.
    \end{enumerate}
    Moreover, we will use the following abbreviations.
    \begin{itemize}
        \item $\H$ will often denote $\H(\emptyset)$.
        \item For a term $t$, $\H[t](Y)$ will mean $\H(Y\cup\{|t|\})$.
        \item For a formula $A$, $\H[A](Y)$ will mean $\H\big(Y\cup k(A)\big)$.
        \item For a finite set of formulas $\Gamma$, $\H[\Gamma](Y)$ will mean $\H\big(Y\cup k(\Gamma)\big)$.
    \end{itemize}
\end{definition}
We notice that our definition of operator depends on $\theta$. Likewise, the axioms and rules of $\RS$ depend on $X$ and $\theta$. It is easy to see that $\H(Y)=\H\big(\H(Y)\big)$ and that $\H(Y)$ is an extension of $Y$ that is closed under $\phi$ and $+$.
\begin{definition}[Rules of $\RS$]\index[Symbols]{$\H\dash{}{\alpha}\Gamma$}$\label{rules}$
Let $\H$ be an operator and let $\Gamma$ be a set of formulas. We have that $\Gamma$ is derived by an $\H$-controlled derivation with ordinal $\alpha$ whenever $\{\alpha\}\cup k(\Gamma)\subseteq\H$ and one of the following axioms or rules can be applied.\\  
\ \\
Axioms:
\begin{center}
$\H\dash{}{\alpha}\Gamma,\overline{u}\in\overline{v}$ for any $u,v\in TC(\{X\})$ such that $u\in v$,\\
$\H\dash{}{\alpha}\Gamma,\overline{u}\notin\overline{v}$ for any $u,v\in TC(\{X\})$ such that $u\notin v$.
\end{center}
\ \\
Rules:\\
\ \\
\begin{tabular}{b{10.5cm} m{3.5cm}}
\begin{prooftree}
\hypo{\H\dash{}{\alpha_0}\Gamma, A\wedge B,A}
\hypo{\H\dash{}{\alpha_1}\Gamma, A\wedge B,B}
\infer[left label =$(\wedge)$]2{\H\dash{}{\alpha}\Gamma,A\wedge B}
\end{prooftree}
 & $\alpha_0,\alpha_1<\alpha$
\\ 
\ \\
\ \\
    \begin{prooftree}
        \hypo{\H\dash{}{\alpha_0}\Gamma, A\vee B,A}
        \infer[left label=$(\vee)$]1{\H\dash{}{\alpha}\Gamma, A\vee B}
    \end{prooftree}
& $\alpha_0<\alpha$\\
\ \\
\ \\

    \begin{prooftree}
        \hypo{\H\dash{}{\alpha_0}\Gamma, A\vee B,B}
        \infer[left label=$(\vee)$]1{\H\dash{}{\alpha}\Gamma,A\vee B}
    \end{prooftree}
& $\alpha_0<\alpha$\\
\ \\
\ \\

\begin{prooftree}
        \hypo{\H\dash{}{\alpha_0}\Gamma, r\in t,s\ \dot \in\  t\wedge r=s}
        \infer[left label=$(\in)$]1{\H\dash{}\alpha\Gamma,r\in t}
    \end{prooftree}
    & 
    \ \newline
    $\alpha_0<\alpha$,\newline
    $|s|<|t|$,\newline
    $|s|<\Gamma_{\theta+1}+\alpha$,\newline
    $r\in t$ not basic.
\\
\ \\
\ \\
    \begin{prooftree}
        \hypo{\H[s]\dash{}{\alpha_s}\Gamma, r\notin t,s\ \dot \in\  t\rightarrow r\neq s\text{ for all $|s|<|t|$}}
        \infer[left label=$(\notin)$]1{\H\dash{}\alpha\Gamma, r\notin t}
    \end{prooftree}
& $\alpha_s<\alpha$,\newline
$r\in t$ not basic.\\
\ \\
\ \\
\begin{prooftree}
    \hypo{\H\dash{}{\alpha_0}\Gamma, \exists x\in t\ B(x),s\ \dot \in\  t\wedge B(s)}
    \infer[left label=$(b\exists)$]1{\H\dash{}\alpha\Gamma,\exists x\in t\ B(x)}
\end{prooftree}
& $\alpha_0<\alpha$,\newline
$|s|<|t|$,\newline
$|s|<\Gamma_{\theta+1}+\alpha$.\\
\ \\
\ \\
\begin{prooftree}
    \hypo{\H[s]\dash{}{\alpha_s}\Gamma, \forall x\in t\ B(x), s\ \dot \in\  t\rightarrow B(s)\text{ for all $|s|<|t|$}}
    \infer[left label=$(b\forall)$]1{\H\dash{}\alpha\Gamma,\forall x\in t\ B(x)}
\end{prooftree}
& $\alpha_s<\alpha$
\end{tabular}

\newpage

\begin{tabular}{b{10.5cm} m{3.5cm}}
   
\begin{prooftree}
    \hypo{\H\dash{}{\alpha_0} \Gamma, Ad(t), t= \mathbb{L}_{\Omega_n}(X)}
    \infer[left label = $(Ad)$]1{\H\dash{}\alpha\Gamma, Ad(t)}
\end{prooftree}
&
$\alpha_0<\alpha$,\newline
$n<\omega$,\newline
$\Omega_n<|t|$.
\\ 
\ \\
\ \\
\begin{prooftree}
    \hypo{\H\dash{}{\alpha_n}\Gamma, \neg Ad(t), t\neq \mathbb{L}_{\Omega_n}(X)\text{ for all $n\leq\omega$}}
    \infer[left label=$(\neg Ad)$]1{\H\dash{}\alpha\Gamma,\neg Ad(t)}
\end{prooftree}
& $\alpha_n<\alpha$\\
\ \\
\ \\
\begin{prooftree}
    \hypo{\H\dash{}{\alpha_0}\Gamma, \exists z\in\mathbb{L}_{\Omega_{n}}(X)\ A^z, A^{\Ln}}
    \infer[left label = $({\sf Ref}_{n}(X))$]1{\H\dash{}\alpha \Gamma,\exists z\in\mathbb{L}_{\Omega_{n}}(X)\ A^z}
\end{prooftree}
& $\alpha_0,\Omega_{n}<\alpha$,\newline
$n<\omega$,\newline
$A$ is a $\Sigma$ formula.\\
\\
\ \\
\ \\
\begin{prooftree}
    \hypo{\H\dash{}{\alpha_0}\Gamma, A}
    \hypo{\H\dash{}{\alpha_0}\Gamma, \neg A}
    \infer[left label = $(Cut)$]2{\H\dash{}\alpha\Gamma}
\end{prooftree}
& $\alpha_0<\alpha$

\end{tabular}\\
\end{definition}
\noindent
\textit{Besides $(Cut)$, each rule supplies a new formula in the conclusion. This formula is called the principal formula of the inference. Likewise, each rule withholds a (some) formula(s) of the premise(s). Those kind of formulas are called the active formulas of the derivation. Any other formula is called passive.}\\
\ \\
We take the convention of writing the principal formula in the premises of the rule to have access to the Weakening principle, that we will show in Lemma $\ref{weak}$.\\
We observe that when we make a derivation with principal formula of $\bigvee$-type, we only need a single premise, and when the principal formula has $\bigwedge$-type we need many premises to be derived. Given a formula $A$, we define the set of premises $\C(A)$. This set contains all the premises that allow (or are needed for) a derivation with principal formula $A$. 
\begin{definition}\index[Symbols]{$\C(A)$} $\label{CA}$
We define $\C(A)$ for a non-basic formula $A$ of $\bigvee$-type.\\
    $\C(r\in t)=\{s\ \dot \in\  t\wedge r=s:|s|<|t|\}$;\\
    $\C(A\vee B)=\{A,B\}$;\\
    $\C\big(Ad(t)\big)=\{t=\mathbb{L}_{\Omega_{n}}(X):n<\omega\wedge \Omega_{n}\leq|t|\}$;\\
    $\C\big(\exists x\in t\ A(x)\big)=\{s\ \dot \in\  t\wedge A(s):|s|<|t|\}$.\\
    \ \\
    Now, given a non-basic formula $A$ of $\bigwedge$-type, we define $\C(A)=\{\neg B:B\in\C(\neg A)\}$.
\end{definition}
Comparing Definitions $\ref{rules}$ and $\ref{CA}$, we notice that if $(R)$ is a rule with principal formula $A$, the active formula in the premise(s) of $(R)$ is (are) some (each) $B\in \C(A)$. Furthermore, the rules where the symbol $\dot\in$ appears implicitly correspond to three different rules, as in the following example. 
\begin{ex}\label{exC}
    The rule $(b\exists)$ has three different forms.
    \end{ex}
    \noindent
\begin{tabular}{b{8.5cm} m{6.5cm}}
\begin{prooftree}
    \hypo{\H\dash{}{\alpha_0}\Gamma,\exists x\in \overline u\ B(x),s\ \in\  \overline u\wedge B(s)}
    \infer[left label=$(b\exists)$]1{\H\dash{}\alpha\Gamma,\exists x\in \overline u\ B(x)}
\end{prooftree}
& $\alpha_0<\alpha$\newline
$|s|<\Gamma_{\mathrm{rank}(u)}$,\newline
$|s|<\Gamma_{\theta+1}+\alpha$.\\
\ & \ \\
\begin{prooftree}
    \hypo{\H\dash{}{\alpha_0}\Gamma, \exists x\in \mathbb L_\gamma(X)\ B(x),B(s)}
    \infer[left label=$(b\exists)$]1{\H\dash{}\alpha\Gamma,\exists x\in \mathbb L_\gamma(X)\ B(x)}
\end{prooftree}
& $\alpha_0<\alpha$,\newline
$|s|<\Gamma_{\theta+1}+\gamma$,\newline
$|s|<\Gamma_{\theta+1}+\alpha$.\\
\ & \ \\
\begin{prooftree}
    \hypo{\H\dash{}{\alpha_0}\Gamma,\exists x\in t\ B(x),F(s)\wedge B(s)}
    \infer[left label=$(b\exists)$]1{\H\dash{}\alpha\Gamma,\exists x\in t\ B(x)}
\end{prooftree}
& $t\equiv[x\in\mathbb L_\gamma(X):F(x)]$,\newline
$\alpha_0<\alpha$,\newline
$|s|<|t|$,\newline
$|s|<\Gamma_{\theta+1}+\alpha$.\\
\end{tabular}
\ \\
Now, coming back to Definition $\ref{CA}$, we observe that if $A$ is not a disjunction or a conjunction, each $B\in\C(A)$ is of the form $F(t)$ for a so-called \textit{characteristic term} $t$, where $t$ does not occur in $A$.
\begin{definition}\index[Symbols]{$t_A(B)$}
     If $A$ is a formula with $\bigwedge$-type different from a conjunction, we define $t_A(B):=t$ for $B\in\C(A)$, where $t$ is the characteristic term of $B\equiv F(t)$.\\
     If $A$ has $\bigvee$-type or is a conjunction, then we define $t_A(B):=\mathbb L_0(X)$ for any $B{\in}\C(A)$. 
\end{definition}
We conveniently define $t_A(B)$ this way for conjunctions and $\bigvee$-type formulas because it allows us to uniformize the controlling operator in the premise of a derivation. Indeed, we observe that in the rules deriving a formula with $\bigwedge$-type, the operator that controls the premise with active formula $B\in\C(A)$ is exactly $\H[t_A(B)]$, no matter the form of $A$. As $\H[\L_0(X)]=\H$ for any operator $\H$, the operator that controls any derivation with principal formula $A$ is always $\H[t_A(B)]$, no matter the form of the non-basic formula $A$. 
\section{Cut-elimination for \texorpdfstring{$\RS$}{RSl(X)}}\label{SectCutElimination}
Tipically, an $\RS$-proof will use the $(Cut)$ rule many times. In this section, we show that we can transform $\RS$-proofs of $\Sigma^m$-formulas into $\RS$-proofs without cuts.
\subsection{The rank of a formula}
We want to eliminate cuts from $\RS$-derivations. We introduce a complexity measure for formulas, the rank, by recursion.
\begin{definition}[Rank]$\label{rank}$\index[Symbols]{$\mathrm{rk}(\cdot$)}\index[General]{Rank}We define the rank of a term or formula by recursion.
    \begin{itemize}
        \item $\mathrm{rk}(\overline{u})=\Gamma_{\mathrm{rank}(u)}$,
        \item $\mathrm{rk}\big(\mathbb{L}_\alpha(X)\big)=\Gamma_{\theta+1}+\omega\cdot\alpha$,
        \item $\mathrm{rk}\big([x\in\mathbb{L}_\alpha(X):B(x)]\big)=\mathrm{max}\Big(\Gamma_{\theta+1}+\omega\cdot\alpha+1,\mathrm{rk}\big(B(\overline{\emptyset})\big)+2\Big)$,\\
        \item $\mathrm{rk}(s\in t)=\mathrm{rk}(s\notin t)=\mathrm{max}\big(\mathrm{rk}(s)+6,\mathrm{rk}(t)+1\big)$,
        \item $\mathrm{rk}\big(Ad(t)\big)=\mathrm{rk}\big(\neg Ad(t)\big)=\mathrm{rk}(t)+5$,
        \item $\mathrm{rk}(A\vee B)=\mathrm{rk}(A\wedge B)=\mathrm{max}\big(\mathrm{rk}(A),\mathrm{rk}(B)\big)+1$,
        \item $\mathrm{rk}\big(\exists x\in t\ A(x)\big)=\mathrm{rk}\big(\forall x\in t\ A(x)\big)=\mathrm{max}\Big(\mathrm{rk}(t),\mathrm{rk}\big(A(\overline{\emptyset})\big)+2\Big)$.
    \end{itemize}
\end{definition}

As a useful example, we compute the rank of $s=t$. Since $s=t$ is an abbreviation for $\forall x\in s(x\in t)\wedge\forall x\in t(x\in s)$, we have that 
\begin{equation*}
    \begin{split}
        \mathrm{rk}(s=t) & = \mathrm{max}\Big(\mathrm{rk}\big(\forall x\in s(x\in t)\big),\mathrm{rk}\big(\forall x\in t(x\in s)\big)\Big)+1\\
        & = \mathrm{max}\Big(\mathrm{max}\big(\mathrm{rk}(s),\mathrm{rk}(t)+3\big),\mathrm{max}\big(\mathrm{rk}(t),\mathrm{rk}(s)+3\big)\Big)+1\\
        & = \mathrm{max}\big(\mathrm{rk}(s), \mathrm{rk}(t)\big)+4.
    \end{split}
\end{equation*}
We will prove in Lemma $\ref{rk}$ that, given a formula $A$, the complexity of a characteristic formula $B\in \C(A)$ is always below the complexity of $A$. First, we need the following technical results (proved in \cite{tfm}).
\begin{Lemma}$\label{rklemma}$
    \begin{enumerate}
        \item Let $t$ be any term. Then there is $n<\omega$ such that $\mathrm{rk}(t)=\omega\cdot|t|+n$.
        \item Let $A$ be any formula. Then there is $n<\omega$ such that $\mathrm{rk}(A)=\omega\cdot \mathrm{max}\big(k(A)\big)+n$.
        \item Let $A$ be any formula and $s$ be any term. If $|s|<\mathrm{max}\Big(k\big(A(s)\big)\Big)$ then $\mathrm{rk}\big(A(s)\big)=\mathrm{rk}\big(A(\overline\emptyset)\big)$.
    \end{enumerate}
\end{Lemma}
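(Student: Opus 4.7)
The plan is to prove parts (1) and (2) by a simultaneous induction on the mutual build-up of terms and formulas (terms of the third kind $[x\in\mathbb{L}_\alpha(X):B(x)]$ contain formulas, and formulas contain terms as subterms), and then to deduce (3) by a sharpening of the argument for (2). The main obstacle will be the bookkeeping of the finite constant $n$ across the recursive clauses of Definition~\ref{rank}; in particular, the comprehension-term clause, whose rank is a maximum involving $\mathrm{rk}(B(\overline{\emptyset}))$, must mesh cleanly with the identity $\omega\cdot(\Gamma_{\theta+1}+\alpha)=\Gamma_{\theta+1}+\omega\cdot\alpha$ used for $\mathbb{L}_\alpha(X)$.

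For (1), the base case $t=\overline{u}$ uses that $|\overline{u}|=\Gamma_{\mathrm{rank}(u)}$ is strongly critical, hence additive principal and $>\omega$, so it absorbs left multiplication by $\omega$; this gives $\omega\cdot|\overline{u}|=|\overline{u}|=\mathrm{rk}(\overline{u})$ and $n=0$. For $t=\mathbb{L}_\alpha(X)$, distributivity together with $\omega\cdot\Gamma_{\theta+1}=\Gamma_{\theta+1}$ yields $\omega\cdot|\mathbb{L}_\alpha(X)|=\Gamma_{\theta+1}+\omega\cdot\alpha=\mathrm{rk}(t)$. For a comprehension term $t=[x\in\mathbb{L}_\alpha(X):B(x,\vec{s})]$, the inductive hypothesis for (2) applied to $B(\overline{\emptyset},\vec{s})$ gives $\mathrm{rk}(B(\overline{\emptyset},\vec{s}))=\omega\cdot\mu+n_B$ with $\mu=\max(k(B(\overline{\emptyset},\vec{s})))$. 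The side condition $|s_i|<\Gamma_{\theta+1}+\alpha$ from Definition~\ref{term} forces $\mu\leq\Gamma_{\theta+1}+\alpha=|t|$, so the outer maximum in the definition of $\mathrm{rk}(t)$ simplifies to $\omega\cdot|t|+n$ for a suitable $n<\omega$.

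For (2), I induct on the construction of $A$. Basic formulas are handled by (1) together with the atomic clauses of Definition~\ref{rank}. Each remaining clause (disjunction, conjunction, admissibility predicate, bounded quantifier) defines $\mathrm{rk}(A)$ as a finite shift of the maximum of ranks of immediate subterms and subformulas, each of which by (1) and the inductive hypothesis has the form $\omega\cdot\mu+k$ with $\mu\in k(A)$ and $k<\omega$; the max of such a finite collection is again of this form, with $\omega$-coefficient $\max(k(A))$. For (3), I refine (2) by noting that the finite offset $n$ produced by this argument depends only on the syntactic shape of $A$ viewed as a schema with a term-slot, not on what is plugged in. Under the hypothesis $|s|<\max(k(A(s)))$, some other subterm of $A(s)$ witnesses the maximum, and replacing $s$ by $\overline{\emptyset}$, whose level $\Gamma_0$ is the minimum possible for any term, cannot raise this max; hence $\max(k(A(s)))=\max(k(A(\overline{\emptyset})))$, and combining with the schema-independence of $n$ yields $\mathrm{rk}(A(s))=\mathrm{rk}(A(\overline{\emptyset}))$.
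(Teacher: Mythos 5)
The paper does not actually prove this lemma --- it is cited as ``proved in \cite{tfm}'' --- so there is no in-text argument to compare against. Your overall strategy (simultaneous induction on the mutual build-up of terms and formulas for parts (1) and (2), then (3) as a refinement) is the natural and, as far as I can tell, the intended one, and parts (1) and (2) go through essentially as you describe. Two points need repair, one small and one substantive.

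The small one: in the base case of (1) you justify $\omega\cdot|\overline{u}|=|\overline{u}|$ by saying $\Gamma_{\mathrm{rank}(u)}$ is ``additive principal and $>\omega$.'' That is not sufficient: $\omega^2$ is additive principal and exceeds $\omega$, yet $\omega\cdot\omega^2=\omega^3$. What you actually need is that $\Gamma_{\mathrm{rank}(u)}$, being strongly critical, is an $\varepsilon$-number, so $\Gamma=\omega^{\Gamma}$ and hence $\omega\cdot\Gamma=\omega^{1+\Gamma}=\omega^{\Gamma}=\Gamma$.

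The substantive one concerns (3). Your key claim --- that the finite offset $n$ from part (2) ``depends only on the syntactic shape of $A$ viewed as a schema with a term-slot, not on what is plugged in'' --- is false as a general statement: for $A(x)\equiv x\in t$ the offset is $n_s+6$ when $|s|>|t|$ and $n_t+1$ when $|s|<|t|$, so it very much depends on the substituted term. It only becomes slot-independent \emph{under} the hypothesis $|s|<\max\big(k(A(s))\big)$, and that is precisely what has to be proved; your argument for (2) does not establish it, because the case analysis there never separates the contributions of the slot from those of the other terms. The repair is a structural induction on $A$ showing that every quantity in the rank recursion that depends on $s$ has the form $\mathrm{rk}(s)+m$ or $\omega\cdot|s|+m$ with $m<\omega$, hence is strictly below $\omega\cdot\max\big(k(A(s))\big)$ and is absorbed by the dominant, slot-independent branch of each $\max$. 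Note that you cannot simply apply the induction hypothesis of (3) to subformulas, since the hypothesis $|s|<\max(k(\cdot))$ may fail for them (e.g.\ the subformula $s\in s$ of $(s\in s)\vee(t\in t)$); for those subformulas you must fall back on (2) to bound their rank by $\omega\cdot|s|+m$. Your observation that $\max\big(k(A(s))\big)=\max\big(k(A(\overline{\emptyset}))\big)$ is correct and is the other half of the argument, but on its own it only matches the $\omega$-coefficients, not the offsets.
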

We use this result to prove the following lemma.
\begin{Lemma}
$\label{rk}$
    Let $A$ be any non-basic formula and let $B\in\C(A)$. Then, we have
    \begin{equation*}
        \mathrm{rk}(B)<\mathrm{rk}(A).
    \end{equation*}
\end{Lemma}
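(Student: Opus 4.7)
The plan is to proceed by case analysis on the structure of the non-basic formula $A$, following the definition of $\C(A)$ in Definition $\ref{CA}$. Since an easy induction on the recursive definition of $\mathrm{rk}$ shows $\mathrm{rk}(\neg B) = \mathrm{rk}(B)$ for every formula $B$, and since $\C(A) = \{\neg B : B \in \C(\neg A)\}$ whenever $A$ has $\bigwedge$-type, it suffices to treat the four kinds of non-basic $\bigvee$-type formula: $A \equiv A_0 \vee A_1$, $A \equiv Ad(t)$, $A \equiv r \in t$, and $A \equiv \exists x \in t\, A_0(x)$.

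The disjunction case is immediate from $\mathrm{rk}(A_0 \vee A_1) = \max(\mathrm{rk}(A_0), \mathrm{rk}(A_1)) + 1$. For $A \equiv Ad(t)$ with $B \equiv (t = \mathbb{L}_{\Omega_n}(X))$ and $\Omega_n \leq |t|$, I would first compute $\mathrm{rk}(\mathbb{L}_{\Omega_n}(X)) = \Gamma_{\theta+1} + \omega \cdot \Omega_n = \Omega_n$, using that $\Omega_n$ is additively principal and absorbs multiplication by $\omega$ (both consequences of $X$-admissibility). Then Lemma $\ref{rklemma}.1$ gives $\mathrm{rk}(t) \geq \omega \cdot |t| \geq \Omega_n = \mathrm{rk}(\mathbb{L}_{\Omega_n}(X))$, so $\mathrm{rk}(B) = \max(\mathrm{rk}(t), \Omega_n) + 4 = \mathrm{rk}(t) + 4 < \mathrm{rk}(t) + 5 = \mathrm{rk}(A)$.

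For the remaining two cases $A \equiv r \in t$ and $A \equiv \exists x \in t\, A_0(x)$, I would split on the form of $t$, since the $\dot\in$ abbreviation unfolds differently in each (see Example $\ref{exC}$). When $t = \mathbb{L}_\alpha(X)$, the characteristic $B$ reduces to $r = s$ (respectively $A_0(s)$), and the bound $\mathrm{rk}(s) + c < \mathrm{rk}(t)$ for any finite $c$ follows from $|s| < |t|$ and Lemma $\ref{rklemma}.1$, using that $\mathrm{rk}(t) \geq \Gamma_{\theta+1} + \omega \alpha + 1$ dominates $\omega \cdot |s| + \omega$. When $t = [x \in \mathbb{L}_\alpha(X) : F(x)]$, $B$ picks up an extra conjunct $F(s)$, and the crucial estimate is $\mathrm{rk}(F(s)) < \mathrm{rk}(t)$, which I would prove by bifurcating on whether $|s| < \max k(F(s))$: in the affirmative sub-case Lemma $\ref{rklemma}.3$ yields $\mathrm{rk}(F(s)) = \mathrm{rk}(F(\overline{\emptyset})) < \mathrm{rk}(F(\overline{\emptyset})) + 2 \leq \mathrm{rk}(t)$; otherwise $|s|$ itself attains the maximum term-level in $F(s)$, so by Lemma $\ref{rklemma}.2$ we get $\mathrm{rk}(F(s)) = \omega \cdot |s| + n < \Gamma_{\theta+1} + \omega \alpha + 1 \leq \mathrm{rk}(t)$. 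The existential case is entirely analogous, with $A_0(s)$ playing the role of $r = s$ and the same bifurcation applied once more, this time comparing $\mathrm{rk}(A_0(s))$ against $\mathrm{rk}(A_0(\overline{\emptyset})) + 2 \leq \mathrm{rk}(A)$.

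The main obstacle I anticipate is precisely the separator-term sub-case $t = [x \in \mathbb{L}_\alpha(X) : F(x)]$: the characteristic formula inherits a whole sub-formula $F(s)$ whose rank must be dominated by $\mathrm{rk}(t)$, and the otherwise opaque constants in Definition $\ref{rank}$ (the offset $+2$ in the rank of separator terms, and the $+4$, $+6$ in the ranks of equalities and memberships) are calibrated exactly so that the required inequalities go through. The Lemma $\ref{rklemma}.3$ bifurcation, according to whether substituting $s$ raises the maximum term-level inside $F$, is the one genuinely non-cosmetic piece of bookkeeping.
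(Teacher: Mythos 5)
Your proposal is correct and takes essentially the same approach as the paper's proof: a case split on the form of the $\bigvee$-type formula $A$ (with the $\bigwedge$-type case reduced via invariance of $\mathrm{rk}$ under negation), a further split on the form of the term $t$, and the same bifurcation on whether $|s|<\max\big(k(F(s))\big)$ — using Lemma~\ref{rklemma}.3 in one branch and Lemma~\ref{rklemma}.2 in the other — to establish the key estimate $\mathrm{rk}\big(F(s)\big)<\mathrm{rk}(t)$ in the separation-term subcase. The only difference is that you also treat the $Ad(t)$ case explicitly (and correctly, via $\mathrm{rk}\big(\mathbb{L}_{\Omega_n}(X)\big)=\Omega_n\leq\mathrm{rk}(t)$), which the paper leaves implicit, while omitting the routine subcase $t\equiv\overline{u}$ that the paper spells out.
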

\begin{proof}
    We suppose that $A$ has $\bigvee$-type and we proceed by induction on the construction of $A$.\\
    \ \\
    Case 1. We suppose $A\equiv s\in t$. We split the cases depending on the form of $t$.\\
    Subcase 1.1. We assume $t\equiv\overline{u}$. Then $s$ is not basic since $A$ is not basic by assumption. Therefore, $\mathrm{rk}(A)=\mathrm{rk}(s)+6$ and if $B\in\C(A)$ then $B$ is of the form $r\in\overline{u}\wedge r=s$ for some $r$ with $|r|<|\overline{u}|$. It follows from this last condition that $r$ is basic, say $r\equiv\overline{v}$, and then
    \begin{equation*}
        \begin{split}
            \mathrm{rk}(B) & = \mathrm{max}\big(\mathrm{rk}(\overline{v}\in\overline{u}),\mathrm{rk}(\overline{v}=s)\big)+1\\
            & =\mathrm{rk}(\overline{v}=s)+1\\
            & =\mathrm{rk}(s)+5\\
            & <\mathrm{rk}(s)+6=\mathrm{rk}(A). 
        \end{split}
    \end{equation*}
    Subcase 1.2. We assume $t\equiv\La$. Then $\mathrm{rk}(A)=\mathrm{max}\big(\mathrm{rk}(s)+6,\mathrm{rk}(t)+1\big)$. If $B\in\C(A)$, then $B$ is of the form $r=s$ for some $r$ with $|r|<|t|$. Therefore, 
        \begin{equation*}
            \mathrm{rk}(B) = \mathrm{max}\big(\mathrm{rk}(r),\mathrm{rk}(s)\big)+4< \mathrm{max}\big(\mathrm{rk}(t)+1,\mathrm{rk}(s)+6\big)=\mathrm{rk}(A).
        \end{equation*}
    Subcase 1.3. We assume $t\equiv [x\in\mathbb{L}_\alpha(X):F(x)]$. Then $\mathrm{rk}(A)=\mathrm{max}\big(\mathrm{rk}(s)+6,\mathrm{rk}(t)+1\big)$. If $B\in\C(A)$, then $B$ is of the form $F(r)\wedge s=r$ for some $r$ with $|r|<|t|$. Therefore,
        \begin{equation*}
        \begin{split}
            \mathrm{rk}(B) & = \mathrm{max}\Big(\mathrm{rk}\big(F(r)\big),\mathrm{rk}(s=r)\Big)+1\\
            & = \mathrm{max}\Big(\mathrm{rk}\big(F(r)\big),\mathrm{rk}(s)+4,\mathrm{rk}(r)+4\Big)+1.
        \end{split}
    \end{equation*}
    But $\mathrm{rk}(s)+5,\mathrm{rk}(r)+5<\mathrm{max}\big(\mathrm{rk}(s)+6,\mathrm{rk}(t)+1\big)=\mathrm{rk}(A)$. Also, $\mathrm{rk}\big(F(r)\big)<\mathrm{rk}(t)$. Indeed, if $\mathrm{max}\Big(k\big(F(r)\big)\Big)\leq |s|$, then $\mathrm{rk}\big(F(r)\big)+1<\omega\cdot|s|+\omega\leq \mathrm{rk}(t)$ by Lemma $\ref{rklemma}.2$; if $|s|<\mathrm{max}\Big(k\big(F(r)\big)\Big)$, then by Lemma $\ref{rklemma}.3$ we have
    \begin{equation*}
    \mathrm{rk}\big(F(r)\big)+1=\mathrm{rk}\big(F(\overline{\emptyset})\big)+1<\mathrm{max}\Big(\Gamma_{\theta+1}+\omega\cdot\alpha+1,\mathrm{rk}\big(F(\overline{\emptyset})\big)+2\Big)=\mathrm{rk}(t).
    \end{equation*}
    Hence, from $\mathrm{rk}\big(F(r)\big)<\mathrm{rk}(t)$ we get $\mathrm{rk}\big(F(r)\big)+1<\mathrm{rk}(A)$. Gathering everything, we obtain 
    \begin{equation*}
        \mathrm{rk}(B)=\mathrm{max}\Big(\mathrm{rk}\big(F(r)\big),\mathrm{rk}(s)+4,\mathrm{rk}(r)+4\Big)+1<\mathrm{rk}(A).
    \end{equation*}
    Case 2. We suppose $A\equiv B\vee C$. Then $\mathrm{rk}(A)=\mathrm{max}\big(\mathrm{rk}(B),\mathrm{rk}(C)\big)+1$ and so $\mathrm{rk}(B),\mathrm{rk}(C)<\mathrm{rk}(A)$.\\
    \ \\
    Case 3. We suppose $A\equiv\exists x\in t\ F(x)$. We omit the details, but, as in Case 1, subcases are splitted based on the form of $t$.\\ 
    \ \\
    An analogous proof shows that the result holds if $A$ has $\bigwedge$-type.
\end{proof}

We will write the complexity of the formulas that have been removed by an application of the $(Cut)$ rule in some derivation as a subscript, as follows. 
\begin{definition}\index[Symbols]{$\H\dash{\rho}{\alpha}\Gamma$}
    We will write $\H\dash{\rho}\alpha\Gamma$ whenever $\H\dash{}\alpha\Gamma$ and all the active formulas of the premise of an inference using $(Cut)$ occurring in the derivation have rank strictly less than $\rho$. In this case, we will say that the cut complexity of the derivation is bounded by $\rho$.
\end{definition}
\subsection{Predicative results of \texorpdfstring{$\RS$}{RSl(X)}}
Our objective is to eliminate cuts from derivations. First, we show some immediate results about operator controlled derivations that will be useful to prove the cut-elimination lemmas.\\
Henceforth, we will use $y$ as the set of indices of the possibly many premises used to derive some conclusion. The next lemma states that an operator appearing in the conclusion of an inference is always contained in the operator(s) appearing in the premise(s).

\begin{Lemma}
    Let $\H$ be any operator. We suppose that $\H\dash{\rho}{\alpha} \Gamma$ can be obtained by the application of an inference rule with premises $\H_i\dash{\rho_i}{\alpha_i}\Gamma_i$ with $\alpha_i<\alpha$ and $\H_i$ operators, for $i\in y$. Then, $\H\subseteq \H_i$.
\end{Lemma}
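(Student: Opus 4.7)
The plan is to proceed by inspection of the rules of $\RS$ given in Definition $\ref{rules}$, noting in each case how the operator in each premise is obtained from the operator $\H$ appearing in the conclusion. A glance at the rules shows that the operator controlling each premise is either $\H$ itself (as in $(\wedge)$, $(\vee)$, $(\in)$, $(b\exists)$, $(Ad)$, $({\sf Ref}_n(X))$, $(Cut)$) or is of the form $\H[s]$ for some specific term $s$ depending on the premise (as in $(\notin)$, $(b\forall)$, $(\neg Ad)$). So the claim reduces to showing $\H \subseteq \H[s]$ for any term $s$, which is essentially the statement that operators are monotone.

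First I would establish the general monotonicity property: if $Y \subseteq Y'$, then $\H(Y) \subseteq \H(Y')$. This follows directly from the operator axioms: by axiom 4, $Y' \subseteq \H(Y')$, hence $Y \subseteq Y' \subseteq \H(Y')$, and then axiom 5 gives $\H(Y) \subseteq \H(Y')$. Applying this to the inclusion $\emptyset \subseteq \{|s|\}$ (or, more generally, $Y \subseteq Y \cup \{|s|\}$) yields $\H(Y) \subseteq \H(Y \cup \{|s|\}) = \H[s](Y)$ for every $Y$, and in particular $\H = \H(\emptyset) \subseteq \H[s](\emptyset) = \H[s]$ under the convention that $\H$ abbreviates $\H(\emptyset)$.

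Having this in hand, I would then do a brief case analysis on the last rule applied. If the rule is one whose premises are controlled by the same $\H$, the conclusion $\H \subseteq \H_i$ is immediate. If the rule is $(\notin)$, $(b\forall)$, or $(\neg Ad)$, then $\H_i$ takes the form $\H[s]$ for some $s$ (in the case of $(\neg Ad)$, $s$ ranges over characteristic terms of the form $\mathbb{L}_{\Omega_n}(X)$), and the previous paragraph gives $\H \subseteq \H[s] = \H_i$. Uniformly, one can observe that for every rule $(R)$ with principal formula $A$ and $B \in \C(A)$, the operator controlling the premise with active formula $B$ is $\H[t_A(B)]$, and since $t_A(B)$ is a term, $\H \subseteq \H[t_A(B)]$ by monotonicity.

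There is essentially no obstacle here: the lemma is a bookkeeping result, and the only non-trivial content is the derivation of monotonicity from the operator axioms, which is entirely routine. The value of the lemma lies not in its proof but in its subsequent use in the cut-elimination arguments of Section~\ref{SectCutElimination}, where one needs to propagate membership of parameters through the operator hierarchy.
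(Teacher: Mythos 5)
Your proposal is correct and follows essentially the same route as the paper: a case analysis on the last rule, reducing to the observation that the premise operator is either $\H$ itself or of the form $\H[t_A(B)]\supseteq\H$. The only difference is that you explicitly derive the monotonicity $\H\subseteq\H[s]$ from operator axioms 4 and 5, a step the paper leaves implicit; your derivation of it is correct.
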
 
\begin{proof}
    First, we suppose that the principal formula is $A\in\Gamma$. If $A$ has $\bigvee$-type, then there is only one premise, with control operator $\H_i=\H$. If $A$ has $\bigwedge$-type, then $\H_i=\H[t_A(B)]\supseteq \H$ for some $B\in\C(A)$.\\
Now, if $\H\dash\rho\alpha\Gamma$ has been obtained by (Cut) then $\H_i=\H$ in both premises. If the derivation has been obtain by applying (${\sf Ref}_{n}(X)$) for some $n<\omega$ then $\H_i=\H$ in the unique premise.
\end{proof}
Now, we prove a version of the weakening principle.
\begin{Lemma}
$\label{weak}$
    Let $\H$ be an operator. Let $\alpha,\alpha',\rho$ and $\rho'$ be ordinals. Let $\Delta$ and $\Gamma$ be finite sets of formulas. If $\alpha\leq\alpha'\in\H$, $\rho\leq\rho'$, $k(\Delta)\subseteq\H$ and $\H\dash\rho\alpha\Gamma$ then $\H\dash{\rho'}{\alpha'}\Gamma,\Delta$.
\end{Lemma}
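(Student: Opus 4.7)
The plan is to proceed by straightforward induction on the derivation $\H\dash{\rho}{\alpha}\Gamma$. First I would check that the side conditions for the claimed conclusion hold: we need $\alpha'\in\H$, which is given, and $k(\Gamma\cup\Delta)\subseteq\H$, which follows from the assumption $k(\Delta)\subseteq\H$ together with $k(\Gamma)\subseteq\H$ (the latter coming from the original derivation). These verifications will be needed at every step, but they never fail since they depend only on the static data $\alpha',\rho',\Gamma,\Delta$.

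For the base case, if $\H\dash{\rho}{\alpha}\Gamma$ is an axiom, then $\Gamma$ contains some basic literal $\overline u\in\overline v$ (with $u\in v$) or $\overline u\notin\overline v$ (with $u\notin v$); the same literal is in $\Gamma,\Delta$, so $\H\dash{\rho'}{\alpha'}\Gamma,\Delta$ is the same axiom. For the induction step I would perform a uniform case analysis on the last rule applied. For any rule $(R)$ deriving $\H\dash{\rho}{\alpha}\Gamma$ from premises $\H_i\dash{\rho}{\alpha_i}\Gamma_i$ with $\alpha_i<\alpha$ (where $\H_i$ is either $\H$ or $\H[s]$ depending on the rule), the previous lemma gives $\H\subseteq\H_i$, so in particular $k(\Delta)\subseteq\H_i$. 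The induction hypothesis then yields $\H_i\dash{\rho'}{\alpha_i}\Gamma_i,\Delta$. Since $\alpha_i<\alpha\leq\alpha'$, the ordinal side conditions of $(R)$ are preserved with $\alpha$ replaced by $\alpha'$ (the conditions $|s|<\Gamma_{\theta+1}+\alpha$ in rules like $(\in)$ and $(b\exists)$ are monotone in $\alpha$, as are the conditions $\alpha_0,\Omega_n<\alpha$ in $({\sf Ref}_n(X))$), and reapplying $(R)$ produces $\H\dash{\rho'}{\alpha'}\Gamma,\Delta$ as required.

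The only rule that involves $\rho$ itself is $(Cut)$: there the active cut-formula $A$ has $\mathrm{rk}(A)<\rho\leq\rho'$, so the complexity bound is preserved. No case is an actual obstacle; what I would be careful about is the bookkeeping enabled by the paper's convention of repeating the principal formula in the premises of each rule. This is precisely what makes weakening go through transparently: since the principal formula already appears in the premise, adding $\Delta$ to every sequent in the derivation tree does not interfere with the characteristic-term conditions or with any step's recognition of its principal formula, so the entire derivation reconfigures to a derivation of $\Gamma,\Delta$ by mere decoration with $\Delta$ and relabeling of the top ordinal $\alpha$ to $\alpha'$ and the cut bound $\rho$ to $\rho'$.
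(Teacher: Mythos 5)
Your proposal is correct and follows essentially the same route as the paper: induction on $\alpha$, the axiom base case, and reapplication of the last rule after invoking the induction hypothesis on the premises (using $\H\subseteq\H_i$ to keep $k(\Delta)\subseteq\H_i$), with the observation that only $(Cut)$ interacts with $\rho$ and that $\mathrm{rk}(A)<\rho\leq\rho'$ preserves the cut bound. The paper additionally splits off the subcase where the active or cut formula already lies in $\Delta$, but since the side set $\Gamma$ in each rule is arbitrary this is only bookkeeping and your uniform treatment covers it.
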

\begin{proof}
    We proceed by induction on $\alpha$. If $\Gamma$ is an axiom, then $\Gamma,\Delta$ is an axiom too and, since $\{\alpha'\}\cup k(\Gamma\cup\Delta)\subseteq \H$ and an axiom has no cuts (which means that the complexity of the cut formulas is bounded by any ordinal), we have that $\H\dash{\rho'}{\alpha'}\Gamma,\Delta$.\\
    We suppose that $\Gamma$ has been derived by the application of a rule $(R)$ and consider cases based on $(R)$.\\
\ \\
    Case 1. We assume that $(R)$ is $(Cut)$. Then we have $\H\dash{\rho}{\alpha}\Gamma$ and the premises are
    \begin{equation}\label{A}
        \H\dash{\rho}{\alpha_0}\Gamma, A\text{, and}
    \end{equation}
    \begin{equation}\label{noA}
        \H\dash{\rho}{\alpha_0}\Gamma,\neg A
    \end{equation}
    with $\alpha_0<\alpha<\alpha'$ for some formula $A$ with $\mathrm{rank}(A)<\rho$.\\
    First, we assume that $A$ or $\neg A$ are in $\Delta$. Then, we use the induction hypothesis on $\eqref{A}$ or $\eqref{noA}$ respectively to obtain
    \begin{equation*}
        \H\dash{\rho'}{\alpha'}\Gamma,\Delta.
    \end{equation*}
    We assume now that $A$ and $\neg A$ are not in $\Delta$.
    Then, since $\alpha_0<\alpha$, by the induction hypothesis, we have $\H\dash{\rho'}{\alpha}\Gamma,\Delta, A$ and $\H\dash{\rho'}{\alpha}\Gamma,\Delta,\neg A$.
    We apply $(Cut)$, observing that the cut complexity of the derivation does not increase since $\mathrm{rank}(A)<\rho<\rho'$, and we obtain $\H\dash{\rho'}{\alpha'}\Gamma, \Delta,\neg A$.\\
    \ \\
    Case 2. Suppose that $A\in\Gamma$ is the principal formula of the last derivation obtained by an application of the rule $(R)$ and $A$ has $\bigvee$-type. Then, we have $\H\dash{\rho}{\alpha_0}\Gamma, A, B$ for some $B\in \C(A)$ (we write $\Gamma, A$, which is the same as $\Gamma$, to make explicit the principal formula $A$). If $B\in\Delta$, by the induction hypothesis, we get
    \begin{equation*}
        \H\dash{\rho'}{\alpha'}\Gamma, \Delta, A.
    \end{equation*}
     We assume that $B\notin \Delta$. Then, the induction hypothesis gives
     \begin{equation*}
         \H\dash{\rho'}{\alpha}\Gamma, \Delta, A,B.
     \end{equation*}
     We use the rule $(R)$ to obtain
    \begin{equation*}
        \H\dash{\rho'}{\alpha'}\Gamma,\Delta,A.
    \end{equation*}

    The cases where the principal formula of the last derivation has $\bigwedge$-type and the case where the last rule is some $({\sf Ref_n})$ are analogous.
\end{proof}
$\setcounter{equation}{0}$
Now that we have proved that the Weakening rule is admissible in $\RS$, we will drop the repetition of the principal formula of a rule in the premises to simplify proofs following the next reading convention.
\begin{readingconvention}$\label{read2}$
    We suppose that we have the following inference with principal formula $A$ by applying the rule $(R)$.
    \begin{equation*}
        \begin{prooftree}
            \hypo{\H\dash{}{\alpha_i}\Gamma_i, A}
            \infer[left label = $(R)$]1{\H\dash{}{\alpha}\Gamma, A}
        \end{prooftree}
    \end{equation*}
    We will often omit the repeated formula $A$ in the premises and, instead, write
    \begin{equation*}
        \begin{prooftree}
            \hypo{\H\dash{}{\alpha_i}\Gamma_i}
            \infer[left label = $(R)$]1{\H\dash{}{\alpha}\Gamma, A}
        \end{prooftree}
    \end{equation*}
\end{readingconvention}

We can prove some kind of Inversion for $\bigwedge$-type formulas.
\begin{Lemma}[Inversion]
$\label{inversion}$
    Let $\H$ be any operator. Let $A$ be a $\bigwedge$-type formula and let $\Gamma$ be a finite set of formulas. Let $\alpha$ and $\rho$ be ordinals. If $\H\dash\rho\alpha\Gamma, A$ then $\H[t_A(B)]\dash\rho{\alpha}\Gamma, B$ for every $B\in\C(A)$.
\end{Lemma}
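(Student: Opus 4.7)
The plan is to prove this by transfinite induction on the derivation height $\alpha$, with case analysis on the last inference used to derive $\H\dash\rho\alpha\Gamma, A$, mirroring the standard Inversion argument for operator-controlled infinitary systems (as in Cook–Rathjen).

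For the base case, suppose $\Gamma, A$ is an axiom. Since $A$ is of $\bigwedge$-type, $A$ is non-basic, so the basic formula witnessing the axiom must already lie in $\Gamma$. Hence $\Gamma, B$ is also an axiom, and the control condition $\{\alpha\}\cup k(\Gamma\cup\{B\})\subseteq \H[t_A(B)]$ holds because $\H\subseteq\H[t_A(B)]$ and $k(B)\subseteq k(A)\cup\{|t_A(B)|\}\subseteq \H[t_A(B)]$.

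For the inductive step, I would split into three situations. First, when $A$ itself is the principal formula of the last rule: then since $A$ is of $\bigwedge$-type, the inference has the form supplying a premise $\H[t_A(B)]\dash\rho{\alpha_B}\Gamma, B$ for each $B\in\C(A)$ (with the conventions that $t_A(B)=\mathbb L_0(X)$ if $A$ is a conjunction, in which case $\H[t_A(B)]=\H$ because $|\mathbb L_0(X)|=\Gamma_{\theta+1}\in\H$; and $t_A(B)$ is the characteristic term of $B$ for $A$ of the forms $\forall x\in t\, G(x)$, $\neg Ad(t)$, or non-basic $s\notin t$). The desired conclusion follows directly, possibly after applying Weakening (Lemma~\ref{weak}) to promote $\alpha_B$ to $\alpha$. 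Second, when the last rule has a principal formula $C\in\Gamma$ distinct from $A$: $A$ still occurs in every premise, so apply the induction hypothesis to each premise to replace $A$ by $B$, and then reapply the rule. The operator bookkeeping here works because $\H_i[t_A(B)]$ is the correct premise operator when the conclusion operator is $\H[t_A(B)]$; this uses that for any $Y,Y'$, $\H[Y][Y'](Z) = \H[Y'][Y](Z)$, which follows immediately from property~5 of operators and $\H(W)=\H(\H(W))$. Third, for $(Cut)$ and $({\sf Ref}_n(X))$: apply the induction hypothesis to each premise and reapply the rule, noting that the cut rank is preserved since no new cuts are introduced and the characteristic term added to the operator does not occur in any cut formula.

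The step I expect to require the most care is the bookkeeping in the second case, ensuring that after invoking the induction hypothesis on each premise, the rule $(R)$ with principal formula $C$ can still be applied coherently, i.e.\ that the new premise operators have the form required by $(R)$ for the new conclusion operator $\H[t_A(B)]$. This reduces to commutativity of operator extensions, which is immediate from the definition, but the verification must be done separately for the $\bigvee$-type rules (where premise and conclusion share the operator) and the $\bigwedge$-type rules for $C$ (where the premise operator extends the conclusion operator by $t_C(D)$, and one needs $\H[t_A(B)][t_C(D)] = \H[t_C(D)][t_A(B)]$). Every other calculation is routine.
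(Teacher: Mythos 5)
Your proposal follows essentially the same route as the paper's proof: induction on $\alpha$, the same base-case observation that $\Gamma$ itself must be an axiom since $A$ is non-basic, the same treatment of the non-principal case, and the same operator bookkeeping via $\H[Y][Y']=\H[Y'][Y]$ and idempotence. The one detail you gloss over is in the case where $A$ is principal: because the calculus repeats the principal formula in the premises, the premise there is $\H[t_A(B)]\dash\rho{\alpha_B}\Gamma, A, B$ rather than $\Gamma, B$, so the conclusion does not follow ``directly''; you must apply the induction hypothesis once more to that premise to get $\H[t_A(B)][t_A(B)]\dash\rho{\alpha_B}\Gamma, B, B$, then use $\H[t_A(B)][t_A(B)]=\H[t_A(B)]$ and Weakening to reach $\H[t_A(B)]\dash\rho{\alpha}\Gamma, B$, exactly as the paper does.
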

\begin{proof}
    We proceed by induction on $\alpha$. If $\Gamma, A$ is an axiom, then $\Gamma$ is an axiom since $A$ is non-basic. Therefore, $\Gamma, B$ is an axiom for each $B\in\C(A)$ and so $\H[t_B(A)]\dash\rho\alpha \Gamma, B$.\\
    We suppose that $\Gamma,A$ has been obtained by the application of a rule with $A$ not principal. Then, we can apply the induction hypothesis to the premises of this inference and use the rule again to obtain the result.\\
    We suppose that $\Gamma,A$ has been obtain by the application of a rule $(R)$ with principal formula $A$. This means that, for each $B\in\C(A)$, we have 
    \begin{equation*}
        \H[t_A(B)]\dash\rho{\alpha_B} \Gamma,A, B
    \end{equation*}
    with $\alpha_B<\alpha$. By the induction hypothesis, for every $B\in\C(A)$ we have
    \begin{equation}\label{BB}
        \H[t_A(B)][t_A(B)]\dash\rho{\alpha_B}\Gamma, B, B.
    \end{equation}
    But $\H[t_A(B)][t_A(B)]=\H[t_A(B)]$ since $t_A(B)\in\H[t_A(B)]$, and so $\eqref{BB}$ is exactly
    \begin{equation*}
        \H[t_A(B)]\dash\rho{\alpha_B}\Gamma, B.
    \end{equation*}
    By Lemma $\ref{weak}$, we obtain
    \begin{equation*}
        \H[t_A(B)]\dash\rho\alpha \Gamma, B
    \end{equation*}
    for every $B\in\C(A)$.
\end{proof}
$\setcounter{equation}{0}$
We are now ready to start the proof of the Predicative Cut Elimination Theorem. To prepare for the proof, we first show the Reduction Lemma. This result will only be used to simplify the proof of the upcoming Predicative Cut Elimination Theorem and will not appear further in the paper.
\begin{Lemma}[Reduction]
$\label{reduction}$
    Let $\H$ be any operator. Let $\alpha$ be an ordinal. Let $\Gamma$ and $\Delta$ be finite sets of formulas. Let $A$ be a formula with $\mathrm{rk}(A)=\rho\notin \{\Omega_{n}:n<\omega\}$. If both $\H\dash\rho\alpha\Gamma, \neg A$ and $\H\dash\rho\beta \Delta, A$ hold, then $\H\dash\rho{\alpha+\beta} \Gamma,\Delta$ also holds.    
\end{Lemma}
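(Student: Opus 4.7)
The plan is to prove the lemma by induction on $\beta$, reducing at each step to a $(Cut)$ of rank strictly smaller than $\rho$ — which is legitimized by Lemma $\ref{rk}$ — and making essential use of Inversion (Lemma $\ref{inversion}$) and Weakening (Lemma $\ref{weak}$).

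I begin with the two easy cases. If $\Delta, A$ is an axiom whose witnessing basic formula already lies in $\Delta$, then $\Gamma, \Delta$ is itself an axiom and we are done; otherwise $A$ itself is the witness, so $A$ is basic (say $A \equiv \overline{u} \in \overline{v}$ with $u \in v$, or the $\notin$ variant) and $\neg A$ is a false basic formula with no introduction rule. A routine subinduction on the given derivation of $\H \dash{\rho}{\alpha} \Gamma, \neg A$ shows that $\neg A$ is passive throughout and that no encountered axiom is witnessed by $\neg A$ (its meta-level side condition fails), so $\neg A$ may be replaced by $\Delta$ sequent-by-sequent, yielding $\H \dash{\rho}{\alpha} \Gamma, \Delta$; Weakening completes this case. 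If instead $\H \dash{\rho}{\beta} \Delta, A$ was obtained by an inference rule in which $A$ is not principal, I apply the induction hypothesis to each premise — weakening $\H \dash{\rho}{\alpha} \Gamma, \neg A$ to the possibly larger controlling operator $\H'$ of the premise, which is permitted by the prior lemma giving $\H \subseteq \H'$ — and then reapply the rule.

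The central case is when $A$ is principal. I may assume without loss of generality that $A$ has $\bigvee$-type; the dual case is symmetric, exchanging the roles of $A$ and $\neg A$. The hypothesis $\rho \notin \{\Omega_n : n < \omega\}$ excludes $A$ arising from $({\sf Ref}_n(X))$, so the last inference is a standard $\bigvee$-introduction with some premise $\H \dash{\rho}{\beta_0} \Delta, A, B$ where $B \in \C(A)$ and $\beta_0 < \beta$. The induction hypothesis yields $\H \dash{\rho}{\alpha + \beta_0} \Gamma, \Delta, B$, while Inversion applied to $\H \dash{\rho}{\alpha} \Gamma, \neg A$ gives $\H[t_{\neg A}(\neg B)] \dash{\rho}{\alpha} \Gamma, \neg B$. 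Because $B$ occurs in a derivation controlled by $\H$, its characteristic term satisfies $|t_{\neg A}(\neg B)| \in k(B) \subseteq \H$, so the operator is in fact unchanged. Weakening the side formulas and then applying $(Cut)$ on $B$ is legitimate since Lemma $\ref{rk}$ gives $\mathrm{rk}(B) < \mathrm{rk}(A) = \rho$; the result is $\H \dash{\rho}{\alpha + \beta_0 + 1} \Gamma, \Delta$, and since $\beta_0 + 1 \leq \beta$ a final application of Weakening produces the required $\H \dash{\rho}{\alpha + \beta} \Gamma, \Delta$.

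The main obstacle is the careful operator-bookkeeping after Inversion: one must verify that the characteristic term $t_{\neg A}(\neg B)$ does not truly enlarge the operator, which relies on $B$ being an active formula in the original derivation of $\Delta, A$. A secondary subtlety is the basic-axiom subcase, where the usual trick of cutting against $A$ is unavailable because $\mathrm{rk}(A) = \rho$, forcing a direct argument that false basic formulas can be stripped from their derivations.
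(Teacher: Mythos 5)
Your proposal is correct and follows essentially the same route as the paper's proof: induction on $\beta$ with a case split on whether $A$ is principal, a preliminary claim that false basic literals can be stripped from derivations, and in the principal case the combination of the induction hypothesis, Inversion, the observation that the characteristic term is already in $\H$, and a cut on $B\in\C(A)$ justified by $\mathrm{rk}(B)<\mathrm{rk}(A)=\rho$. The only cosmetic difference is that the paper disposes of the case where $A$ is a literal before starting the induction on $\beta$, whereas you fold it into the axiom base case; both work.
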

\begin{proof}
    We will consider cases based on the form of $A$. Before considering the case where $A$ is a basic formula, we show the following claim:
    \begin{claim}$\label{claim 1}$
    \begin{enumerate}
        \item If $u\in v$ is true in $TC(\{X\})$ and $\H\dash\rho\alpha \Gamma,\neg \overline u\in\overline v$ then $\H\dash\rho\alpha\Gamma$.
        \item If $u\notin v$ is true in $TC(\{X\})$ and $\H\dash\rho\alpha\Gamma,\overline{u}\in\overline{v}$, then $\H\dash\rho\alpha\Gamma$.
    \end{enumerate}
    \end{claim}
    \renewcommand\qedsymbol{$\blacksquare$}
    \begin{proof}
        We prove Item 1. of Claim $\ref{claim 1}$ by induction on $\alpha$. If $\Gamma,\neg \overline u\in \overline v$ is an axiom, then $\Gamma$ is an axiom since $u\notin v$ is false in $TC(\{X\})$, and so $\H\dash\rho\alpha \Gamma$ holds.\\
        We suppose that $\Gamma,\neg \overline u\in \overline v$ was derived by an application of a rule $(R)$. Then $\neg \overline u\in \overline v$ is a passive formula since it is basic. Therefore, we have (possibly many) premises of the form $\H_i\dash\rho{\alpha_i}\Gamma_i,\neg \overline u\in \overline v$ with $\alpha_i<\alpha$ for every $i\in y$. By the induction hypothesis, we obtain $\H_i\dash\rho{\alpha_i}\Gamma_i$, with $\alpha_i<\alpha$, for every $i\in y$. Finally, we apply the rule $(R)$ to get $\H\dash\rho\alpha\Gamma$.\\
    The proof of Item $2.$ is analogous, and so Claim $\ref{claim 1}$ is shown.        
    \end{proof}
    \renewcommand\qedsymbol{$\square$}
    We now start the proof of the Reduction Lemma.\\
    We suppose that $A$ is a literal. Without loss of generality, we can suppose that $A\equiv \overline u\in\overline v$. Then, by Claim $\ref{claim 1}$, we have either $\H\dash\rho\alpha\Gamma$ or $\H\dash\rho\beta\Delta$ depending on whether $u\in v$ holds in $TC(\{X\})$ or not. Therefore, by Weakening (Lemma $\ref{weak}$) we obtain $\H\dash\rho{\alpha+\beta}\Gamma,\Delta$.\\
    \ \\
    We suppose now that $A$ has $\bigvee$-type. We have
    \begin{equation}\label{VA}
        \H\dash\rho\alpha\Gamma,\neg A,
    \end{equation}
    \begin{equation}\label{VnoA}
        \H\dash\rho\beta\Delta, A.
    \end{equation}
    We proceed by induction on $\beta$. If $\Delta, A$ is an axiom, then $\Delta$ is an axiom since $A$ is non-basic and so $\Gamma,\Delta$ is also an axiom, showing that $\H\dash\rho{\alpha+\beta} \Gamma,\Delta$ holds.\\
    \ \\
    Now, we assume that $\Delta, A$ has been obtained by the application of a rule $(R)$. We suppose that $A$ is not the principal formula in this last derivation. Then, we can apply the induction hypothesis to the premises and apply again the rule $(R)$ to obtain the result.\\
    We suppose now that $A$ is the principal formula of the last derivation. Without loss of generality, we can assume that $A$ has $\bigvee$-type. We notice that $(R)$ cannot be any $({\sf Ref}_{n})$ rule since, if it were, we would have $\mathrm{rk}(A)=\Omega_{n}$, against the hypothesis. Therefore, the rule $(R)$ is either $(\vee)$, $(\in)$, $(b\exists)$ or $(Ad)$, and we have the only premise 
    \begin{equation}\label{prem}
       \H\dash\rho{\beta_0}\Delta, A,B,
    \end{equation}
    with $\beta_0<\beta$. We apply the induction hypothesis to $\eqref{VA}$ and $\eqref{prem}$ to obtain
     \begin{equation}\label{noB}
         \H\dash\rho{\alpha+\beta_0}\Gamma,\Delta, B.
     \end{equation}
     On the other hand, from $\eqref{VA}$, we use Lemma $\ref{inversion}$ (Inversion) and we get 
     \begin{equation}\label{invnoB}
         \H[t_A(B)]\dash\rho\alpha\Gamma,\neg B.
     \end{equation}
     But $t_A(B)\in\H$ by $\eqref{noB}$, and so $\H[t_A(B)]=\H$. Thus, Lemma $\ref{weak}$ on $\eqref{invnoB}$ gives
     \begin{equation}\label{weaknoB}
         \H\dash\rho{\alpha+\beta_0}\Gamma,\Delta,\neg B.
     \end{equation}
     Finally, we apply (Cut) to $\eqref{noB}$ and $\eqref{weaknoB}$ to obtain $\H\dash\rho{\alpha+\beta}\Gamma,\Delta$. We note that, by Lemma $\ref{rk}$, we have $\mathrm{rk}(B)<\mathrm{rk}(A)=\rho$ and so the complexity of the cuts done in the derivation is still bounded by $\rho$.
\end{proof}
$\setcounter{equation}{0}$
Now, we state and prove the Predicative Cut Elimination Theorem. If we have a derivation $\H\dash{\rho}{\alpha}\Gamma$ with $\rho<\Omega_k$ where $k=\min(n<\omega:\rho<\Omega_n)$, this results allows us to lower the bound of the complexity of the cuts up to $\Omega_{k-1}+1$, or to $0$ if $k=0$. 
\begin{Theorem}[Predicative Cut Elimination]
$\label{cut elim}$
    Let $\H$ be any operator closed under $\phi$. Let $\alpha\in\H$ and $\rho$ be ordinals such that $\Omega_{n}\notin[\rho,\rho+\omega^\alpha)$ for any $n<\omega$. We have that if $\H\dash{\rho+\omega^\alpha}\beta\Gamma$ then $\H\dash\rho{\phi\alpha\beta}\Gamma$.   
\end{Theorem}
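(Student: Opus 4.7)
The plan is to prove the theorem by main induction on $\alpha$ with subsidiary induction on $\beta$. Note first that $\phi\alpha\beta\in\H$ holds throughout, since $\alpha\in\H$, $\beta\in\H$ (the latter by definition of $\H\dash{\rho+\omega^\alpha}{\beta}\Gamma$), and $\H$ is closed under $\phi$ by hypothesis.

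If $\Gamma$ is an axiom, the conclusion follows immediately after checking that $\{\phi\alpha\beta\}\cup k(\Gamma)\subseteq\H$. If $\Gamma$ was obtained by a rule $(R)$ which is either not $(Cut)$, or is $(Cut)$ whose cut formula has rank strictly less than $\rho$, I would apply the subsidiary IH to each premise $\H_i\dash{\rho+\omega^\alpha}{\beta_i}\Gamma_i$ (with $\beta_i<\beta$) to obtain $\H_i\dash{\rho}{\phi\alpha\beta_i}\Gamma_i$. Reapplying $(R)$ then gives $\H\dash{\rho}{\phi\alpha\beta}\Gamma$, using that $\phi\alpha$ is strictly monotone in its second argument, so $\phi\alpha\beta_i<\phi\alpha\beta$, and that $(Cut)$ instances with cut formula of rank $<\rho$ remain within the new bound.

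The nontrivial case is that the last inference is $(Cut)$ with cut formula $A$ of rank $\sigma\in[\rho,\rho+\omega^\alpha)$, with premises $\H\dash{\rho+\omega^\alpha}{\beta_0}\Gamma,A$ and $\H\dash{\rho+\omega^\alpha}{\beta_0}\Gamma,\neg A$ for some $\beta_0<\beta$. I would apply the subsidiary IH to each premise to obtain $\H\dash{\rho}{\phi\alpha\beta_0}\Gamma,\pm A$. Since the hypothesis forbids $\Omega_n\in[\rho,\rho+\omega^\alpha)$, in particular $\sigma\neq\Omega_n$ for any $n$, so after raising the cut bound to $\sigma$ by Weakening (Lemma~\ref{weak}) I can invoke the Reduction Lemma (Lemma~\ref{reduction}) to derive $\H\dash{\sigma}{\phi\alpha\beta_0+\phi\alpha\beta_0}\Gamma$. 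Since $\phi\alpha\beta$ is additively principal and $\phi\alpha\beta_0<\phi\alpha\beta$, the new ordinal $\phi\alpha\beta_0\cdot 2$ still lies below $\phi\alpha\beta$. It remains to pull the cut bound down from $\sigma$ to $\rho$. Write $\sigma=\rho+\tau$ with $\tau<\omega^\alpha$. If $\alpha$ is a limit, then $\tau<\omega^{\alpha'}$ for some $\alpha'<\alpha$, and one application of the main IH (with $\rho$, $\alpha'$, and the current ordinal in the role of $\beta$) gives the desired conclusion with bound $\phi\alpha'(\phi\alpha\beta_0\cdot 2)$, which is strictly below $\phi\alpha\beta$ because $\phi\alpha\beta$ is a fixed point of $\phi_{\alpha'}$ (Lemma~\ref{fixbelow}). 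If $\alpha=\alpha_0+1$, one writes $\tau$ in Cantor normal form and iterates the main IH with $\alpha_0$ finitely many times, once for each summand of $\tau$ of the form $\omega^{\alpha_0}$; the hypothesis on $\Omega_n$ transfers to every intermediate interval $[\rho',\rho'+\omega^{\alpha_0})$ since these are contained in $[\rho,\rho+\omega^\alpha)$.

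The main obstacle is the bookkeeping in the successor case: one must verify at each of the finitely many iterations that the ordinal bound stays below $\phi\alpha\beta$ and that the operator $\H$ contains all ordinals named. Both are guaranteed by closure of $\H$ under $\phi$ and $+$ and by the fact that $\phi\alpha\beta$ is a fixed point of every $\phi_{\alpha'}$ with $\alpha'<\alpha$ and is additively principal, so iterated compositions of $\phi_{\alpha_0}$ applied to ordinals below $\phi\alpha\beta$ remain below $\phi\alpha\beta$. The verification that $\sigma\neq\Omega_n$ in the Reduction step, which is exactly what justifies the technical hypothesis on the interval $[\rho,\rho+\omega^\alpha)$, is essential and is the reason this hypothesis appears in the statement.
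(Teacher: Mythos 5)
Your proposal is correct and follows essentially the same route as the paper's proof: main induction on $\alpha$ with subsidiary induction on $\beta$, pushing all rules through except a $(Cut)$ whose cut rank lies in $[\rho,\rho+\omega^\alpha)$, which is handled by Weakening plus the Reduction Lemma and then by iterated applications of the main induction hypothesis, using that $\phi\alpha\beta$ is additively principal and a fixed point of $\phi_{\alpha'}$ for $\alpha'<\alpha$. The only (cosmetic) difference is that you split on $\alpha$ limit versus successor, whereas the paper treats both uniformly by writing $\mathrm{rk}(A)=\rho+\omega^{\alpha_1}+\cdots+\omega^{\alpha_n}$ in Cantor normal form and stripping one summand per application of the main induction hypothesis.
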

\begin{proof}
    First of all, we observe that $\phi\alpha\beta\in\H$ since $\alpha,\beta\in \H$ and so the ordinal bound of the derivation appearing in the conclusion of the theorem is coherent. We proceed by induction on $\alpha$ with subsidiary induction on $\beta$. If $\Gamma$ is an axiom then trivially $\H\dash\rho{\phi\alpha\beta}\Gamma$.\\
    We suppose that $\Gamma$ has been obtained by the application of a rule $(R)$. We distinguish cases according to whether $(R)$ is the rule (Cut) or $(R)$ is any other rule.\\
    \ \\
    Case 1. We suppose that $(R)$ is not (Cut). Then we have the premise(s) $\H_i\dash{\rho+\omega^\alpha}{\beta_i}\Gamma_i$ with $\beta_i<\beta$ for each $i\in y$. By the subsidiary induction hypothesis, we get $\H_i\dash\rho{\phi\alpha\beta_i}\Gamma_i$ and, since $\phi\alpha\beta_i<\phi\alpha\beta$ for all $i\in y$ we obtain $\H\dash\rho{\phi\alpha\beta}\Gamma$ by an application of $(R)$.\\
    \ \\
    Case 2. We suppose now that $(R)$ is (Cut). This means that the premises are 
    \begin{equation}\label{cutelim1}
        \H\dash{\rho+\omega^\alpha}{\beta_0} \Gamma, B\text{ with $\beta_0<\beta$}
    \end{equation}

    \begin{equation}\label{cutelim2}
        \H\dash{\rho+\omega^\alpha}{\beta_0}\Gamma,\neg B\text{ with $\beta_0<\beta$}
    \end{equation}
        for some formula $B$. By the subsidiary induction hypothesis applied to $\eqref{cutelim1}$ and \eqref{cutelim2}, we have
    \begin{equation}\label{cutelim3}
        \H\dash\rho{\phi\alpha\beta_0}\Gamma,B
    \end{equation}
    \begin{equation}\label{cutelim4}
        \H\dash\rho{\phi\alpha\beta_0}\Gamma,\neg B.
    \end{equation}
    We observe that $\mathrm{rk}(B)<\rho+\omega^\alpha$ (if the rank of $B$ was greater than $\rho+\omega^\alpha$ we could not have derived $\Gamma$ with cuts of complexity bounded by $\rho+\omega^\alpha$). If $\mathrm{rk}(B)<\rho$, then we can apply (Cut) to \eqref{cutelim3} and \eqref{cutelim4} and get $\H\dash\rho{\phi\alpha\beta}\Gamma$ since $\phi\alpha\beta_0<\phi\alpha\beta$.\\
    If $\mathrm{rk}(B)\in[\rho,\rho+\omega^\alpha)$ we cannot apply (Cut) efficiently because it would increase the complexity of the cuts. In this case, we write $\mathrm{rk}(B)=\rho+\omega^{\alpha_1}+\cdots+\omega^{\alpha_n}$ for some $\alpha>\alpha_1\geq\dots\geq\alpha_n$.  By Lemma $\ref{weak}$ on $\eqref{cutelim3}$ and \eqref{cutelim4} we are able to have the cut complexity bound equal to the rank of $B$:
    \begin{equation}\label{cutelim5}
        \H\dash{\rho+\omega^{\alpha_1}+\cdots+\omega^{\alpha_n}}{\phi\alpha\beta_0} \Gamma,B
    \end{equation}
    \begin{equation}\label{cutelim6}
        \H\dash{\rho+\omega^{\alpha_1}+\cdots+\omega^{\alpha_n}}{\phi\alpha\beta_0}\Gamma,\neg B.
    \end{equation}
    We observe that either $B$ or $\neg B$ has $\bigvee$-type, and so we can apply Lemma $\ref{reduction}$ (Reduction) to \eqref{cutelim5} and \eqref{cutelim6} and obtain
    \begin{equation*}
        \H\dash{\rho+\omega^{\alpha_1}+\cdots+\omega^{\alpha_n}}{\phi\alpha\beta_0+\phi\alpha\beta_0}\Gamma.
    \end{equation*}
    By Lemma $\ref{weak}$, since $\phi\alpha\beta_0+\phi\alpha\beta_0<\phi\alpha\beta$ and $\phi\alpha\beta\in\H$, we get 
    \begin{equation}\label{cutelim7}
        \H\dash{\rho+\omega^{\alpha_1}+\cdots+\omega^{\alpha_n}}{\phi\alpha\beta}\Gamma.
    \end{equation}
    Now, since $\rho+\omega^{\alpha_1}+\cdots+\omega^{\alpha_n}=\mathrm{rk}(B)\in \H$ (this comes from the fact that $k(B)\subseteq\H$ by \eqref{cutelim3} and $\mathrm{rk}(B)=\omega\cdot \mathrm{max}(k(B))+n'$ for some $n'<\omega$), we have that $\alpha_1,\dots,\alpha_n\in\H$. Therefore, as $\alpha_n<\alpha$, we can apply the principal induction hypothesis to \eqref{cutelim7} and obtain 
    \begin{equation}\label{cutelim8}
        \H\dash{\rho+\omega^{\alpha_1}+\cdots+\omega^{\alpha_{n-1}}}{\phi_{\alpha_n}(\phi\alpha\beta)}\Gamma.
    \end{equation}
    But $\alpha_n<\alpha$ shows that $\phi_{\alpha_n}(\phi\alpha\beta)=\phi\alpha\beta$ by Veblen functions' properties, and so \eqref{cutelim8} is exactly
    \begin{equation*}
        \H\dash{\rho+\omega^{\alpha_1}+\cdots+\omega^{\alpha_{n-1}}}{\phi\alpha\beta}\Gamma.
    \end{equation*}
    Repeating this $n-1$ times, we finally obtain
    \begin{equation*}
        \H\dash{\rho}{\phi\alpha\beta}\Gamma.
    \end{equation*}
\end{proof}
$\setcounter{equation}{0}$
We prove the last predicative result for $\RS$, the Boundedness Lemma.
\begin{Lemma}[Boundedness]
$\label{boundedness}$
    Let $\H$ be any operator. Let $n$ be any natural number. Let $\rho$ be an ordinal. Let $A^{\mathbb{L}_{\Omega_n}(X)}$ be a $\Sigma^{\Omega_n}$ formula and let $\alpha,\beta$ be ordinals such that $\beta\in \H$ and $\alpha\leq\beta<\Omega_n$. If $\H\dash\rho\alpha\Gamma, A^{\mathbb{L}_{\Omega_n}(X)}$ then $\H\dash\rho\alpha\Gamma, A^{{\mathbb{L}_{\beta}(X)}}$.    
\end{Lemma}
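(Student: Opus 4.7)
The plan is a transfinite induction on $\alpha$. For the base case, an axiom containing $A^{\mathbb{L}_{\Omega_n}(X)}$ either has $A$ basic, in which case $A^{\mathbb{L}_{\Omega_n}(X)} = A = A^{\mathbb{L}_\beta(X)}$ (basic formulas contain no quantifiers to restrict), or is witnessed by other formulas of $\Gamma$ so the replacement is cosmetic. In either case the resulting sequent is still an axiom, provided the control condition $\{\alpha\} \cup k(\Gamma \cup \{A^{\mathbb{L}_\beta(X)}\}) \subseteq \H$ is met; this holds because $|\mathbb{L}_\beta(X)| = \Gamma_{\theta+1} + \beta$ and $\Gamma_{\theta+1} + \beta \in \H$ by the hypothesis $\beta \in \H$ together with the closure of operators under addition.

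For the inductive step, let $(R)$ be the last rule applied. If $A^{\mathbb{L}_{\Omega_n}(X)}$ is not the principal formula of $(R)$, the induction hypothesis applies directly to each premise and $(R)$ is re-applied without any further changes. If $A^{\mathbb{L}_{\Omega_n}(X)}$ is principal, then since $A$ is $\Sigma$ its outermost connective is $\vee$, $\wedge$, a bounded quantifier, or an unbounded existential. In the first three cases the active formula in the premise is of the form $C^{\mathbb{L}_{\Omega_n}(X)}$ for a $\Sigma$ subformula $C$ of $A$ (or, when a comprehension-term bounds a quantifier, conjunctively combined with some $F(s)$ that does not mention $\mathbb{L}_{\Omega_n}(X)$ and is thus untouched by the substitution). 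In each such subcase the induction hypothesis supplies the required premise and the same rule is re-applied.

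The critical case is $A \equiv \exists x\, B(x)$, where $A^{\mathbb{L}_{\Omega_n}(X)} \equiv \exists x \in \mathbb{L}_{\Omega_n}(X)\, B(x)^{\mathbb{L}_{\Omega_n}(X)}$, derived by $(b\exists)$ from a premise $\H \dash{\rho}{\alpha_0} \Gamma, B(s)^{\mathbb{L}_{\Omega_n}(X)}$ with $\alpha_0 < \alpha$ and $|s| < \Gamma_{\theta+1} + \alpha$. The induction hypothesis applied to this premise (to the $\Sigma$ formula $B$) gives $\H \dash{\rho}{\alpha_0} \Gamma, B(s)^{\mathbb{L}_\beta(X)}$, and since $|s| < \Gamma_{\theta+1} + \alpha \leq \Gamma_{\theta+1} + \beta = |\mathbb{L}_\beta(X)|$, a fresh application of $(b\exists)$ with the smaller bounding term $\mathbb{L}_\beta(X)$ delivers the desired conclusion with the same ordinal bound $\alpha$ and cut-rank bound $\rho$. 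The main technical point lies in verifying these side conditions and the operator control throughout the substitution of the bounding term; the substantive ingredient is the inequality $\Gamma_{\theta+1} + \alpha \leq \Gamma_{\theta+1} + \beta$, which is the sole place where the hypothesis $\alpha \leq \beta$ is genuinely used.
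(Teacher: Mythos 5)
Your overall strategy -- induction on $\alpha$, a case split on the last rule, and the key inequality $|s|<\Gamma_{\theta+1}+\alpha\leq\Gamma_{\theta+1}+\beta$ allowing the witness to be re-used under the smaller bounding term -- is exactly the engine of the paper's proof, and your axiom, non-principal, $\bigwedge$-type and $(b\exists)$ cases match it. However, your case analysis for the principal formula is incomplete at the one point the paper treats with special care: a formula of the shape $\exists z\in\Ln\, F^z$ can be introduced not only by $(b\exists)$ but also by a reflection rule. This is precisely the scenario in which ``re-apply the same rule with the smaller bounding term'' breaks down, because the premise of $({\sf Ref}_n)$ exhibits the witness $\Ln$ itself (its active formula is $F^{\Ln}$, not $s\ \dot\in\ \Ln\wedge F^s$ for some $|s|<|\Ln|$), and $\Ln$ does not have level below $\Gamma_{\theta+1}+\beta$.

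The gap is repairable, in either of two ways. One can argue that $({\sf Ref}_n)$ cannot in fact be the last inference, since its side condition requires $\Omega_n<\alpha$ while the lemma assumes $\alpha<\Omega_n$; that $({\sf Ref}_m)$ with $m>n$ is excluded because its principal formula would need the bounding term $\mathbb{L}_{\Omega_m}(X)$, whose level exceeds that of every term occurring in $A^{\Ln}$; and that $({\sf Ref}_m)$ with $m<n$ reduces to your $\bigvee$-type treatment since the substitution of $\mathbb{L}_\beta(X)$ for $\Ln$ does not touch $\mathbb{L}_{\Omega_m}(X)$. The paper instead handles the reflection case head-on (its Case 3): it applies the induction hypothesis \emph{twice} to the premise $\Gamma, A^{\Ln}, C^{\Ln}$ -- once to replace $\Ln$ by $\mathbb{L}_\beta(X)$ in $A^{\Ln}$ and once to push $C^{\Ln}$ down to $C^{\mathbb{L}_{\alpha_0}(X)}$ -- after which $\mathbb{L}_{\alpha_0}(X)$ has small enough level to serve as a $(b\exists)$ witness. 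Either repair is short, but some such argument must appear: as written, your proof silently assumes that every principal occurrence of $\exists x\in\Ln\,B(x)^{\Ln}$ comes from $(b\exists)$.
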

\begin{proof}
    We proceed by induction on $\alpha$. If $\Gamma,A^{\mathbb{L}_{\Omega_n}(X)}$ is an axiom then $\Gamma, A^{\mathbb{L}_{\beta}(X)}$ is also an axiom (if $\Gamma$ is an axiom this is clear; if $A^{\mathbb{L}_{\Omega_n}(X)}$ is an axiom then $A$ has no quantifiers and therefore $A^{\mathbb{L}_{\Omega_n}(X)}\equiv A\equiv A^{\mathbb{L}_{\beta}(X)}$).\\
    We suppose that the derivation has been obtained by the application of a rule $(R)$. If $A^{\mathbb{L}_{\Omega_n}(X)}$ is not the principal formula, then we can apply the induction hypothesis to the premise(s) and use the rule $(R)$ again to obtain the result. So assume that $A^{\mathbb{L}_{\Omega_n}(X)}$ is the principal formula. We distinguish 3 cases: the rule $(R)$ is not $({\sf Ref}_n)$ and the formula $A^{\Ln}$ has $\bigwedge$-type, the rule $(R)$ is not $({\sf Ref}_n)$ and the formula $A^{\Ln}$ has $\bigvee$-type, or the rule $(R)$ is $({\sf Ref}_n)$. We observe that the last rule can not be $({\sf Ref}_m)$ with $m>n$ since all the terms that appear in $A^{\Ln}$ have level less than $\Omega_n$. The case where $(R)$ is $({\sf Ref}_m)$ with $m<n$ falls within the Case 2, where $A^{\Ln}$ has $\bigvee$-type.\\
    \ \\
    Case 1. We suppose that $A^{\Ln}$ has $\bigwedge$-type. We notice that, since $A^{\Ln}$ is $\Sigma^{\Ln}$, every formula in $\C(A^{\Ln})$ is also $\Sigma^{\Omega_n}$. This means that formulas in $\C(A^{\Ln})$ are of the form $B^{\Ln}$ and $B^{\mathbb L_\beta(X)}\in\C(A^{\mathbb L_\beta(X)})$ for every $B^{\Ln}\in\C(A^{\Ln})$. So, we have
    \begin{equation*}
        \H[t_{A^{\Ln}}(B^{\Ln})]\dash{\rho}{\alpha_B}\Gamma, A^{\Ln}, B^{\Ln}.
    \end{equation*}
    with $\alpha_B<\alpha$, for every $B^{\Ln}\in \C(A^{\Ln})$. Since $\alpha_B<\alpha$, we can use the induction hypothesis, that gives
    \begin{equation*}
        \H[t_{A^{\Ln}}(B^{\Ln})]\dash{\rho}{\alpha_B}\Gamma, A^{\mathbb L_\beta(X)}, B^{\mathbb L_\beta(X)}.
    \end{equation*}
    Applying the rule $(R)$, we obtain as desired
    \begin{equation*}
        \H\dash{\rho}{\alpha}\Gamma,A^{\mathbb L_\beta(X)}.
    \end{equation*}
    Case 2. We suppose that $A^{\Ln}$ has $\bigvee$-type and is not of the form\\
    $\exists x \in \Ln\ C(x)^{\Ln}$. Then, again each formula in $\C(A^{\Ln})$ is $\Sigma^{\Ln}$. So, there is $B^{\Ln}\in\C(A^{\Ln})$ such that
    \begin{equation*}
        \H\dash{\rho}{\alpha_0}\Gamma, A^{\Ln}, B^{\Ln},
    \end{equation*}
    with $\alpha_0<\alpha$. By the induction hypothesis, we have
    \begin{equation*}
        \H\dash{\rho}{\alpha_0}\Gamma, A^{\mathbb L_\beta(X)}, B^{\mathbb L_\beta(X)}.
    \end{equation*}
    We apply the rule $(R)$ and we obtain
    \begin{equation*}
        \H\dash{\rho}{\alpha}\Gamma,A^{\mathbb L_\beta(X)}.
    \end{equation*}
    We notice that we can do this even in the case that $A^{\Ln}\equiv \exists x\in\Ln\ C(x)^{\Ln}$. If this is the case, we would have
    \begin{equation*}
        \H\dash{\rho}{\alpha_0}\Gamma, A^{\Ln}, C(t)^{\Ln},
    \end{equation*}
    for some term $t$ with $|t|<\Ln$ and $|t|<\Gamma_{\theta+1}+\alpha$. By the previous reasoning, we get
    \begin{equation*}
        \H\dash{\rho}{\alpha}\Gamma, A^{\mathbb L_\beta(X)}, C(t)^{\mathbb L_\beta(X)}.
    \end{equation*}
    The thing is that $\alpha<\beta$ shows that $\Gamma_{\theta+1}+\alpha<\Gamma_{\theta+1}+\beta$, and so $|t|<\Gamma_{\theta+1}+\beta$, meaning that we can apply $(b\exists)$ to get
    \begin{equation*}
        \H\dash{\rho}{\alpha}\Gamma,\exists x\in\mathbb L_\beta(X)\ C(x)^{\mathbb L_\beta(X)},
    \end{equation*}
    as desired.\\
    \ \\    
    Case 3. We suppose $A^{\Ln}\equiv \exists x \in \Ln\ C^{x}$ for some formula $C$ and $(R)$ is $({\sf Ref}_n)$. This means that we have 
    \begin{equation*}
        \H\dash{\rho}{\alpha_0}\Gamma, A^{\Ln},C^{\Ln},
    \end{equation*}
    with $\alpha_0<\alpha$. We will use the induction hypothesis on $A^{\Ln}$ and on $C^{\Ln}$ separately: on $A^{\Ln}$ to get $A^{\mathbb L_\beta(X)}$ and on $C$ to get $C^{\mathbb L_{\alpha_0}(X)}$. We obtain the following.
    \begin{equation*}
        \H\dash{\rho}{\alpha_0}\Gamma, A^{\mathbb L_{\beta}(X)}, C^{\mathbb L_{\alpha_0}(X)}.
    \end{equation*}
    We obtain by an application of $(b\exists)$ the desired result:
    \begin{equation*}
        \H\dash{\rho}{\alpha}\Gamma,\exists x\in\mathbb L_\beta(X)\ C^x.
    \end{equation*}
\end{proof}
\subsection{The Collapsing Theorem}
In this subsection, we prove the Collapsing Theorem, according to which we can collapse the ordinal bound of derivations controlled by some specific operators which we define now.
\begin{definition}
    We define $\H_\beta$\index[Symbols]{$\H_\beta$} as follows. For any set of ordinals $Y \subseteq T(\theta)$ we let
    \begin{equation*}
        \H_\beta(Y)=\bigcap\Big\{B_n(\alpha):Y\subseteq B_n(\alpha)\text{ with $\beta<\alpha$ and $n<\omega$}\Big\}.
    \end{equation*}
\end{definition}
We notice that if $\beta<\alpha$ then $\beta+1\leq\alpha$ and so $B_n(\beta+1)\subseteq B_n(\alpha)$ for any $n<\omega$. Moreover, $B_0(\beta+1)\subseteq B_n(\beta+1)$ for any $n<\omega$.
Therefore, we have that
\begin{equation*}
    \H_\beta=\bigcap\Big\{B_n(\alpha):\beta<\alpha\wedge n<\omega\Big\}=B_0(\beta+1)
\end{equation*}
It is straightforward to show that $\H_\beta$ is an operator closed under Veblen functions for every ordinal $\beta$.\\
We will use the following result in the proof of the Collapsing Theorem. The proof proceeds by induction on $\alpha$ and can be found in \cite{tfm}.
\begin{Lemma}
$\label{downwards}$
    Let $\H$ be any operator. Let $\alpha,\beta,\gamma$ and $\rho$ be ordinals. Let $\Gamma\cup\{A\}$ be a finite set of formulas. If $\beta>\gamma\in\H$ and $\H\dash{\rho}{\alpha}\Gamma,\forall x\in \mathbb L_\beta(X) A(x)$ then 
    \[\H\dash{\rho}{\alpha}\Gamma, \forall x\in \mathbb L_\gamma(X) A(x).
    \]
\end{Lemma}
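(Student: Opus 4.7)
The proof will proceed by induction on $\alpha$. The base case, where $\Gamma,\forall x\in\mathbb{L}_\beta(X)\,A(x)$ is an axiom, is immediate: the principal formula of an axiom is always basic, so $\Gamma$ itself is an axiom, and replacing the $\forall$-formula by the analogous one with $\mathbb L_\gamma(X)$ yields another axiom. Here we only need to verify the control requirement $k(\forall x\in\mathbb{L}_\gamma(X)\,A(x))\subseteq\H$, which reduces to checking $\Gamma_{\theta+1}+\gamma\in\H$; this follows from $\gamma\in\H$ and the closure properties of operators under $+$, together with $\Gamma_{\theta+1}\in\H$.

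For the inductive step, suppose the derivation ends with a rule $(R)$. If $\forall x\in\mathbb{L}_\beta(X)\,A(x)$ is not the principal formula, then it appears passively in each premise. We apply the induction hypothesis to each premise (noting $\gamma\in\H\subseteq\H[s]$ for any side-term $s$ introduced by $(R)$) to replace the $\forall$-formula by its $\mathbb L_\gamma(X)$-version, and then reapply $(R)$. The side conditions of $(R)$ are unaffected since we have not touched its principal or active formulas, and the control condition for the conclusion is satisfied as in the base case.

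The only interesting case is when $\forall x\in\mathbb{L}_\beta(X)\,A(x)$ is principal, so $(R)$ is $(b\forall)$. Since $\mathbb{L}_\beta(X)$ is a term of the second kind in Definition \ref{term}, the abbreviation $s\,\dot\in\,\mathbb{L}_\beta(X)\rightarrow A(s)$ unfolds to just $A(s)$, so the premises are
\[
\H[s]\dash{\rho}{\alpha_s}\Gamma, \forall x\in\mathbb{L}_\beta(X)\,A(x), A(s)
\]
for every $|s|<\Gamma_{\theta+1}+\beta$, with $\alpha_s<\alpha$. Since $\gamma<\beta$, we have $\Gamma_{\theta+1}+\gamma\leq\Gamma_{\theta+1}+\beta$, so in particular these premises are already available for every $|s|<\Gamma_{\theta+1}+\gamma$. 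By the induction hypothesis applied to each such premise (using $\gamma\in\H\subseteq\H[s]$), we obtain
\[
\H[s]\dash{\rho}{\alpha_s}\Gamma, \forall x\in\mathbb{L}_\gamma(X)\,A(x), A(s),
\]
which, by reading $A(s)$ as $s\,\dot\in\,\mathbb{L}_\gamma(X)\rightarrow A(s)$, are precisely the premises needed to apply $(b\forall)$ with principal formula $\forall x\in\mathbb{L}_\gamma(X)\,A(x)$. This yields the desired conclusion $\H\dash{\rho}{\alpha}\Gamma,\forall x\in\mathbb{L}_\gamma(X)\,A(x)$.

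\textbf{Expected difficulty.} The proof is essentially bookkeeping: the main point is the observation that $(b\forall)$ applied to a term of the form $\mathbb{L}_\beta(X)$ uniformly bundles premises for all $|s|<\Gamma_{\theta+1}+\beta$, and that shrinking $\beta$ to $\gamma$ simply amounts to discarding the premises for $|s|$ in the interval $[\Gamma_{\theta+1}+\gamma,\Gamma_{\theta+1}+\beta)$. The only thing to watch is ensuring $\gamma\in\H[s]$ in each induction step, which always reduces to the hypothesis $\gamma\in\H$ together with the monotonicity clause $Y\subseteq\H(Y')\Rightarrow\H(Y)\subseteq\H(Y')$ for operators.
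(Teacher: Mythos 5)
Your proof is correct and follows exactly the route the paper indicates for Lemma \ref{downwards} (induction on $\alpha$, with the only substantive case being a last inference $(b\forall)$ on $\mathbb{L}_\beta(X)$, whose premises for $|s|<\Gamma_{\theta+1}+\gamma$ are a subfamily of those for $|s|<\Gamma_{\theta+1}+\beta$); the paper itself only states this strategy and defers the details to \cite{tfm}. Your treatment of the control conditions via $\gamma\in\H$ and closure under $+$ is the right bookkeeping.
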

$\setcounter{equation}{0}$
We recall that, given some $m\leq\omega$, a $\Sigma^{\Omega_m}$ formula is an $\RS$-formula that has been obtained from a {\sf KPl} $\Sigma$-formula by restricting all the unbounded quantifiers to $\Lm$ and by replacing free variables by terms of level strictly less than $\Omega_m$ (see Definition \ref{sigman}).
\begin{Theorem}[Collapsing Theorem]
$\label{collapsing}$
    Let $n\leq\omega$ and let $m<\omega$. Let $\Gamma$ be a set of $\Sigma^{\Omega_m}$-formulas and let $\alpha$ and $\beta$ be ordinals with $\beta\in \H_\beta$.\\
    If $\H_\beta\dash{\Omega_n+1}\alpha\Gamma$ then $\H_{\beta+\omega^{\Omega_n+1+\alpha}}\dash{\psi_m(\beta+\omega^{\Omega_n+1+\alpha})}{\psi_m(\beta+\omega^{\Omega_n+1+\alpha})}\Gamma$.

\end{Theorem}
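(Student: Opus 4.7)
The proof proceeds by transfinite induction on $\alpha$, with a case distinction on the final inference rule. Writing $\gamma := \beta + \omega^{\Omega_n+1+\alpha}$ and $\delta := \psi_m(\gamma)$, the first preliminary step is to verify that $\gamma, \delta \in \H_\gamma$, that $\H_\beta \subseteq \H_\gamma$, and that $\delta < \Omega_m$, using the closure properties of $B_0(\gamma+1) = \H_\gamma$ together with Lemma \ref{311}; these facts ensure the claimed conclusion is well-typed.

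For axioms the conclusion is immediate. In the routine cases, where the principal formula $A$ of the last rule is $\Sigma^{\Omega_m}$ and all of its characteristic terms $s$ satisfy $|s| < \Omega_m$ (automatic for $(\vee)$, $(\wedge)$, $(b\exists)$, $(b\forall)$, $(\in)$, $(\notin)$ whose quantifier range $t$ already has $|t| \le \Omega_m$), the plan is to apply the induction hypothesis to each premise $\H_\beta\dash{\Omega_n+1}{\alpha_i}\Gamma_i$, obtaining $\H_{\gamma_i}\dash{\delta_i}{\delta_i}\Gamma_i$ with $\gamma_i < \gamma$ and $\delta_i < \delta$ by the monotonicity of $\psi_m$, and then reapply the same rule after absorbing everything into $\H_\gamma$ via Lemma \ref{weak}.

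The two genuinely delicate cases are $({\sf Ref}_k)$ and $(Cut)$. For $({\sf Ref}_k)$ with premise $\H_\beta\dash{\Omega_n+1}{\alpha_0}\Gamma, \exists z\in\L_{\Omega_k}(X)\,A^z, A^{\L_{\Omega_k}(X)}$, the plan is to apply the induction hypothesis to this premise, then invoke Lemma \ref{boundedness} to replace $\L_{\Omega_k}(X)$ inside the $\Sigma^{\Omega_k}$ side formula by $\L_{\delta_0}(X)$, where $\delta_0 := \psi_m(\gamma_0)$ with $\gamma_0 < \gamma$; since $\delta_0 < \Omega_m \le \Omega_k$, the bounded formula $A^{\L_{\delta_0}(X)}$ is $\Sigma^{\Omega_m}$, and it witnesses the existential via a final application of $(b\exists)$ using $\L_{\delta_0}(X)$ as characteristic term. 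The side condition $\Omega_k < \alpha$ of $({\sf Ref}_k)$ is precisely what makes the exponent $\omega^{\Omega_n+1+\alpha}$ large enough for $\delta_0$ to lie in $\H_\gamma$.

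The $(Cut)$ case is the main obstacle. The cut formula $A$ has rank $\mathrm{rk}(A) \le \Omega_n$. When $\mathrm{rk}(A) < \Omega_m$, one of $A, \neg A$ is of $\bigvee$-type (hence already $\Sigma^{\Omega_m}$), while the other becomes $\Sigma^{\Omega_m}$ after using Lemma \ref{inversion} to expose its $\bigwedge$-premises; the induction hypothesis then applies to both sides and $(Cut)$ is reapplied with rank now strictly below $\delta$. When $\mathrm{rk}(A) \in [\Omega_m, \Omega_n]$, the outer restriction of $A$ sits at some $\L_{\Omega_k}(X)$ with $m \le k \le n$; the plan mirrors the $({\sf Ref}_k)$ case: identify the side of the cut of $\bigvee$-type, use Lemma \ref{boundedness} to lower its domain to some $\L_{\delta_0}(X)$ with $\delta_0 < \Omega_m$, and then apply the induction hypothesis before reapplying $(Cut)$ with rank below $\delta$. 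The hard part will be orchestrating Boundedness and Inversion in this last sub-case so that every auxiliary ordinal lands in the correct $\H_\gamma$ and the new cut rank ends up strictly below $\delta$; this requires careful tracking of the closure data packaged into $\H_\gamma = B_0(\gamma+1)$, and is the source of the delicate ordinal bookkeeping that pervades the Cook--Rathjen style of argument.
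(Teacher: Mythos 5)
Your setup, the routine cases, the $({\sf Ref}_k)$ case and the easy cut subcase ($\mathrm{rk}(A)<\Omega_m$) all match the paper's argument in outline. But the hard cut subcase $\Omega_m\le\mathrm{rk}(A)\le\Omega_n$ --- which you yourself flag as the crux --- is where your plan breaks down, and the breakdown is conceptual, not bookkeeping. You propose to use Boundedness to lower the domain of the $\bigvee$-side of the cut to some $\mathbb{L}_{\delta_0}(X)$ with $\delta_0<\Omega_m$ and then reapply $(Cut)$ with rank below $\psi_m(\hat\alpha)$. This cannot work: Boundedness (Lemma \ref{boundedness}) only replaces $\mathbb{L}_{\Omega_k}(X)$ by $\mathbb{L}_{\beta}(X)$ for $\beta$ \emph{at least the length of the (already collapsed) derivation}, and when the cut formula lives at level $k>m$ that collapsed length is $\psi_k(\cdot)>\Omega_{k-1}\ge\Omega_m$. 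So the new cut formula still has rank $\ge\Omega_m$, far above $\psi_m(\hat\alpha)$, and the cut cannot simply be ``reapplied with rank below $\delta$.'' (For the same reason the universal side is handled in the paper not by Boundedness but by the downward persistence Lemma \ref{downwards}.)

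The missing idea is the impredicative loop packaged in the paper's Claim~\ref{cl coll 2}: collapse both premises at level $k=\min\{l:\mathrm{rk}(A)<\Omega_l\}$, perform the cut (producing a derivation of length $<\Omega_k$ with cut rank in $[\Omega_{k-1}+1,\Omega_k)$), run Predicative Cut Elimination (Theorem \ref{cut elim}) to push the cut complexity down to $\Omega_{k-1}+1$, and then \emph{apply the Collapsing Theorem again} to the resulting derivation, now at the strictly smaller reflection level $k-1\le n-1$. This second collapse is only available if the theorem is proved by a \emph{main induction on $n$} with subsidiary induction on $\alpha$; your proof is by induction on $\alpha$ alone, so no induction hypothesis covers this re-collapse. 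A secondary point: in the $\bigwedge$-type cases the premises are controlled by the enlarged operator $\H_\beta[t_A(B)]$, not by $\H_\beta$, so the statement as you induct on it does not literally apply to them; the paper fixes this by proving the generalized Claim~\ref{claim collapse} with an auxiliary set $\Delta$ satisfying $k(\Delta)\subseteq B_m(\beta+1)$, which you would need as well.
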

\begin{proof}
To simplify notation we define for every ordinal $\alpha$
    \begin{equation*}
        \hat\alpha=\beta+\omega^{\Omega_n+1+\alpha}.
    \end{equation*}
    We are going to prove a more general claim to deal with cases where some terms might be added to the operator:
    \begin{claim}$\label{claim collapse}$
    Let $\Gamma,\alpha,\beta,n$ and $m$ as in the assumption of the theorem. Let $\Delta$ be any finite set of formulas such that $k(\Delta)\subseteq B_m(\beta+1)$. 
    \begin{center}
        If $\H_\beta[\Delta]\dash{\Omega_n+1}{\alpha}\Gamma$ then $\H_{\hat\alpha}[\Delta]\dash{\psi_m\hat{\alpha}}{\psi_m\hat{\alpha}}\Gamma$.
    \end{center}
    \end{claim}
    With Claim $\ref{claim collapse}$, taking $\Delta=\emptyset$ we obtain the theorem.\\ 
    First, we observe that, from
    \begin{equation*}
    0,1,\alpha,\Omega_n,\beta\in\H_\beta[\Delta]=\bigcap\Big\{B_k(\gamma):k(\Delta)\subseteq B_k(\gamma)\wedge k<\omega\Big\}
    \end{equation*}
    we get that
    \begin{equation*}
        \hat\alpha=\beta+\phi_0(\Omega_n+1+\alpha)\in \H_\beta[\Delta]
    \end{equation*}
    since each $B_k(\alpha)$ is closed under addition and Veblen functions. Therefore, also $\hat\alpha\in \H_{\hat\alpha}[\Delta]$, and so $\psi_m(\hat\alpha)\in\H_{\hat\alpha}[\Delta]$.\\
    Now, we prove Claim $\ref{claim collapse}$ by induction on $n$ with a subsidiary induction on $\alpha$.
    \ \\    
    If $\Gamma$ is an axiom, then the claim is trivial.\\
    \ \\
    We suppose that $\Gamma$ has been obtained by applying a rule. We run through the cases based on the last inference rule.\\
    Cases 1. and 2. correspond to the cases where the last rule applied has a principal formula with $\bigvee$-type (and the rule is different from ${\sf Ref}_k$) and $\bigwedge$-type, respectively. We refer to \cite{tfm} for the proof of these two cases.\\
    \ \\
    Case 3. We suppose that the last rule applied is $({\sf Ref}_{k})$ for some $k\leq m$. Then, we have $\H_\beta[\Delta]\dash{\Omega_n+1}{\alpha}\Gamma',\exists z\in\mathbb{L}_{\Omega_k}(X)F^z$, where $F$ is a $\Sigma$-formula and $\H_\beta[\Delta]\dash{\Omega_n+1}{\alpha_0}\Gamma', F^{\mathbb{L}_{\Omega_k}(X)}$. By the induction hypothesis on $\alpha$, $\H_{\hat{\alpha}_0}[\Delta]\dash{\psi_m\hat\alpha_0}{\psi_m\hat\alpha_0}\Gamma', F^{\mathbb{L}_{\Omega_k}(X)}$. We cannot use again the rule since maybe $\psi_m\hat\alpha\geq\Omega_k$. Instead, we use Lemma $\ref{boundedness}$ (Boundedness) to obtain $\H_{\hat\alpha_0}[\Delta]\dash{\psi_m\hat\alpha_0}{\psi_m\hat\alpha_0}\Gamma',F^{\mathbb{L}_{\psi_k\hat\alpha_0}(X)}$. Moreover, since $\H_{\hat\alpha_0}[\Delta]\subseteq \H_{\hat\alpha}[\Delta]$, we apply Lemma $\ref{weak}$ to change the controlling operator and increase the bound of the complexity of the cuts and get $\H_{\hat\alpha}[\Delta]\dash{\psi_m\hat\alpha}{\psi_m\hat\alpha_0}\Gamma',F^{\mathbb{L}_{\psi_k\hat\alpha_0}(X)}$. Now, an application of $(b\exists)$ yields $\H_{\hat\alpha}[\Delta]\dash{\psi_m\hat\alpha}{\psi_m\hat\alpha}\Gamma',\exists z\in\mathbb{L}_{\Omega_k}(X)F^z$.\\
    \ \\
    Case 4. We suppose that the last rule applied is $(Cut)$. Then, we have $\H_\beta[\Delta]\dash{\Omega_n+1}{\alpha}\Gamma$. We also have the premises $\H_\beta[\Delta]\dash{\Omega_n+1}{\alpha_0}\Gamma,A$ and $\H_\beta[\Delta]\dash{\Omega_n+1}{\alpha_0}\Gamma,\neg A$ with $\alpha_0<\alpha$ and $\mathrm{rk}(A)<\Omega_n+1$. We will run through cases based upon the ordering relation between $\mathrm{rk}(A)$ and $\Omega_m$.\\
    \ \\
    Subcase 4.1. We assume $\mathrm{rk}(A)<\Omega_m$. First of all, we observe that 
    \begin{equation*}
        \H_\beta[\Delta]=\bigcap\Big\{B_l(\delta):k(\Delta)\subseteq B_l(\delta)\wedge\beta<\delta\wedge l<\omega\Big\}\subseteq B_m(\beta+1)
    \end{equation*}
    since $k(\Delta)\subseteq B_m(\beta+1)$ by assumption. Thus, since $\mathrm{rk}(A)\in k(A)\subseteq \H_\beta[\Delta]$, we have
    \begin{equation*}
        \mathrm{rk}(A)\in \H_\beta[\Delta]\cap\Omega_m\subseteq B_m(\beta+1)\cap\Omega_m=\psi_m(\beta+1)\leq\psi_m\hat\alpha.
    \end{equation*}
    By the induction hypothesis on $\alpha$, we have $\H_{\hat\alpha_0}[\Delta]\dash{\psi_m\hat\alpha_0}{\psi_m\hat\alpha_0}\Gamma, A$ and $\H_{\hat\alpha_0}[\Delta]\dash{\psi_m\hat\alpha_0}{\psi_m\hat\alpha_0}\Gamma, \neg A$. Now, taking $\H_{\hat\alpha}[\Delta]$ as the control operator by means of Lemma $\ref{weak}$, an application of $(Cut)$ yields $\H_{\hat\alpha}[\Delta]\dash{\psi_m\hat\alpha}{\psi_m\hat\alpha}\Gamma$ as desired. The cut complexity has not increased since $\mathrm{rk}(A)<\psi_m\hat\alpha$.\\
    \ \\
    Subcase 4.2. We assume $\Omega_m\leq \mathrm{rk}(A)<\Omega_n+1$. We notice that we are not able to proceed as in Subcase 4.1 because the complexity of the cuts in the last derivation go beyond $\psi_m\hat\alpha$. We prove the following claim.\\ 
    \begin{claim}$\label{cl coll 2}$  
    Let $\beta\leq\eta<\hat\alpha$ such that $\eta\in \H_\eta$, let $k=\min\{l<\omega: \mathrm{rk}(A)<\Omega_l\}$. If $\H_\eta[\Delta]\dash{\delta}{\delta}\Gamma, A$ and $\H_\eta[\Delta]\dash{\delta}{\delta}\Gamma, \neg A$ for some $\delta<\Omega_{k}$ then
    \begin{equation*}
        \H_{\hat\alpha}[\Delta]\dash{\psi_m\hat\alpha}{\psi_m\hat\alpha}\Gamma.
    \end{equation*}
    \end{claim} 
    \renewcommand\qedsymbol{$\blacksquare$}
    \begin{proof}
    We let $\mu=\mathrm{max}(\mathrm{rk}(A),\delta)+1$. We notice that $\mu\leq\omega^\mu<\Omega_{k}$. 
    Let $\rho=\Omega_{k-1}+1$ (we know $k>0$ since $\Omega_m\leq rk(A)$). Then, we have $\Omega_{k-1}<\rho<\rho+\omega^\mu<\Omega_k\leq \Omega_n$.\\
    From the hypothesis of Claim $\ref{cl coll 2}$, we have
    \begin{equation}\label{coll1}
        \H_\eta[\Delta]\dash{\delta}{\delta}\Gamma, A
    \end{equation}
    and 
    \begin{equation}\label{coll2}
        \H_\eta[\Delta]\dash{\delta}{\delta}\Gamma,\neg A.
    \end{equation}
Note in particular that this implies $\delta \in \H_\eta[\Delta]$.
    By an application of $(Cut)$ on \eqref{coll1} and \eqref{coll2}, we obtain
    \begin{equation*}
        \H_\eta[\Delta]\dash{\rho+\omega^\mu}{\delta+1}\Gamma.
    \end{equation*}
   We observe that the complexity of the cuts is in fact bounded by $\mu$, and so it is also bounded by $\rho+\omega^\mu$.\\
   Obviously, there is no $\Omega_l$ in the interval $[\rho,\rho+\omega^\mu)$. Moreover, $\omega^\mu\in \H_\eta[\Delta]$ since $\delta,\mathrm{rk}(A)\in\H_\eta[\Delta]$. We can use Theorem $\ref{cut elim}$ (Predicative Cut Elimination) and we get
    \begin{equation*}
        \H_\eta[\Delta]\dash{\rho}{\phi_\mu(\delta+1)}\Gamma.
    \end{equation*}
    Now, since $\beta\leq \eta$, we have that $k(\Delta)\subseteq B_m(\beta+1)\subseteq B_m(\eta+1)$. Also, $\eta\in\H_\eta[\Delta]$ by assumption. Thus, the conditions of the Collapsing Theorem (in fact, the conditions of the general Claim $\ref{claim collapse}$ we are proving) are met and so, since $\rho<\Omega_n$, we use the main induction hypothesis (on $n$) to obtain
    \begin{equation}\label{coll3}
        \H_{\eta+\omega^{\rho+\phi_\mu(\delta+1)}}[\Delta]\dash{\psi_m(\eta+\omega^{\rho+\phi_\mu(\delta+1)})}{\psi_m(\eta+\omega^{\rho+\phi_\mu(\delta+1)})}\Gamma.
    \end{equation}
    It remains to show that $\psi_m(\eta+\omega^{\rho+\phi_\mu(\delta+1)})\leq \psi_m(\beta+\omega^{\Omega_n+1+\alpha})$ in order to use Lemma $\ref{weak}$ and obtain the conclusion of Claim $\ref{cl coll 2}$.\\
    From $\rho+\phi_\mu(\delta+1)<\Omega_{k+1}\leq\Omega_n$ we get that
    \begin{equation*}
        \omega^{\rho+\phi_\mu(\delta+1)}<\omega^{\Omega_n}.
    \end{equation*}
    Now, we observe that, since $\beta\leq\eta<\beta+\omega^{\Omega_n+1+\alpha}$ we can write
    \begin{equation*}
        \eta=\beta+\zeta\text{ for some $\zeta<\omega^{\Omega_n+1+\alpha}$}.
    \end{equation*}
    It follows that 
    \begin{equation*}
    \begin{split}
        \eta+\omega^{\Omega_k+\phi_\mu(\delta+1)} & = \beta+\zeta+\omega^{\Omega_k+\phi_\mu(\delta+1)}\\
        & < \beta+\zeta+\omega^{\Omega_n}\\
        & \leq \beta+ \omega^{\Omega_n+1+\alpha}\text{ since $\zeta,\omega^{\Omega_n}<\omega^{\Omega_n+1+\alpha}$ and $\omega^{\Omega_n+1+\alpha}$ is additive principal}\\
        & =\hat{\alpha}.
    \end{split}
    \end{equation*}
    From this inequality we derive that 
    \begin{equation*}
        \H_{\eta+\omega^{\rho+\phi_\mu(\delta+1)}}[\Delta]\subseteq \H_{\hat{\alpha}}[\Delta]\text{ and }\psi_m(\eta+\omega^{\rho+\phi_\mu(\delta+1)})\leq\psi_m(\hat\alpha).
    \end{equation*} 
    Lemma $\ref{weak}$ applied to \eqref{coll3} yields
    \begin{equation*}
        \H_{\hat\alpha}[\Delta]\dash{\psi_m(\hat\alpha)}{\psi_m(\hat\alpha)}\Gamma,
    \end{equation*}
    as desired.
    \end{proof}
    \renewcommand\qedsymbol{$\square$}
    Hence, Claim $\ref{cl coll 2}$ is verified and we can continue analyzing Subcase 4.2 as follows.\\
    \ \\
    Subsubcase 4.2.1. We assume $\mathrm{rk}(A)\neq\Omega_j$ for any $j<\omega$. Let $\Omega_k=\min\{\Omega_i:\mathrm{rk}(A)<\Omega_i\}$. We notice that $\Gamma\cup\{A,\neg A\}$ is in particular a set of $\Sigma^{\Omega_k}$ formulas because $\Omega_m<\mathrm{rk}(A)<\Omega_k$. By the induction hypothesis on $\alpha$, we have $\H_{\hat\alpha_0}[\Delta]\dash{\psi_k\hat\alpha_0}{\psi_k\hat\alpha_0}\Gamma, A$ and $\H_{\hat\alpha_0}[\Delta]\dash{\psi_k\hat\alpha_0}{\psi_k\hat\alpha_0}\Gamma, \neg A$. We use Claim $\ref{cl coll 2}$ with $\eta=\hat\alpha_0$, $\delta=\psi_k\hat\alpha_0$ and $k=k$ to get the result.\\
    \ \\
    Subsubcase 4.2.2. We assume $\Omega_m\leq \mathrm{rk}(A)=\Omega_k\leq\Omega_n$. In this case, $A$ and $\neg A$ are of the form $\exists x\in\mathbb{L}_{\Omega_k}(X) B(x)$ and $\forall x\in\mathbb L_{\Omega_k}(X) \neg B(x)$ (respectively or alternatively), and we have
    \begin{equation}\label{coll4}
        \H_\beta[\Delta]\dash{\Omega_n+1}{\alpha_0}\Gamma,\exists x\in\mathbb{L}_{\Omega_k}(X) B(x)
    \end{equation}
    and
    \begin{equation}\label{coll5}
        \H_\beta[\Delta]\dash{\Omega_n+1}{\alpha_0}\Gamma,\forall x\in\mathbb{L}_{\Omega_k}(X) \neg B(x).
    \end{equation}
    By the induction hypothesis on $\alpha$ applied to \eqref{coll4}, we have $\H_{\hat\alpha_0}[\Delta]\dash{\psi_k\hat\alpha_0}{\psi_k\hat\alpha_0}\Gamma,\exists x\in \mathbb L_{\Omega_k}(X) B(x)$. Let $\xi=\psi_k(\Omega_n+\omega^{\beta+\alpha_0})$. We observe that $\xi\in\H_{\hat\alpha_0}[\Delta]\cap\Omega_k$. By Lemma $\ref{boundedness}$ (Boundedness), we get 
   \begin{equation}\label{coll6}
   \H_{\hat\alpha_0}[\Delta]\dash{\psi_k\hat\alpha_0}{\psi_k\hat\alpha_0}\Gamma,\exists x\in \mathbb L_\xi(X) B(x).
   \end{equation}
   On the other hand, we apply Lemma $\ref{downwards}$ to \eqref{coll5} to obtain 
   \begin{equation*}
       \H_\beta[\Delta]\dash{\Omega_n+1}{\alpha_0}\Gamma,\forall x\in \mathbb L_\xi(X)\neg B(x).
   \end{equation*}
   Since $\beta<\hat\alpha_0$, by Lemma $\ref{weak}$ we get
   \begin{equation*}
       \H_{\hat\alpha_0}[\Delta]\dash{\Omega_n+1}{\alpha_0}\Gamma,\forall x\in \mathbb L_\xi(X)\neg B(x).
   \end{equation*}
   By the induction hypothesis on $\alpha$,
   \begin{equation}\label{coll7}
       \H_{\hat\alpha_0+\omega^{\Omega_n+1+\alpha_0}}[\Delta]\dash{\psi_k(\hat\alpha_0+\omega^{\Omega_n+1+\alpha_0})}{\psi_k(\hat\alpha_0+\omega^{\Omega_n+1+\alpha_0})}\Gamma,\forall x\in \mathbb L_\xi(X)\neg B(x).
   \end{equation}
   Now, we apply Claim $\ref{cl coll 2}$ to \eqref{coll6} and \eqref{coll7} with $\delta=\psi_k(\hat\alpha_0+\omega^{\Omega_n+1+\alpha_0})$, $\eta=\hat\alpha_0+\omega^{\Omega_n+1+\alpha_0}$ and $k=k$ and obtain the result.
\end{proof}
$\setcounter{equation}{0}$
We combine all the cut-elimination results for $\Sigma^{\Omega_0}$-formulas to give a general overview of how to use them together.
\begin{corollary}
    Let $\Gamma$ be any set of $\Sigma^{\Omega_0}$-formulas and let $n\leq\omega$. Let $\alpha$ be any ordinal and let $\beta_k<\dots<\beta_1<\Omega_n$. Then, there is $\delta<\Omega_0$ such that
    \begin{equation*}
        \text{if }\H_0\dash{\Omega_n+1+\omega^{\beta_1}+\dots+\omega^{\beta_k}}{\alpha}\Gamma\text{ then } \H_{\omega^{\Omega_n+1+\gamma}}\dash{0}{\delta}\Gamma,
    \end{equation*}
    where $\gamma=\phi(\beta_1)\Big(\phi(\beta_2)\big(\dots\phi(\beta_k)(\alpha)\dots\big)\Big)$\footnote{Using the notations of hyperations given in \cite{hyperations}, we can succinctly write this expression as $\gamma=e^\beta(\alpha)$ where $\beta=\omega^{\beta_1}+\cdots+\omega^{\beta_k}$.}. 
\end{corollary}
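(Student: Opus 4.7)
The plan is to combine Theorem~\ref{cut elim} (Predicative Cut Elimination) with Theorem~\ref{collapsing} (Collapsing) in a three-stage reduction: first strip the summands $\omega^{\beta_i}$ from the cut bound down to $\Omega_n+1$; then collapse below $\Omega_0$; finally eliminate the remaining cuts.

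For the first stage, I peel off the summands one at a time, starting from the smallest. Set $\rho_j:=\Omega_n+1+\omega^{\beta_1}+\cdots+\omega^{\beta_j}$, so $\rho_0=\Omega_n+1$ and $\rho_k$ is the initial cut bound. To pass from a derivation of cut rank $\rho_{j+1}$ and derivation rank $\alpha_j$ to one of cut rank $\rho_j$, I apply Theorem~\ref{cut elim} with $\rho=\rho_j$ and exponent $\beta_{j+1}$. The side-condition $\Omega_m\notin[\rho_j,\rho_{j+1})$ is satisfied because $\rho_j\geq \Omega_n+1>\Omega_n$ and $\rho_{j+1}<\Omega_{n+1}$, the latter because each $\omega^{\beta_i}<\Omega_n$ (since $\beta_i<\Omega_n$ and $\Omega_n$ is admissible, hence closed under $\omega$-exponentiation). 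After $k$ iterations, and using that $\H_0$ is closed under $\phi$, I obtain
\[
\H_0 \dash{\Omega_n+1}{\gamma}\Gamma,
\]
with $\gamma=\phi\beta_1(\phi\beta_2(\cdots\phi\beta_k(\alpha)\cdots))$ as in the statement.

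For the second stage, since $\Gamma$ consists of $\Sigma^{\Omega_0}$-formulas and $0\in\H_0$, I invoke Theorem~\ref{collapsing} with $\beta=0$ and $m=0$, obtaining
\[
\H_{\omega^{\Omega_n+1+\gamma}} \dash{\tau}{\tau}\Gamma,
\]
where $\tau:=\psi_0(\omega^{\Omega_n+1+\gamma})<\Omega_0$ and $\tau$ is strongly critical. For the third stage, the interval $[0,\tau)$ contains no $\Omega_m$, so I apply Theorem~\ref{cut elim} once more with $\rho=0$ and exponent $\tau$ (legitimate because $\tau=\omega^\tau$ by strong criticality). This removes all remaining cuts, producing
\[
\H_{\omega^{\Omega_n+1+\gamma}} \dash{0}{\delta}\Gamma
\]
for $\delta:=\phi\tau\tau<\Gamma_{\tau+1}<\Omega_0$.

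The main obstacle is bookkeeping: at each invocation of Theorem~\ref{cut elim} the exponent $\beta_{j+1}$ (resp.\ $\tau$) must lie in the controlling operator, and likewise $\gamma$ must lie in $\H_{\omega^{\Omega_n+1+\gamma}}$ for the final step. When the $\beta_i$ are not already in $\H_0$, I remedy this by weakening the operator to $\H_0[\{\beta_1,\ldots,\beta_k\}]$ via Lemma~\ref{weak} before beginning the reduction; this enlargement is harmless since $\H_{\omega^{\Omega_n+1+\gamma}}$ absorbs it at the end. Everything else is a routine chain of appeals to the machinery already established in Section~\ref{SectCutElimination}.
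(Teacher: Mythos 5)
Your proof is correct and follows essentially the same route as the paper's: repeated applications of Predicative Cut Elimination to strip the summands $\omega^{\beta_k},\dots,\omega^{\beta_1}$ down to cut rank $\Omega_n+1$, then the Collapsing Theorem with $\beta=0$, $m=0$, then one final application of Predicative Cut Elimination using that $\psi_0(\omega^{\Omega_n+1+\gamma})$ is strongly critical. You are in fact more explicit than the paper about verifying the side conditions (the intervals avoiding the $\Omega_m$ and the membership of the exponents in the controlling operator), which the paper leaves implicit.
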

\begin{proof}
    If $k\geq1$, by repeatedly applying Predicative Cut Elimination (Theorem {$\ref{cut elim}$}), we 
    arrive at
    \begin{equation*}
        \H_0\dash{\Omega_n+1}{\gamma}\Gamma.
    \end{equation*}
    By the Collapsing Theorem \ref{collapsing},
    \begin{equation*}
    \H_{\omega^{\Omega_n+1+\gamma}}\dash{\psi_0(\omega^{\Omega_n+1+\gamma})}{\psi_0(\omega^{\Omega_n+1+\gamma})}\Gamma.
    \end{equation*}
    We use Theorem $\ref{cut elim}$ again to obtain
    \begin{equation*}
        \H_{\omega^{\Omega_n+1+\gamma}}\dash{0}{\phi\big(\psi_0(\omega^{\Omega_n+1+\gamma})\big)\big(\psi_0(\omega^{\Omega_n+1+\gamma})\big)}\Gamma.
    \end{equation*}
\end{proof}
\section{Embedding {\sf KPl} into \texorpdfstring{$\RS$}{RSl(X)}}\label{SectEmbedding}
The objective of this section is to prove Theorem $\ref{Embed}$, that shows that if ${\sf KPl}\vdash \Gamma(a_1,\dots,a_n)$ then there is $m<\omega$ such that for arbitrary $X$ we have $\H[s_1,\dots,s_n]\dash{\Omega_\omega+m}{\Omega_\omega\cdot\omega^m}\Gamma(s_1,\dots,s_n)^{\mathbb L_{\Omega_\omega}(X)}$ for any operator $\H$ and any terms $s_1,\dots,s_n$ of level below $\Omega_\omega$. We will see that this $m$ can be retrieved constructively from the {\sf KPl}-proof of $\Gamma(a_1,\dots,a_n)$.
\subsection{The \texorpdfstring{$\Vdash$}{forces} relation}

We start by introducing the relation $\Vdash \Gamma$ as in \cite{cookrathj}, which will mean that a set of formulas $\Gamma$ is derivable with the control of any operator with a reasonable depth depending on the rank of those formulas. We will use the following operation.
\begin{definition}
    Let $\alpha_1,\dots,\alpha_n$ be ordinals. Let $\pi:\{1,\dots,n\}\twoheadrightarrow\{1,\dots,n\}$ be an onto function such that $\alpha_{\pi(1)}\geq\dots\geq\alpha_{\pi(n)}$. We define
    \begin{equation*}
        \alpha_1\#\dots\#\alpha_n=\alpha_{\pi(1)}+\dots+\alpha_{\pi(n)}.
    \end{equation*}
    Given a set of formulas $\Gamma=\{A_1,\dots,A_n\}$, we define
    \begin{equation*}
        \#\Gamma=\omega^{\mathrm{rk}(A_1)}\#\cdots\#\omega^{\mathrm{rk}(A_n)}.
    \end{equation*}
\end{definition}
With the operation $\#$, we can now define $\Vdash \Gamma$.
\begin{definition}\index[Symbols]{$\#$}\index[Symbols]{$\Vdash$}\index[Symbols]{$\Vstile{\rho}{\alpha}$}
    We write $\Vdash \Gamma$ whenever for any operator $\H$ we have $\H[\Gamma]\dash{0}{\#\Gamma}\Gamma$.\\
    We write $\dststile{\rho}{\alpha}\Gamma$ whenever for any operator $\H$ with $\alpha\in\H$ we have $\H[\Gamma]\dash{\rho}{\#\Gamma\#\alpha}\Gamma$.
\end{definition}
Along this section, we follow Reading Convention $\ref{read2}$ and omit the repetition of principal formulas in the premises. The next lemma shows that we can treat the $\Vdash$ relation as a logic in the sense that given the premise(s) $\Gamma_i$, for $i\in y$, of the conclusion $\Gamma$ of some instance of an $\RS$-rule, if $\Vdash \Gamma_i$ for all $i\in y$ then $\Vdash \Gamma$. 
\begin{Lemma} $\label{derive}$
    Let $\Gamma\cup\{A,B\}$ be a finite set of formulas. Let $\alpha$ and $\rho$ be ordinals. If $B\in\C(A)$ then $\#(\Gamma, B)\#\alpha<\#(\Gamma,A)\#\alpha$.\\
    Moreover, if $\Gamma, A$ follows from the premise(s) $\Gamma, B_i$, for $i\in y$ by a rule other than $(Cut)$ and $({\sf Ref}_{n})$, with $n<\omega$, with principal formula $A$ and active formulas $B_i$, then $\Vstile{\rho}{\alpha}\Gamma,A$ whenever $\Vstile{\rho}{\alpha}\Gamma, B_i$ for all $i\in y$.
\end{Lemma}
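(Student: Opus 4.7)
The plan is to establish Part 1 first, since it is a pure monotonicity statement that feeds directly into the rule-application argument for Part 2. For Part 1, the key input is Lemma $\ref{rk}$, giving $\mathrm{rk}(B) < \mathrm{rk}(A)$ whenever $B \in \C(A)$. Hence $\omega^{\mathrm{rk}(B)} < \omega^{\mathrm{rk}(A)}$; since the natural sum $\#$ is strictly monotone in each argument, we obtain $\#(\Gamma, B) < \#(\Gamma, A)$, and summing both sides with $\alpha$ via $\#$ preserves the strict inequality. (The statement is read under the convention that $A \notin \Gamma$, the only case that arises when $A$ is the principal formula of a rule applied to $\Gamma$.)

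For Part 2, fix any operator $\H$ with $\alpha \in \H$ and unpack the hypothesis $\Vstile{\rho}{\alpha}\Gamma, B_i$ to get a derivation $\H[\Gamma, B_i]\dash{\rho}{\#(\Gamma, B_i)\#\alpha}\Gamma, B_i$ for each $i \in y$. The aim is to apply the given $\RS$-rule, with principal formula $A$ and active formulas $B_i$, to conclude $\H[\Gamma, A]\dash{\rho}{\#(\Gamma, A)\#\alpha}\Gamma, A$. Three things have to be checked: (i) the ordinal side-condition $\#(\Gamma, B_i)\#\alpha < \#(\Gamma, A)\#\alpha$, which is exactly Part 1; (ii) the rule-specific side conditions such as $|s|<|t|$ and $|s|<\Gamma_{\theta+1}+\#(\Gamma,A)\#\alpha$ for $(b\exists)$, which follow from the definition of $\C(A)$ together with the estimate $\omega^{\mathrm{rk}(A)} \leq \#(\Gamma, A)\#\alpha$ and the relation between $\mathrm{rk}(t)$ and $|t|$ given by Lemma $\ref{rklemma}$; and (iii) the operator containment $\H[\Gamma, B_i] \subseteq \H[\Gamma, A][t_A(B_i)]$. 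Granted (iii), Lemma $\ref{weak}$ (Weakening) lifts each premise to a derivation controlled by $\H[\Gamma, A][t_A(B_i)]$, and the rule fires.

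The main obstacle is (iii), which reduces to the inclusion $k(B_i) \subseteq k(A) \cup k(t_A(B_i))$ at the level of sets of ordinals. This has to be verified case by case from Definition $\ref{CA}$. When $A$ is a disjunction or conjunction, $B_i$ is one of the immediate subformulas, so $k(B_i) \subseteq k(A)$ and $t_A(B_i) = \mathbb{L}_0(X)$ contributes nothing new. In the remaining $\bigvee$-type cases, where $A$ is $r\in t$, $Ad(t)$, or $\exists x\in t\, F(x)$, the formula $B_i$ is assembled from components already occurring in $A$ together with the characteristic term $t_A(B_i)$, so every level appearing in $B_i$ occurs either in $A$ or as the level of a subterm of $t_A(B_i)$. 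The $\bigwedge$-type case is handled by the duality built into Definition $\ref{CA}$. This case inspection is the only genuinely nontrivial portion of the proof; the rest is unpacking of definitions and invoking Weakening.
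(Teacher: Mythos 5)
Your proof is correct and follows the same route as the paper's: Part 1 is obtained from Lemma \ref{rk} together with strict monotonicity of the natural sum $\#$, and Part 2 by unpacking the definition of $\Vstile{\rho}{\alpha}$, invoking Part 1 for the ordinal side condition, and firing the rule. The paper's own proof is considerably terser---it treats your checks (ii) and (iii) (the level side conditions and the containment $\H[\Gamma,B_i]\subseteq\H[\Gamma,A][t_A(B_i)]$) as immediate and does not spell them out---but the substance of the two arguments is identical.
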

\begin{proof}
    First, by Lemma $\ref{rk}$ we have $\mathrm{rk}(B)<\mathrm{rk}(A)$ whenever $B\in \C(A)$. It follows that $\omega^{\mathrm{rk}(B)}<\omega^{\mathrm{rk}(A)}$ for any $B\in \C(A)$ and so $\#(\Gamma,B)<\#(\Gamma, A)$ for every $B\in\C(A)$.\\
    We suppose now that $\Vstile{\rho}{\alpha}\Gamma, B_i$ for all $i\in y$ and fix an operator $\H$. Then, 
    \begin{equation*}
        \H[\Gamma, B_i]\dash{\rho}{\#(\Gamma,B_i)\#\alpha}\Gamma, B_i
    \end{equation*}
    for all $i\in y$. By an application of the rule, and since $\#(\Gamma, B_i)\#\alpha<\#(\Gamma,A)\#\alpha$ for all $i\in y$, we get $\H[\Gamma,A]\dash{\rho}{\#(\Gamma,A)\#\alpha}\Gamma,A$. Hence, $\Vstile{\rho}{\alpha}\Gamma, A$.    
\end{proof}
By Lemma $\ref{derive}$, we can write derivations with the $\Vdash$ relation. For example, if for any operator $\H$ we have $\H[A]\dash{0}{\omega^{\mathrm{rk}(A)}}A$ then we also have $\H[A\vee B]\dash{0}{\omega^{\mathrm{rk}(A\vee B)}}A\vee B$. In this case, we will write
\begin{equation*}
    \begin{prooftree}
        \hypo{&\Vdash A}
        \infer[left label=$(\vee)$]1{&\Vdash  A\vee B}
    \end{prooftree}
\end{equation*}
The first part of following lemma gives a nice way to derive formulas of the form $A\rightarrow B$. For instance, a general reasoning that we will use to prove that $\Vdash A\rightarrow B$ will be to first derive $\Vdash \neg A,B$ and, by means of the following lemma, get $\Vdash \neg A\vee B$, which is equivalent to $\Vdash A\rightarrow B$.
\begin{Lemma}$\label{derive2}$
\begin{enumerate}
    \item If $\Vstile{\rho}{\alpha}\Gamma, A, B$ then $\Vstile{\rho}{\alpha}\Gamma, A\vee B$.
    \item For any formula $A$ we have $\Vdash A,\neg A$.
\end{enumerate}
    
\end{Lemma}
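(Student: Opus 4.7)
For Part (1), my plan is to apply Lemma $\ref{derive}$ twice, each time with the $(\vee)$-rule whose principal formula is $A\vee B$. Starting from $\Vstile{\rho}{\alpha}\Gamma, A, B$, I will first treat $B$ as a side formula and $A$ as the active formula; since $A\in\C(A\vee B)$, Lemma $\ref{derive}$ yields $\Vstile{\rho}{\alpha}\Gamma, B, A\vee B$. A second application of Lemma $\ref{derive}$, now with $B\in\C(A\vee B)$ as the active formula, collapses the remaining $B$ into the disjunction, giving $\Vstile{\rho}{\alpha}\Gamma, A\vee B$ as required. Note that Lemma $\ref{derive}$ preserves the subscripts $\rho$ and $\alpha$ verbatim, so no extra ordinal bookkeeping is needed beyond the strict inequality $\omega^{\mathrm{rk}(A)}+\omega^{\mathrm{rk}(B)}<\omega^{\mathrm{rk}(A\vee B)}$, which is already built into its proof.

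For Part (2), I will proceed by induction on $\mathrm{rk}(A)$. In the base case $A$ is basic, i.e.\ $A\equiv\overline{u}\in\overline{v}$ or $A\equiv\overline{u}\notin\overline{v}$; in either event, exactly one of $A$ or $\neg A$ has the form required by an $\RS$-axiom, so the set $\{A,\neg A\}$ is itself an axiom and $\Vdash A,\neg A$ holds immediately.

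For the inductive step with $A$ non-basic, after swapping $A$ with $\neg A$ if necessary I may assume $A$ has $\bigvee$-type, so that $\neg A$ has $\bigwedge$-type and $\C(\neg A)=\{\neg B:B\in\C(A)\}$. For each $B\in\C(A)$, Lemma $\ref{rk}$ gives $\mathrm{rk}(B)<\mathrm{rk}(A)$, so the induction hypothesis supplies $\Vdash B,\neg B$. Invoking Lemma $\ref{derive}$ with side formula $\neg B$, principal formula $A$, and active formula $B$ — via the appropriate $\bigvee$-rule among $(\vee)$, $(\in)$, $(b\exists)$, $(Ad)$ — transforms this into $\Vdash A,\neg B$. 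Performing this step for every $B\in\C(A)$ produces exactly the family of premises called for by the $\bigwedge$-rule corresponding to $\neg A$; a final application of Lemma $\ref{derive}$, with side formula $A$, principal formula $\neg A$, and active formulas ranging through $\C(\neg A)$, yields $\Vdash A,\neg A$.

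The only real subtlety is verifying that the rules invoked in the two applications of Lemma $\ref{derive}$ are neither $(Cut)$ nor $({\sf Ref}_n)$. This is automatic, because in each case I am using the standard introduction rule matching the outer connective of the principal formula. Since no cut or reflection inference is ever introduced, the cut-rank superscript stays at $0$ throughout, so the conclusion of the inductive step really is of the form $\Vdash A,\neg A$ rather than $\Vstile{\rho}{\alpha}$ with nontrivial superscripts.
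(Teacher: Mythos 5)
Your proposal is correct and follows essentially the same route as the paper: Part (1) by two applications of Lemma \ref{derive} through the $(\vee)$-rule, and Part (2) by induction on $\mathrm{rk}(A)$, using $\Vdash B,\neg B$ for $B\in\C(A)$ (justified by Lemma \ref{rk}), then a $\bigvee$-inference to obtain $\Vdash A,\neg B$ for each $B$, and finally the matching $\bigwedge$-inference over all of $\C(\neg A)$ to conclude $\Vdash A,\neg A$. The only cosmetic difference is that you phrase the inductive step uniformly in terms of $\C(A)$, whereas the paper spells out the individual cases ($r\in t$, $\exists x\in t\,B(x)$, etc.) and declares the rest analogous.
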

\begin{proof}
    1. This follows by applying Lemma \ref{derive} twice.\\
    2. We proceed by induction on the rank of $A$. We note that given any non-basic formula $A$, since $\mathrm{rk}(B)<\mathrm{rk}(A)$ for any $B\in\C(A)$, when proving $\Vdash A,\neg A$ we can suppose $\Vdash B, \neg B$ by the induction hypothesis. We consider cases based on the form of $A$.\\
    \ \\
    Case 1. We suppose that $A\equiv \overline{u}\in\overline{v}$. This means that $A,\neg A$ is an axiom (since either $u\in v$ or $u\notin v$ holds).\\
    \ \\
    Case 2. We suppose that $A\equiv r\in t$ is not a basic formula. This means that either $r$ or $t$ is not a basic term, and so $|r|>\Gamma_{\theta+1}$ or $|t|>\Gamma_{\theta+1}$. By the induction hypothesis, we have $\Vdash s\dot\in t\wedge r=s,\neg(s\dot\in t\wedge r=s)$, which is exactly $\Vdash s\dot\in t\wedge r=s,s\dot\in t\rightarrow r\neq s$, for all terms $s$ with $|s|<|t|$. Therefore, we have the following derivation for every term $s$ with $|s|<|t|$, where the first inference is applied to the first formula and the second inference is applied to the second formula:\\
    \begin{center}

    \begin{prooftree}
        \hypo{&\Vdash s\dot\in t\wedge r=s,s\dot\in t\rightarrow r\neq s}
        \infer[left label = $(\in)$]1{&\Vdash r\in t, s\dot\in t\rightarrow r\neq s}
        \infer[left label = $(\notin)$]1{&\Vdash r\in t, r\notin t}
    \end{prooftree}\\
    \end{center}    
    Hence, we obtain $\Vdash A,\neg A$.\\
    \ \\
    Case 3. We suppose that $A\equiv \exists x\in t\ B(x)$. By the inductive hypothesis, we have $\Vdash s\dot\in t\wedge B(s),\neg\big(s\dot\in t\wedge B(s)\big)$, which is exactly $\Vdash s\dot\in t\wedge B(s),s\dot\in t\rightarrow\neg B(s)$, for all terms $s$ with $|s|<|t|$. Therefore, we have the following derivation  for every term $s$ with $|s|<|t|$, where the first inference is applied to the first formula and the second inference is applied to the second formula:
    \begin{center}
        \begin{prooftree}
            \hypo{&\Vdash s\dot\in t\wedge B(s),s\dot\in t\rightarrow\neg B(s)}
            \infer[left label = $(b\exists)$]1{&\Vdash \exists x\in t\ B(x),s\dot\in t\rightarrow \neg B(s)}
            \infer[left label= $(b\forall)$]1{&\Vdash \exists x\in t\ B(x), \forall x\in t\ \neg B(x)}
        \end{prooftree}
    \end{center}
    Hence, we obtain $\Vdash A,\neg A$.\\
    The other cases follow analogously.
\end{proof}
At some points we will need to write derived formulas in some equivalent expression, e.g. write $A\rightarrow B$ instead of $\neg A\vee B$. We will simply use the symbol $\equiv$ as the label of the derivation when this happens.\\
The next lemma states some results that will be helpful to embed the {\sf KPl} axioms and rules into the $\RS$-system. 
\begin{Lemma}
$\label{Vdash}$
    Let $s$ be any term. Then, we have
    \begin{enumerate}
        \item $\Vdash s\notin s$,
        \item Given any term $t$, if $|s|<|t|$ then $\Vdash s\dot\in t\rightarrow s\in t$,
        \item $\Vdash s\subseteq s$,
        \item $\Vdash s=s$,
        \item Let $\alpha$ be an ordinal. If $|s|<\Gamma_{\theta+1}+\alpha$ then $\Vdash s\in\mathbb L_\alpha(X)$.
    \end{enumerate}
\end{Lemma}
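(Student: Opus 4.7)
The plan is to prove Parts~1--5 by joint induction on $|s|$, exploiting that Parts~2--5 form a tight chain where each reduces to the next in a single inference step, while Part~1 requires a finer argument handled in parallel. Throughout I reason informally with the $\Vdash$-relation as justified by Lemma~\ref{derive2} and Reading Convention~\ref{read2}, relying on Lemma~\ref{derive} to propagate the $\#$-sums of ranks through each rule application.

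The chain for Parts~2--5 goes as follows. Part~5 reduces to Part~4 by one application of $(\in)$: take the characteristic term to be $s$ itself (legal since $|s|<\Gamma_{\theta+1}+\alpha=|\mathbb{L}_\alpha(X)|$), and by Definition~\ref{dotin} the required active formula $s\,\dot\in\,\mathbb{L}_\alpha(X)\wedge s=s$ collapses to just $s=s$. Part~4 reduces to Part~3 by unfolding $s=s$ as $s\subseteq s\wedge s\subseteq s$ and applying $(\wedge)$. Part~3 reduces to Part~2 by a single $(b\forall)$ on $\forall x\in s(x\in s)$: the premise at each $r$ with $|r|<|s|$ is exactly Part~2 at $(r,s)$. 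For Part~2, I split on the form of $t$. If $t\equiv\overline{u}$ is basic, then $|s|<|t|<\Gamma_{\theta+1}$ forces $s$ basic too, so the target is $\Vdash A\rightarrow A$ for the atomic $A\equiv s\in t$, which follows from Lemma~\ref{derive2}.2 combined with Lemma~\ref{derive2}.1. If $t\equiv\mathbb{L}_\alpha(X)$, then Definition~\ref{dotin} collapses the target to $\Vdash s\in\mathbb{L}_\alpha(X)$, which is Part~5 (with $|s|<\Gamma_{\theta+1}+\alpha$ guaranteed by $|s|<|t|$). If $t\equiv[x\in\mathbb{L}_\alpha(X):B(x)]$, the target is $\Vdash B(s)\rightarrow s\in t$, derivable via $(\in)$ with active formula $B(s)\wedge s=s$, itself obtained by $(\wedge)$ from $\Vdash\neg B(s),B(s)$ (Lemma~\ref{derive2}.2) and Part~4.

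For Part~1, the base case $s\equiv\overline{u}$ is immediate from the axiom $\Vdash\overline{u}\notin\overline{u}$, valid since $u\notin u$ in the well-founded transitive closure $TC(\{X\})$. In the inductive step I apply $(\notin)$: for each $s'$ with $|s'|<|s|$ one needs $\Vdash s'\,\dot\in\, s\rightarrow s\neq s'$. After unfolding $\dot\in$ according to the shape of $s$, the task reduces to deriving $\Vdash s\neq s'$ whenever $|s'|<|s|$, which is proved by an auxiliary induction that witnesses the inequality by exhibiting a term of level strictly between $|s'|$ and $|s|$ (or an appropriate basic witness when both levels lie below $\Gamma_{\theta+1}$) belonging to one side but not the other, invoking Parts~3--5 at the smaller levels that the outer induction has already produced. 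The main obstacle I anticipate is precisely this auxiliary argument for Part~1: converting the ordinal-rank strict inequality $|s'|<|s|$ into a bona fide $\RS$-derivation of $s\neq s'$ while respecting the side conditions $|r|<|t|$ and $|r|<\Gamma_{\theta+1}+\alpha$ on the rules $(\in),(\notin),(b\exists)$ at every step and avoiding any circularity in the joint induction.
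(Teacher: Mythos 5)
Your treatment of Parts 2--5 is essentially the paper's own proof: the paper establishes Items 2 and 3 simultaneously by induction on $\mathrm{rk}(s)$ with a case split on the form of the term, then gets 4 from 3 by $(\wedge)$ and 5 from 4 by $(\in)$ --- exactly the chain you describe (inducting on $|s|$ rather than on $\mathrm{rk}(s)$ is immaterial by Lemma \ref{rklemma}.1).

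Part 1, however, contains a genuine gap. You reduce $\Vdash s'\,\dot\in\, s\rightarrow s\neq s'$ to the unconditional claim $\Vdash s\neq s'$ for all $|s'|<|s|$, to be witnessed by a term lying in one denotation but not the other. That claim is false: terms of different levels can denote the same set. For instance, $s\equiv[x\in\mathbb L_1(X):x\in\mathbb L_0(X)]$ has level $\Gamma_{\theta+1}+1$ while $s'\equiv\mathbb L_0(X)$ has level $\Gamma_{\theta+1}$, yet both denote $TC(\{X\})$; since the system is sound, no derivation of $s\neq s'$ exists and your auxiliary induction cannot close. The repair is to keep the antecedent and use $s'$ itself as the witness, in the style of a foundation argument. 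Writing $s\neq s'$ as $\exists x\in s(x\notin s')\vee\exists x\in s'(x\notin s)$, take (say) $s\equiv[x\in\mathbb L_\alpha(X):B(x)]$: from $\Vdash \neg B(s'),B(s')$ (Lemma \ref{derive2}.2) and the induction hypothesis $\Vdash s'\notin s'$, Weakening and $(\wedge)$ give $\Vdash \neg B(s'),\, s'\,\dot\in\, s\wedge s'\notin s'$; then $(b\exists)$ with witness $s'$ (legal since $|s'|<|s|$) and $(\vee)$ yield $\Vdash \neg B(s'),\, s\neq s'$, i.e.\ $\Vdash s'\,\dot\in\, s\rightarrow s\neq s'$, and $(\notin)$ concludes. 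The essential point is that $s\neq s'$ is only derivable \emph{relative to} the hypothesis $s'\,\dot\in\, s$, by testing $s'$ for membership in itself, never outright.
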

\begin{proof}
    1. The proof proceeds by induction on $\mathrm{rk}(s)$, considering cases based on the form of $s$. We refer to \cite{tfm}.\\
    \ \\
    We prove 2. and 3. simultaneously. Actually, we show $\Vdash \forall x\in s(x\in s)$ by induction on $\mathrm{rk}(s)$ and considering cases based on the form of $s$, and 2. will be shown along the way.\\
    \ \\
    Case 1. We suppose $s\equiv \overline u$. Then, given the basic term $\overline v$, either $\overline v\in\overline u$ or $\overline v\notin \overline{u}$ is an axiom, and so $\overline v\in\overline u,\overline v\notin\overline u$ is an axiom. We have the following derivation for any basic term $\overline v$:
    \begin{equation*}
        \begin{prooftree}
            \hypo{&\Vdash \overline v\in\overline u,\overline v\notin\overline u}
            \infer[left label = $\text{Lemma }\ref{derive2}.1$]1{&\Vdash\neg\overline v\in\overline{u}\vee\overline{v}\in\overline{u}}
            \infer[left label = $\equiv$]1{&\Vdash \overline{v}\in\overline{u}\rightarrow\overline v\in\overline{u}}
            \infer[left label = $\equiv$]1{&\Vdash \overline{v}\dot\in\overline{u}\rightarrow\overline v\in\overline{u}}
            \infer[left label = $(b\forall)$]1{&\Vdash \forall x\in s(x\in s)}
        \end{prooftree}
    \end{equation*}
        We notice that the second to last line is exactly $\Vdash \overline v\dot\in\overline{u}\rightarrow \overline{v}\in\overline{u}$, as in Item 2.\\
    \ \\
    Case 2. We suppose $s\equiv \mathbb L_\alpha(X)$. By the induction hypothesis $\Vdash \forall x\in r(x\in r)$ for any $|r|<|s|$. Therefore, we have the following derivation for all $|r|<|s|$:
    \begin{equation*}
        \begin{prooftree}
            \hypo{\Vdash \forall x\in r(x\in r)}
            \hypo{\Vdash\forall x\in r(x\in r)}
            \infer[left label = $(\wedge)$]2{&\Vdash\forall x\in r(x\in r)\wedge\forall x\in r(x\in r)}
            \infer[left label = $\equiv$]1{&\Vdash r=r}
            \infer[left label = $\equiv$]1{&\Vdash r\dot\in s\wedge r=r}
            \infer[left label= $(\in)$]1{&\Vdash r\in s}
            \infer[left label = $\equiv$]1{&\Vdash r\dot\in s\rightarrow r\in s}
            \infer[left label = $(b\forall)$]1{&\Vdash \forall x\in s(x\in s)}
        \end{prooftree}
    \end{equation*}
    Case 3. We suppose $s\equiv [x\in\mathbb L_\alpha(X):B(x)]$. By the induction hypothesis, we get $\Vdash \forall x\in r(x\in r)$ for all $|r|<|s|$. So, by Weakening (Lemma $\ref{weak}$), we obtain $\Vdash \forall x\in r(x\in r),\neg B(r)$. Applying $(\wedge)$, we have $\Vdash r=r,\neg B(r)$. Moreover, we have $\Vdash B(r),\neg B(r)$ by Item 1. We get the following derivation for all $|r|<|s|$:
    \begin{equation*}
        \begin{prooftree}
            \hypo{\Vdash r=r,\neg B(r)}
            \hypo{\Vdash B(r),\neg B(r)}
            \infer[left label = $(\wedge)$]2{&\Vdash B(r)\wedge r=r,\neg B(r)}
            \infer[left label = $\equiv$]1{&\Vdash r\dot\in s\wedge r=r, \neg B(r)}
            \infer[left label = $(\in)$]1{&\Vdash r\in s, \neg B(r)}
            \infer[left label = Lemma $\ref{derive2}.1$]1{&\Vdash \neg B(r)\vee r\in s}
            \infer[left label = $\equiv$]1{&\Vdash B(r)\rightarrow r\in s}
            \infer[left label = $\equiv$]1{&\Vdash r\dot\in s\rightarrow r\in s}
            \infer[left label = $(b\forall)$]1{&\Vdash \forall x\in s(x\in s)}
        \end{prooftree}
    \end{equation*}
    4. The result follows from Item 3. by an application of the $(\wedge)$ rule.\\
    \ \\
    5. First, for all $|s|<\Gamma_{\theta+1}+\alpha$ we have $\Vdash s=s$ by Item 3. Using Definition $\ref{dotin}$, this is equivalent to $\Vdash s\dot\in\mathbb L_\alpha(X)\wedge s=s$. We apply $(\in)$ to obtain the result.
\end{proof}

\subsection{Embedding the axioms of {\sf KPl}}
In this subsection, we will embed the axioms of {\sf KPl} into $\RS$ and we give ordinal bounds to the length and cut-complexity of the derivations of the axioms in $\RS$. It seems that, in order to embed {\sf KPl}-derivations into $\RS$, it is sufficient to prove that we can find ordinal bounds $\alpha$ and $\beta$ such that, for any {\sf KPl} axiom $\Ax$ and any operator $\H$, we have
\begin{equation*}
    \H\dash{\beta}{\alpha}(\Ax)^{\mathbb L_{\Omega_\omega}(X)}.
\end{equation*}
Nonetheless, we need a stronger result: the third Admissibility axiom $(Ad3)$ states that any admissible set has to satisfy basic axioms. This means that we have to prove that, given an axiom $\Ax$ among Leibniz Principle, Pair, Union, $\Delta_0$-Separation and $\Delta_0$-Collection,
\begin{equation*}
    \H\dash{\beta}{\alpha}(\Ax)^{\mathbb L_{\Omega_{n}}(X)}
\end{equation*}
holds for all $n\leq\omega$ and for any operator $\H$. Actually, for $\Delta_0$-Collection we only need $n<\omega$ since this axiom is not an axiom of {\sf KPl}.\\
First, we have the following result via a single excluded middle on $s\in t$.
\begin{Lemma}$\label{stAB}$
    Let $s, t$ be terms such that $|s|<|t|$. Let $\Gamma$ be any finite set of formulas and let $A$ and $B$ be formulas. If $\Vdash\Gamma,A,B$ then
    \begin{center}
        $\Vdash \Gamma, s\dot\in t\rightarrow A,s\dot\in t\wedge B$.
    \end{center}
    
\end{Lemma}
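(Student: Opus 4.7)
The plan is to prove the lemma by case analysis on the form of $t$, following the three clauses of Definition \ref{dotin}. Write $P$ for the underlying atomic content of ``$s\,\dot\in\, t$'': $P \equiv \overline{u}\in\overline{v}$ when $s\in t$ is basic, and $P \equiv F(s)$ when $t \equiv [x\in\mathbb{L}_\alpha(X): F(x)]$. The middle case $t \equiv \mathbb{L}_\alpha(X)$ is vacuous, because then $s\,\dot\in\, t \rightarrow A$ collapses to $A$ and $s\,\dot\in\, t \wedge B$ collapses to $B$, so the desired conclusion is literally the hypothesis $\Vdash \Gamma, A, B$. In the two remaining cases the goal uniformly reads $\Vdash \Gamma, \neg P \vee A, P \wedge B$.

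For these cases I would apply the $(\wedge)$-rule (through Lemma \ref{derive}) with principal formula $P \wedge B$, reducing the goal to
\[
  \Vdash \Gamma, \neg P \vee A, P \wedge B, P \qquad \text{and} \qquad \Vdash \Gamma, \neg P \vee A, P \wedge B, B.
\]
The second premise is obtained from the hypothesis $\Vdash \Gamma, A, B$ by weakening (i.e.\ by the $\Vdash$-version of Lemma \ref{weak} together with monotonicity of $\#$) to insert $\neg P \vee A$ and $P \wedge B$, and then applying $(\vee)$ with $A$ as the active disjunct to absorb the loose $A$ on the right. The first premise is where the ``single excluded middle on $s\in t$'' announced in the statement enters: Lemma \ref{derive2}.2 supplies $\Vdash P, \neg P$, and after weakening I apply $(\vee)$ with $\neg P$ as the active disjunct of $\neg P \vee A$ to absorb the loose $\neg P$. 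Combining the two premises by $(\wedge)$ yields the conclusion.

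This amounts to a formal rendering of the classical case split ``either $s\,\dot\in\, t$ holds or it does not''. There is no real obstacle beyond the bookkeeping forced by the three syntactic shapes of $s\,\dot\in\, t$, but since every move above is uniform in the shape of $P$ and uses only Lemmas~\ref{rk}, \ref{weak}, \ref{derive}, and \ref{derive2}, the ordinal and operator side-conditions are all automatic: the $\#$-norms of the premises are strictly below that of the conclusion because $\mathrm{rk}(P), \mathrm{rk}(B) < \mathrm{rk}(P \wedge B)$ and $\mathrm{rk}(\neg P), \mathrm{rk}(A) < \mathrm{rk}(\neg P \vee A)$, which is precisely the content of Lemma~\ref{derive}.
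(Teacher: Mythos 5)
Your proof is correct and follows exactly the route the paper intends: the paper states Lemma \ref{stAB} without proof, remarking only that it goes ``via a single excluded middle on $s\in t$'', and your argument is precisely that — an instance of Lemma \ref{derive2}.2 for the underlying formula $P$, combined with weakening and the $(\vee)$/$(\wedge)$ steps mediated by Lemma \ref{derive}, with the $t=\mathbb{L}_\alpha(X)$ case correctly identified as vacuous. The bookkeeping on $\#$-norms and operators is handled as the paper handles it elsewhere, so there is nothing to add.
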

The embedding of the set-theoretic axioms of {\sf KPl} follow standard methods.
\begin{Lemma}[Set-theoretic axioms]\ $\label{axiomsembed}$Let $n<\omega$.
\begin{enumerate}
    \item Leibiniz Principle. Let $s$ and $t$ be terms. For any formula {\sf KPl}-formula $A(x)$, we have
    \begin{equation*}
        \Vdash s\neq t,\neg A(s)^{\mathbb L_{\Omega_n}(X)},A(t)^{\mathbb L_{\Omega_n}(X)}.
    \end{equation*}
    \item Class Induction. For any {\sf KPl}-formula $A$, we have
    \begin{equation*}
        \Vstile{}{\omega^{\mathrm{rk}(B)}}B\rightarrow \forall x\in\mathbb L_{\Omega_n}(X)\ A(x)^{\Ln},
    \end{equation*}
    where $B\equiv \forall x\in\mathbb L_{\Omega_n}(X)\big(\forall y\in x\ A(y)^{\mathbb L_{\Omega_n}(X)}\rightarrow A(x)^{\mathbb L_{\Omega_n}(X)}\big)$.
    \item $\Delta_0$-Separation. Let $A(a,b_1,\dots,b_k)$ be a {\sf KPl} $\Delta_0$-formula. Let $s,t_1,\dots,t_k$ be terms such that $|s|,|t_1|,\dots,|t_k|<\Omega_n$. We will use the abbreviation $\vec t=t_0,\dots,t_k$. Then,
    \begin{equation*}
        \Vdash \exists y\in\mathbb L_{\Omega_n}(X)\Big[\forall x\in y\big(x\in s\wedge A(x,\vec t)^{\mathbb L_{\Omega_n}(X)}\big)\wedge\forall x\in s\big(A(x,\vec t)^{\mathbb L_{\Omega_n}(X)}\rightarrow x\in y\big)\Big].
    \end{equation*}
    \item Infinity. $\Vdash\exists x\in\mathbb{L}_{\Omega_n}(X) [\exists z\in x(z\in x)\wedge \forall y\in x\exists z\in x(y\in z)]$.\\
    \ \\
    \item Pairing. Let $s$ and $t$ be terms such that $|s|,|t|<\Omega_n$. Then
    \begin{equation*}
        \Vdash \exists z\in\mathbb L_{\Omega_n}(X)(s\in z\wedge t\in z).
    \end{equation*}
    \item Union. Let $s$ be a term such that $|s|<\Omega_n$. Then
    \begin{equation*}
        \Vdash \exists z\in\mathbb L_{\Omega_n}(X)\forall y\in s\forall x\in y(x\in z).
    \end{equation*}
    \end{enumerate}
\end{Lemma}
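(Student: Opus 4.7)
The plan is to handle the six items by exhibiting an explicit RS-term as witness where needed and then assembling the derivation from the basic facts in Lemma $\ref{Vdash}$ using Lemmas $\ref{derive}$, $\ref{derive2}$, and $\ref{stAB}$. Because $\Vdash$ auto-tracks the ordinal bounds through the $\#$-operation, the bookkeeping is largely forced: one only needs to check at each step that any characteristic formula $B\in\C(A)$ has strictly smaller $\omega^{\mathrm{rk}(\cdot)}$-weight, which is precisely Lemma $\ref{derive}$.

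For Leibniz Principle I would induct on $\mathrm{rk}\bigl(A(s)\bigr)$. The atomic cases $A\equiv r\in r'$ (and dually $r\notin r'$) reduce to showing that $s=t$ is incompatible with the basic $\in/\notin$-axioms; this is done by applying $(\in),(\notin)$ to characteristic subterms, using Lemma $\ref{Vdash}.3$ (that $\Vdash s\subseteq s$) to discharge the equality premises. Compound formulas (disjunctions, conjunctions, bounded quantifiers, $Ad$-atoms) are handled by applying the induction hypothesis to each $A'\in\C(A)$ and recombining via Lemma $\ref{derive}$, with Lemma $\ref{stAB}$ used whenever the $\dot\in$ abbreviation has to be split. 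For Class Induction I would prove, by meta-induction on $|s|<\Omega_n$, that $\Vdash \neg B, A(s)^{\Ln}$. Inversion of $B$ at $s$ (Lemma $\ref{inversion}$ applied inside $\Vdash$) yields the conditional $\forall y\in s\ A(y)^{\Ln}\rightarrow A(s)^{\Ln}$; combining this with the meta-induction hypothesis via $(b\forall)$ over terms of level $<|s|$ produces $A(s)^{\Ln}$. An outer $(b\forall)$ collects these into $\forall x\in\Ln\ A(x)^{\Ln}$ and Lemma $\ref{derive2}.1$ converts to the implication, with budget $\omega^{\mathrm{rk}(B)}$ because each recursion step contributes strictly less.

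For the last four items the witnesses are standard. Separation is witnessed by $u\equiv[x\in\La : x\in s\wedge A(x,\vec t)^{\La}]$, where $\alpha$ is chosen so that $|s|,|t_i|<\Gamma_{\theta+1}+\alpha<\Omega_n$; since $A$ is $\Delta_0$, the relativizations $A^{\La}$ and $A^{\Ln}$ coincide on $L_\alpha(X)$. Infinity is witnessed by $\mathbb L_\omega(X)$ itself, which is inductive and contains $\overline\emptyset$. Pairing and Union are witnessed by $[x\in\mathbb L_{\max(|s|,|t|)+1}(X): x=s\vee x=t]$ and $[x\in\mathbb L_{|s|}(X): \exists y\in s(x\in y)]$ respectively. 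In every case the witness has level strictly below $\Omega_n$, so Lemma $\ref{Vdash}.5$ provides the outer membership in $\Ln$, and the inner clauses are derived by routine applications of $(b\exists)$, $(b\forall)$, $(\wedge)$, $(\vee)$, combined with Lemma $\ref{Vdash}.4$ and the $\dot\in$-rewriting of Definition $\ref{dotin}$.

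The main obstacle, cutting across all six items, is the disciplined choice of witness levels so that unpacking $\dot\in$ via Definition $\ref{dotin}$ does not inflate ranks beyond what the $\#$-budget of $\Vdash$ tolerates; a second subtlety, peculiar to Class Induction, is that the meta-induction on $|s|<\Omega_n$ must close uniformly in the controlling operator, which is possible because the infinitary $(b\forall)$ rule assembles one premise per term-level into a single object-level derivation of bounded length $\omega^{\mathrm{rk}(B)}$.
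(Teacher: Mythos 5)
Your overall plan is the same one the paper (deferring to Cook--Rathjen and \cite{tfm}) intends: explicit witnesses for the existential axioms, induction on the rank of $A$ for the Leibniz Principle, and transfinite induction on the term level $|s|$ for Class Induction. The witnesses you pick for $\Delta_0$-Separation, Infinity, Pairing and Union are the standard ones, the level bookkeeping is right, and the Leibniz sketch, while thin on the atomic case (which requires actually producing a separating element of $TC(\{X\})$ or of a set term, not merely citing ``incompatibility with the axioms''), follows the usual simultaneous induction.

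The Class Induction step, however, does not work as written, in two concrete respects. First, ``Inversion of $B$ at $s$'' is not an available move: Lemma \ref{inversion} transforms a \emph{derivation} of $\Gamma,B$ into derivations of $\Gamma,C$ for $C\in\C(B)$, but here $B$ occurs only negatively --- you are deriving $\neg B,\dots$ --- so there is no derivation of $B$ to invert. You must either (a) invert the excluded-middle derivation $\Vdash B,\neg B$ of Lemma \ref{derive2}.2 to get $\neg B,\ \forall y\in s\,A(y)^{\Ln}\rightarrow A(s)^{\Ln}$ and then combine it with the assembled $\neg B,\forall y\in s\,A(y)^{\Ln}$ by a $(Cut)$ on $\forall y\in s\,A(y)^{\Ln}$ (legitimate for item 2 precisely because it is stated with $\Vstile{}{\omega^{\mathrm{rk}(B)}}$, which carries no cut-rank constraint, but incompatible with your intermediate claim ``$\Vdash\neg B,A(s)^{\Ln}$'', since $\Vdash$ demands cut rank $0$); or (b) avoid the cut by applying $(b\exists)$ to $\neg B\equiv\exists x\in\Ln\big(\forall y\in x\,A(y)^{\Ln}\wedge\neg A(x)^{\Ln}\big)$ with witness $s$, deriving the characteristic conjunction from the induction hypothesis together with $\Vdash A(s)^{\Ln},\neg A(s)^{\Ln}$. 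Second, the inductive claim ``$\Vdash\neg B,A(s)^{\Ln}$'' cannot drive a transfinite induction on $|s|$: its built-in bound $\#\{\neg B,A(s)^{\Ln}\}$ is constant in $s$ whenever $A^{\Ln}$ contains a term of level $\geq\Omega_n$ (Lemma \ref{rklemma}.3), while the $(b\forall)$ step assembling $\forall y\in s\,A(y)^{\Ln}$ forces the height at stage $s$ to strictly exceed the heights at all stages $r$ with $|r|<|s|$. The inductive invariant must therefore carry an explicit bound increasing in $|s|$ (something of the shape $\omega^{\mathrm{rk}(A(\overline\emptyset)^{\Ln})}\#(\Gamma_{\theta+1}+|s|)\cdot\omega$, say), which is absorbed only at the end into $\#\Gamma\#\omega^{\mathrm{rk}(B)}$; this is exactly what the extra summand $\omega^{\mathrm{rk}(B)}$ in the statement of item 2 is there to pay for.
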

\begin{proof}
    The proof follows the methods of \cite{cookrathj}. More details can be found in \cite{tfm}.
\end{proof}
$\setcounter{equation}{0}$
Now, we focus on the axioms ruling the $Ad$ predicate. To embed the first axiom, we need to know what $(Ad1)^{\Lo}$ will look like. In particular, we need to identify in $\RS$ the term to which the set $\omega$ translates and the predicate to which the set-theoretic predicate $\mathrm{Tran}$ translates. We start with the latter. The set-theotetic predicate $\mathrm{Tran(x)}$ for a free variable $x$ would translate to the following $\RS$-predicate for a term $t$.
\begin{definition}
    Let $t$ be any term. We define the $\RS$-predicate
    \begin{equation*}
        \mathrm{Tran}(t)\equiv \forall x\in t\forall y\in x(y\in t).
    \end{equation*}
\end{definition}
We note that the {\sf KPl} $\Delta_0$-formula $\mathrm{Tran}(x)$ with free variable $x$ is translated to $\mathrm{Tran}(t)$ in the $\RS$-language, for some chosen term $t$. This is why we write both predicates the same way.\\ 
Next, we can define $\omega$ as the unique ordinal only containing infinitely many successor ordinals, and so the term
\begin{equation*}
    \underline\omega=\Big[x\in\mathbb L_{\omega+1}(X):\mathrm{Ord}(x)\wedge\big( \exists y\in x\rightarrow \exists y\in x\forall z\in x(z\neq y\rightarrow z\in y)\big) \Big].
\end{equation*}
fully captures the set $\omega$. This means that $(Ad1)^{\Lo}$ can be written as
\begin{equation*}
    \forall x\in\Lo[Ad(x)\rightarrow \underline\omega\in x\wedge \mathrm{Tran}(x)].
\end{equation*}

But our admissibles in $\RS$ are all the $\Ln$, $n<\omega$. So we show that they are transitive and contain $\omega$.
\begin{Lemma} Let $n<\omega$. Then,
    $\label{tran}$
    \begin{enumerate}
        \item $
        \Vdash \mathrm{Tran}\big(\mathbb L_{\Omega_{n}}(X)\big)$.
    \item $
        \Vdash\underline \omega\in\mathbb L_{\Omega_{n}}(X)$.
    \end{enumerate}
    
\end{Lemma}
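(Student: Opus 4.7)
The plan is to derive both statements by direct bottom-up construction in the $\Vdash$ calculus, relying almost entirely on Lemma \ref{Vdash}.5, which says that any term $t$ with $|t|<\Gamma_{\theta+1}+\alpha$ satisfies $\Vdash t\in\mathbb L_\alpha(X)$. The crucial observation driving both parts is that the level of every $\RS$-term that could possibly appear ``inside'' $\Ln$ is automatically below $|\Ln|=\Gamma_{\theta+1}+\Omega_n$, and hence inhabits $\Ln$ via Lemma \ref{Vdash}.5.

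For item 2 I expect only a one-line argument. By Definition \ref{term}, $|\underline\omega|=\Gamma_{\theta+1}+\omega+1$, since $\underline\omega$ is a separation-style term built over $\mathbb L_{\omega+1}(X)$. Since each $\Omega_n$ indexes an $X$-admissible level (Reading Convention \ref{read}), $\Omega_n$ is a limit ordinal with $\omega<\Omega_n$, so $\omega+1<\Omega_n$, and therefore $|\underline\omega|<\Gamma_{\theta+1}+\Omega_n$. Lemma \ref{Vdash}.5 applied with $\alpha=\Omega_n$ then yields $\Vdash\underline\omega\in\Ln$ directly.

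For item 1 I will peel off the two universal quantifiers in $\mathrm{Tran}(\Ln)\equiv\forall x\in\Ln\,\forall y\in x(y\in\Ln)$ by two successive uses of $(b\forall)$ through Lemma \ref{derive}. After the first $(b\forall)$, for each term $s$ with $|s|<\Gamma_{\theta+1}+\Omega_n$ I need $\Vdash s\,\dot\in\,\Ln\to\forall y\in s(y\in\Ln)$; since $\Ln$ is of the form $\mathbb L_\alpha(X)$, Definition \ref{dotin} collapses this to $\Vdash\forall y\in s(y\in\Ln)$. A second $(b\forall)$ reduces the goal to $\Vdash r\,\dot\in\,s\to r\in\Ln$ for every $r$ with $|r|<|s|$. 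But then $|r|<|s|<\Gamma_{\theta+1}+\Omega_n$, so Lemma \ref{Vdash}.5 delivers $\Vdash r\in\Ln$; I finish by weakening (Lemma \ref{weak}) to introduce $\neg(r\,\dot\in\,s)$ on the left and applying Lemma \ref{derive2}.1 to form the required implication. Pushing ordinals through the two $(b\forall)$ steps only increases the derivation length by the controlled $\#$-amounts appearing in the definition of $\Vdash$, so the resulting derivation is still $\H$-controlled as required.

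There is no real obstacle here: both items reduce, after routine logical unfolding of $\dot\in$ and weakening, to the single fact that the levels of the relevant subterms are all bounded by $\Gamma_{\theta+1}+\Omega_n$. The only mild subtlety is the bookkeeping for case 1 of Definition \ref{dotin} when $s$ is a basic term $\overline u$ (so that $r$ must also be basic, $r\equiv\overline v$ with $\mathrm{rank}(v)<\mathrm{rank}(u)$), but this case again falls under Lemma \ref{Vdash}.5 with no change to the overall structure.
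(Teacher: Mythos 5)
Your proposal is correct and follows essentially the same route as the paper: both parts reduce, via Lemma \ref{Vdash}.5, to the fact that every subterm occurring under the quantifiers of $\mathrm{Tran}(\Ln)$ has level below $|\Ln|$, with the implication $r\,\dot\in\,s\to r\in\Ln$ obtained by weakening and Lemma \ref{derive2}.1 after the same case split on the form of the intermediate term. The paper's proof is just the bottom-up presentation of your goal-directed derivation, so nothing is missing.
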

\begin{proof}
    1. Let $t$ be any term such that $|t|<\Omega_n$. We show that for every term $s$ with $|s|<|t|$
    \begin{equation}\label{tran1}
        \Vdash s\dot\in t\rightarrow s\in \Ln
    \end{equation}
    to use the rule $(b\forall)$. We prove \eqref{tran1} by fixing such an $s$ and splitting cases based on the form of $t$. First, we observe that $|s|<\Omega_n$ and so, by Lemma $\ref{Vdash}.5$, we have
    \begin{equation}\label{tran2}
        \Vdash s\in \Ln.
    \end{equation}
    Case 1. We suppose $t\equiv \overline u$. Then $s\equiv \overline v$. Therefore, using Weakening (Lemma $\ref{weak}$) on \eqref{tran2}, we get $\Vdash \neg\overline v\in\overline u, \overline v\in\Ln$. By Lemma $\ref{derive2}.1$, we obtain $\Vdash \neg \overline v\in\overline u\vee \overline v\in\Ln$, which is equivalent to $\Vdash \overline v\in\overline u\rightarrow \overline v\in\Ln$.
    By Definition $\ref{dotin}$, this is
    \begin{equation*}
        \Vdash \overline v\dot\in\overline u\rightarrow \overline v\in\Ln.
    \end{equation*}
    Case 2. We suppose $t\equiv\mathbb L_\alpha(X)$. Then \eqref{tran2} is exactly $\Vdash s\dot\in t\rightarrow s\in\Ln$ by Definition $\ref{dotin}$.\\
    \ \\
    Case 3. We suppose $t\equiv [x\in\mathbb L_\alpha(X):B(x)]$. Then, using Weakening (Lemma $\ref{weak}$) on \eqref{tran2} we get $\Vdash \neg B(s),s\in\Ln$. By Lemma $\ref{derive2}.1$, we obtain $\Vdash \neg B(s)\vee s\in \Ln$, which is exactly $\Vdash B(s)\rightarrow s\in \Ln$. By Definition $\ref{dotin}$, this is equivalent to $\Vdash s\dot\in t\rightarrow s\in\Ln$.\\
    \ \\
    Hence, we have shown \eqref{tran1} for any $|s|<|t|$. By an application of $(b\forall)$, we obtain
    \begin{equation*}
        \Vdash \forall y\in t\big(y\in\Ln\big).
    \end{equation*}
    Again by Definition $\ref{dotin}$, this is the same as
    \begin{equation*}
        \Vdash t\dot\in\Ln\rightarrow \forall y\in t\big(y\in\Ln\big).
    \end{equation*}
    Since this holds for any $|t|<\Omega_n$, another application of $(b\forall)$ yields
    \begin{equation*}
        \Vdash\forall x\in\Ln\forall y\in x\big(y\in \Ln\big).
    \end{equation*}
    $\setcounter{equation}{0}$
    \ \\
    2. We observe that $|\underline\omega|=\Gamma_{\theta+1}+\omega+1$. Let $n<\omega$. Then, since $\Gamma_{\theta+1}+\omega+1<\Omega_{n}$, by Lemma $\ref{Vdash}.5$ we have $\Vdash \underline\omega\in L_{\Omega_{n}}(X)$.
\end{proof}
$\setcounter{equation}{0}$
We can now proceed to the proof of the embedding of $(Ad1)$ into $\RS$.
\begin{Lemma}[Ad1] Let $\H$ be any operator. We have
    $\label{Ad1}$    
     \begin{equation*}
         \H\dash{\Omega_\omega}{\Omega_\omega+1}\forall x\in\mathbb L_{\Omega_\omega}(X)\big(Ad(x)\rightarrow\underline\omega\in x\wedge \mathrm{Tran}(x)\big)
     \end{equation*}
\end{Lemma}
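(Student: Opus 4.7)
The plan is to apply $(b\forall)$, which reduces the goal to establishing, for every term $t$ with $|t|<|\Lo|=\Gamma_{\theta+1}+\Omega_\omega$, a sequent of the form
\[
\H[t]\dash{\Omega_\omega}{\alpha_t}Ad(t)\rightarrow\underline\omega\in t\wedge\mathrm{Tran}(t),
\]
for some $\alpha_t<\Omega_\omega+1$. Since $\Lo\equiv\mathbb L_{\Omega_\omega}(X)$, Definition~\ref{dotin} collapses $t\,\dot\in\,\Lo\rightarrow A$ to $A$, so the displayed sequent is exactly what $(b\forall)$ requires as premise. Write $B(x):=\underline\omega\in x\wedge\mathrm{Tran}(x)$ for brevity.

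By Lemma~\ref{derive2}.1 it suffices to derive $\H[t]\dash{\Omega_\omega}{\alpha_t'}\neg Ad(t),B(t)$. An application of $(\neg Ad)$ reduces the problem in turn to deriving, for each relevant $n<\omega$ (the range furnished by $\C(\neg Ad)$, see Definition~\ref{CA}),
\[
\H[t]\dash{\Omega_\omega}{\alpha_{t,n}}t\neq\Ln,B(t).
\]

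Fix such $n$. Lemma~\ref{tran} delivers $\Vdash\underline\omega\in\Ln$ and $\Vdash\mathrm{Tran}(\Ln)$, so $(\wedge)$ gives $\Vdash B(\Ln)$. Since $B(x)$ is a $\Delta_0$ formula, its relativization to any $\mathbb L_{\Omega_m}(X)$ is literally $B(x)$, and the Leibniz Principle (Lemma~\ref{axiomsembed}.1) yields
\[
\Vdash t\neq\Ln,\ \neg B(\Ln),\ B(t).
\]
A single $(Cut)$ on $B(\Ln)$ closes the argument, producing the required sequent. Crucially, $\mathrm{rk}(B(\Ln))=\Gamma_{\theta+1}+\Omega_n+O(1)<\Omega_\omega$, so this cut respects the cut-complexity bound $\rho=\Omega_\omega$ demanded by the target.

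The principal obstacle is ordinal bookkeeping. One has to verify that each $\alpha_{t,n}$ stays strictly below $\Omega_\omega$---this is the case because the Leibniz derivation together with the single cut on $B(\Ln)$ contribute only ordinals bounded by $\Omega_n+\omega$. Consequently $\sup_{n<\omega}\alpha_{t,n}\leq\Omega_\omega$, which is exactly what is needed to fire $(\neg Ad)$ at depth $\Omega_\omega$. Two further inferences---$(\vee)$ (via Lemma~\ref{derive2}.1) and the outer $(b\forall)$---then push the final ordinal bound up to $\Omega_\omega+1$, matching the statement, while the cut complexity never exceeds $\Omega_\omega$.
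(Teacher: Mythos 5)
Your proposal is correct and follows essentially the same route as the paper: reduce via $(b\forall)$ and Lemma~\ref{derive2}.1 to $\neg Ad(t), \underline\omega\in t\wedge\mathrm{Tran}(t)$, fire $(\neg Ad)$ over all $n$, and for each $n$ cut the true instance $\underline\omega\in\Ln\wedge\mathrm{Tran}(\Ln)$ (from Lemma~\ref{tran}) against the Leibniz instance from Lemma~\ref{axiomsembed}.1. The only quibbles are cosmetic: you need a Weakening step to align the side formulas of the two cut premises, and your bound $\Omega_n+\omega$ on $\alpha_{t,n}$ is too optimistic since $\mathrm{rk}(B(t))$ depends on $|t|$, which may exceed $\Omega_n$ --- but all that matters is $\alpha_{t,n}<\Omega_\omega$, which still holds.
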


\begin{proof}
    We need to derive
    \begin{equation*}
        Ad(t)\rightarrow\underline\omega\in t\wedge \mathrm{Tran}(t)
    \end{equation*}
    for every term $t$ with $|t|<\Omega_\omega$ to apply $(b\forall)$. So we fix a term $t$ such that $|t|<\Omega_\omega$. In the following derivations, we fix a natural number $n$. By Lemma $\ref{tran}$ and applying $(\wedge)$, we have $\Vdash \underline\omega\in\Ln\wedge \mathrm{Tran}\big(\Ln\big)$. By Weakening (Lemma $\ref{weak}$), we get
    \begin{equation}\label{ad11}
        \Vdash t\neq\Ln,\underline\omega\in\Ln\wedge \mathrm{Tran}\big(\Ln\big),\underline\omega\in t\wedge \mathrm{Tran}(t).
    \end{equation}
    On the other hand, by Lemma $\ref{axiomsembed}.1$, we have
    \begin{equation}\label{ad12}
        \Vdash t\neq\Ln,\neg\Big(\underline\omega\in\Ln\wedge \mathrm{Tran}\big(\Ln\big)\Big),\underline\omega\in t\wedge \mathrm{Tran}(t).
    \end{equation}
    Fix an operator $\H$. An application of $(Cut)$ to \eqref{ad11} and \eqref{ad12} gives
    \begin{equation*}
        \H[t]\dash{\Omega_{n+1}}{\alpha}t\neq\Ln, \underline\omega\in t\wedge \mathrm{Tran}(t),
    \end{equation*}
    where $\alpha=\#\{t\neq\Ln, \underline\omega\in t\wedge \mathrm{Tran}(t)\}<\Omega_\omega$. Since this derivation holds for every $n<\omega$, we apply $(\neg Ad)$ to get $\H[t]\dash{\Omega_\omega}{\Omega_\omega}\neg Ad(t),\underline\omega\in t\wedge \mathrm{Tran}(t)$.
    By Lemma $\ref{derive2}.1$, we obtain $\H[t]\dash{\Omega_\omega}{\Omega_\omega}\neg Ad(t)\vee\underline\omega\in t\wedge \mathrm{Tran}(t)$, which is the same as $\H[t]\dash{\Omega_\omega}{\Omega_\omega} Ad(t)\rightarrow\underline\omega\in t\wedge \mathrm{Tran}(t)$.
    This holds for any $|t|<\Omega_\omega$. An application of $(b\forall)$ yields $\H\dash{\Omega_\omega}{\Omega_\omega+1}\forall x\in\Lo\big(Ad(x)\rightarrow \underline\omega\in x\wedge \mathrm{Tran}(x)\big)$.
\end{proof}
$\setcounter{equation}{0}$
We continue with the second axiom about the $Ad$ predicate.
\begin{Lemma}[Ad2] Let $\H$ be any operator. We have
$\label{Ad2}$
    \begin{equation*}
        \H\dash{\Omega_\omega}{\Omega_\omega+3}\forall x\in\mathbb L_{\Omega_\omega}(X)\forall y\in \mathbb L_{\Omega_\omega}(X)\big(Ad(x)\wedge Ad(y)\rightarrow x\in y\vee x= y\vee y\in x\big)
    \end{equation*}
\end{Lemma}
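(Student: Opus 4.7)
The plan is to fix arbitrary terms $s, t$ of level strictly below $|\Lo| = \Gamma_{\theta+1} + \Omega_\omega$ and derive the implication $Ad(s) \wedge Ad(t) \rightarrow s \in t \vee s = t \vee t \in s$ under control $\H[s,t]$ at ordinal $\Omega_\omega + 1$ and cut rank $\Omega_\omega$; two $(b\forall)$ inferences then lift this to the stated bound $\Omega_\omega + 3$. It suffices to derive, for every pair $n, m \leq \omega$, the sequent
\[
\Gamma_{n,m} := s \neq \Ln,\; t \neq \Lm,\; s \in t \vee s = t \vee t \in s,
\]
at ordinal strictly below $\Omega_\omega$ and cut rank strictly below $\Omega_\omega$. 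Two successive $(\neg Ad)$ inferences, first on $t$ (collapsing the $m$-index and concluding at ordinal $\Omega_\omega$) and then on $s$ (collapsing $n$ and concluding at $\Omega_\omega + 1$), produce $\neg Ad(s), \neg Ad(t), s \in t \vee s = t \vee t \in s$; two applications of $(\vee)$ via Lemma $\ref{derive2}.1$ then reassemble the implication while staying at ordinal $\Omega_\omega + 1$.

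For $n, m < \omega$ I argue WLOG $n \leq m$. Since $|\Ln| \leq |\Lm|$, Lemma $\ref{Vdash}.5$ gives $\Vdash \Ln \in \Lm$ when $n < m$ and Lemma $\ref{Vdash}.4$ gives $\Vdash \Ln = \Lm$ when $n = m$. Two applications of Leibniz Principle (Lemma $\ref{axiomsembed}.1$), each followed by a cut with the basic fact, substitute $s$ for $\Ln$ and $t$ for $\Lm$ to yield $s \neq \Ln, t \neq \Lm, s \in t$ (respectively $s = t$); one $(\vee)$ then forms $\Gamma_{n,m}$. All cut formulas are of the form $\Ln \in \Lm$ or $s \in \Lm$, and have rank bounded by $\Gamma_{\theta+1} + \Omega_m + O(1) < \Omega_\omega$, so the cut rank bound is respected. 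The subcase $n > m$ is symmetric and delivers the $t \in s$ disjunct.

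For $n = \omega$ I derive $\Vdash s \neq \Lo$ with no high-rank cut at all: Lemma $\ref{Vdash}.1$ gives $\Vdash s \notin s$, which by Definition $\ref{dotin}$ coincides with $\Vdash s \dot\in \Lo \wedge s \notin s$ (the $t = \mathbb{L}_\alpha(X)$ clause collapses $s \dot\in \Lo \wedge A$ to $A$); one $(b\exists)$ inference, legitimate because $|s| < |\Lo|$, yields $\Vdash \exists x \in \Lo(x \notin s)$, and a $(\vee)$ completes the derivation of $\Vdash s \neq \Lo$. Weakening (Lemma $\ref{weak}$) then gives $\Gamma_{\omega, m}$; the case $m = \omega$ is handled symmetrically. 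The main obstacle is the ordinal bookkeeping: one must verify that the two $(\vee)$ steps assembling the implication do not push the bound past $\Omega_\omega + 1$ (they do not, because their characteristic sum $\#\Gamma$ stays below $\Omega_\omega$), and that no cut in the Leibniz-based derivations ever involves a formula with $\Lo$ as a subterm, which would push the cut rank above $\Omega_\omega$.
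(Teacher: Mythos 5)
Your proposal is correct and follows essentially the same route as the paper: establish trichotomy for $\mathbb{L}_{\Omega_n}(X)$ and $\mathbb{L}_{\Omega_m}(X)$ via Lemma~\ref{Vdash}, transfer it to $s,t$ by Leibniz plus cuts of rank below $\Omega_\omega$, apply $(\neg Ad)$ twice, and reassemble with $(\vee)$ and $(b\forall)$; whether you form the trichotomy disjunction before or after the Leibniz step is immaterial. The one substantive difference is that you explicitly supply the $n=\omega$ premise of $(\neg Ad)$ by deriving $s\neq\mathbb{L}_{\Omega_\omega}(X)$ from $s\notin s$, which the paper's proof silently omits; this is a genuine improvement in completeness, but note that you cannot simply quote the $\Vdash$ bound there, since $\#\{s\neq\mathbb{L}_{\Omega_\omega}(X)\}=\omega^{\mathrm{rk}(s=\mathbb{L}_{\Omega_\omega}(X))}\geq\omega^{\Omega_\omega}$ exceeds $\Omega_\omega$ — you must instead observe that the actual three-step derivation ($s\notin s$, then $(b\exists)$, then $(\vee)$) has length about $\omega^{\mathrm{rk}(s)+6}+2<\Omega_\omega$, so that the subsequent $(\neg Ad)$ inference still lands at $\Omega_\omega$.
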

\begin{proof}
    We need to derive
    \begin{equation*}
        Ad(s)\wedge Ad(t)\rightarrow s\in t\vee s=t\vee t\in s
    \end{equation*}
    for all the terms $s,t$ with $|s|,|t|<\Omega_\omega$ in order to apply $(b\forall)$ twice. In the following derivations, we fix two natural numbers $n$ and $m$.\\
    If $n=m$, then by Lemma $\ref{Vdash}.4$ we have $\Vdash \Ln=\Lm$.\\
    If $n<m$, by Lemma $\ref{Vdash}.5$ we have $\Vdash \Ln\in\Lm$.\\
    If $m<n$, by Lemma $\ref{Vdash}.5$ we have $\Vdash \Lm\in \Ln$.\\
    \ \\
    In any case, two applications of $(\vee)$ give
    \begin{equation*}
        \Vdash \mathrm{Trichotomy}(\Ln,\Lm),
    \end{equation*}
    where $\mathrm{Trichotomy}(a,b):\equiv a\in b\vee a=b\vee b\in a$.\\
    We use Weakening (Lemma $\ref{weak}$) to get
    \begin{equation}\label{ad21}
    \begin{split}
        \Vdash & s\neq\Ln,t\neq \Lm,\mathrm{Trichotomy}\big(\Ln,\Lm\big),\\
        & s\in t\vee s=t\vee t\in s.
        \end{split}
    \end{equation}
    On the other hand, by Lemma $\ref{axiomsembed}.1$ we have
    \begin{equation}\label{ad22}
    \begin{split}
        \Vdash & s\neq\Ln,t\neq \Lm,\neg\mathrm{Trichotomy}\big(\Ln,\Lm\big),\\
        & s\in t\vee s=t\vee t\in s.
    \end{split}
    \end{equation}
    We fix an operator $\H$. An application of $(Cut)$ on \eqref{ad21} and \eqref{ad22} yields
    \begin{equation*}
        \H[s,t]\dash{\Omega_{n+m}}{\alpha}s\neq\Ln,t\neq \Lm,s\in t\vee s=t\vee t\in s,
    \end{equation*}
    where $\alpha=\#\{s\neq\Ln,t\neq \Lm,s\in t\vee s=t\vee t\in s\}$. Since this derivation holds for any $n,m<\omega$, we use $(\neg Ad)$ twice and we obtain
    \begin{equation*}
        \H[s,t]\dash{\Omega_\omega}{\Omega_\omega+1} \neg Ad(s),\neg Ad(t),s\in t\vee s=t\vee t\in s.
    \end{equation*}
    We use Lemma $\ref{derive2}.1$ to get $\H[s,t]\dash{\Omega_\omega}{\Omega_\omega+1} \neg Ad(s)\vee\neg Ad(t),s\in t\vee s=t\vee t\in s$, which is the same as $\H[s,t]\dash{\Omega_\omega}{\Omega_\omega+1} \neg \big(Ad(s)\wedge Ad(t)\big),s\in t\vee s=t\vee t\in s$.
    Again by Lemma $\ref{derive2}.1$, we get $\H[s,t]\dash{\Omega_\omega}{\Omega_\omega+1} \neg \big(Ad(s)\wedge Ad(t)\big)\vee s\in t\vee s=t\vee t\in s$, which is equivalent to $\H[s,t]\dash{\Omega_\omega}{\Omega_\omega+1} Ad(s)\wedge Ad(t)\rightarrow s\in t\vee s=t\vee t\in s$.
    Therefore, since this last derivation holds for any $t,s$ with $|t|,|s|<\Omega_\omega$, we can apply $(b\forall)$ twice to obtain
    \begin{equation*}
        \H\dash{\Omega_\omega}{\Omega_\omega+3}\forall x\in\Lo\forall y\in \Lo\big(Ad(x)\wedge Ad(y)\rightarrow x\in y\vee x=y\vee y\in x\big).
    \end{equation*}
\end{proof}
$\setcounter{equation}{0}$
We continue with the third admissibility axiom. By Lemma $\ref{axiomsembed}$, we can derive in $\RS$ that any $\Ln$ satisfies all the axioms appearing in $(Ad3)$, except $\Delta_0$-Collection. This is given by the following Lemma, proved in \cite{tfm}. The embedding of this axiom uses the $\RS$-rule $(\mathrm{Ref_n})$. This is the only place where this rule is used.
\begin{Lemma}$\label{Coll}$
    Let $A(x,y)$ be any $\Delta_0$-formula of {\sf KPl} with free variables $x$ and $y$. Let $n<\omega$. Let $s$ be a term such that $|s|<\Omega_n$. Then,
    \begin{equation*}
        \Vdash \forall x\in s\exists y\in \mathbb L_{\Omega_n}(X)\ A(x,y)\rightarrow \exists z\in\mathbb{L}_{\Omega_n}(X)\forall x\in s\exists y\in z\ A(x,y).
    \end{equation*}
\end{Lemma}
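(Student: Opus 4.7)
The plan is to reduce the derivation to a single application of the reflection rule $(\mathrm{Ref}_n(X))$. I abbreviate the antecedent as $B \equiv \forall x \in s\,\exists y \in \Ln\, A(x,y)$ and the consequent as $C \equiv \exists z \in \Ln\,\forall x \in s\,\exists y \in z\, A(x,y)$. The crucial observation is that the {\sf KPl}-formula $F \equiv \forall x \in s\,\exists y\, A(x,y)$ is $\Sigma$ in the sense of Definition~\ref{ax}: $A$ is $\Delta_0$ by hypothesis and the only universal quantifier appearing in $F$ is bounded. Consequently $F^{\Ln} = B$ and $\exists z \in \Ln\, F^z = C$, so the target implication is literally of the shape $F^{\Ln} \to \exists z \in \Ln\, F^z$, which fits the template of the reflection rule.

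First I would start from $\Vdash B, \neg B$ (Lemma~\ref{derive2}.2) and apply Weakening (Lemma~\ref{weak}) to introduce $C$ as a side formula, raising the ordinal bound as needed. Then a single application of $(\mathrm{Ref}_n(X))$ with principal formula $C$, active formula $B = F^{\Ln}$, and side context $\{\neg B\}$ will yield a derivation of $\{\neg B, C\}$ at a suitable ordinal. Finally, Lemma~\ref{derive2}.1 will convert this into $\Vdash \neg B \vee C$, which is by Definition~\ref{equalconn} precisely $\Vdash B \to C$.

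The main technical point to verify is the ordinal bookkeeping built into the $\Vdash$ abbreviation: the reflection rule requires $\alpha_0, \Omega_n < \alpha$ for the ordinal $\alpha$ of the conclusion. Since $\Ln$ occurs as a subterm of $C$, Definition~\ref{rank} gives $\mathrm{rk}(C) \geq \mathrm{rk}(\Ln) \geq \Omega_n$, so the summand $\omega^{\mathrm{rk}(C)}$ appearing in $\#\{\neg B \vee C\}$ exceeds $\Omega_n$ and provides ample slack to accommodate the reflection step within the $\Vdash$-format. Since no cut is introduced anywhere in the argument, the cut-complexity bound trivially remains $0$. All remaining steps are routine invocations of lemmas already established in the section.
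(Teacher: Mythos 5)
Your strategy is the right one, and it is exactly the route the paper indicates for this lemma: the implication is literally of the form $F^{\Ln}\rightarrow\exists z\in\Ln\,F^z$ for the $\Sigma$-formula $F\equiv\forall x\in s\,\exists y\,A(x,y)$, so a single instance of $({\sf Ref}_n(X))$ applied to an excluded middle $\Vdash B,\neg B$ does essentially all the work (the paper only states this and defers the details to \cite{tfm}, noting that this is the unique use of the reflection rule).

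There is, however, a concrete slip in the ordinal bookkeeping of your last two steps. Since $|s|<\Omega_n$ and $A$ is $\Delta_0$, Definition \ref{rank} gives $\mathrm{rk}(\Ln)=\Gamma_{\theta+1}+\omega\cdot\Omega_n=\Omega_n$, hence $\mathrm{rk}(C)=\Omega_n$ but $\mathrm{rk}(B)=\Omega_n+2$, so $\#\{\neg B,C\}=\omega^{\Omega_n+2}+\omega^{\Omega_n}$, whereas Lemma \ref{derive2}.2 only provides $B,\neg B$ at depth $\#\{B,\neg B\}=\omega^{\Omega_n+2}\cdot 2$, which is already \emph{larger} than $\#\{\neg B,C\}$. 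Consequently the reflection inference (which must strictly increase the depth) cannot land at $\#\{\neg B,C\}$; you do not obtain $\Vdash\neg B,C$, and Lemma \ref{derive2}.1 is not applicable in the $\Vdash$-format at that point. Note also that $\#\{\neg B\vee C\}$ is the single term $\omega^{\mathrm{rk}(\neg B\vee C)}$, not a sum containing $\omega^{\mathrm{rk}(C)}$ as a summand, so the "slack" does not live where your last paragraph places it. The repair is straightforward: carry out the weakening, the $({\sf Ref}_n(X))$ inference and two explicit $(\vee)$-inferences with concrete bounds, arriving at $\neg B\vee C$ at depth $\omega^{\Omega_n+2}\cdot 2+3$, and only then weaken the depth up to $\#\{B\rightarrow C\}=\omega^{\mathrm{rk}(\neg B\vee C)}=\omega^{\Omega_n+3}$, which lies in $\H[B\rightarrow C]$ because $\Omega_n\in k(C)$ and operators are closed under $+$ and $\phi$. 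The slack sits in the rank of the final \emph{disjunction}, and with this rearrangement your argument does yield $\Vdash B\rightarrow C$ as required.
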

$\setcounter{equation}{0}$
We have everything we needed to embed $(Ad3)$.
\begin{Lemma}[Ad3]
$\label{Ad3}$
     $\H\dash{\Omega_\omega}{\Omega_\omega+1}\forall x\in \mathbb L_{\Omega_\omega}(X)[Ad(x)\rightarrow (Pair)^x\wedge (Union)^x\wedge (\Delta_0-Separation)^x\wedge(\Delta_0-Collection)^x]$.
\end{Lemma}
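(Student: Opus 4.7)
The plan is to follow the template of Lemmas $\ref{Ad1}$ and $\ref{Ad2}$. Abbreviate $\Phi(x):\equiv(Pair)^x\wedge(Union)^x\wedge(\Delta_0\text{-}Sep)^x\wedge(\Delta_0\text{-}Coll)^x$ and fix a term $t$ with $|t|<\Omega_\omega$; the goal is to derive $Ad(t)\to\Phi(t)$ uniformly in such $t$ and then close with $(b\forall)$. All quantifiers of $\Phi(x)$ are internally bounded by $x$, so no unrestricted quantifier appears and no extra relativization issue arises.

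The first step is to show $\Vdash\Phi(\Ln)$ for every $n<\omega$. Items 5, 6, and 3 of Lemma $\ref{axiomsembed}$ provide the required Pair, Union, and $\Delta_0$-Separation instances for arbitrary parameters of level below $\Omega_n$, and Lemma $\ref{Coll}$ supplies $\Delta_0$-Collection. Closing each of these under their parameters via $(b\forall)$ yields $\Vdash(Pair)^{\Ln}$, $\Vdash(Union)^{\Ln}$, $\Vdash(\Delta_0\text{-}Sep)^{\Ln}$, and $\Vdash(\Delta_0\text{-}Coll)^{\Ln}$, and three applications of $(\wedge)$ assemble these into $\Vdash\Phi(\Ln)$. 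For each such $n$, the Leibniz Principle (Lemma $\ref{axiomsembed}$.1) gives $\Vdash t\neq\Ln,\neg\Phi(\Ln),\Phi(t)$; weakening $\Vdash\Phi(\Ln)$ and applying $(Cut)$ on $\Phi(\Ln)$—whose rank is some ordinal below $\Omega_{n+1}$—produces $\H[t]\dash{\Omega_{n+1}}{\alpha_n}t\neq\Ln,\Phi(t)$ with $\alpha_n<\Omega_\omega$.

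The final stretch applies $(\neg Ad)$ over all $n\leq\omega$ to obtain $\H[t]\dash{\Omega_\omega}{\Omega_\omega}\neg Ad(t),\Phi(t)$; Lemma $\ref{derive2}$.1 rewrites this disjunction as $Ad(t)\to\Phi(t)$, and a final $(b\forall)$ (valid because $t$ was arbitrary with $|t|<\Omega_\omega$) yields the sequent with the claimed bounds $\Omega_\omega+1$ and cut-complexity $\Omega_\omega$.

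The main subtlety—and the step at which the analogous proofs of Lemmas $\ref{Ad1}$ and $\ref{Ad2}$ leave things implicit—is supplying the $n=\omega$ premise of $(\neg Ad)$: the Leibniz--Cut template fails for $n=\omega$ because Lemma $\ref{Coll}$ does not provide $\Vdash(\Delta_0\text{-}Coll)^{\Lo}$ (its proof invokes $({\sf Ref}_n)$, which exists only for $n<\omega$). The fix is to derive $\Vdash t\neq\Lo$ cut-freely: Lemma $\ref{Vdash}$.1 gives $\Vdash t\notin t$, and since $|t|<\Omega_\omega$ meets the side condition of $(b\exists)$ with witness $t$ for the term $\Lo=\mathbb L_{\Omega_\omega}(X)$ (where $s\,\dot\in\Lo\wedge B(s)$ reduces to $B(s)$ by Definition $\ref{dotin}$), one obtains $\Vdash\exists x\in\Lo(x\notin t)$, whence a single $(\vee)$ yields $\Vdash t\neq\Lo$. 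Weakening then supplies the missing premise of $(\neg Ad)$ at $n=\omega$, with ordinal bound well below $\Omega_\omega$ and cut-complexity $0$, so the final $(\neg Ad)$ step respects the overall bound $\Omega_\omega$.
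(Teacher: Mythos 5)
Your proposal is correct and follows essentially the same route as the paper's proof: derive $\Vdash\bigwedge({\sf Axioms})^{\Ln}$ for each $n<\omega$ from Lemmas \ref{axiomsembed} and \ref{Coll}, combine this with the Leibniz principle by a cut of rank below $\Omega_{n+1}$ to get $t\neq\Ln,\bigwedge({\sf Axioms})^t$, and close with $(\neg Ad)$, Lemma \ref{derive2}.1, and $(b\forall)$. Your extra handling of the $n=\omega$ premise of $(\neg Ad)$ addresses a detail the paper leaves implicit (its proof only invokes the premises for $n<\omega$, in line with Definition \ref{CA}), and your cut-free derivation of $t\neq\Lo$ via $\Vdash t\notin t$ and $(b\exists)$ is sound---just note that the ordinal bound must be tracked directly rather than read off from the $\Vdash$ relation, since $\#\{t\neq\Lo\}=\omega^{\mathrm{rk}(t\neq\Lo)}$ already exceeds $\Omega_\omega$.
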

\begin{proof}
    We need to derive 
    \begin{equation*}
        Ad(t)\rightarrow (Pair)^t\wedge (Union)^t\wedge (\Delta_0-Separation)^t\wedge(\Delta_0-Collection)^t
    \end{equation*}
    for every $t$ with $|t|<\Omega_\omega$ in order to apply $(b\forall)$ and obtain the desired derivation. So let $t$ be any term with $|t|<\Omega_\omega$. Fix a natural number $n$. 
    By Lemma $\ref{axiomsembed}$, we have
    \begin{equation}\label{ad31}
        \Vdash ({\sf Ax})^{\Ln}
    \end{equation}
    for every axiom ${\sf Ax}$ among Pair, Union, $\Delta_0$-Collection and $\Delta_0$-Separation. We will use the following abbreviation: given any term (or variable) $s$, we will write $\bigwedge ({\sf Axioms})^s$ to denote
    \begin{equation*}
        (Pair)^s\wedge (Union)^s\wedge (\Delta_0-Separation)^s\wedge(\Delta_0-Collection)^s
    \end{equation*}
    By \eqref{ad31} and applying $(\wedge)$ thrice, we get $\Vdash \bigwedge ({\sf Axioms})^{\Ln}$. We use Weakening (Lemma $\ref{weak}$) to obtain
    \begin{equation}\label{ad32}
        \Vdash t\neq\Ln, \bigwedge ({\sf Axioms})^t,\bigwedge ({\sf Axioms})^{\Ln}.
    \end{equation}
    On the other hand, by Lemma $\ref{axiomsembed}.1$ we have
    \begin{equation}\label{ad33}
        \Vdash t\neq\Ln, \bigwedge ({\sf Axioms})^t,\neg \bigwedge ({\sf Axioms})^{\Ln}.
    \end{equation}
    Therefore, for any operator $\H$, an application of $(Cut)$ on $\eqref{ad32}$ and \eqref{ad33} yields
    \begin{equation*}
        \H[t]\dash{\Omega_{n+1}}{\alpha}t\neq\Ln, \bigwedge ({\sf Axioms})^t,
    \end{equation*}
    where $\alpha=\#\{t\neq\Ln, \bigwedge ({\sf Axioms})^t\}$. Since this derivation holds for any $n<\omega$, an application of $(\neg Ad)$ yields $\H[t]\dash{\Omega_\omega}{\Omega_\omega}\neg Ad(t), \bigwedge ({\sf Axioms})^t$. By Lemma $\ref{derive2}.1$, we get for any operator $\H$ that $\H[t]\dash{\Omega_\omega}{\Omega_\omega}\neg Ad(t)\vee \bigwedge ({\sf Axioms})^t$, which is equivalent to $\H[t]\dash{\Omega_\omega}{\Omega_\omega}Ad(t)\rightarrow \bigwedge ({\sf Axioms})^t$. Therefore, we obtain $\H\dash{\Omega_\omega}{\Omega_\omega+1}\forall x\in\Lo[Ad(x)\rightarrow \bigwedge ({\sf Axioms})^x]$ by an application of $(b\forall)$.
\end{proof}
$\setcounter{equation}{0}$
Finally, we embed the limit axiom. We state a preliminary lemma.
\begin{Lemma}
    $\label{Ad}$
    For any natural number $n$ we have
    \begin{equation*}
        \Vdash Ad\big(\mathbb L_{\Omega_n}(X)\big).
    \end{equation*}
\end{Lemma}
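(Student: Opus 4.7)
The plan is to apply the $(Ad)$ rule with $t\equiv \mathbb{L}_{\Omega_n}(X)$. That rule was designed precisely to produce $Ad(\mathbb{L}_{\Omega_n}(X))$ as a principal formula, and its premise requires the sequent $\Gamma, Ad(t), t=\mathbb{L}_{\Omega_n}(X)$. With $t\equiv \mathbb{L}_{\Omega_n}(X)$ the nontrivial part of the premise collapses to the reflexive equality $\mathbb{L}_{\Omega_n}(X)=\mathbb{L}_{\Omega_n}(X)$, which has already been shown derivable in Lemma~$\ref{Vdash}$.4.

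Concretely, fix an arbitrary operator $\H$. Lemma~$\ref{Vdash}$.4 applied to the term $s\equiv \mathbb{L}_{\Omega_n}(X)$ gives $\Vdash \mathbb{L}_{\Omega_n}(X)=\mathbb{L}_{\Omega_n}(X)$, which unfolds to $\H[\mathbb{L}_{\Omega_n}(X)]\dash{0}{\alpha_0}\mathbb{L}_{\Omega_n}(X)=\mathbb{L}_{\Omega_n}(X)$ for $\alpha_0=\omega^{\mathrm{rk}(\mathbb{L}_{\Omega_n}(X)=\mathbb{L}_{\Omega_n}(X))}$. Weakening (Lemma~$\ref{weak}$) adjoins the passive formula $Ad(\mathbb{L}_{\Omega_n}(X))$ to the succedent and enlarges the controlling operator to $\H[Ad(\mathbb{L}_{\Omega_n}(X))]$ while keeping the cut-complexity bound at $0$. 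One then applies the $(Ad)$ rule with $t\equiv \mathbb{L}_{\Omega_n}(X)$ and the same natural number $n$ from the statement; following Reading Convention~$\ref{read2}$, the repeated principal formula in the premise is absorbed. This yields the desired derivation at depth $\#\{Ad(\mathbb{L}_{\Omega_n}(X))\}=\omega^{\mathrm{rk}(Ad(\mathbb{L}_{\Omega_n}(X)))}$, exactly what the $\Vdash$-relation requires.

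The only side condition of $(Ad)$ to check is the level inequality $\Omega_n\leq |t|$, which is immediate from $|\mathbb{L}_{\Omega_n}(X)|=\Gamma_{\theta+1}+\Omega_n$. There is essentially no obstacle: all the work has been done upstream. Lemma~$\ref{Vdash}$ already handles the derivability of reflexive equalities for arbitrary terms within the $\#$-budget, and the $(Ad)$ rule itself was built to convert such an equality premise into an $Ad$-conclusion in a single step. The resulting lemma will be invoked immediately afterward to embed the limit axiom $(Lim)$, where one needs membership in an admissible set together with the admissibility of that set.
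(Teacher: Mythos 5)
Your proof is correct and is essentially the paper's own argument: derive $\Vdash \mathbb{L}_{\Omega_n}(X)=\mathbb{L}_{\Omega_n}(X)$ from Lemma~\ref{Vdash}.4 and apply the $(Ad)$ rule with $t\equiv\mathbb{L}_{\Omega_n}(X)$. The extra bookkeeping you supply (weakening, the $\#$-budget, and the level condition $\Omega_n\leq|t|$) is all consistent with the intended reading of the $(Ad)$ rule via $\C\big(Ad(t)\big)$.
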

\begin{proof}
    Let $n<\omega$. By Lemma $\ref{Vdash}.4$, we have $\Vdash \mathbb L_{\Omega_n}(X)=\mathbb L_{\Omega_n}(X)$. We apply $(Ad)$ to obtain the result.
\end{proof}
We can embed the last axiom of {\sf KPl}.
\begin{Lemma}[Lim]
    $\label{Lim}$
    Let $\H$ be any operator. Then 
    \begin{equation*}
        \H\dash{0}{\Omega_\omega\cdot\omega^2}\forall x\in \mathbb L_{\Omega_\omega}(X)\exists y\in \mathbb L_{\Omega_\omega}(X)\big(Ad(y)\wedge x\in y\big).
    \end{equation*}
\end{Lemma}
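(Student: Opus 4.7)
The plan is to apply the rule $(b\forall)$ once, with each premise proved by explicitly producing an admissible witness for the inner existential quantifier; no cut is needed here, in contrast to the embeddings of Ad1--Ad3. The key point is that every term of level $<\Omega_\omega$ already lies in some $\Ln$ with $n<\omega$, and each such $\Ln$ is provably admissible in $\RS$ by Lemma $\ref{Ad}$. So the admissible witness is just $\Ln$ for a sufficiently large $n$.

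Concretely, I would fix an arbitrary term $t$ with $|t|<\Omega_\omega$ and pick $n<\omega$ such that $|t|<\Omega_n$; this is possible since $\sup_{n<\omega}\Omega_n=\Omega_\omega$. Then $|t|<\Gamma_{\theta+1}+\Omega_n=|\Ln|$, so Lemma $\ref{Vdash}.5$ gives $\Vdash t\in\Ln$, while Lemma $\ref{Ad}$ gives $\Vdash Ad(\Ln)$. Applying the $(\wedge)$ rule through Lemma $\ref{derive}$ yields $\Vdash Ad(\Ln)\wedge t\in\Ln$, and since $\Lo$ is an $\mathbb{L}$-term, Definition $\ref{dotin}$ identifies this with $\Vdash \Ln\,\dot\in\,\Lo \wedge (Ad(\Ln)\wedge t\in\Ln)$. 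A single application of $(b\exists)$ with witness $\Ln$---noting that $|\Ln|=\Gamma_{\theta+1}+\Omega_n<\Omega_\omega=|\Lo|$ and that the ordinal bound is easily arranged to exceed $|\Ln|$---then produces $\Vdash \exists y\in\Lo(Ad(y)\wedge t\in y)$ with some ordinal bound strictly below $\Omega_\omega$ (in fact below $\Omega_{n+1}$, up to finitely many successors).

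Since this derivation is uniform in $t$, a final application of $(b\forall)$ in the external context, for any operator $\H$, yields the desired sequent. The stated bound $\Omega_\omega\cdot\omega^2$ is just a convenient upper bound: each premise for the $(b\forall)$ has ordinal below $\Omega_{n+1}$ for some $n<\omega$ depending on the term $s$ being considered, and by Lemma $\ref{weak}$ we may uniformly weaken all of these to the target bound. The cut rank remains $0$ throughout, since no $(Cut)$ is ever applied. There is no real obstacle in this proof; the only side conditions to verify are those of $(b\exists)$ and $(b\forall)$, which reduce to the elementary observations that $|\Ln|<\Omega_\omega$ for every $n<\omega$ and that $\sup_{n<\omega}\Omega_n=\Omega_\omega$.
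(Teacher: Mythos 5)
Your proposal is correct and follows essentially the same route as the paper's proof: both derive $\Vdash Ad(\mathbb L_{\Omega_n}(X))$ and $\Vdash t\in\mathbb L_{\Omega_n}(X)$ for a suitable $n$ via Lemmas \ref{Ad} and \ref{Vdash}.5, combine them with $(\wedge)$, introduce the existential witness $\mathbb L_{\Omega_n}(X)$ by $(b\exists)$, and finish with $(b\forall)$, with no cuts. Your additional remarks on the $\dot\in$ identification and the side conditions of $(b\exists)$/$(b\forall)$ are details the paper leaves implicit but do not change the argument.
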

\begin{proof}
    Let $s$ be a term such that $|s|<\Omega_\omega$. Then there is $m<\omega$ such that $|s|<\Omega_{m}$. So let $n:=\min(m<\omega: |s|<\Omega_{n})$. It follows that $\Vdash s\in\mathbb L_{\Omega_{n}}(X)$ by Lemma $\ref{Vdash}.5$.\\
    On the other hand, by Lemma $\ref{Ad}$ we have 
    \begin{equation*}
        \Vdash Ad\big(\mathbb L_{\Omega_{n}}(X)\big).
    \end{equation*}
    We have the following derivation for any $|s|<\Omega_\omega$:
    \begin{center}
        \begin{prooftree}
            \hypo{\Vdash Ad\big(\mathbb L_{\Omega_{n}}(X)\big)}
            \hypo{\Vdash s\in\mathbb L_{\Omega_{n}}(X)}
            \infer[left label = $(\wedge)$]2{&\Vdash Ad\big(\mathbb L_{\Omega_{n}}(X)\big)\wedge s\in\mathbb L_{\Omega_{n}}(X)}
            \infer[left label = $(b\exists)$]1{&\Vdash\exists y\in\mathbb L_{\Omega_\omega}(X)\big(Ad(y)\wedge s\in y\big)}
            \infer[left label = $(b\forall)$]1{&\Vdash\forall x\in\mathbb L_{\Omega_\omega}(X)\exists y\in\mathbb L_{\Omega_\omega}(X)\big(Ad(y)\wedge x\in y\big)}
        \end{prooftree}
    \end{center}
    Since $\#\{(Lim)^{\mathbb L_{\Omega_\omega}(X)}\}=\omega^{\mathrm{rk}\big(\mathbb L_{\Omega_\omega}(X)\big)}=\omega^{\Omega_\omega+2}=\omega^{\Omega_\omega}\cdot\omega^2=\Omega_\omega\cdot \omega^2$, we eventually obtain that for any operator $\H$
    \begin{equation*}
        \H\dash{0}{\Omega_\omega\cdot \omega^2}(Lim)^{\mathbb L_{\Omega_\omega}(X)}.
    \end{equation*}
\end{proof}
$\setcounter{equation}{0}$
\subsection{The embedding Theorem}
We have successfully embedded all of the axioms of {\sf KPl} into the $\RS$-system. It is now time to state and show the full embedding theorem.

\begin{Theorem}
    $\label{Embed}$
    Let $\Gamma(a_1,\dots,a_n)$ be a finite set of formulas with all the free variables displayed such that ${\sf KPl}\vdash \Gamma(a_1,\dots,a_n)$. Then, there is some $m<\omega$ such that for any set $X$, for any operator $\H$ and any $\RS$-terms $s_1,\dots,s_n$ we have
    \begin{center}
        $\H[s_1,\dots,s_n]\dash{\Omega_\omega+m}{\Omega_\omega\cdot \omega^m}\Gamma(s_1,\dots,s_n)^{\mathbb L_{\Omega_\omega}(X)}$.
    \end{center}
\end{Theorem}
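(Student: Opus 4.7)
The plan is to proceed by induction on the length of a {\sf KPl}-derivation of $\Gamma(a_1,\dots,a_n)$. Given such a derivation, the embedded $\RS$-derivation will have the shape $\H[s_1,\dots,s_n]\dash{\Omega_\omega+m}{\Omega_\omega\cdot\omega^m}\Gamma(s_1,\dots,s_n)^{\Lo}$, where $m$ is read off recursively from the structure of the {\sf KPl}-proof: each inference increases $m$ by a bounded constant, and the {\sf KPl}-derivation has finite height, so a single $m$ suffices. Throughout, we treat the substitution $a_i\mapsto s_i$ uniformly and relativize all unbounded quantifiers in $\Gamma$ to $\Lo$.

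For the base cases, the logical axioms $\Gamma, A, \neg A$ are handled by Lemma \ref{derive2}.2 (to get $\Vdash A,\neg A$) followed by Weakening (Lemma \ref{weak}) to absorb $\Gamma$ and adjust operators; this contributes an extra $+1$ to $m$ for safety. The set-theoretic axioms of {\sf KPl}, relativized to $\Lo$, are derived from Lemma \ref{axiomsembed} (applied inside a $(b\forall)$ that ranges over $|t|<\Omega_\omega$, since each such $t$ lies in some $\Ln$ by $|t|<\Omega_n$ for an appropriate $n$). The admissibility axioms are exactly Lemmas \ref{Ad1}, \ref{Ad2}, \ref{Ad3}, while Lim is Lemma \ref{Lim}; all four already deliver bounds of the desired form.

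For the inductive steps we simulate each {\sf KPl}-rule in $\RS$. The propositional rules $(\wedge)$ and $(\vee)$ translate directly. The bounded-quantifier rules $(b\exists)$ and $(b\forall)$ translate by unfolding $s\dot\in t$ via Definition \ref{dotin}; the existential case additionally invokes Lemma \ref{Vdash}.5 to see that any witnessing term has level $<\Omega_\omega$. The unbounded-quantifier rules must pick up the relativization: a {\sf KPl}-step deriving $\Gamma,\forall x\ B(x)$ from $\Gamma,B(a)$ becomes a $(b\forall)$-step deriving $\Gamma^{\Lo},\forall x\in\Lo\ B(x)^{\Lo}$ from $\H[s]\dash{}{}\Gamma^{\Lo}, B(s)^{\Lo}$ for every term $s$ with $|s|<\Omega_\omega$, and each such premise comes from the induction hypothesis applied with $s$ substituted for the eigenvariable $a$ (using uniformity in the term parameters). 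The $(Cut)$-rule is handled by Lemma \ref{weak} to align operators and then a fresh $(Cut)$, noting that the cut formula is a relativized {\sf KPl}-formula and therefore has rank bounded by some $\Omega_\omega + k$.

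The main obstacle is the Class Induction scheme, which is not literally an $\RS$-rule: here one invokes Lemma \ref{axiomsembed}.2 with $n$ chosen large enough so that all the displayed terms have level below $\Omega_n$, and cuts the result against the induction hypothesis on the progressiveness premise. A secondary subtlety is bookkeeping: the ordinal length must stay of the form $\Omega_\omega\cdot\omega^m$ and the cut-rank of the form $\Omega_\omega+m$. This is maintained by observing that $\Omega_\omega\cdot\omega^m\#\Omega_\omega\cdot\omega^m<\Omega_\omega\cdot\omega^{m+1}$, so each inference consumes at most one unit of $m$, and that ranks of formulas occurring in a {\sf KPl}-derivation of $\Gamma$ (after relativization to $\Lo$) are uniformly bounded by $\Omega_\omega+k$ for some $k$ depending only on the syntactic complexity of the formulas appearing in the proof. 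Collecting the finitely many contributions across the {\sf KPl}-proof gives the desired single $m$.
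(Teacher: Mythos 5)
Your proposal follows essentially the same route as the paper's proof: induction on the {\sf KPl}-derivation, with the axioms discharged by Lemmas \ref{axiomsembed}, \ref{Ad1}--\ref{Ad3} and \ref{Lim}, each rule simulated by its $\RS$-counterpart (unfolding $\dot\in$ according to the form of the bounding term, relativizing unbounded quantifiers to $\Lo$, and substituting a fixed term such as $\overline\emptyset$ for eigenvariables not among the $a_i$), and the observation $\Omega_\omega\cdot\omega^m\#\Omega_\omega\cdot\omega^m<\Omega_\omega\cdot\omega^{m+1}$ keeping the bounds in the required shape. One small correction: in the Tait-style formulation Class Induction is an \emph{axiom}, not a rule, so it is discharged in the base case directly by Lemma \ref{axiomsembed}.2 --- there is no progressiveness premise in the {\sf KPl}-derivation to cut against.
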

\begin{proof}
    We fix a set $X$. We fix an operator $\H$ and terms $s_1,\dots,s_n$. We proceed by induction on the {\sf KPl} proof.\\
    If $\Gamma(a_1,\dots,a_n)$ is an axiom of {\sf KPl}, then we obtain the result by Lemmas $\ref{axiomsembed}$, \ref{Ad1}, \ref{Ad2}, \ref{Ad3} and \ref{Lim}.\\
    Now, we assume that $\Gamma(a_1,\dots,a_n)$ is obtained by a {\sf KPl} rule (R). So, if $\Delta(a_1,\dots,a_n)$ is the premise, or one of the premises, of this inference, then {\sf KPl} proves $\Delta(a_1,\dots,a_n)$. Therefore, by the induction hypothesis, there is $m<\omega$ such that
    \begin{equation*}
        \H[s_1,\dots,s_n]\dash{\Omega_\omega+m}{\Omega_\omega\cdot\omega^m}\Delta(s_1,\dots,s_n)^{\mathbb L_{\Omega_\omega}(X)}.
    \end{equation*}
    for any operator $\H$ and any terms $s_1,\dots,s_n$ of level below $\Omega_\omega$. This is the way we will reason for each case (cases correspond to {\sf KPl} rules).\\
    So, we fix an arbitrary operator $\H$ and arbitrary terms $s_1,\dots,s_n$ of level less than $\Omega_\omega$. To simply notation, we will write $\vec a=a_1,\dots,a_n$ and $\vec s=s_1,\dots, s_n$ (even though $\vec a$ is a vector of variables of {\sf KPl} and $\vec s$ is a vector of $\RS$-terms). As a reminder, when we write $A(\vec s)^{\mathbb L_{\Omega_\omega}(X)}$ we are meaning the formula $A$ replacing free variables by the terms in $\vec s$ and bounding all unrestricted quantifiers to $\mathbb L_{\Omega_\omega}(X)$.\\
    \ \\
    Case 1. We suppose that the last {\sf KPl} rule applied is $(\wedge)$. This means that 
    \begin{equation*}
        \Gamma(\vec a)=\Gamma'(\vec a), A(\vec a)\wedge B(\vec a),
    \end{equation*}
    for some {\sf KPl} formulas $A$ and $B$. Therefore, we have 
    \begin{equation}\label{emb1}
        {\sf KPl}\vdash\Gamma'(\vec a), A(\vec a)
    \end{equation}
    and
    \begin{equation}\label{emb2}
        {\sf KPl}\vdash\Gamma'(\vec a), B(\vec a)
    \end{equation}
    We apply the induction hypothesis to \eqref{emb1} to find some $m_0<\omega$ independent of $\vec s$ such that
    \begin{equation}\label{emb3}
        \H[\vec s]\dash{\Omega_\omega+m_0}{\Omega_\omega\cdot\omega^{m_0}}\Gamma'(\vec s)^{\mathbb L_{\Omega_\omega}(X)}, A(\vec s)^{\mathbb L_{\Omega_\omega}(X)}.
    \end{equation}
    We also apply the induction hypothesis to \eqref{emb2} to find some $m_1<\omega$ such that
    \begin{equation}\label{emb4}
        \H[\vec s]\dash{\Omega_\omega+m_1}{\Omega_\omega\cdot\omega^{m_1}}\Gamma'(\vec s)^{\mathbb L_{\Omega_\omega}(X)}, B(\vec s)^{\mathbb L_{\Omega_\omega}(X)}.
    \end{equation}
    We apply the $\RS$ rule $(\wedge)$ to \eqref{emb3} and \eqref{emb4} to obtain
    \begin{equation*}
        \H[\vec s]\dash{\Omega_\omega+\mathrm{max}(m_0,m_1)+1}{\Omega_\omega\cdot\omega^{\mathrm{max}(m_0+m_1)+1}}\Gamma'(\vec s)^{\mathbb L_{\Omega_\omega}(X)}, A(\vec s)^{\mathbb L_{\Omega_\omega}(X)}\wedge B(\vec s)^{\mathbb L_{\Omega_\omega}(X)},
    \end{equation*}
    which is exactly
    \begin{equation*}
        \H[\vec s]\dash{\Omega_\omega+\mathrm{max}(m_0,m_1)+1}{\Omega_\omega\cdot\omega^{\mathrm{max}(m_0+m_1)+1}}\Gamma'(\vec s)^{\mathbb L_{\Omega_\omega}(X)}, (A\wedge B)(\vec s)^{\mathbb L_{\Omega_\omega}(X)}.
    \end{equation*}
    Case 2. We suppose that the last {\sf KPl} rule applied is $(\vee)$. This means that
    \begin{equation*}
        \Gamma(\vec a)=\Gamma'(\vec a), A(\vec a)\vee B(\vec a),
    \end{equation*}
    for some {\sf KPl} formulas $A$ and $B$. By a reasoning similar to the one in Case 1, we get some $m<\omega$ independent of $\vec s$ such that
    \begin{equation*}
        \H[\vec s]\dash{\Omega_\omega+m}{\Omega_\omega\cdot\omega^m}\Gamma'(\vec s)^{\mathbb L_{\Omega_\omega}(X)},(A\vee B)(\vec s)^{\mathbb L_{\Omega_\omega(X)}}.
    \end{equation*}
    Case 3. We suppose that the last {\sf KPl} rule applied is $(b\forall)$. This means that 
    \begin{equation*}
        \Gamma(\vec a)=\Gamma'(\vec a), \forall x\in a_i\ A(x,\vec a)
    \end{equation*}
    for some {\sf KPl}-formula $A$ and some free variable $a_i$ with $i\in\{1,\dots,n\}$. Therefore, we have
    \begin{equation*}
        {\sf KPl}\vdash \Gamma'(\vec a), b\in a_i\rightarrow A(b,\vec a)
    \end{equation*}
    where $b\neq a_j$ for all $j\in\{1,\dots,n\}$. By the induction hypothesis we can find some $m<\omega$ independent of $\vec s$ such that 
    \begin{equation}\label{emb5}
        \H[r,\vec s]\dash{\Omega_\omega+m}{\Omega_\omega\cdot\omega^m}\Gamma'(\vec s)^{\mathbb L_{\Omega_\omega}(X)},r\in s_i\rightarrow A(r,\vec s)^{\mathbb L_{\Omega_\omega}(X)},
    \end{equation}
    for all terms $r$ with $|r|<|s_i|$. We would like to apply $(b\forall)$ to end this case. Nonetheless, we need to have $r\dot\in s\rightarrow A(r,\vec s)^{\mathbb L_{\Omega_\omega}(X)}$ instead of $r\in s\rightarrow A(r,\vec s)^{\mathbb L_{\Omega_\omega}(X)}$ in the premise to be able to do this. We split subcases based on the form of $s_i$. We omit the details of Subcases 3.1 and 3.2 and fully show Subcase 3.3.\\
    Subcase 3.1. We assume that $s_i\equiv\overline u$. Then, we have
    \begin{equation*}
        \H[\vec s]\dash{\Omega_\omega+m+1}{\Omega_\omega\cdot\omega^{m+1}}\Gamma'(\vec s)^{\mathbb L_{\Omega_\omega}(X)},\forall x\in \overline u\ A(x,\vec s)^{\mathbb L_{\Omega_\omega}(X)}.
    \end{equation*}
    Subcase 3.2. We assume that $s_i\equiv \mathbb L_\alpha(X)$. Then, we have
    \begin{equation*}
        \H[\vec s]\dash{\Omega_\omega+m+1}{\Omega_\omega\cdot\omega^{m+1}}\Gamma'(\vec s)^{\mathbb L_{\Omega_\omega}(X)}, \forall x\in \mathbb L_\alpha(X)\ A(x,\vec s)^{\mathbb L_{\Omega_\omega}(X)}.
    \end{equation*}
    Subcase 3.3. We assume that $s_i\equiv [x\in\mathbb L_\alpha(X)]$. We apply Lemma $\ref{vee}$ to \eqref{emb5} to get
    \begin{equation*}
        \H[r,\vec s]\dash{\Omega_\omega+m}{\Omega_\omega\cdot\omega^m}\Gamma'(\vec s)^{\Lo},r\notin s_i,A(r,\vec s)^{\Lo}.
    \end{equation*}
    Using Weakening (Lemma $\ref{weak}$), we obtain
    \begin{equation}\label{emb8}
        \H[r,\vec s]\dash{\Omega_\omega+m}{\Omega_\omega\cdot\omega^m}\Gamma'(\vec s)^{\Lo},r\notin s_i,A(r,\vec s)^{\Lo}, \neg B(r).
    \end{equation}
    On the other hand, by Lemma $\ref{derive2}.2$ we have
    \begin{equation*}\label{emb9}
        \Vdash \neg B(r),B(r).
    \end{equation*}
    and by Lemma $\ref{Vdash}.4$ we have
    \begin{equation*}\label{emb10}
        \Vdash r=r.
    \end{equation*}
    Since $|r|<|s_i|$, we have the following derivation
    \begin{equation*}
        \begin{prooftree}
            \hypo{\Vdash \neg B(r), B(r)}
            \hypo{r=r}
            \infer[left label = $(\wedge)$]2{&\Vdash \neg B(r), B(r)\wedge r=r}
            \infer[left label = $(\in)$]1{&\Vdash \neg B(r),r\in s_i}.
        \end{prooftree}
    \end{equation*}
    By Weakening (Lemma $\ref{weak}$), we get 
    \begin{equation}\label{emb11}
        \H[r,\vec s]\dash{\Omega_\omega+m}{\Omega_\omega\cdot\omega^m}\Gamma'(\vec s)^{\Lo},\neg B(r),r\in s_i,A(r,\vec s)^{\Lo}.
    \end{equation}
    Applying $(Cut)$ to \eqref{emb8} and \eqref{emb11} yields
    \begin{equation*}
        \H[r,\vec s]\dash{\Omega_\omega+m}{\Omega_\omega\cdot\omega^m+1}\Gamma'(\vec s)^{\Lo},\neg B(r),A(r,\vec s)^{\Lo}.
    \end{equation*}
    We have the following derivation
     \begin{equation*}
         \begin{prooftree}
             \hypo{&\H[r,\vec s]\dash{\Omega_\omega+m}{\Omega_\omega\cdot\omega^m+1}\Gamma'(\vec s)^{\Lo},\neg B(r),A(r,\vec s)^{\Lo}}
             \infer[left label = $(\vee)$]1{&\H[r,\vec s]\dash{\Omega_\omega+m}{\Omega_\omega\cdot\omega^m+2}\Gamma'(\vec s)^{\Lo},\neg B(r)\vee A(r,\vec s)^{\Lo},A(r,\vec s)^{\Lo}}
             \infer[left label = $(\vee)$]1{&\H[r,\vec s]\dash{\Omega_\omega+m}{\Omega_\omega\cdot\omega^m+3}\Gamma'(\vec s)^{\Lo},\neg B(r)\vee A(r,\vec s)^{\Lo},\neg B(r)\vee A(r,\vec s)^{\Lo}}
             \infer[left label = $\equiv$]1{&\H[r,\vec s]\dash{\Omega_\omega+m}{\Omega_\omega\cdot\omega^m+4}\Gamma'(\vec s)^{\Lo}, B(r)\rightarrow A(r,\vec s)^{\Lo}}
             \infer[left label = $(b\forall)$]1{&\H[r,\vec s]\dash{\Omega_\omega+m}{\Omega_\omega\cdot\omega^m+5}\Gamma'(\vec s)^{\Lo},\forall r\in s_i\ A(r,\vec s)^{\Lo}}
         \end{prooftree}
     \end{equation*}
     We use Lemma $\ref{weak}$ to obtain the final bounds:
     \begin{equation*}
         \H[r,\vec s]\dash{\Omega_\omega+m+5}{\Omega_\omega\cdot\omega^{m+5}}\Gamma'(\vec s)^{\Lo},\forall r\in s_i\ A(r,\vec s)^{\Lo}.
     \end{equation*}
     \ \\
     Case 4. We suppose that the last {\sf KPl} rule applied is $(\forall)$. This means that
     \begin{equation*}
         \Gamma(\vec a)=\Gamma'(\vec a),\forall x A(x,\vec a)
     \end{equation*}
    for some {\sf KPl} formula $A$. Therefore,
    \begin{equation*}
        {\sf KPl}\vdash \Gamma'(\vec a),A(c,\vec a),
    \end{equation*}
    with $c\neq a_i$ for all $i\in\{1,\dots, n\}$. By the induction hypothesis, we can find some $m<\omega$ independent of $\vec s$ such that
    \begin{equation*}
        \H[r,\vec s]\dash{\Omega_\omega+m}{\Omega_\omega\cdot\omega^m}\Gamma'(\vec s),A(r,\vec s)^{\Lo}\text{ for all terms $r$ with $|r|<\Omega_\omega$.}
    \end{equation*}
    We apply the $\RS$ rule $(b\forall)$ to get
    \begin{equation*}
        \H[\vec s]\dash{\Omega_\omega+m}{\Omega_\omega\cdot\omega^m+1}\Gamma'(\vec s),\forall x\in\Lo\ A(x,\vec s)^{\Lo}.
    \end{equation*}
     Using Lemma $\ref{weak}$ gives the desired bounds, as we obtain
     \begin{equation*}
         \H[\vec s]\dash{\Omega_\omega+m+1}{\Omega_\omega\cdot\omega^{m+1}}\Gamma'(\vec s),\forall x\in\Lo\ A(x,\vec s)^{\Lo}.
     \end{equation*}
     Case 5. We suppose that the last {\sf KPl} rule applied is $(b\exists)$. This means that 
     \begin{equation*}
         \Gamma(\vec a)=\Gamma'(\vec a), \exists x\in a_i A(x,\vec a)
     \end{equation*}
     for some {\sf KPl} formula $A$ and some free variable $a_i$, with $i\in\{1,\dots,n\}$. Therefore, we have
     \begin{equation*}
         {\sf KPl}\vdash \Gamma'(\vec a)^{\Lo},c\in a_i\wedge A(c,\vec a)
     \end{equation*}
     for some free variable $c$.
    We prove this case by splitting two subcases based on whether $c$ is $a_j$ for some $j\in\{1,\dots,n\}$ or not. We divide subcases based on the form of $s_i$. We refer the reader interested in the details to \cite{cookrathj}, Theorem 4.10 (case 3 of the proof, page 36).\\
    \ \\
    Case 6. We suppose that the last {\sf KPl} rule applied is $(\exists)$. This means that
    \begin{equation*}
        \Gamma(\vec a)=\Gamma'(\vec a),\exists x A(x,\vec a).
    \end{equation*}
    for some {\sf KPl} formula $A$. Therefore,
    \begin{equation}\label{6c1}
        {\sf KPl}\vdash \Gamma'(\vec a),A(c,\vec a).
    \end{equation}
    We distinguish two cases depending on whether $c=a_i$ for some $i\in\{1,\dots,n\}$.\\
    We suppose that $c=a_i$ with $i\in\{1,\dots,n\}$. Then, the derivation \eqref{6c1} is
    \begin{equation*}
        {\sf KPl}\vdash \Gamma'(\vec a),A(a_i,\vec a).
    \end{equation*}
    By the induction hypothesis, we can find some $m<\omega$ independent of $\vec s$ such that
    \begin{equation*}
        \H[\vec s]\dash{\Omega_\omega+m}{\Omega_\omega\cdot\omega^m}\Gamma'(\vec s),A(s_i,\vec s)^{\Lo}.
    \end{equation*}
    Since $|s_i|<\Omega_\omega$, we use the $\RS$ rule $(b\exists)$ to get
    \begin{equation*}
        \H[\vec s]\dash{\Omega_\omega+m}{\Omega_\omega\cdot\omega^m+1}\Gamma'(\vec s),\exists xA(x,\vec s)^{\Lo}.
    \end{equation*}
    Finally, by means of Lemma $\ref{weak}$, we are able to obtain the desired ordinal bounds, as follows
    \begin{equation*}
        \H[\vec s]\dash{\Omega_\omega+m+1}{\Omega_\omega\cdot\omega^{m+1}}\Gamma'(\vec s),\exists xA(x,\vec s)^{\Lo}.
    \end{equation*}
    We suppose now that $c\neq a_i$ for any $i\in\{1,\dots, n\}$. Since in the translation of {\sf KPl} formulas to $\RS$-formulas, free variables become terms in, we can assign to $c$ any term we want with level below $\Omega_\omega$, and so we choose $\overline\emptyset$ while applying the induction hypothesis, that produces an $m<\omega$ such that
    \begin{equation*}
        \H[\vec s]\dash{\Omega_\omega+m}{\Omega_\omega\cdot\omega^m}\Gamma'(\vec s),A(\overline{\emptyset},\vec s)^{\Lo}.
    \end{equation*}
    Here, we apply the $\RS$ rule $(\exists)$ to obtain
    \begin{equation*}
        \H[\vec s]\dash{\Omega_\omega+m}{\Omega_\omega\cdot\omega^m+1}\Gamma'(\vec s),\exists x\in\Lo A(x,\vec s)^{\Lo}.
    \end{equation*}
    Finally, by means of Lemma $\ref{weak}$, we obtain the desired ordinal bounds, as follows
    \begin{equation*}
        \H[\vec s]\dash{\Omega_\omega+m+1}{\Omega_\omega\cdot\omega^{m+1}}\Gamma'(\vec s),\exists x\in\Lo A(x,\vec s)^{\Lo}.
    \end{equation*}
    Case 7. We suppose that the last {\sf KPl} rule applied is $(Cut)$. This means that there is some {\sf KPl} formula $A(\vec a,b_1,\dots,b_k)$, where $b_1,\dots,b_k$ are all the free variables occuring in $A$ different from the free variables in $\vec a$, such that
    \begin{equation*}
        {\sf KPl}\vdash \Gamma(\vec a),A(\vec a,b_1,\dots,b_k)\text{ and }{\sf KPl}\vdash \Gamma(\vec a), 
        \neg A(\vec a,b_1,\dots,b_k).
    \end{equation*}
    Since the level of $\overline\emptyset$ is below $\Omega_\omega$, we can choose $\overline\emptyset$ as the term replacing $b_j$ in the $\RS$-formula $A^{\Lo}$ for all $j\in\{1,\dots,k\}$, and so by the induction hypothesis we can find $m_0,m_1<\Omega_\omega$ such that 
    \begin{equation}\label{cutt1}
        \H[\vec s]\dash{\Omega_\omega+m_0}{\Omega_\omega\cdot\omega^{m_0}}\Gamma(\vec s)^{\Lo},A(\vec s,\overline{\emptyset},\dots,\overline{\emptyset})^{\Lo}.
    \end{equation}
    and
    \begin{equation}\label{cutt2}
        \H[\vec s]\dash{\Omega_\omega+m_1}{\Omega_\omega\cdot\omega^{m_1}}\Gamma(\vec s)^{\Lo},\neg A(\vec s,\overline{\emptyset},\dots,\overline{\emptyset})^{\Lo}.
    \end{equation}
    We apply the $\RS$ rule $(Cut)$ to \eqref{cutt1} and \eqref{cutt2} and obtain
    \begin{equation*}
        \H[\vec s]\dash{\Omega_\omega+\mathrm{max}(m_0,m_1)}{\Omega_\omega\cdot\omega^{\mathrm{max}(m_0,m_1)+1}}\Gamma(\vec s)^{\Lo}.
    \end{equation*}
    \end{proof} 
 $\setcounter{equation}{0}$   
 By inspecting the proof of Theorem \ref{Embed}, we see that the $m$ appearing in the ordinal bounds of the theorem can be constructively retreived from the {\sf KPl}-proof of $\Gamma(a_1,\dots,a_n)$.
\section{The total set-recursive-from-\texorpdfstring{$\omega$}{omega} functions of {\sf KPl}} \label{SectTheoremMainKPl}
We are now ready to classify the total set-recursive-from-$\omega$ functions of {\sf KPl}. We will give a bound for $f(x)$, where $x$ is any set and $f$ is a provably total set-recursive-from-$\omega$ function in {\sf KPl}. By Slaman's theorem (\cite{slaman}), this means that {\sf KPl} proves that $f$ is total and uniformly $\Sigma$-definable with parameter $\omega$ in every admissible set. This bound will be $G_n(x)$, defined as follows. This definition depends on a fixed set $X$, following Reading Convention $\ref{read}$. This means that the ordinals $\Omega_n$ for $n\leq \omega$ and the functions $\psi_n$ for $n<\omega$ are also fixed and depend on $X$ and the set-theoretic rank $\theta$ of $X$.
\begin{definition}\index[Symbols]{$e_n$} \index[Symbols]{$G_n(X)$}$\label{en}$
    We define the ordinal $e_n$ by recursion on $n\in\omega$ as follows:
    \begin{itemize}
        \item $e_0=\Omega_\omega+1$,
        \item $e_{n+1}=\omega^{e_n}$.
    \end{itemize}
    Now, we define for each $n<\omega$ the set $G_n(x)=L_{\psi_0(e_{n+3})}(x)$.
\end{definition}
We observe that the definition of $e_n$ can be written in a $\Sigma_1$ way. The following  result follows the idea that conclusions of derivation can be seen as disjunctions.
\begin{Lemma}
     $\label{vee}$
     Let $\H$ be any operator. Let $\Gamma\cup\{A,B\}$ be a finite set of formulas. If $\H\dash{\rho}{\alpha}\Gamma, A\vee B$ then $\H\dash{\rho}{\alpha}\Gamma, A,B$.
\end{Lemma}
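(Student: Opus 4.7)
The natural approach is induction on the derivation length $\alpha$, following the same general template used for the Inversion Lemma (Lemma \ref{inversion}), but adapted to the $\bigvee$-type formula $A\vee B$. First I would verify the side conditions: since $k(A\vee B)=k(A)\cup k(B)$, any operator $\H$ controlling a derivation of $\Gamma, A\vee B$ automatically contains $k(\Gamma, A, B)$, so the claim $\H\dash{\rho}{\alpha}\Gamma, A, B$ is well-posed.

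The base case is routine: if $\Gamma, A\vee B$ is an axiom, then since $A\vee B$ is not basic, $\Gamma$ itself must already be an axiom, so $\Gamma, A, B$ is also one. For the inductive step, I would split on whether $A\vee B$ is the principal formula of the last inference.

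If $A\vee B$ is passive, I would apply the induction hypothesis to each premise $\H_i\dash{\rho}{\alpha_i}\Gamma_i, A\vee B$ (with $\alpha_i<\alpha$) to obtain $\H_i\dash{\rho}{\alpha_i}\Gamma_i, A, B$, and then reapply the same rule to conclude $\H\dash{\rho}{\alpha}\Gamma, A, B$. If instead $A\vee B$ is the principal formula, the rule used is $(\vee)$ and the single premise has the form $\H\dash{\rho}{\alpha_0}\Gamma, A\vee B, C$ for some $C\in\{A, B\}$, with $\alpha_0<\alpha$. Applying the induction hypothesis yields $\H\dash{\rho}{\alpha_0}\Gamma, A, B, C$, which is just $\H\dash{\rho}{\alpha_0}\Gamma, A, B$ (since $C$ is already among $A,B$). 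An invocation of Weakening (Lemma \ref{weak}) then lifts $\alpha_0$ up to $\alpha$ while keeping the cut bound at $\rho$.

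No genuine obstacle is expected here; the only subtle point is making sure that passing from $\Gamma, A\vee B$ to $\Gamma, A, B$ does not enlarge $k$ and thus does not require strengthening the controlling operator, which follows immediately from $k(A\vee B)=k(A)\cup k(B)$. The proof is almost entirely parallel to the passive-formula and principal-formula cases of the proof of Lemma \ref{inversion}, merely using that both $A$ and $B$ appear in the single premise pattern for a $\bigvee$-introduction of $A\vee B$.
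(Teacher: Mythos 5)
Your proof is correct and follows exactly the route the paper intends: the paper gives no details, stating only that the lemma ``is easily shown by induction on $\alpha$,'' and your case analysis (axiom, passive, principal via $(\vee)$ with a final Weakening to restore the bound $\alpha$) is the standard fleshing-out of that induction. The observation that $k(A\vee B)=k(A)\cup k(B)$ keeps the controlling operator unchanged is the right check and nothing further is needed.
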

This lemma is easily shown by induction on $\alpha$. The last preliminary result states that $B_0(e_{n+1})$ always contains $e_n$, and so $\psi_0(e_n)<\psi_0(e_{n+1})$ by Lemma \ref{311}.
\begin{Lemma}$\label{enlemma}$
    For every natural number $n$ we have
    \begin{equation*}
        e_n\in B_0(e_{n+1}).
    \end{equation*}
\end{Lemma}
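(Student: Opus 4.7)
The plan is to prove the statement by induction on $n$, leveraging the closure properties of the sets $B_0(\alpha)$ spelled out in Definition $\ref{psi}$ together with the monotonicity of $B_0$ given in Lemma $\ref{311}.1$.

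For the base case $n=0$ I would argue directly. We have $e_0 = \Omega_\omega+1$ and need to show $\Omega_\omega+1 \in B_0(e_1)$. By the very first clause $B_0^0(e_1)$, we already have $\Omega_\omega, 0 \in B_0(e_1)$. Since $B_0(e_1)$ is closed under $\phi$, the ordinal $1 = \phi 0\, 0$ (the least additive principal ordinal) belongs to $B_0(e_1)$; closure under $+$ then yields $\Omega_\omega+1 \in B_0(e_1)$.

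For the inductive step, assume $e_n \in B_0(e_{n+1})$ and aim to show $e_{n+1} \in B_0(e_{n+2})$. First observe that $e_{n+1} < e_{n+2}$, since $e_{n+2} = \omega^{e_{n+1}} > e_{n+1}$ (as $e_{n+1} > 0$, which itself follows from $e_0 = \Omega_\omega+1 > 0$ and an easy induction). By Lemma $\ref{311}.1$, $B_0(e_{n+1}) \subseteq B_0(e_{n+2})$, so the inductive hypothesis gives $e_n \in B_0(e_{n+2})$. Since $0 \in B_0(e_{n+2})$ and $B_0(e_{n+2})$ is closed under $\phi$, we conclude $e_{n+1} = \omega^{e_n} = \phi 0\, e_n \in B_0(e_{n+2})$, completing the induction.

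There is no real obstacle here; the only observations required are the trivial identity $\omega^\alpha = \phi 0\, \alpha$ and the monotonicity $B_0(\beta) \subseteq B_0(\alpha)$ for $\beta < \alpha$. Together with the closure of each $B_0(\alpha)$ under $0$, $+$ and $\phi$, the induction essentially writes itself. The lemma's role in the sequel is to guarantee the strict inequality $\psi_0(e_n) < \psi_0(e_{n+1})$ via the second item of Lemma $\ref{311}$, so no stronger statement is needed.
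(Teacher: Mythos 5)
Your proof is correct and follows essentially the same route as the paper's: induction on $n$, with the base case handled by noting $\Omega_\omega$ and $1$ lie in $B_0(e_1)$, and the inductive step using monotonicity $B_0(e_{n+1})\subseteq B_0(e_{n+2})$ together with closure under $\phi$ to get $e_{n+1}=\phi 0 e_n\in B_0(e_{n+2})$. Your explicit justification of $1=\phi 0\,0\in B_0(e_1)$ and of $e_{n+1}<e_{n+2}$ only makes the paper's argument slightly more detailed.
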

\begin{proof}
    We proceed by induction on $n$.\\
    For $n=0$, we have $e_0=\Omega_\omega+1\in B_0(e_1)$ since, by definition, we have $1,\Omega_\omega\in B_0(e_1)$.\\
    We suppose that $e_n\in B_0(e_{n+1})$. We have to show that $e_{n+1}\in B_0(e_{n+2})$. We have $e_{n+1}=\omega^{e_n}=\phi0e_n$. By the induction hypothesis,
    $e_n\in B_0(e_{n+1})\subseteq B_0(e_{n+2})$.
    It follows that $0,e_n\in B_0(e_{n+2})$, and therefore $e_{n+1}=\phi0e_n\in B_0(e_{n+2})$.
 
\end{proof}
We finally state and prove our main theorem. 
\begin{Theorem}[Main Theorem]
$\label{main}$
    Let $f$ be a provably total set-recursive-from-$\omega$ function in {\sf KPl}. Then, there is some $n<\omega$ such that
    \begin{center}
        $V\vDash \forall x\big(f(x)\in G_n(x)\big)$.
    \end{center}
\end{Theorem}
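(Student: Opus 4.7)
I propose to prove the Main Theorem by combining the Embedding Theorem, the Predicative Cut Elimination Theorem, the Boundedness Lemma, and the Collapsing Theorem. By Slaman's extension of Van de Wiele's theorem, the function $f$ admits a uniform $\Sigma_1$-definition of the form $\exists w\,F_0(a,y,\omega,w)$ in every admissible set, where $F_0$ is a $\Delta_0$-formula; ${\sf KPl}$-provable totality of $f$ then means
\[
{\sf KPl}\vdash \forall a\,\exists y\,\exists w\,F_0(a,y,\omega,w).
\]
Fix any set $x$ and apply Reading Convention~\ref{read} with $X:=x$, so that $\overline X$ is a basic term of level $\Gamma_\theta\in\H_0$. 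Applying the Embedding Theorem~\ref{Embed} will produce some $m<\omega$ that depends only on the ${\sf KPl}$-proof (and is thus uniform in $x$) such that
\[
\H_0 \dash{\Omega_\omega+m}{\Omega_\omega\cdot\omega^m}\bigl(\exists y\,\exists w\,F_0(\overline X,y,\underline\omega,w)\bigr)^{L_{\Omega_\omega}(X)}.
\]

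The heart of the argument will be to transform this derivation, via cut elimination and collapsing, into a cut-free derivation of bounded length below $\psi_0(e_{n+3})$ for a suitable $n<\omega$, from which the final bound on $f(x)$ can be extracted. First, I would exploit the fact that no $\Omega_k$ lies in the interval $[\Omega_\omega+1,\Omega_\omega+m)$ and apply the Predicative Cut Elimination Theorem~\ref{cut elim} iteratively (together with Weakening) to reduce the cut bound from $\Omega_\omega+m$ down to $\Omega_\omega+1$; the proof length grows to some ordinal $\gamma_0$ obtained by a finite iteration of Veblen functions applied to $\Omega_\omega\cdot\omega^m$, and in particular $\gamma_0$ lies in $B_0(e_j)$ for a specific $j$. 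Next, I plan to invoke the Boundedness Lemma~\ref{boundedness} to recast the conclusion as a $\Sigma^{\Omega_m}$-formula for a suitable $m<\omega$, and then apply the Collapsing Theorem~\ref{collapsing} with $n=\omega$ to collapse both cut bound and length to some $\psi_m(\omega^{\Omega_\omega+1+\gamma_0})<\Omega_m$. A further cycle of (Predicative Cut Elimination $+$ Boundedness $+$ Collapsing) will then be iterated, each round lowering the index of the collapsing function by one until reaching $\psi_0$; the terminal step (handling a $\Sigma^{\Omega_0}$-formula) will be covered by the Corollary at the end of Section~\ref{SectCutElimination}. The result will be a cut-free derivation of the existential formula with length $\delta$ satisfying $\delta\leq\psi_0(e_{n+3})$ for some $n<\omega$ depending only on $m$, the ``$+3$'' absorbing the ordinal overhead of the first collapse at the top level and the final conversion to a $\Sigma^{\Omega_0}$-formula.

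From this cut-free derivation of $\exists y\,\exists w\in L_\delta(X)\,F_0(\overline X,y,\underline\omega,w)$ of length $\delta\leq\psi_0(e_{n+3})$, I would then carry out a standard $\Sigma_1$-witness extraction argument: by induction on the derivation, tracing each application of $(b\exists)$ and noting that every witness term introduced has level strictly below the current derivation length. This will yield, in $V$, some $y\in L_\delta(X)\subseteq L_{\psi_0(e_{n+3})}(X)$ such that $\exists w\,F_0(x,y,\omega,w)$; because $f$ is a well-defined total function, this $y$ must equal $f(x)$, and hence $f(x)\in G_n(x)$, as required.

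The hardest part will be the ordinal bookkeeping in the iterative cut-reduction and the interplay with the Boundedness Lemma. At each stage one must verify that the new derivation length remains in the appropriate operator $\H_\eta$, that it drops below the next $\Omega_j$ so that a further application of the Collapsing Theorem is legitimate, and that the accumulated ordinal is expressible within $B_0(e_{n+3})$. Moreover, the Boundedness Lemma as formulated shrinks only outer bounds of the form $L_{\Omega_n}(X)$, so the iteration will require either a straightforward generalisation to arbitrary $L_\gamma(X)$-bounds, or else a careful orchestration of the cut-reduction so that each invocation of Boundedness occurs exactly at a step where the outer bound is already of the form $L_{\Omega_n}(X)$.
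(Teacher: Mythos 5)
There is a genuine gap in your proposal, and it concerns the level at which the existential quantifiers land after the embedding. Because you formalize totality as ${\sf KPl}\vdash\forall a\,\exists y\,\exists w\,F_0(a,y,\omega,w)$, the Embedding Theorem hands you the formula $\bigl(\exists y\,\exists w\,F_0\bigr)^{\mathbb{L}_{\Omega_\omega}(X)}$, which is a $\Sigma^{\Omega_\omega}$-formula. From there your route down to $\Omega_0$ does not go through: the Boundedness Lemma requires the derivation length $\alpha$ to satisfy $\alpha\leq\beta<\Omega_n$, but after the embedding (and a fortiori after predicative cut elimination) the length is at least $\Omega_\omega\cdot\omega^m>\Omega_\omega$, so you cannot invoke Boundedness at level $\omega$ to ``recast'' the formula as $\Sigma^{\Omega_m}$ for finite $m$; and the Collapsing Theorem is stated only for sets of $\Sigma^{\Omega_m}$-formulas with $m<\omega$, collapsing \emph{directly} to $\psi_m$ in a single application (the descent through the levels $\Omega_n$ is an induction \emph{inside its proof}, not an iteration available to the user). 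So the iterated ``lower the collapsing index by one each round'' scheme you describe has no theorem to instantiate, and if you stop at some $m>0$ you have only bounded the witness below $\Omega_m$, which is useless for placing $f(x)$ in $L_{\psi_0(e_{n+3})}(X)$.

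The missing idea is to exploit the \emph{uniformity} of the $\Sigma_1$-definition over admissible sets given by Slaman's theorem: ${\sf KPl}$ proves $Ad(b)\rightarrow[\forall x\in b\,\exists! y\in b\ A_f(x,y)^b]$, and it is \emph{this} statement one embeds. Instantiating the (embedded, $\mathbb{L}_{\Omega_\omega}(X)$-bounded) universal quantifier over $b$ at the term $\mathbb{L}_{\Omega_0}(X)$, cutting against $\Vdash Ad\bigl(\mathbb{L}_{\Omega_0}(X)\bigr)$ (Lemma \ref{Ad}), and applying Inversion twice leaves the formula $\exists y\in\mathbb{L}_{\Omega_0}(X)\,A_f(\overline X,y)^{\mathbb{L}_{\Omega_0}(X)}$, which is already $\Sigma^{\Omega_0}$. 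Then a single pass of Predicative Cut Elimination (cut rank down to $\Omega_\omega+1$, length $e_{m+1}$), one application of the Collapsing Theorem with $m=0$ (yielding rank and length $\psi_0(e_{m+2})$), a second Predicative Cut Elimination (cut-free, length $\phi\bigl(\psi_0(e_{m+2})\bigr)\bigl(\psi_0(e_{m+2})\bigr)<\psi_0(e_{m+3})$), Boundedness, and a soundness argument for cut-free derivations of $\Sigma^{\mathbb{L}_\delta(X)}$-formulas below $\Omega_0$ (essentially your witness-extraction step, which is fine) complete the proof. Your opening (Slaman's theorem, the Embedding Theorem, uniformity of $m$ in $x$) and your closing extraction step match the paper; the middle needs to be reorganized as above.
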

\begin{proof}
     By Slaman's theorem (\cite{slaman}), $f$ is, provably in {\sf KPl}, uniformly $\Sigma_1$ definable with parameter $\omega$ in any admissible set.
    Let $A_f(\cdot,\cdot)$ be the $\Sigma$-formula that defines $f$ in any admissible set, so that
\begin{equation}\label{equation:MainAss}
{\sf KPl}\vdash Ad(b)\rightarrow[\forall x\in b\exists!y\in b\ A_f(x,y)^b].    
\end{equation}
We fix a set $X$.
Let $\theta$ be the set-theoretic rank of $X$, as in Reading Convention $\ref{read}$. 
In particular, from \eqref{equation:MainAss} we see that {\sf KPl} proves that $f$ is total and $A_f\big(X,f(X)\big)$ is satisfied in $L_{\Omega_0}(X)$. 
     By Theorem $\ref{Embed}$, we have
    \begin{equation*}
        \H_0\dash{\Omega_\omega+m}{\Omega_\omega\cdot\omega^m}Ad\big(\mathbb L_{\Omega_0}(X)\big)\rightarrow \forall x\in \mathbb L_{\Omega_0}(X)\exists!y\in \mathbb L_{\Omega_0}(X) A_f(x,y)^{\mathbb L_{\Omega_0}(X)},
    \end{equation*}
    which is exactly
    \begin{equation*}
        \H_0\dash{\Omega_\omega+m}{\Omega_\omega\cdot\omega^m}\neg Ad\big(\mathbb L_{\Omega_0}(X)\big)\vee\forall x\in \mathbb L_{\Omega_0}(X)\exists! y\in \mathbb L_{\Omega_0}(X) A_f(x,y)^{\mathbb L_{\Omega_0}(X)}.
    \end{equation*}
    Therefore, by Lemma $\ref{vee}$ we obtain
    \begin{equation}\label{main1}
        \H_0\dash{\Omega_\omega+m}{\Omega_\omega\cdot\omega^m}\neg Ad\big(\mathbb L_{\Omega_0}(X)\big),\forall x\in \mathbb L_{\Omega_0}(X)\exists!y\in \mathbb L_{\Omega_0}(X) A_f(x,y)^{\mathbb L_{\Omega_0}(X)}.
    \end{equation}
    On the other hand, we have by Lemma $\ref{Ad}$ and Weakening (Lemma $\ref{weak}$)
    \begin{equation}\label{main2}
        \H_0\dash{\Omega_\omega+m}{\Omega_\omega\cdot\omega^m}Ad\big(\mathbb L_{\Omega_0}(X)\big),\forall x\in \mathbb L_{\Omega_0}(X)\exists!y\in \mathbb L_{\Omega_0}(X) A_f(x,y)^{\mathbb L_{\Omega_0}(X)}.
    \end{equation}
    We notice that Lemma $\ref{Ad}$ actually concerns the relation $\Vdash$, but since this relation means that the derivation can be controlled by any operator with bounds that are below the ones displayed in \eqref{main2}, we can write the derivation controlled by $\H_0$ with those ordinal bounds. We apply $(Cut)$ to \eqref{main1} and \eqref{main2} to obtain
    \begin{equation*}
        \H_0\dash{\Omega_\omega+m}{\Omega_\omega\cdot\omega^m}\forall x\in \mathbb L_{\Omega_0}(X)\exists!y\in \mathbb L_{\Omega_0}(X) A_f(x,y)^{\mathbb L_{\Omega_0}(X)}.
    \end{equation*}
    We notice that $\mathrm{rk}\Big(Ad\big(\mathbb L_{\Omega_0}(X)\big)\Big)=\Omega_0+5$ and so the complexity of the cuts has not been increased. We now apply Lemma $\ref{inversion}$ (Inversion) two times. First, since 
    \begin{equation*}
        \exists! y\in \mathbb L_{\Omega_0}(X) A_f(\overline X,y)^{\mathbb L_{\Omega_0}(X)}\in\C\big(\forall x\in\mathbb L_{\Omega_0}(X)\exists!y\in\mathbb L_{\Omega_0}(X)A_f(x,y)^{\mathbb L_{\Omega_0}(X)}\big),
    \end{equation*}
    we get
    \begin{equation*}
        \H_0\dash{\Omega_\omega+m}{\Omega_\omega\cdot\omega^m}\exists! y\in \mathbb L_{\Omega_0}(X) A_f(\overline X,y)^{\mathbb L_{\Omega_0}(X)}.
    \end{equation*}
    Now, since the symbol ``!'' abbreviates a conjunction, we finally obtain
    \begin{equation}\label{main3}
        \H_0\dash{\Omega_\omega+m}{\Omega_\omega\cdot\omega^m}\exists y\in \mathbb L_{\Omega_0}(X) A_f(\overline X,y)^{\mathbb L_{\Omega_0}(X)}.
    \end{equation}
    We are now going to eliminate cuts from the derivation \eqref{main3}. By Predicative Cut Elimination (Theorem $\ref{cut elim}$), we have
    \begin{equation*}
        \H_0\dash{\Omega_\omega+1}{e_{m+1}}\exists y\in\mathbb L_{\Omega_0}(X)A_f(\overline X,y)^{\mathbb L_{\Omega_0}(X)}.
    \end{equation*}
    By the Collapsing Theorem \ref{collapsing}, we get
    \begin{equation*}
        \H_{e_{m+2}}\dash{\psi_0(e_{m+2})}{\psi_0(e_{m+2})}\exists y\in\mathbb L_{\Omega_0}(X) A_f(\overline X,y)^{\mathbb L_{\Omega_0}(X)}.
    \end{equation*}
    Again by Theorem $\ref{cut elim}$ (Predicative Cut Elimination), we have
    \begin{equation*}
        \H_{e_{m+2}}\dash 0{\alpha}\exists y\in\mathbb L_{\Omega_0}(X)A_f(\overline X,y)^{\mathbb L_{\Omega_0}(X)}.
    \end{equation*}
    where $\alpha=\phi\big(\psi_0(e_{m+2})\big)\big(\psi_0(e_{m+2})\big)<\Omega_0$. By the Boundedness Lemma \ref{boundedness} we get 
    \begin{center}
        $\H_{e_{m+2}}\dash0\alpha \exists y\in\mathbb{L}_\alpha (X) A_f(\overline X,y)^{\mathbb{L}_\alpha (X)}$.
    \end{center}
    We shall see that this implies
    \begin{center}
        $L_\alpha(X)\vDash \exists y A_f(X,y)$.
    \end{center}
    Actually, we prove a more general result.
    \begin{claim}$\label{fin cl}$
    Let $A^{\mathbb L_\delta(X)}$ be a $\Sigma^{\mathbb L_\delta(X)}$-formula. Let $\beta,\delta<\Omega_0$. If $\H_\gamma\dash{0}{\beta}A^{\mathbb L_\delta(X)}$ then $L_\delta(X)\vDash A$.
    \end{claim}
    \renewcommand\qedsymbol{$\blacksquare$}
    \begin{proof}
    We fix $\delta$. We prove Claim $\ref{fin cl}$ by induction on $\beta$. We observe that, since $\beta<\Omega_0$, no reflection rule has been applied. Moreover, since the proof is cut-free, by the subformula property the predicate $Ad$ does not appear in the proof, and so the rules $(Ad)$ and $(\neg Ad)$ have not been applied. If $A^{\mathbb L_\delta(X)}$ is an axiom, then $A^{\mathbb L_\delta(X)}$ is a basic formula. We suppose that this formula is $\overline{u}\in\overline{v}$. This means that $u,v\in TC(\{X\})$ satisfy $u\in v$. But $TC(\{X\})\subseteq L_\delta(X)$. It follows that $L_\delta(X)\vDash A$.\\
    We assume that $A^{\mathbb L_\delta(X)}$ has been derived using a rule (R) different from $(Cut)$, $({\sf Ref}_{n})$, $(Ad)$ and $(\neg Ad)$. Then, we have $\H_\gamma[t_A(B)]\dash{0}{\beta_B}B^{\mathbb L_\delta(X)}$ for some/any premise $B^{\mathbb L_\delta(X)}\in \C\big(A^{\mathbb L_\delta(X)}\big)$. But all those premises are also $\Sigma^{\mathbb L_\delta(X)}$-formulas. By the induction hypothesis, $L_\delta(X)\vDash B$ for some/any $B$. Finally, apply the same rule $(R)$ but in $L_\delta(X)$ to obtain $L_\delta(X)\vDash A$.
    \end{proof}
    \renewcommand\qedsymbol{$\square$}
    Hence, using Claim $\ref{fin cl}$, we get $L_\alpha(X)\vDash \exists y\ A_f(X,y)$. This means that $f(x)\in L_\alpha(X)$.    
    Finally, $L_\alpha(X)\subseteq G_{m+3}(X)$. By Lemma $\ref{enlemma}$, we have
    \begin{equation*}
        e_{m+2}\in B_0(e_{m+3}).
    \end{equation*}
    It follows that $\psi_0(e_{m+2})<\psi_0(e_{m+3})$. Thus,
    \begin{equation*}
        \alpha=\phi\big(\psi_0(e_{m+2})\big)\big(\psi_0(e_{m+2})\big)<\psi_0(e_{m+3}).
    \end{equation*}
    Hence,
    \begin{equation*}
        L_\alpha(X)\subseteq L_{\psi_0(e_{m+3})}(X)=G_{m+3}(X),
    \end{equation*}
as desired.
\end{proof}
$\setcounter{equation}{0}$

A slight strengthening of Theorem \ref{main} will be obtained by combining this argument and the results of \S\ref{SectWellOrderingProofs}.

\section{The provably total set-recursive-from-\texorpdfstring{$\omega$}{omega} functions of \texorpdfstring{${\sf KPl^r}$}{KPlr} and \texorpdfstring{${\sf W-KPl}$}{W-KPl}} 
\label{SectTheoremMainWKPl}
In this section, we show two results similar to Theorem \ref{main} for ${\sf KPl^r}$ and {\sf W-KPl}.
\subsection{The provably total functions of \texorpdfstring{${\sf KPl^r}$}{KPlr}}\label{SectKPlr}
We classify the total set-recursive-from-$\omega$ functions of ${\sf KPl^r}$, which is ${\sf KPl}$ with the axiom of Induction restricted to $\Delta_0$-formulas, following the same reasoning as for {\sf KPl}. We will embed ${\sf KPl^r}$ into the same infinitary system introduced in \S$\ref{rs}$. Nonetheless, we will proceed in a slightly different way. In Theorem $\ref{totalr}$, we will show that if $f$ is provably total in ${\sf KPl^r}$ then $f(x)\in L_{\psi_0(\Omega_n)}(X)$ for some $n$, as the (relativized) proof-theoretic ordinal of ${\sf KPl^r}$ is $\psi_0(\Omega_\omega)$ (cf. \cite{pohlerspaper}). This means that we want to embed ${\sf KPl^r}$-proofs into the infinitary system with ordinal bounds $\Omega_n+k$ for some $n,k<\omega$. Thus, if ${\sf Ax}$ is an axiom of ${\sf KPl^r}$, we would like to have
\begin{equation}\label{comment}
    \H\dash{\Omega_n+1}{\Omega_n+k}({\sf Ax})^{\mathbb L_{\Omega_n}(X)}.
\end{equation}
for some $n,k<\omega$. Nonetheless, \eqref{comment} does not hold for the limit axiom (see Lemma $\ref{Lim}$). To avoid this problem, we will use another presentation of first-order logic.\\
$\setcounter{equation}{0}$
We introduce the one-sided sequent calculus $\dash{}{k}\Gamma$.
\begin{definition}$\label{fol}$
    For every $k<\omega$ and every finite set of formulas $\Gamma$ we define the relation $\dash{}{k}\Gamma$ as follows.\\
    \ \\
    (AxL) $\dash{}{k}\Gamma,A,\neg A$ for any $k<\omega$ and any finite set of formulas $\Gamma,A$.\\
    $(\vee)$ If $\dash{}{k_0}\Gamma, A\vee B, A$ or $\dash{}{k_0}\Gamma, A\vee B, B$ for some $k_0<k$ then $\dash{}{k}\Gamma, A\vee B$.\\
    $(\wedge)$ If $\dash{}{k_0}\Gamma, A\wedge B, A$ and $\dash{}{k_1} \Gamma, A\wedge B, B$ with $k_0,k_1<k$ then $\dash{}{k}\Gamma, A\wedge B$.\\
    $(\exists)$ If $\dash{}{k_0}\Gamma, \exists x\, A(x), A(t)$ with $k_0<k$ then $\dash{}{k}\Gamma, \exists x\ A(x)$.\\
    $(\forall)$ If $\dash{}{k_0}\Gamma, \forall x\, A(x), A(u)$ with $k_0<k$ and $u$ is not free in $\Gamma,\forall x\, A(x)$ then we have $\dash{}{k}\Gamma,\forall x\, A(x)$.
\end{definition}
By routine methods one establishes that this proof system is sound and complete for first-order logic, and so by the Deduction Theorem we have that
\begin{equation*}
    {\sf KPl^r}\vdash\Gamma\text{ if and only if }\dash{}{k}\neg A_1,\dots,\neg A_n,\Gamma
\end{equation*}
for some $k<\omega$ and some instances $A_1,\dots,A_n$ of axioms of ${\sf KPl^r}$. The main idea is that we want to ignore any instance of the axiom $(Lim)$ while embedding ${\sf KPl^r}$-derivations into the infinitary system. To do that, we go through the calculus introduced in Definition $\ref{fol}$. In particular, our first step is to show that we can embed first-order logic derivations of $\Sigma$-formulas into the infinitary system eliminating any instance of $\neg Lim$.
\begin{Lemma}$\label{neglim}$
    Let $\Gamma(\vec a)$ be a finite set of $\Sigma$-formulas. We suppose $\dash{}{k}\neg Lim,\Gamma(\vec a)$ for some $k<\omega$. Then, given any operator $\H$, for any $n<\omega$ there is $0<m<\omega$ such that, for any $\vec s$ with $|s_i|<\Omega_{n}$, we have
    \begin{equation*}
        \H[\vec s]\dash{\Omega_{n+m}+1}{\Omega_{n+m}+k}\Gamma(\vec s)^{\L_{\Omega_{n+m}}(X)}.
    \end{equation*}
\end{Lemma}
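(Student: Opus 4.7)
I would proceed by induction on the depth $k$ of the first-order derivation, strengthening the induction hypothesis to accommodate intermediate sequents that can contain the formula $\neg Lim$ itself together with its \emph{partial instantiations} that arise in a cut-free first-order proof: formulas of the form $\psi(b):=\forall y(\neg Ad(y)\vee b\notin y)$ and disjuncts of the form $\neg Ad(c)\vee b\notin c$, for variables $b,c$ occurring in $\vec a$. Concretely, the strengthened claim reads: if such a sequent is derivable in depth $k$, then for every $n$ there is $m$ such that the infinitary derivation of $\Gamma(\vec s)^{\L_{\Omega_{n+m}}(X)}$ holds with the stated bounds, where $\Gamma$ consists of the genuinely $\Sigma$-formulas in the sequent (those not subformulas of $\neg Lim$). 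The lemma itself is the special case $\Delta=\{\neg Lim\}\cup\Gamma$.

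The reason this strengthening works rests on the following rank computation, which I would establish first: for any term $r$ with $|r|<\Omega_n$, the infinitary translation $\psi(r)^{\L_{\Omega_{n+m}}(X)}$, i.e.\ the formula $\forall y\in\L_{\Omega_{n+m}}(X)(\neg Ad(y)\vee r\notin y)$, has rank exactly $\Omega_{n+m}$ (the bounding term dominates, while $r$ contributes rank $<\Omega_n+\omega$). Hence such a formula is precisely in the cut budget strictly below $\Omega_{n+m}+1$. Moreover, its negation $\exists y\in\L_{\Omega_{n+m}}(X)(Ad(y)\wedge r\in y)$ is derivable in $\RS$ with ordinal length well below $\Omega_{n+m}$: pick the witness $\L_{\Omega_n}(X)$, use Lemma \ref{Ad} for $Ad(\L_{\Omega_n}(X))$ and Lemma \ref{Vdash}.5 for $r\in\L_{\Omega_n}(X)$, then combine via $(\wedge)$ and $(b\exists)$ (which applies since $\L_{\Omega_n}(X)\in\L_{\Omega_{n+m}}(X)$ for $m\geq 1$). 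An analogous refutation handles the doubly-instantiated disjuncts $\neg Ad(\L_{\Omega_n}(X))\vee b\notin\L_{\Omega_n}(X)$, whose rank sits well below $\Omega_{n+m}$.

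The inductive case analysis then follows the pattern of the Embedding Theorem \ref{Embed}. For rules acting on $\Sigma$-formulas in $\Gamma$, we translate variables in $\vec a$ by the given $\vec s$ and any fresh variable by $\overline\emptyset$, applying the corresponding $\RS$ rule; the bounds evolve by $1$ in length each time, giving $\Omega_{n+m}+k$. For rules acting on $\neg Lim$ (the $(\exists)$-rule with a variable witness $t$), on $\psi(b)$ (the $(\forall)$-rule introducing a fresh variable $u$), or on the doubly-instantiated disjuncts (the $(\vee)$-rule), we translate the new variable to a term of level below $\Omega_n$ (e.g.\ $\overline\emptyset$) or to $\L_{\Omega_n}(X)$ as appropriate, apply the inductive hypothesis to the premise (which already absorbs the new subformula into the non-$\Gamma$ part of the sequent), and then cut out the absorbed subformula using the refutations described above. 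Weakening (Lemma \ref{weak}) is used to harmonise ordinal bounds throughout.

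The main obstacle is the \emph{bridging axiom} case, where the two active formulas $A,\neg A$ of a logical axiom straddle the $\Sigma$-part of the sequent and the $\neg Lim$-part, e.g.\ an axiom $Ad(u),\neg Ad(u)$ with $Ad(u)$ arising as a subformula of some $\Gamma$-formula while $\neg Ad(u)$ is a subformula of $\neg Lim$. To handle this, one must commit, at the outset of the embedding, to a consistent translation of each ``bridging'' variable (typically to $\L_{\Omega_n}(X)$, which makes the $Ad$-side derivable and the $\neg Ad$-side refutable), and then propagate these choices uniformly throughout the induction. The bookkeeping is routine but is the only non-mechanical ingredient of the argument.
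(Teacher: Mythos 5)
Your proposal is correct in outline and shares the paper's two essential ingredients: the induction on the depth $k$ that mimics the Embedding Theorem for the $\Sigma$-rules, and the observation that the translated trace of $\neg Lim$ is eliminated by cutting against $\exists y\in\mathbb{L}_{\Omega_{n+1}}(X)\big(Ad(y)\wedge s_i\in y\big)$, derivable with witness $\mathbb{L}_{\Omega_n}(X)$ via Lemmas \ref{Ad} and \ref{Vdash}.5 — which is exactly why the index must increase by at least one. Where you genuinely diverge is in the bookkeeping of the partial instantiations of $\neg Lim$: you strengthen the induction hypothesis to carry $\forall y(\neg Ad(y)\vee b\notin y)$ and its disjuncts as separately tracked formulas dropped from the conclusion, which pushes the real work into the axiom cases (your ``bridging axioms'') and forces a committed translation of eigenvariables to $\mathbb{L}_{\Omega_n}(X)$. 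The paper sidesteps all of this by noting that the disjunct $\neg Ad(b)\vee a_i\notin b$ is itself $\Delta_0$, hence $\Sigma$: after a first-order inversion stripping the $\forall y$ from the premise of the $(\exists)$-inference on $\neg Lim$, the disjunct is absorbed into the set $\Gamma$ of $\Sigma$-formulas and the induction hypothesis — applied at level $n+1$ so that the quantifier may later be instantiated at $\mathbb{L}_{\Omega_n}(X)$ — applies verbatim; the formula is then reassembled by $(b\forall)$ into $\forall y\in\mathbb{L}_{\Omega_{n+1}}(X)(\neg Ad(y)\vee s_i\notin y)$, of rank $\Omega_{n+1}$, and removed by a single cut. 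This keeps every logical axiom in the excluded-middle case (Lemma \ref{derive2}.2) irrespective of how variables are translated, so no bridging analysis is needed. Your route is workable and your explicit rank computation makes the cut-budget check more transparent, but the paper's organization buys a shorter argument and a uniform base case.
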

\begin{proof}
    We proceed by induction on $k$. If $\Gamma(\vec a)$ is a logical axiom, then the result holds.\\
    We first assume that the principal formula of the last rule applied is $A(\vec a)$, with $\Gamma(\vec a)=\Gamma'(\vec a), A(\vec a)$. We distinguish cases based on the rule.\\
    \ \\
    Case 1. We assume the last rule applied is $(\vee)$. This means that $\Gamma(\vec a)$ is $\Gamma'(\vec a), A(\vec a)\vee B(\vec a)$ and we have the premise, say, 
    \begin{equation*}
        \dash{}{k_0}\neg Lim, \Gamma'(\vec a),A(\vec a).
    \end{equation*}
    By the induction hypothesis, we get
    \begin{equation*}
        \H[\vec s]\dash{\Omega_{n+m}+1}{\Omega_{n+m}+k_0}\Gamma'(\vec s)^{\L_{\Omega_{n+m}}(X)},A(\vec s)^{\L_{\Omega_{n+m}}(X)}.
    \end{equation*}
    An application of $(\vee)$ gives
    \begin{equation*}
        \H[\vec s]\dash{\Omega_{n+m}+1}{\Omega_{n+m}+k}\Gamma'(\vec s)^{\L_{\Omega_{n+m}}(X)},A(\vec s)^{\L_{\Omega_{n+m}}(X)}\vee B(\vec s)^{\L_{\Omega_{n+m}}(X)}.
    \end{equation*}
    \ \\
    Case 2. We assume the last rule applied is $(\wedge)$. Then the argument is analogous to Case 1.\\
    \ \\
    Case 3. We assume the last rule applied is $(\exists)$. We distinguish two subcases based on whether the existential quantifier is restricted or not.\\
    Subcase 3.1. We suppose that the quantifier is unrestricted, so we have 
    \begin{equation*}
        \dash{}{k}\neg Lim,\Gamma'(\vec a),\exists x\ A(x,\vec a).
    \end{equation*}
    Then, we have the premise
    \begin{equation*}
        \dash{}{k_0}\neg Lim,\Gamma'(\vec a),A(b,\vec a).
    \end{equation*}
    By the induction hypothesis, we obtain
    \begin{equation*}
        \H[\vec s]\dash{\Omega_{n+m}+1}{\Omega_{n+m}+k_0}\Gamma'(\vec s)^{\L_{\Omega_{n+m}}(X)},A(\vec s)^{\L_{\Omega_{n+m}}(X)}.
    \end{equation*}
    for any $\vec s\in \L_n(X)$. Therefore, by applying $(b\exists)$, we get
    \begin{equation*}
        \H[\vec s]\dash{\Omega_{n+m}+1}{\Omega_{n+m}+k}\Gamma'(\vec s)^{\L_{\Omega_{n+m}}(X)},\exists x\in\L_{\Omega_{n+m}}(X)\ A(\vec s)^{\L_{\Omega_{n+m}}(X)}.
    \end{equation*}
    Subcase 3.2. We suppose that the quantifier is restricted, so we have 
    \begin{equation*}
        \dash{}{k}\neg Lim,\Gamma'(a),\exists x(x\in a_i\wedge A(x,\vec a)).
    \end{equation*}
    Therefore, we have the premise
    \begin{equation*}
        \dash{}{k_0}\neg Lim,\Gamma'(\vec a),x\in a_i\wedge A(x,\vec a).
    \end{equation*}
    By the induction hypothesis, we have
    \begin{equation}\label{kplrlem1}
        \H[s,\vec s]\dash{\Omega_{n+m}+1}{\Omega_{n+m}+k_0}\Gamma(\vec s)^{\L_{\Omega_{n+m}}(X)},s\in s_i\wedge A(s,\vec s).
    \end{equation}
    At this point, we want to use the rule $(b\exists)$. We recall that this rule has three different forms, according to Definition $\ref{rules}$ and Example $\ref{exC}$. So we have to distinguish cases based on the form of $s_i$.\\
    If $s_i\equiv \overline u$, we get the result from \eqref{kplrlem1} applying $(b\exists)$. Otherwise, by Lemma $\ref{inversion}$ on \eqref{kplrlem1}, we get
    \begin{equation}\label{rlem2}
        \H[s,\vec s]\dash{\Omega_{n+m}+1}{\Omega_{n+m}+k_0}\Gamma'(\vec s)^{\L_{\Omega_{n+m}}(X)},s\in s_i
    \end{equation}
    and
    \begin{equation}\label{rlem3}
        \H[s,\vec s]\dash{\Omega_{n+m}+1}{\Omega_{n+m}+k_0}\Gamma'(\vec s)^{\L_{\Omega_{n+m}}(X)},A(s,\vec s).
    \end{equation}
    If $s_i\equiv \L_\beta(X)$ for some $\beta$, then by $(b\exists)$ on \eqref{rlem2} we get the result.\\
    We assume now that $s_i\equiv[z\in \L_\beta(X):B(z)]$. To apply $(b\exists)$, we need to have $\H[s,\vec s]\dash{}{}\Gamma'(\vec s)^{\L_{\Omega_{n+m}}},B(s)\wedge A(s,\vec s)$. We need the following claim.
    \begin{claim}$\label{claimyb}$
        Let $\H$ be any operator. Let $\Delta, C(a)$ be any finite set of formulas. Let $\alpha$ be any ordinal and let $\rho>\max\{\mathrm{rk}(C(\mathbb L_\delta(X)))\}$.
        \begin{equation*}
            \text{If }\H\dash{\rho}{\alpha}\Delta, t\in\{x\in\L_\delta(X):C(x)\}\text{ then }\H\dash{\rho}{\alpha}\Delta,C(t).
        \end{equation*}
    \end{claim}
    \renewcommand\qedsymbol{$\blacksquare$}
    \begin{proof}
    Claim $\ref{claimyb}$ can be proved by induction on $\alpha$. We consider the only interesting case where the principal formula in the last derivation of $\H\dash{\rho}{\alpha}\Delta, t\in\{x\in\L_\delta(X):C(x)\}$ is $t\in\{x\in\L_\delta(X):C(x)\}$. This means that we have for some $\alpha_0<\alpha$ and some term $r$ with $|r|<\mathrm{rk}(\mathbb L_\delta(X))$
    \begin{equation*}
        \H\dash{\rho}{\alpha_0}\Delta, t=r\wedge C(r).
    \end{equation*}
    By Inversion and Weakening, we get
    \begin{equation*}
        \H\dash{\rho}{\alpha_0}\Delta, C(r), t\neq r, C(t).
    \end{equation*}
    But using Leibniz principle, by Lemma $\ref{axiomsembed}$, we also have
    \begin{equation*}
        \H\dash{\rho}{\alpha_0}\Delta, t=r,C(t),\neg C(r).
    \end{equation*}
    So we use $(Cut)$ to obtain
    \begin{equation*}
        \H\dash{\rho}{\alpha_0}\Delta, C(t).
    \end{equation*}
    The last application of $(Cut)$ shows why we need the condition on $\rho$ in the hypothesis of the claim.
    \end{proof}
    \renewcommand\qedsymbol{$\square$}
    So from \eqref{rlem2} we obtain using Claim $\ref{claimyb}$
    \begin{equation}\label{rlem4}
        \H[s,\vec s]\dash{\Omega_{n+m}+1}{\Omega_{n+m}+k_0}\Gamma'(\vec s)^{\L_{\Omega_{n+m}}(X)},B(s).
    \end{equation}
    An application of $(\wedge)$ on \eqref{rlem3} and \eqref{rlem4} yields
    \begin{equation*}
        \H[s,\vec s]\dash{\Omega_{n+m}+1}{\Omega_{n+m}+k}\Gamma'(\vec s)^{\L_{\Omega_{n+m}}(X)},B(s)\wedge A(s,\vec s).
    \end{equation*}
    Finally, we apply $(b\exists)$ to obtain
    \begin{equation*}
        \H[s,\vec s]\dash{\Omega_{n+m+1}+1}{\Omega_{n+m+1}+k}\Gamma'(\vec s)^{\L_{\Omega_{n+m}}(X)},\exists x\in s\ A(s,\vec s).
    \end{equation*}
    Case 4. We assume the last rule applied is $(\forall)$. Then, the quantifier introduced by this rule must be restricted since $\Gamma(\vec a)$ is a set of $\Sigma$-formulas. Therefore, we have $\Gamma(\vec a)=\Gamma'(\vec a),\forall y\in a_i\ A(y,\vec a)$, and we have
    \begin{equation*}
        \dash{}{k}\neg Lim,\Gamma'(\vec a),\forall y\in a_i\ A(y,\vec a)
    \end{equation*}
    with the premise
    \begin{equation*}
        \dash{}{k_0}\neg Lim,\Gamma'(\vec a),b\in a_i\rightarrow A(b,\vec a)
    \end{equation*}
    with $b\neq a_j$ for all $j$ and with $k_0<k$. Therefore, by the induction hypothesis, we get
    \begin{equation}\label{rlem5}
        \H[t,\vec s]\dash{\Omega_{n+m}+1}{\Omega_{m+n}+k_0}\Gamma'(\vec s)^{\L_{\Omega_{n+m}}(X)},t\in s_i\rightarrow A(t,\vec s)^{\L_{\Omega_{n+m}}(X)}
    \end{equation}
    for all $|t|<\Omega_n$. Now, we separate cases based on the form of $s_i$.\\
    Subcase 4.1. If $s_i\equiv \overline{u}$, then \eqref{rlem5} is exactly
    \begin{equation*}
        \H[t,\vec s]\dash{\Omega_{n+m}+1}{\Omega_{m+n}+k_0}\Gamma'(\vec s)^{\L_{\Omega_{n+m}}(X)},t\dot\in s_i\rightarrow A(t,\vec s)^{\L_{\Omega_{n+m}}(X)}.
    \end{equation*}
    By an application of $(b\forall)$, we obtain
    \begin{equation*}
        \H[t,\vec s]\dash{\Omega_{n+m}+1}{\Omega_{m+n}+k}\Gamma'(\vec s)^{\L_{\Omega_{n+m}}(X)},\forall y\in s_i\ A(y,\vec s)^{\L_{\Omega_{n+m}}(X)}.
    \end{equation*}
    Subcase 4.2. We suppose $s_i\equiv \L_\beta(X)$ for some $\beta<\Omega_n$. Then, by Lemma $\ref{vee}$ on \eqref{rlem5}, we get
    \begin{equation}\label{rlem6}
        \H[t,\vec s]\dash{\Omega_{n+m}+1}{\Omega_{m+n}+k_0}\Gamma'(\vec s)^{\L_{\Omega_{n+m}}(X)},\neg t\in\mathbb L_\beta(X),A(t,\vec s)^{\L_{\Omega_{n+m}}(X)}.
    \end{equation}
    This holds for all $|t|<\Omega_n$, and so, in particular, for all $|t|<\omega\cdot\beta$. Moreover, we also have for all $|t|<\omega\cdot\beta$
    \begin{equation}\label{rlem7}
        \H[t,\vec s]\dash{\Omega_{n+m}+1}{\Omega_{m+n}+k_0}\Gamma'(\vec s)^{\L_{\Omega_{n+m}}(X)}, t\in\mathbb L_\beta(X),A(t,\vec s)^{\L_{\Omega_{n+m}}(X)}.
    \end{equation}
    Hence, by $(Cut)$ on \eqref{rlem6} and \eqref{rlem7}, we obtain
    \begin{equation*}
        \H[t,\vec s]\dash{\Omega_{n+m}+1}{\Omega_{m+n}+k}\Gamma'(\vec s)^{\L_{\Omega_{n+m}}(X)},A(t,\vec s)^{\L_{\Omega_{n+m}}(X)}.
    \end{equation*}
    An application of $(b\forall)$ yields
    \begin{equation*}
        \H[\vec s]\dash{\Omega_{n+m}+1}{\Omega_{m+n}+k+1}\Gamma'(\vec s)^{\L_{\Omega_{n+m}}(X)}, \forall y\in\mathbb L_\beta(X)\ A(y,\vec s)^{\L_{\Omega_{n+m}}(X)}.
    \end{equation*}
    Subcase 4.3. We suppose $s_i\equiv [x\in\mathbb L_\beta(X):B(x)]$. Then, \eqref{rlem5} is exactly
    \begin{equation}\label{rlem8}
         \H[t,\vec s]\dash{\Omega_{n+m}+1}{\Omega_{m+n}+k_0}\Gamma'(\vec s)^{\L_{\Omega_{n+m}}(X)},\neg t\in[x\in \mathbb L_\beta(X):B(x)]\vee A(t,\vec s)^{\L_{\Omega_{n+m}}(X)}
    \end{equation}
    for all $|t|<\Omega_n$ and so, in particular, for all $|t|<\omega\cdot\beta$. We will use the following claim.
    \begin{claim}\label{negb}
        Let $|t|<\omega\cdot\beta$. Let $\H$ be any operator. Let $\alpha$ be any ordinal and let $\rho>\omega\cdot\beta$. If $\H\dash{\rho}{\alpha}\Delta,\neg t\in[x\in \mathbb L_\beta(X):B(x)]\vee A$ then $\H\dash{\rho}{\alpha}\Delta,\neg B(t)\vee A$.
    \end{claim}
    \renewcommand\qedsymbol{$\blacksquare$}
    \begin{proof}
    We prove Claim \ref{negb} by induction on $\alpha$. If $\Delta$ is an axiom, then the result holds. If the last rule applied has not principal formula $\neg t\in[x\in \mathbb L_\beta(X):B(x)]\vee A$, then we apply the induction hypothesis to the premise(s) and use the rule again.\\
    We suppose that $\neg t\in[x\in \mathbb L_\beta(X):B(x)]\vee A$ is the principal formula of the last inference.\\
    If the premise is $\H\dash{\rho}{\alpha_0}\Delta, A$, then we apply $(\vee)$ to obtain the result. So we assume the premise is $\H\dash{\rho}{\alpha_0}\Delta,\neg [t\in\mathbb L_\beta:B(x)]$. The premises of this derivation are $\H\dash{\rho}{\alpha_0'}\Delta,B(r)\rightarrow r\neq t$ for all $|r|<\omega\cdot\beta$. In particular, we have
    \begin{equation*}
        \H\dash{\rho}{\alpha_0'}\Delta,B(t)\rightarrow t\neq t,
    \end{equation*}
    from what we get by Lemma $\ref{vee}$
    \begin{equation}\label{rlem9}
        \H\dash{\rho}{\alpha_0'}\Delta,\neg B(t), t\neq t.
    \end{equation}
    But we also have
    \begin{equation}\label{rlem10}
        \H\dash{\rho}{\alpha_0'}\Delta,\neg B(t),t=t.
    \end{equation}
    Therefore, by $(Cut)$ on \eqref{rlem9} and \eqref{rlem10}, we obtain
    \begin{equation*}
        \H\dash{\rho}{\alpha_0}\Delta,\neg B(t).
    \end{equation*}
    An application of $(\vee)$ gives the result.
    \end{proof}
    \renewcommand\qedsymbol{$\square$}
    An application of Claim $\ref{negb}$ on \eqref{rlem8} yields
    \begin{equation*}
        \H[t,\vec s]\dash{\Omega_{n+m}+1}{\Omega_{m+n}+k_0}\Gamma'(\vec s)^{\L_{\Omega_{n+m}}(X)},\neg B(t)\vee A(t,\vec s)^{\L_{\Omega_{n+m}}(X)}.
    \end{equation*}
    We apply $(b\forall)$ and we obtain
    \begin{equation*}
        \H[\vec s]\dash{\Omega_{n+m}+1}{\Omega_{m+n}+k_0}\Gamma'(\vec s)^{\L_{\Omega_{n+m}}(X)},\forall y\in s_i\ A(y,\vec s)^{\L_{\Omega_{n+m}}(X)}.
    \end{equation*}
    \ \\
    Finally, we assume that the principal formula of the last rule applied is $\neg Lim$. Then, we have the premise
\begin{equation*}
    \dash{}{k_0}\neg Lim,\forall y(\neg Ad(y)\vee a_i\notin y),\Gamma(\vec a),
\end{equation*}
We apply Lemma $\ref{inversion}$ (Inversion) to obtain 
\begin{equation*}
    \dash{}{k_0}\neg Lim,\neg Ad(b)\vee a_i\notin b,\Gamma(\vec a)\text{ for some free variable $b$ not occurring in $\Gamma(\vec a)$}.
\end{equation*}
Let $n<\omega$. We apply the induction hypothesis to $n+1$, so we have
\begin{equation}\label{rlem11}
    \H[\vec s,t]\dash{\Omega_{n+1+m_0}+1}{\Omega_{n+1+m_0}+k_0}\neg Ad(t)\vee s_i\notin t,\Gamma(\vec s), \text{ for all $t,\vec s$ with $|t|,|s_1|,\dots,|s_m|<\Omega_{n+1}$}. 
\end{equation}
In particular, we consider \eqref{rlem11} for any $t,\vec a$ with $|t|<\Omega_{n+1}$ and $|s_1|,\dots,|s_m|<\Omega_n$. An application of $(b\forall)$ yields
\begin{equation}\label{rlem12}
    \H[\vec s]\dash{\Omega_{n+1+m_0}+1}{\Omega_{n+1+m_0}+k}\forall y\in\Lm(\neg Ad(y)\vee s_i\notin y),\Gamma(\vec s)^{\Lnm}.
\end{equation}
But one can prove
\begin{equation}\label{rlem13}
    \H[\vec s]\dash{\Omega_{n+1+m_0}+1}{\Omega_{n+1+m_0}+k}\exists y\in\Lm(Ad(y)\wedge s_i\in y),\Gamma(\vec s)^{\Lnm}.
\end{equation}
Therefore, using $(Cut)$ on \eqref{rlem12} and \eqref{rlem13} gives
\begin{equation}\label{rlem14}
    \H[\vec s]\dash{\Omega_{n+1+m_0}+1}{\Omega_{n+1+m_0}+k}\Gamma(\vec s)^{\Lnm}.
\end{equation}
We observe that the rank of the formula cut in \eqref{rlem14} is $\Omega_{n+1}$ and so the cut-complexity of the derivation does not increase. We obtain the desired result by taking $m=1+m_0$ in \eqref{rlem14}.
\end{proof}
$\setcounter{equation}{0}$
On the other hand, if ${\sf KPl^r}\vdash \Gamma$ for some finite set of $\Sigma$-formulas $\Gamma$, then there is $k<\omega$ such that for some axioms $A_1,\dots,A_l$ of ${\sf KPl^r}$ we have $\dash{}{k}\neg A_1,\dots,\neg A_l,\Gamma$. But then, by Lemma $\ref{neglim}$, we can embed this derivation into the infinitary system eliminating $\neg Lim$. Afterwards, we can get rid of the remaining negated axioms in the infinitary derivation with $(Cut)$. This is exactly what we prove in the next theorem.
\begin{Theorem}[Embedding Theorem for ${\sf KPl^r}$]$\label{Embedr}$
    Let $\Gamma(\vec a)$ be a finite set of $\Sigma$-formulas. If ${\sf KPl^r}\vdash \Gamma(\vec a)$ then, given any operator $\H$, for any $\vec s$ there are $n<\omega$ and $k<\omega$ such that
    \begin{equation*}
        \H[\vec s]\dash{\Omega_{n}+1}{\Omega_{n}+k}\Gamma(\vec s)^{\Ln}
    \end{equation*}
\end{Theorem}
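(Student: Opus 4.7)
The plan is to pass through the one-sided sequent calculus $\dash{}{k}$ of Definition \ref{fol} and extend the technique of Lemma \ref{neglim} so as to absorb every ${\sf KPl^r}$-axiom rather than only $Lim$. By soundness and completeness of $\dash{}{k}$ for classical first-order logic, ${\sf KPl^r}\vdash\Gamma(\vec a)$ yields finitely many axiom instances $A_1,\dots,A_l$ of ${\sf KPl^r}$ and some $k<\omega$ with
\[\dash{}{k}\neg A_1,\dots,\neg A_l,\Gamma(\vec a).\]
I would then prove, by induction on $k$ in the style of Lemma \ref{neglim}, the stronger statement that some finite $N$ and $K$ and some terms $\vec r$ interpreting the free variables of the $\neg A_i$ give
\[\H[\vec s,\vec r]\dash{\Omega_N+1}{\Omega_N+K}\Gamma(\vec s)^{\L_{\Omega_N}(X)}.\]

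All rule-based cases from the proof of Lemma \ref{neglim} transfer verbatim once the side formulas $\neg A_i^{\L_{\Omega_N}(X)}$ are treated as passive; the only genuinely new cases occur when some $\neg A_i$ is the principal formula of the last finitary rule. For axioms of local character---Leibniz, Pair, Union, $\Delta_0$-Separation, Infinity, and the $\Delta_0$-Induction scheme of ${\sf KPl^r}$---I would invoke the corresponding item of Lemma \ref{axiomsembed}, which yields an $\RS$-derivation of $A_i^{\L_{\Omega_N}(X)}$ with bounds of the form $\Omega_N+k_i$ for some finite $k_i$, and then discharge $\neg A_i^{\L_{\Omega_N}(X)}$ by a single $(Cut)$. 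For the $Lim$ instance (if present), the climb-up argument of Lemma \ref{neglim} applies directly. For each of $Ad1, Ad2, Ad3$ I would adapt that climb-up technique: invoke the induction hypothesis at level $N+1$ and use Lemma \ref{Ad} to witness $Ad\big(\L_{\Omega_k}(X)\big)$ for a suitable $k\le N$, closing with a bounded number of cuts.

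The main obstacle is precisely the treatment of $\neg Ad1$, $\neg Ad2$, and $\neg Ad3$. The direct embeddings given in Lemmas \ref{Ad1}--\ref{Ad3} invoke the $(\neg Ad)$ rule, which has $\omega$-many premises and therefore forces the cut-complexity up to $\Omega_\omega$, incompatible with the target bound $\Omega_N+1$. The resolution is to avoid $(\neg Ad)$ altogether inside the ${\sf KPl^r}$-embedding: whenever a $\neg Ad_i$-axiom is to be discharged, one climbs the hierarchy by one level, $\Omega_N\rightsquigarrow\Omega_{N+1}$, and exploits that $\L_{\Omega_k}(X)$ is admissible for each $k<N+1$ via the positive $(Ad)$ rule. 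The final $N$ and $K$ are then recovered constructively from the ${\sf KPl^r}$-proof, in parallel with Theorem \ref{Embed}.
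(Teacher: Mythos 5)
Your overall route coincides with the paper's: pass to the one-sided calculus of Definition \ref{fol}, use the Deduction Theorem to collect finitely many negated axiom instances $\neg A_1,\dots,\neg A_l$ alongside $\neg Lim$, run the machinery of Lemma \ref{neglim}, and finally discharge the negated axioms by cuts against the $\RS$-embeddings of the positive axioms. The difference is organizational: the paper does \emph{not} extend the induction of Lemma \ref{neglim} with new principal-formula cases. Instead it arranges matters so that every $\neg A_i$ is itself a $\Sigma$-formula, absorbs the $\neg A_i$ into the set $\Gamma$ of $\Sigma$-formulas to which Lemma \ref{neglim} applies as a black box, and only at the very end removes the relativized formulas $\neg A_i^{\mathbb{L}_{\Omega_{n_0+m}}(X)}$ by $l$ applications of $(Cut)$. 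The device that makes this possible is the one your sketch is missing: the axioms Pair, Union, $\Delta_0$-Separation and Infinity are $\Sigma$, so their \emph{negations} are $\Pi$ and cannot be fed to Lemma \ref{neglim} (its Case 4 explicitly uses that no unbounded universal quantifier occurs); the paper eliminates these from the axiom list beforehand by deriving them from $(Lim)$ and $(Ad3)$. Without that step, your claim that ``all rule-based cases transfer verbatim'' fails precisely when an unbounded $(\forall)$ of the finitary proof acts on one of these $\Pi$ negations, and you would have to add a genuinely new case with a level mismatch between the induction hypothesis (terms of level $<\Omega_n$) and the $(b\forall)$ rule for $\mathbb{L}_{\Omega_{n+m}}(X)$ (terms of level $<\Omega_{n+m}$).

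Conversely, the obstacle you single out as the main one largely dissolves in the paper's setup. The negations $\neg Ad1$, $\neg Ad2$, $\neg Ad3$ \emph{are} $\Sigma$, so they ride through Lemma \ref{neglim} passively; and when the positive axioms are relativized to $\mathbb{L}_{\Omega_N}(X)$ rather than $\mathbb{L}_{\Omega_\omega}(X)$, the $(\neg Ad)$ inferences needed to prove them only require premises $t\neq\mathbb{L}_{\Omega_j}(X)$ for those $j$ with $\Omega_j\leq|t|<\Omega_N$, so the cut formulas stay of rank below $\Omega_N$ and the bound $\Omega_N+1$ on cut complexity is not threatened. The $\Omega_\omega$ bounds in Lemmas \ref{Ad1}--\ref{Ad3} are an artifact of relativizing to $\mathbb{L}_{\Omega_\omega}(X)$, not of the $(\neg Ad)$ rule itself; your proposed ``climb one level and avoid $(\neg Ad)$'' workaround is therefore unnecessary, and as stated it is too vague to check. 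In summary: your skeleton is right and would likely be made to work, but you should (i) preprocess the axiom list so that all negated axioms other than $\neg Lim$ are $\Sigma$, after which no extension of Lemma \ref{neglim} is needed, and (ii) verify that the relativized axiom embeddings used in the final cuts have cut rank below $\Omega_N+1$, rather than importing the $\Omega_\omega$-bounded Lemmas \ref{Ad1}--\ref{Ad3} wholesale.
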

\begin{proof}
    We suppose ${\sf KPl^r}\vdash \Gamma(\vec a)$. Then
    \begin{equation}\label{rem1}
        \dash{}{k}\neg Lim,\neg A_1,\dots, \neg A_l,\Gamma(\vec a)
    \end{equation}
    for some $k<\omega$ and some axioms $A_1,\dots, A_l$ of ${\sf KPl^r}$ different from $(Lim)$. We can assume that all the formulas $\neg A_1,\dots,\neg A_l$ are $\Sigma$ to use Lemma $\ref{neglim}$ because $Pair,Union,\Delta_0-Separation$ and $Infinity$ can be derived from $(Lim)$ and $(Ad3)$. Hence, all the formulas appearing in \eqref{rem1} are $\Sigma$ except for $\neg Lim$. Given terms $\vec s$, let $n_0<\omega$ be such that $|\vec s|<\Omega_{n_0}$. Then, by Lemma $\ref{neglim}$, we get
    \begin{equation*}
        \H[\vec s]\dash{\Omega_{n_0+m}+1}{\Omega_{n_0+m}+2k}\neg A_1^{\Lnpm},\dots,\neg A_l^{\Lnpm},\Gamma(\vec s)^{\Lnpm}.
    \end{equation*}
    Applying $(Cut)$ $l$-many times, we obtain
    \begin{equation}\label{rem2}
        \H[\vec s]\dash{\Omega_{n_0+m}+1}{\Omega_{n_0+m}+2k+l}\Gamma(\vec s)^{\Lnpm}.
    \end{equation}
    We obtain the desired result by taking $n=n_0+m$ in \eqref{rem2}.
\end{proof}
$\setcounter{equation}{0}$
The following operation on sets will be used to classify the total set-recursive-from-$\omega$ function of ${\sf KPl^r}$.
\begin{definition}
    We define for every $n<\omega$ the operation $\hat G_n(x)$ as
    \begin{equation*}
        \hat G_n(x)=L_{\psi_0(\Omega_{n+1})}(x)
    \end{equation*}
\end{definition}
We observe that, since the definition of $\Omega_{n+1}$ and $\psi_0$ depend on $x$ (see Reading Convention \ref{read}), the definition of $\hat G_n$ also depends on $x$.\\
Now, we classify the total set-recursive-from-$\omega$ function of ${\sf KPl^r}$. We note that this result is analogous to Theorem $\ref{main}$.
\begin{Theorem}$\label{totalr}$
    Let $f$ be a provably total set-recursive-from-$\omega$ function in ${\sf KPl^r}$. Then, there is $n<\omega$ such that
    \begin{equation*}
        V\vDash\forall x(f(x)\in \hat G_n(x)).
    \end{equation*}
\end{Theorem}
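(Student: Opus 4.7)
My plan is to mirror the proof of Theorem \ref{main} very closely, using Theorem \ref{Embedr} in place of Theorem \ref{Embed}. First, by Slaman's theorem applied to ${\sf KPl^r}$ (which shares all admissibility axioms with ${\sf KPl}$), I obtain a $\Sigma$-formula $A_f$ with
\[{\sf KPl^r}\vdash Ad(b)\to\big[\forall x\in b\,\exists! y\in b\,A_f(x,y)^b\big].\]
I fix a set $X$ and follow Reading Convention \ref{read}. Applying Theorem \ref{Embedr} and instantiating the single free variable as $\L_{\Omega_0}(X)$, I obtain $n,k<\omega$ such that
\[\H_0\dash{\Omega_n+1}{\Omega_n+k} Ad(\L_{\Omega_0}(X))\to\forall x\in\L_{\Omega_0}(X)\,\exists! y\in\L_{\Omega_0}(X)\,A_f(x,y)^{\L_{\Omega_0}(X)}.\]

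Next, as in the proof of Theorem \ref{main}, I combine this with $\Vdash Ad(\L_{\Omega_0}(X))$ from Lemma \ref{Ad} via Lemma \ref{vee} and one application of $(Cut)$; since $\mathrm{rk}\big(Ad(\L_{\Omega_0}(X))\big)=\Omega_0+5<\Omega_n+1$, the cut complexity is preserved. Applying Lemma \ref{inversion} (Inversion) twice---once to strip the outer bounded $\forall$, once to pick out the existential witness from the conjunction hidden in ``$\exists!$''---yields
\[\H_0\dash{\Omega_n+1}{\Omega_n+k'}\exists y\in\L_{\Omega_0}(X)\,A_f(\overline X,y)^{\L_{\Omega_0}(X)}\]
for some $k'<\omega$.

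The crucial feature of Theorem \ref{Embedr} is that the cut complexity is already of the form $\Omega_n+1$, so the Collapsing Theorem \ref{collapsing} applies directly with $\beta=0$. Setting $\delta=\omega^{\Omega_n+1+\Omega_n+k'}$, I obtain a derivation controlled by $\H_\delta$ with cut complexity and length both equal to $\psi_0(\delta)$. Since $\Omega_{n+1}$ is admissible it is closed under $+$ and $\zeta\mapsto\omega^\zeta$, whence $\delta<\Omega_{n+1}$; moreover $\delta\in B_0(\Omega_{n+1})$ because $\delta$ is built from $\Omega_n$ by $+$ and $\phi_0$. Lemma \ref{311}.2 then gives $\psi_0(\delta)<\psi_0(\Omega_{n+1})$. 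A second application of Predicative Cut Elimination (Theorem \ref{cut elim}) eliminates all remaining cuts below $\Omega_0$, and the Boundedness Lemma \ref{boundedness} together with the cut-free evaluation Claim at the end of the proof of Theorem \ref{main} yields $L_{\psi_0(\delta)}(X)\vDash\exists y\,A_f(X,y)$. Therefore $f(X)\in L_{\psi_0(\delta)}(X)\subseteq L_{\psi_0(\Omega_{n+1})}(X)=\hat G_n(X)$.

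The main obstacle is to verify that $\delta$ stays below $\Omega_{n+1}$, so that its collapse stays below $\psi_0(\Omega_{n+1})$; this reduces to the admissibility of $\Omega_{n+1}$ and the closure properties of $B_0$, which are routine. The cleaner cut complexity bound provided by Theorem \ref{Embedr} replaces the tower $e_n$ used in Theorem \ref{main} with a single application of Collapsing, and otherwise the proof proceeds step-for-step as in the ${\sf KPl}$ case.
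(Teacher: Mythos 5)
Your proposal is correct and follows essentially the same route as the paper's proof: Slaman's theorem, the embedding Theorem~\ref{Embedr}, cutting against $Ad(\mathbb L_{\Omega_0}(X))$, double Inversion, a single application of the Collapsing Theorem with $\beta=0$ (exploiting that the cut complexity is already $\Omega_n+1$), then Predicative Cut Elimination and Boundedness. The only cosmetic difference is that after the final cut elimination the length bound is $\phi\big(\psi_0(\delta)\big)\big(\psi_0(\delta)\big)$ rather than $\psi_0(\delta)$ itself, but since this is still below the strongly critical ordinal $\psi_0(\Omega_{n+1})$ the conclusion $f(X)\in\hat G_n(X)$ goes through exactly as in the paper, which handles the same point by simply taking the looser bound $\psi_0(\Omega_{n+1})$.
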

\begin{proof}
    Let $A_f(\cdot,\cdot)$ be the $\Sigma$ formula that defines $f$ in any admissible set. In particular, we have that ${\sf KPl^r}\vdash Ad(b)\rightarrow[\forall x\in b\exists!y\in b\ A_f(x,y)^b]$.\\
    Now, we fix a set $X$ and we let $\theta$ be the set-theoretic rank of $X$, as in Reading Convention $\ref{read}$. By Theorem $\ref{Embedr}$, we have
    \begin{equation*}
        \H_0\dash{\Omega_{n}+1}{\Omega_{n}+k}Ad(\mathbb L_{\Omega_0}(X))\rightarrow \forall x\in \mathbb L_{\Omega_0}(X)\exists!y\in \mathbb L_{\Omega_0}(X) A_f(x,y)^{\mathbb L_{\Omega_0}(X)}.
    \end{equation*}
    Following the same reasoning as in the proof of Theorem $\ref{main}$, we get
    \begin{equation}\label{rmain1}
        \H_0\dash{\Omega_{n}+1}{\Omega_{n}+k}\neg Ad(\mathbb L_{\Omega_0}(X)),\forall x\in \mathbb L_{\Omega_0}(X)\exists!y\in \mathbb L_{\Omega_0}(X) A_f(x,y)^{\mathbb L_{\Omega_0}(X)}.
    \end{equation}
    On the other hand, we have by Lemma $\ref{Ad}$ and Lemma $\ref{weak}$
    \begin{equation}\label{rmain2}
        \H_0\dash{\Omega_{n}+1}{\Omega_{n}+k}Ad(\mathbb L_{\Omega_0}(X)),\forall x\in \mathbb L_{\Omega_0}(X)\exists!y\in \mathbb L_{\Omega_0}(X) A_f(x,y)^{\mathbb L_{\Omega_0}(X)}.
    \end{equation}
    We apply $(Cut)$ to \eqref{rmain1} and \ref{rmain2} to obtain
    \begin{equation*}
        \H_0\dash{\Omega_{n}+1}{\Omega_{n}+k+1}\forall x\in \mathbb L_{\Omega_0}(X)\exists!y\in \mathbb L_{\Omega_0}(X) A_f(x,y)^{\mathbb L_{\Omega_0}(X)}.
    \end{equation*}
    We notice that $\mathrm{rk}(Ad(\mathbb L_{\Omega_0}(X)))=\Omega_0+5$ and so the complexity of the cuts has not been increased. We now apply Lemma $\ref{inversion}$ (Inversion) two times to obtain
    \begin{equation}\label{rmain3}
        \H_0\dash{\Omega_{n}+1}{\Omega_{n}+k+1}\exists y\in \mathbb L_{\Omega_0}(X) A_f(\overline X,y)^{\mathbb L_{\Omega_0}(X)}.
    \end{equation}
    We are now going to eliminate cuts from the Derivation \eqref{rmain3}.
    By the Collapsing Theorem \ref{collapsing}, we get
    \begin{equation*}
        \H_{\omega^{\Omega_{n}\cdot 2+k}}\dash{\psi_0(\omega^{\Omega_{n}\cdot 2+k})}{\psi_0(\omega^{\Omega_{n}\cdot 2+k})}\exists y\in\mathbb L_{\Omega_0}(X) A_f(\overline X,y)^{\mathbb L_{\Omega_0}(X)}.
    \end{equation*}
    By Theorem $\ref{cut elim}$ (Predicative Cut Elimination), we have
    \begin{equation*}
        \H_{\omega^{\Omega_{n}\cdot 2+k}}\dash 0{\alpha}\exists y\in\mathbb L_{\Omega_0}(X)A_f(\overline X,y)^{\mathbb L_{\Omega_0}(X)}.
    \end{equation*}
    where $\alpha=\psi_0(\Omega_{n+1})$. Technically, we should have $\alpha=\phi(\psi_0(\omega^{\Omega_{n}\cdot 2+k}))(\psi_0(\omega^{\Omega_{n}\cdot 2+k}))$, but we choose the looser bound $\alpha=\psi_0(\Omega_{n+1})$. By the Boundedness Lemma, we get 
    \begin{center}
        $\H_{\omega^{\Omega_{n}\cdot 2+k}}\dash0\alpha \exists y\in\mathbb{L}_\alpha (X) A_f(\overline X,y)^{\mathbb{L}_\alpha (X)}$.
    \end{center}
    It follows that
    \begin{center}
        $L_\alpha(X)\vDash \exists y A_f(X,y)$.
    \end{center}
    This means that $f(X)\in L_\alpha(X)=\hat G_n(X)$.
\end{proof}
$\setcounter{equation}{0}$
\subsection{The provably total functions of {\sf W-KPl}}
To classify the provably total functions of {\sf W-KPl}, we will argue as in the previous subsection, although we need to extend the calculus introduced in Definition $\ref{fol}$ with the $\omega$-rule and the $Cut$-rule to deal with Mathematical Induction. Moreover, to show cut-elimination, we introduce a measure of complexity of formulas. We opt for defining the complexity of a formula as the number of connectives the formula has.
\begin{definition}
    We define $\mathrm{com}(F)$ for every formula $F$ as follows.
    \begin{enumerate}
        \item $\mathrm{com}(a\in b)=\mathrm{com}(a\notin b)=1$,
        \item $\mathrm{com}(A\vee B)=\mathrm{com}(A\wedge B)=\max(\mathrm{com}(A),\mathrm{com}(B))+1$,
        \item $\mathrm{com}(Ad(A))=\mathrm{com}(\neg Ad(A))=\mathrm{com}(\exists x\ A(x))=\mathrm{com}(\forall x\ A(x))=\mathrm{com}(A)+1$.
    \end{enumerate}
\end{definition}
\begin{definition}
    For every $\alpha<\omega$ and every finite set of formulas $\Gamma$ we define the relation $[\omega]\dash{}{\alpha}\Gamma$ as follows.\\
    \ \\
    (AxL) $[\omega]\dash{r}{\alpha}\Gamma,A,\neg A$ for any $k<\omega$ and any finite set of formulas $\Gamma,A$.\\
    $(\vee)$ If $[\omega]\dash{r}{\alpha_0}\Gamma, A$ or $[\omega]\dash{r}{\alpha_0}\Gamma, B$ for some $\alpha_0<\alpha$ then $[\omega]\dash{r}{\alpha}\Gamma, A\vee B$.\\
    $(\wedge)$ If $[\omega]\dash{r}{\alpha_0}\Gamma,A$ and $[\omega]\dash{r}{\alpha_1} \Gamma, B$ with $\alpha_0,\alpha_1<\alpha$ then $[\omega]\dash{r}{\alpha}\Gamma, A\wedge B$.\\
    $(\exists)$ If $[\omega]\dash{r}{\alpha_0}\Gamma, A(t)$ with $\alpha_0<\alpha$ then $[\omega]\dash{r}{\alpha}\Gamma, \exists x\ A(x)$.\\
    $(\forall)$ If $[\omega]\dash{r}{\alpha_0}\Gamma,A(u)$ with $\alpha_0<\alpha$ and $u$ is not free in $\Gamma,\forall x\ A(x)$ then $[\omega]\dash{r}{\alpha}\Gamma,\forall x\ A(x)$.\\
    $(\omega)$ If $[\omega]\dash{r}{\alpha_n}\Gamma,A(n)$ with $\alpha_n<\alpha$ for every $n<\omega$ for some formula $A(x)$, then $[\omega]\dash{r}{\alpha}\Gamma,\forall n<\omega A(n)$.\\
    $(Cut)$ If $[\omega]\dash{r}{\alpha_0}\Gamma, A$ and $[\omega]\dash{r}{k_0}\Gamma,\neg A$ with $k_0<\omega$ for some formula $A$, then $[\omega]\dash{\max(r,\mathrm{com}(A))}{\alpha}\Gamma$.\\
\end{definition}
Again, by the Deduction Theorem, we have
\begin{equation*}
    {\sf W-KPl}\vdash\Gamma\text{ if and only if }[\omega]\dash{0}{k}\neg A_1,\dots,\neg A_l,\Gamma
\end{equation*}
for some $k<\omega$ and some axioms $A_1,\dots,A_l$ of {\sf W-KPl}.
Actually, the next Lemma shows that we can assume that the axioms $A_1,\dots,A_l$ above are not instances of Mathematical Induction. We refer to \cite{pohlerspaper} and \cite{pohlersbook}.

\begin{Lemma}$\label{induction}$
    If $[\omega]\dash{0}{k}\neg A_1,\dots,\neg A_m,\neg MI_1,\dots,\neg MI_l,\Gamma$ with $A_1,\dots,A_m$ instances of axioms of {\sf W-KPl} different from Mathematical Induction and $MI_1,\dots,MI_l$ instances of Mathematical Induction, then $[\omega]\dash{0}{\alpha}\neg A_1,\dots,\neg A_m,\Gamma$ for some $\alpha<\omega$.
\end{Lemma}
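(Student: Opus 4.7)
The plan is to proceed by induction on the length $k$ of the given cut-free derivation $[\omega]\dash{0}{k}\neg A_1,\dots,\neg A_m,\neg MI_1,\dots,\neg MI_l,\Gamma$. Since $k$ is finite, the $(\omega)$-rule cannot be applied non-trivially within this derivation — its premises would demand unboundedly many derivations of height $<k$, which collapses to a uniform bound that is not available when genuinely distinct $F(n)$'s are involved — so the given derivation is in fact a standard finite first-order cut-free proof in the rules $(\vee)$, $(\wedge)$, $(\exists)$, $(\forall)$ of Definition $\ref{fol}$. When the last rule acts on a formula in $\Gamma$ or among the $\neg A_j$, I apply the induction hypothesis to the premises and reapply the same rule, incurring only a constant additive cost, as in the analogous steps of Lemma $\ref{neglim}$.

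The critical case is when the last rule's principal formula is some $\neg MI_i$. Being the conjunction of the induction antecedent $F_i(0)\wedge\forall x\in\omega(F_i(x)\to F_i(x+1))$ with the negated consequent $\exists x\in\omega\,\neg F_i(x)$, the formula $\neg MI_i$ is of $\bigwedge$-type, so the last rule is $(\wedge)$ with two premises. I apply the induction hypothesis to each premise, which eliminates the remaining $\neg MI_j$ with $j\neq i$ and leaves two auxiliary derivations: one whose endsequent contains the induction antecedent in place of $\neg MI_i$, and one whose endsequent contains $\exists x\in\omega\,\neg F_i(x)$ in place of $\neg MI_i$. These two must then be recombined so that both of these added components are eliminated in favour of $\Gamma$.

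The main obstacle is exactly this recombination: constructing, without recourse to the $(\omega)$-rule, a finite cut-free refutation of the induction antecedent together with $\exists x\in\omega\,\neg F_i(x)$. The key observation unlocking it is that in any cut-free finite-length proof, the $(\exists)$-rule that introduces $\exists x\in\omega\,\neg F_i(x)$ must supply explicit witnessing terms, and across the entire finite derivation the set of such witnesses is finite and its numerical content is bounded by a function of $k$. For each such numerical witness $n$, one derives $F_i(n)$ by $n$ iterations of modus-ponens-style applications of the step hypothesis (expressible in finite length $O(n)$ using only $(\wedge)$, $(\vee)$ and $(b\forall)$), which together with the supplied $\neg F_i(n)$ yields a propositional contradiction. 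Assembling these finitely many case-derivations gives a single cut-free derivation of $\neg A_1,\dots,\neg A_m,\Gamma$ of some finite length $\alpha<\omega$.

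The $(\omega)$-rule is therefore avoided entirely by exploiting the Herbrand-style finiteness of the witnesses used in the original proof: full induction is never needed, only the finitely many instances of $F_i$ actually referenced. The bookkeeping is straightforward but unavoidably explicit; the precise cost analysis (showing $\alpha$ is bounded by a primitive recursive function of $k$ and the complexities of the $MI_i$) is best laid out by reference to the standard treatment in \cite{pohlerspaper,pohlersbook}, which is the source the paper cites.
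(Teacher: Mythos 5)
There is a genuine gap, and it lies in the central idea rather than in the bookkeeping. Your proposal rests on two claims that both fail. First, you assert that a derivation of finite height $k$ cannot contain a non-trivial application of the $(\omega)$-rule. This is not so: the height bounds the ordinal rank of the proof tree, not its branching, and an $(\omega)$-inference whose infinitely many (pairwise different) premises each have height $\le k-1$ legitimately yields a conclusion of height $k$. (In the intended application the hypothesis does come from a finitary {\sf W-KPl}-proof, but not for the reason you give.) Second, and decisively, the Herbrand-style recombination step does not exist. The language of {\sf KPl} has no numerals: the witnesses supplied by the $(\exists)$-inferences introducing $\exists x\in\omega\,\neg F_i(x)$ are free variables $a$ accompanied by a conjunct $a\in\omega$, so there is no ``numerical content bounded by a function of $k$'' to unwind, and no finite list of instances $F_i(0),\dots,F_i(N)$ can refute the negated consequent. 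More structurally: if Mathematical Induction could be eliminated at finite proof-height without the $(\omega)$-rule, the conclusion $[\omega]\dash{0}{\alpha}\neg A_1,\dots,\neg A_m,\Gamma$ would be an ordinary cut-free first-order derivation, so every theorem of {\sf W-KPl} would already be provable in ${\sf KPl^r}$; this contradicts the separation of the two theories implicit in Theorems \ref{totalr} and \ref{totalw} and the well-ordering proofs of \S\ref{SectWellOrderingProofs}.

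The paper itself gives no proof and defers to \cite{pohlerspaper} and \cite{pohlersbook}, where the argument is essentially the opposite of yours: each instance $MI_i$ is \emph{derived} in the infinitary calculus, using the $(\omega)$-rule to pass from the premises $F_i(n)$ (each obtained from $F_i(0)$ and the step hypothesis in roughly $2n$ inferences, hence of unbounded finite height) to $\forall x\in\omega\,F_i(x)$, giving a derivation of $MI_i$ of height about $\omega\cdot2$; one then cuts this against the given derivation and applies cut elimination for the $[\omega]$-calculus to restore cut-rank $0$, at the price of a transfinite height. Consequently the bound ``$\alpha<\omega$'' in the statement should be read as $\alpha<\epsilon_0$, consistent with Theorem \ref{EmbedW}, which consumes this lemma with that bound; taking the printed bound literally is what steered you toward an approach that cannot succeed.
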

We show that we can embed the augmented one-sided sequent calculus into $\RS$ removing $\neg Lim$.
\begin{Lemma}$\label{limw}$
    Let $\Gamma(\vec a)$ be a finite set of $\Sigma$-formulas such that $[\omega]\dash{}{\alpha}\neg Lim,\Gamma(\vec a)$. Let $\vec s$ with $|\vec s|<\Omega_n$. Then $\H_{\omega^{\Omega_\omega+3\alpha}}[\vec s,t]\dash{\Omega_n+1}{\Omega_\omega+3\alpha}\Gamma(\vec s)^{\Ln}$.
\end{Lemma}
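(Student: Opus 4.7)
The plan is to proceed by transfinite induction on $\alpha$, reducing the claim to the analogous situation handled in Lemma \ref{neglim} while accounting for the two new rules introduced in the augmented calculus: the $\omega$-rule and the Cut rule. For the base case (axioms) and for the first-order rules $(\vee), (\wedge), (\exists), (\forall)$ with principal formula in $\Gamma(\vec a)$ (not $\neg Lim$), the argument is essentially the same as in the proof of Lemma \ref{neglim}: apply the induction hypothesis to the premise(s) — which yields derivations controlled by operators $\H_{\omega^{\Omega_\omega+3\alpha_0}}$ with $\alpha_0 < \alpha$ — use Lemma \ref{weak} to pass to the ambient operator $\H_{\omega^{\Omega_\omega+3\alpha}}$, and apply the corresponding $\RS$-rule, being careful in the $(b\exists)$ and $(b\forall)$ subcases to treat the three possible shapes of the bounding term $s_i$ (basic, $\L_\beta(X)$, or $\{x\in\L_\beta(X):B(x)\}$) using Claims \ref{claimyb} and \ref{negb} as in the earlier lemma.

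For the two genuinely new cases, first consider the $(\omega)$-rule, where $\Gamma(\vec a) = \Gamma'(\vec a), \forall n\in\omega A(n,\vec a)$ and we have premises $[\omega]\dash{}{\alpha_n}\neg Lim, \Gamma'(\vec a), A(n,\vec a)$ for each $n<\omega$ with $\alpha_n < \alpha$. Applying the induction hypothesis gives, for each $n$,
\[
\H_{\omega^{\Omega_\omega+3\alpha_n}}[\vec s,\overline n]\dash{\Omega_n+1}{\Omega_\omega+3\alpha_n}\Gamma'(\vec s)^{\Ln}, A(\overline n, \vec s)^{\Ln},
\]
and weakening to the ambient operator followed by a single application of $(b\forall)$ against $\underline\omega$ (whose level is below $\Omega_n$) yields the result with bound $\Omega_\omega + 3\alpha_n + 1 \le \Omega_\omega + 3\alpha$. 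For the $(Cut)$ case, the premises are $[\omega]\dash{}{\alpha_0}\neg Lim,\Gamma(\vec a), A$ and $[\omega]\dash{}{k_0}\neg Lim,\Gamma(\vec a), \neg A$ with $k_0<\omega$ and $\alpha_0 < \alpha$; since $A$ is a first-order formula, its $\RS$-translation has rank bounded by some ordinal strictly below $\Omega_n + 1$ (provided $n$ is chosen large enough, which we can arrange by an initial weakening step), so applying the induction hypothesis to both premises and then the $\RS$-rule $(Cut)$ preserves the cut-complexity bound $\Omega_n + 1$ while the ordinal bound grows by at most a constant.

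The main case, and the one dictating the factor $3$ in the bound $\Omega_\omega + 3\alpha$, is when the principal formula of the last inference is $\neg Lim \equiv \exists x \forall y(\neg Ad(y) \vee x \notin y)$. Here we invert to extract $[\omega]\dash{}{\alpha_0}\neg Lim, \neg Ad(b) \vee a_i \notin b, \Gamma(\vec a)$ for a fresh $b$ and some $i$, apply the induction hypothesis at level $n+1$ (so that $\vec s$ still lives below $\Omega_{n+1}$) to obtain, for any term $t$ with $|t|<\Omega_{n+1+m_0}$ for an appropriate $m_0$,
\[
\H_{\omega^{\Omega_\omega+3\alpha_0}}[\vec s, t]\dash{\Omega_{n+1+m_0}+1}{\Omega_\omega+3\alpha_0}\neg Ad(t)\vee s_i\notin t,\Gamma(\vec s)^{\L_{\Omega_{n+1+m_0}}(X)},
\]
apply $(b\forall)$ to bind $t$, and then cut against the $\RS$-derivation of $\exists y\in \L_{\Omega_{n+1+m_0}}(X)(Ad(y)\wedge s_i\in y)$ supplied by Lemmas \ref{Ad} and \ref{Vdash}. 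The cut formula has rank below $\Omega_{n+1+m_0}+1$, so the cut-complexity bound is preserved, and the ordinal bound increases additively by a bounded amount, absorbed by the gap between $3\alpha_0$ and $3\alpha$.

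The principal obstacle is the ordinal bookkeeping in this last case: we must keep the cut-rank bound of the form $\Omega_k + 1$ (to stay in the regime where the Collapsing Theorem can later be applied), which forces us to shift the index $n$ of the constructible level upward whenever we unfold $\neg Lim$. The choice of operator $\H_{\omega^{\Omega_\omega+3\alpha}}$, with its exponent above $\Omega_\omega$, is exactly what lets every ordinal appearing in the proof remain inside the controlling operator through Lemma \ref{weak}, while the coefficient $3$ accommodates the three ordinal-increments — induction hypothesis, $(b\forall)$, and $(Cut)$ — used in the $\neg Lim$ step. Everything else is a routine adaptation of Lemma \ref{neglim} to the transfinite setting.
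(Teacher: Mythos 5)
There is a genuine gap in your treatment of the $\neg Lim$ case, and it is precisely the case that distinguishes this lemma from Lemma \ref{neglim}. You propose to handle $\neg Lim$ exactly as in the ${\sf KPl^r}$ setting: apply the induction hypothesis one level up, perform the cut against $\exists y\in\mathbb L_{\Omega_{n+1+m_0}}(X)(Ad(y)\wedge s_i\in y)$, and accept that ``we must shift the index $n$ of the constructible level upward whenever we unfold $\neg Lim$.'' But the conclusion of Lemma \ref{limw} fixes the cut-rank at $\Omega_n+1$ for the \emph{same} $n$ determined by $|\vec s|<\Omega_n$; there is no existential over an index shift $m$ as there is in Lemma \ref{neglim}. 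An index-shifting induction hypothesis cannot be closed here: along a single branch the shift accumulates with each unfolding of $\neg Lim$, and --- more fatally --- the $\omega$-rule has infinitely many premises, each of which may demand a different shift $m_i$, with no uniform bound. (In the finitary calculus of \S\ref{SectKPlr} the total number of $\neg Lim$-unfoldings is bounded by the finite proof height $k$, which is why the shift is harmless there.)

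The missing idea is that after the cut (which produces a derivation of $\Gamma(\vec s)^{\Ln}$ with cut-rank $\Omega_{n+1}+1$), one must \emph{return} to cut-rank at most $\Omega_n+1$ before closing the induction step. The paper does this by applying the Collapsing Theorem \ref{collapsing} --- which is applicable exactly because the controlling operator has the special form $\H_\beta$ and $\Gamma(\vec s)^{\Ln}$ is a set of $\Sigma^{\Omega_n}$-formulas --- followed by Predicative Cut Elimination (Theorem \ref{cut elim}), landing at an ordinal bound $\phi(\psi_n\gamma)(\psi_n\gamma)<\Omega_\omega+3\alpha$ with $\gamma<\omega^{\Omega_\omega+3\alpha}$. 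This is the real reason the operators $\H_{\omega^{\Omega_\omega+3\alpha}}$ appear in the statement (not merely to keep ordinals inside the operator via Weakening, as you suggest), and it is what the slack between $3\alpha_0$ and $3\alpha$ is absorbing. The remaining cases of your proposal (axioms, the propositional and quantifier rules, the $\omega$-rule via $(b\forall)$ over $\underline\omega$, and $(Cut)$) are in line with the paper's argument.
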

\begin{proof}
    We proceed by induction on $\alpha$. If $\Gamma(\vec a)$ is a logical axiom then the result holds.\\
    We suppose that the last {\sf W-KPl}-rule applied has principal formula in $\Gamma(\vec a)$, or is $(Cut)$. Then we apply the induction hypothesis to the premises and then we apply the adequate rule in the infinitary system. We remark the simple but key difference between this case and those in \S\ref{SectKPlr} that when this last {\sf W-KPl}-rule is the $\omega$-rule, the corresponding derivation in the infinitary proof system might end up with length greater than $\Omega_\omega$. \\
    We suppose now that the principal formula of the last applied rule is $\neg Lim$. Then we have the premise
    \begin{equation*}
        [\omega]\dash{}{\alpha_0}\neg Lim, \forall y (\neg Ad(y)\vee a_i\notin y),\Gamma(\vec a).
    \end{equation*}
    By Lemma $\ref{inversion}$, we get 
    \begin{equation*}
        [\omega]\dash{}{\alpha_0}\neg Lim, \neg Ad(b)\vee a_i\notin b,\Gamma(\vec a).
    \end{equation*}
    for some variable $b$ not occuring in $\Gamma(\vec a)$. By the induction hypothesis, we obtain
    \begin{equation*}
        \H_{\omega^{\Omega_\omega+3\alpha_0}}[\vec s]\dash{\Omega_n+1}{\Omega_\omega+3\alpha_0}\neg Ad(r)\vee s_i\notin r,\Gamma(\vec s)^{\Ln}.
    \end{equation*}
    for all $r$ with $|r|<\Omega_{n+1}$. An application of $(b\forall)$ gives
    \begin{equation}\label{cut1}
        \H_{\omega^{\Omega_\omega+3\alpha_0}}[\vec s]\dash{\Omega_n+1}{\Omega_\omega+3\alpha_0}\forall y\in \mathbb L_{\Omega_{n+1}}(X)(\neg Ad(y)\vee s_i\notin y),\Gamma(\vec s)^{\Ln}.
    \end{equation}
    On the other hand, we can show
    \begin{equation}\label{cut2}
        \H_{\omega^{\Omega_\omega+3\alpha_0}}[\vec s]\dash{\Omega_{n+1}+1}{\Omega_\omega+3\alpha_0}\exists y\in \mathbb L_{\Omega_{n+1}}(X)(Ad(y)\wedge s_i\in y),\Gamma(\vec s)^{\Ln}.
    \end{equation}
    Therefore, an application of $(Cut)$ on $\eqref{cut1}$ and $\eqref{cut2}$ yields 
    \begin{equation*}
        \H_{\omega^{\Omega_\omega+3\alpha_0}}[\vec s]\dash{\Omega_{n+1}+1}{\Omega_\omega+3\alpha_0}\Gamma(\vec s)^{\Ln}.
    \end{equation*}
    We notice that the formula removed by $(Cut)$ has rank $\Omega_{n+1}$ and so the bound of the complexity of the cuts has not grown. Now, by the Collapsing Theorem, we get
    \begin{equation*}
            \H_\gamma[\vec s,t]\dash{\psi_{n}\gamma}{\psi_{n}\gamma}\Gamma(\vec s)^{\Ln},
    \end{equation*}
    where $\gamma=\omega^{\Omega_\omega+3\alpha_0+2}$. By predicative cut elimination, we obtain
    \begin{equation}\label{predcutt}
        \H_\gamma[\vec s,t]\dash{\Omega_n+1}{\phi(\psi_n\gamma)(\psi_n\gamma)}\Gamma(\vec s)^{\Ln}.
    \end{equation}
    Since $\gamma<\omega^{\Omega_\omega+3\alpha}$ and $\phi(\psi_n\gamma)(\psi_n\gamma)<\Omega_\omega+3\alpha$, we can rewrite $\eqref{predcutt}$ as
    \begin{equation*}
        \H_\gamma[\vec s,t]\dash{\Omega_n+1}{\Omega_\omega+3\alpha}\Gamma(\vec s)^{\Ln},
    \end{equation*}
    as desired.
\end{proof}

$\setcounter{equation}{0}$
Now, we can embed {\sf W-KPl} derivations.
\begin{Theorem}[Embedding Theorem for {\sf W-KPl}]$\label{EmbedW}$
    Let $\Gamma(\vec a)$ be a finite set of $\Sigma^{\Omega_n}$-formulas. If ${\sf W-KPl}\vdash \Gamma(\vec a)$ then, for any $\vec s$ there is $\alpha<\epsilon_0$ such that
    \begin{equation*}
        \H_{\Omega_\omega+3\alpha}[\vec s]\dash{\Omega_{n}+1}{\Omega_{\omega}+\alpha}\Gamma(\vec s)^{\Ln}
    \end{equation*}
\end{Theorem}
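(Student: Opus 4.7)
The plan is to adapt the proof of Theorem $\ref{Embedr}$ to the augmented one-sided calculus $[\omega]\dash{}{}$ which accommodates the $\omega$-rule. First, by the Deduction Theorem, from ${\sf W-KPl}\vdash \Gamma(\vec a)$ we obtain some $k<\omega$ and instances $A_1,\ldots,A_m,MI_1,\ldots,MI_l$ of axioms of ${\sf W-KPl}$ (with the $MI_j$ being the Mathematical Induction instances and the $A_i$ the remaining ones) such that
\[
[\omega]\dash{0}{k}\neg A_1,\ldots,\neg A_m,\neg MI_1,\ldots,\neg MI_l,\Gamma(\vec a).
\]
Lemma $\ref{induction}$ then eliminates the $\neg MI_j$'s, yielding $[\omega]\dash{0}{\alpha_0}\neg A_1,\ldots,\neg A_m,\Gamma(\vec a)$ for some $\alpha_0<\epsilon_0$: the use of the $(\omega)$-rule to unfold each $MI_j$ pushes the length beyond $\omega$ but keeps it below $\epsilon_0$. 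As in the proof of Theorem $\ref{Embedr}$, we may further assume that each $A_i$ is an instance of Leibniz Principle, $(Ad1)$, $(Ad2)$, $(Ad3)$, or $\Delta_0$-Induction, since Pair, Union, $\Delta_0$-Separation and Infinity are derivable from $(Lim)$ and $(Ad3)$; after moving negations inward, all remaining formulas in the sequent are $\Sigma$.

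Next, fix $\vec s$ with $|\vec s|<\Omega_n$ for $n$ sufficiently large with respect to the axioms appearing in the proof, and apply Lemma $\ref{limw}$ to obtain
\[
\H_{\omega^{\Omega_\omega+3\alpha_0}}[\vec s,t]\dash{\Omega_n+1}{\Omega_\omega+3\alpha_0}\neg A_1^{\Ln},\ldots,\neg A_m^{\Ln},\Gamma(\vec s)^{\Ln}.
\]
For each remaining $A_i$, versions of Lemmas $\ref{axiomsembed}$ and $\ref{Ad1}$--$\ref{Ad3}$ relativized to $\Ln$ rather than to $\Lo$ (using the fact that admissibility witnesses can be taken as $\Lm$ with $m<n$) yield $\H[\vec s]\dash{\Omega_n+1}{\Omega_n+c_i}A_i^{\Ln}$ for some $c_i<\omega$ and any operator $\H$ containing the relevant parameters. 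Successive applications of $(Cut)$ then eliminate each $\neg A_i^{\Ln}$; since each cut formula has rank $<\Omega_n+1$, the cut-complexity bound is preserved while the length grows by only finitely many steps. Finally, Lemma $\ref{weak}$ absorbs these increments into $\Omega_\omega+\alpha$ for a suitable $\alpha<\epsilon_0$ just above $3\alpha_0$, giving the claimed bound.

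The main obstacle is the bookkeeping of operator-control and the interplay of the $\omega$-rule with the ordinal arithmetic. Lemma $\ref{limw}$ itself already applies the Collapsing Theorem at each step handling $\neg Lim$, and its induction on the length of the $[\omega]$-derivation must pass through the $(\omega)$-rule, where one takes suprema of operators and of bounds across infinitely many premises; this is what accounts for the factor of $3$ in $\Omega_\omega+3\alpha$. A secondary difficulty is verifying the $\Ln$-localization of the admissibility embeddings: whereas in Lemmas $\ref{Ad1}$--$\ref{Ad3}$ one unfolds $\neg Ad(t)$ via $(\neg Ad)$ over all $m\le \omega$ and hence uses cut-complexity $\Omega_\omega$, here it suffices to consider $m<n$, keeping the cut-complexity below $\Omega_n+1$, which is precisely what is required to preserve the cut-complexity bound through the final applications of $(Cut)$.
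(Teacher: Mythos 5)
Your proposal follows essentially the same route as the paper: apply the Deduction Theorem, use Lemma \ref{induction} to remove the Mathematical Induction instances, invoke Lemma \ref{limw} to eliminate $\neg Lim$ while embedding into $\RS$, and then cut away the remaining negated axioms. In fact your write-up is more explicit than the paper's own (very terse) proof about the final cut-elimination of the $\neg A_i^{\Ln}$ and about why the $(\neg Ad)$ premises can be restricted to $m<n$ so that the cut-complexity stays below $\Omega_n+1$; these details are correct and consistent with how the analogous step is carried out for Theorem \ref{Embedr}.
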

\begin{proof}
    We suppose ${\sf W-KPl}\vdash\Gamma(\vec a)$. Then, by Lemma $\ref{induction}$, we have
    \begin{equation*}
        [\omega]\dash{0}{\alpha}\neg Lim,\neg A_1,\dots,\neg A_l,\Gamma(\vec a)
    \end{equation*}
    for some axioms $A_1,\dots, A_l$ of ${\sf W-KPl}$ different from mathematical induction and $(Lim)$. Now, by Lemma $\ref{limw}$, we get
    \begin{equation*}
        \H_{\Omega_\omega+3\alpha}[\vec s]\dash{\Omega_n+1}{\Omega_\omega+3\alpha}\Gamma(\vec s)^{\Ln}.
    \end{equation*}
\end{proof}

We will use the following operation to classify the total set-recursive-from-$\omega$ functions of {\sf W-KPl}.
\begin{definition}
    We define for every $\alpha<\epsilon_0$ the operation $\check G_\alpha(x)$ as
    \begin{equation*}
        \check G_\alpha(x)=L_{\psi_0(\Omega_{\omega}\cdot\omega^{\alpha+2})}(x)
    \end{equation*}
\end{definition}
Again, following Reading Convention \ref{read}, the definition of $\Omega_\omega$ and $\psi_0$ depend on $x$, and hence so does the definition of $\check G_\alpha$.\\
Finally, we classify the total set-recursive-from-$\omega$ functions of {\sf W-KPl}. The result is analogous to Theorems $\ref{main}$ and $\ref{totalr}$.
\begin{Theorem}$\label{totalw}$
    Let $f$ be a provably total set-recursive-from-$\omega$ function in {\sf W-KPl}. Then, there is $\alpha<\epsilon_0$ such that
    \begin{equation*}
        V\vDash\forall x(f(x)\in \check G_\alpha(x)).
    \end{equation*}
\end{Theorem}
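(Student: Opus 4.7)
The plan is to follow the template of Theorems \ref{main} and \ref{totalr}, replacing the embedding theorem with its {\sf W-KPl}-version (Theorem \ref{EmbedW}). By Slaman's Theorem there is a $\Sigma$-formula $A_f(\cdot,\cdot)$ such that ${\sf W\text{-}KPl} \vdash Ad(b) \rightarrow \forall x \in b\, \exists! y \in b\, A_f(x,y)^b$. Fix $X$ of rank $\theta$ as in Reading Convention \ref{read}. Applying Theorem \ref{EmbedW} yields $n<\omega$ and $\alpha<\epsilon_0$ with
\[
\H_{\Omega_\omega + 3\alpha} \dash{\Omega_n + 1}{\Omega_\omega + \alpha}\ Ad(\mathbb{L}_{\Omega_0}(X)) \to \forall x \in \mathbb{L}_{\Omega_0}(X)\, \exists! y \in \mathbb{L}_{\Omega_0}(X)\, A_f(x,y)^{\mathbb{L}_{\Omega_0}(X)}.
\]

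The ensuing manipulation is identical to the one in the proof of Theorem \ref{totalr}: use Lemma \ref{vee} to replace the implication by a sequent containing $\neg Ad(\mathbb{L}_{\Omega_0}(X))$; use Lemma \ref{Ad} together with Weakening (Lemma \ref{weak}) to derive $Ad(\mathbb{L}_{\Omega_0}(X))$ with the same bounds; apply $(Cut)$ on this formula, noting that the cut complexity is preserved because $\mathrm{rk}(Ad(\mathbb{L}_{\Omega_0}(X))) = \Omega_0 + 5 < \Omega_n + 1$. Two applications of Lemma \ref{inversion} (Inversion) -- first to instantiate the outer universal quantifier at $\overline{X}$, then to extract the existential conjunct from $\exists!$ -- then give
\[
\H_{\Omega_\omega + 3\alpha} \dash{\Omega_n + 1}{\Omega_\omega + \alpha + 1}\ \exists y \in \mathbb{L}_{\Omega_0}(X)\, A_f(\overline{X}, y)^{\mathbb{L}_{\Omega_0}(X)}.
\]

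Now apply the Collapsing Theorem \ref{collapsing} with $m=0$. The hypothesis $\Omega_\omega + 3\alpha \in \H_{\Omega_\omega + 3\alpha} = B_0(\Omega_\omega + 3\alpha + 1)$ holds because $\alpha < \epsilon_0$ is generated in $B_0$ from $0$ and the base ordinals under $+$ and $\phi$. Setting $\hat{\alpha} := \Omega_\omega + 3\alpha + \omega^{\Omega_n + 1 + \Omega_\omega + \alpha + 1}$ and using $\omega^{\Omega_\omega} = \Omega_\omega$ together with additive absorption of $\Omega_\omega + 3\alpha$ by $\Omega_\omega \cdot \omega^{\alpha+1}$, one finds $\hat{\alpha} = \Omega_\omega \cdot \omega^{\alpha + 1}$. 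Predicative Cut Elimination (Theorem \ref{cut elim}) then yields
\[
\H_{\hat{\alpha}} \dash{0}{\beta'}\ \exists y \in \mathbb{L}_{\Omega_0}(X)\, A_f(\overline{X}, y)^{\mathbb{L}_{\Omega_0}(X)},
\]
where $\beta' = \phi(\psi_0 \hat{\alpha})(\psi_0 \hat{\alpha}) < \Omega_0$. Boundedness (Lemma \ref{boundedness}) replaces $\mathbb{L}_{\Omega_0}(X)$ by $\mathbb{L}_{\beta'}(X)$, and Claim \ref{fin cl} from the proof of Theorem \ref{main} then yields $L_{\beta'}(X) \vDash \exists y\, A_f(X, y)$, whence $f(X) \in L_{\beta'}(X)$.

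To finish, note $\hat{\alpha} = \Omega_\omega \cdot \omega^{\alpha+1} \in B_0(\Omega_\omega \cdot \omega^{\alpha+2})$, so Lemma \ref{311}.2 gives $\psi_0 \hat{\alpha} < \psi_0(\Omega_\omega \cdot \omega^{\alpha+2})$; closure of $B_0$ under Veblen functions then places $\beta'$ in $B_0(\Omega_\omega \cdot \omega^{\alpha+2}) \cap \Omega_0 = \psi_0(\Omega_\omega \cdot \omega^{\alpha+2})$. Hence $L_{\beta'}(X) \subseteq \check G_\alpha(X)$, as required. No genuinely new technique is needed; the argument is a direct transplant of the one in Theorem \ref{totalr}, and the main bookkeeping obstacle is the ordinal arithmetic -- one must verify that the transfinite length $\Omega_\omega + \alpha$ (in contrast to the finite $\Omega_n + k$ of ${\sf KPl^r}$) drives the collapsed bound to $\psi_0(\Omega_\omega \cdot \omega^{\alpha + 2})$, which is precisely the quantity built into the definition of $\check G_\alpha$.
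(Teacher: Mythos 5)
Your argument follows the paper's proof essentially step for step (embedding via Theorem \ref{EmbedW}, the $Ad$/$\neg Ad$ cut, double Inversion, Collapsing, Predicative Cut Elimination, Boundedness, and the final $B_0(\Omega_\omega\cdot\omega^{\alpha+2})$ computation), and it is correct. The one bookkeeping slip is your claim that $\mathrm{rk}\big(Ad(\mathbb L_{\Omega_0}(X))\big)=\Omega_0+5<\Omega_n+1$ preserves the cut complexity: with $n=0$ (the case the embedding actually delivers for a $\Sigma^{\Omega_0}$ formula) this inequality fails, and one must raise the cut rank to $\Omega_1+1$ as the paper does --- harmless, since $\Omega_1+1+\Omega_\omega+\alpha+1=\Omega_\omega+\alpha+1$ and the collapsed bound is unchanged.
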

\begin{proof}
    Let $A_f(\cdot,\cdot)$ be the $\Sigma$ formula that defines $f$ in any admissible set. In particular, we have that ${\sf W-KPl}\vdash Ad(b)\rightarrow[\forall x\in b\exists!y\in b\ A_f(x,y)^b]$.\\
    Now, we fix a set $X$ and we let $\theta$ be the set-theoretic rank of $X$, as in Reading Convention $\ref{read}$. By Theorem $\ref{EmbedW}$, we have
    \begin{equation*}
        \H_{\Omega_\omega+3\alpha}\dash{\Omega_{0}+1}{\Omega_{\omega}+\alpha}Ad(\mathbb L_{\Omega_0}(X))\rightarrow \forall x\in \mathbb L_{\Omega_0}(X)\exists!y\in \mathbb L_{\Omega_0}(X) A_f(x,y)^{\mathbb L_{\Omega_0}(X)}.
    \end{equation*}
    Following the same reasoning as in the proof of Theorem $\ref{main}$, we get
    \begin{equation}\label{wmain1}
        \H_{\Omega_\omega+3\alpha}\dash{\Omega_{0}+1}{\Omega_{\omega}+\alpha}\neg Ad(\mathbb L_{\Omega_0}(X)),\forall x\in \mathbb L_{\Omega_0}(X)\exists!y\in \mathbb L_{\Omega_0}(X) A_f(x,y)^{\mathbb L_{\Omega_0}(X)}.
    \end{equation}
    On the other hand, we have by Lemma $\ref{Ad}$ and Lemma $\ref{weak}$
    \begin{equation}\label{wmain2}
        \H_{\Omega_\omega+3\alpha}\dash{\Omega_{0}+1}{\Omega_{\omega}+\alpha}Ad(\mathbb L_{\Omega_0}(X)),\forall x\in \mathbb L_{\Omega_0}(X)\exists!y\in \mathbb L_{\Omega_0}(X) A_f(x,y)^{\mathbb L_{\Omega_0}(X)}.
    \end{equation}
    We apply $(Cut)$ to \eqref{wmain1} and \eqref{wmain2} to obtain
    \begin{equation*}
        \H_{\Omega_\omega+3\alpha}\dash{\Omega_{1}+1}{\Omega_{\omega}+\alpha+1}\forall x\in \mathbb L_{\Omega_0}(X)\exists!y\in \mathbb L_{\Omega_0}(X) A_f(x,y)^{\mathbb L_{\Omega_0}(X)}.
    \end{equation*}
    Since $\mathrm{rk}(Ad(\mathbb L_{\Omega_0}(X)))=\Omega_0+5$, we bound the complexity of the cuts by $\Omega_1+1$. We now apply Lemma $\ref{inversion}$ (Inversion) two times to obtain
    \begin{equation}\label{wmain3}
        \H_{\Omega_\omega+3\alpha}\dash{\Omega_{1}+1}{\Omega_{\omega}+\alpha+1}\exists y\in \mathbb L_{\Omega_0}(X) A_f(\overline X,y)^{\mathbb L_{\Omega_0}(X)}.
    \end{equation}
    We are now going to eliminate cuts from the derivation \eqref{wmain3}.
    By the Collapsing Theorem \ref{collapsing}, we get, as $\omega^{\Omega_\omega+\alpha+1}=\Omega_\omega\cdot\omega^{\alpha+1}$,
    \begin{equation*}
        \H_{\Omega_\omega\cdot\omega^{\alpha+1}}\dash{\psi_0(\Omega_\omega\cdot\omega^{\alpha+1})}{\psi_0(\Omega_\omega\cdot\omega^{\alpha+1})}\exists y\in\mathbb L_{\Omega_0}(X) A_f(\overline X,y)^{\mathbb L_{\Omega_0}(X)}.
    \end{equation*}
    By Theorem $\ref{cut elim}$ (Predicative Cut Elimination), we have
    \begin{equation*}
        \H_{\Omega_\omega\cdot\omega^{\alpha+1}}\dash 0{\beta}\exists y\in\mathbb L_{\Omega_0}(X)A_f(\overline X,y)^{\mathbb L_{\Omega_0}(X)}.
    \end{equation*}
    where $\beta=\phi(\psi_0(\Omega_\omega\cdot\omega^{\alpha+1}))(\psi_0(\Omega_\omega\cdot\omega^{\alpha+1}))$. By the Boundedness Lemma, we get 
    \begin{center}
        $\H_{\Omega_\omega\cdot\omega^{\alpha+1}}\dash0\beta \exists y\in\mathbb{L}_\beta (X) A_f(\overline X,y)^{\mathbb{L}_\beta (X)}$.
    \end{center}
    It follows that
    \begin{equation}\label{wmain4}
        L_\beta(X)\vDash \exists y A_f(X,y).
    \end{equation}
    Now, we observe that $\Omega_\omega\cdot \omega^{\alpha+1}\in B_0(\Omega_\omega\cdot\omega^{\alpha+2})$ because $\alpha<\epsilon_0<\Gamma_0<\psi_0(\Omega_\omega\cdot\omega^{\alpha+2})$. This means that $\psi_0(\Omega_\omega\cdot\omega^{\alpha+1})<\psi_0(\Omega_\omega\cdot\omega^{\alpha+2})$.\\
    Hence, by \eqref{wmain4}, we get
    \begin{equation*}
        f(X)\in L_\beta(X)\subseteq L_{\psi_0(\Omega_\omega\cdot\omega^{\alpha+2})}=\check G_\alpha(X),
    \end{equation*}
    as desired.
\end{proof}
$\setcounter{equation}{0}$
\section{The well-ordering and totality proofs}\label{SectWellOrderingProofs}
The proof of the classification Theorems $\ref{main}$, $\ref{totalr}$ and $\ref{totalw}$ cannot be carried out within {\sf KPl}, ${\sf KPl^r}$ and {\sf W-KPl} respectively, as they imply the consistency of the theories. However, they can ``almost'' be carried out internally, in the sense that one can prove the schemata establishing the conclusions of the theorems restricting the provably total functions according to the lengths of the proofs. 

In this section, we show that for every set $X$ with set-theoretic rank $\theta$ and for every $n<\omega$, the terms in $T(\theta)$ below $\psi_0(e_n)$ can be well-ordered within {\sf KPl}. Similarly, we show that, for every $n<\omega$, ${\sf KPl^r}$ proves the well-ordering of $\psi_0(\Omega_n)$ and that, for every $n<\omega$, {\sf W-KPl} proves that $\psi_0(\Omega_\omega\cdot\omega_n)$ can be well-ordered, where $\omega_0=1$ and $\omega_{n+1}=\omega^{\omega_n}$. Thus, for each $n<\omega$, the function $G_n$ (respectively, $\hat G_n$ and $\check G_n$) is provably total in {\sf KPl} (respectively, ${\sf KPl^r}$ and {\sf W-KPl}). From these facts, we then recover slight strengthenings of the classification theorems.

We fix a set $X$ and its set-theoretic rank $\theta$ and follow Reading Convention $\ref{read}$. The ordinals $e_n$ for $n<\omega$ are therefore fixed (see Definition $\ref{en}$). An element of $T(\theta)$ can be well-ordered if it belongs to the following set.
\begin{definition} We define the set $\acc$ of \textit{accessible} elements of $T(\theta)$.
    \begin{center}
        $\acc=\Big\{a\in T(\theta):a\prec \Omega_0\wedge\exists\alpha\exists f\big[\mathrm{dom}(f)=\alpha\wedge rg(f)=\{b:b\prec a\}\wedge\forall \delta,\gamma\in \mathrm{dom}(f)(\delta<\gamma\rightarrow f(\delta)\prec f(\gamma))\big]\Big\}$.
    \end{center}
\end{definition}
Given $a\in\acc$, there is a unique ordinal $\alpha$ isomorphic to $a$, the order-type of $\{b:b\prec a\}$. Moreover, the isomorphism between $\alpha$ and $a$ is unique. This ensures that the following notions are well-defined. 
\begin{definition}$\label{of}$
    Given $a\in\acc$, we write $o(a)$ and $f_a$ to denote the ordinal and the function such that $\mathrm{dom}(f_a)=o(a)$, $\mathrm{ran}(f_a)=\{b:b\preceq a\}$ and
    \begin{equation*}
        \forall\delta,\gamma<o(a)[\delta<\gamma\rightarrow f_a(\delta)\prec f_a(\gamma)].
    \end{equation*}
\end{definition}
Additionally, we notice that the formula $a\in\acc$ is $\Sigma$.
\subsection{The well-ordering and totality proof for \texorpdfstring{${\sf KPl^r}$}{KPlr}}
We start working in ${\sf KPl^r}$. We study some closure properties about $\acc$. The next lemma states the useful result that for any $X$ we can perform restricted induction in $\acc$ within ${\sf KPl^r}$ and full induction within {\sf KPl}.

\begin{Lemma}$\label{indacc}$
    Let $A(a)$ be any $\Delta_0$-formula in the language $\mathcal{L}'$. Then
    \begin{center}
        ${\sf KPl^r}\vdash\forall\theta\in\ord\big[\forall a\in \acc\big(\forall b\prec a\ A(b)\rightarrow A(a)\big)\rightarrow \forall a\in\acc\ A(a)\big]$.
    \end{center}
    Moreover, given any $\mathcal{L'}$-formula $B$ we have
    \begin{center}
        ${\sf KPl}\vdash\forall\theta\in\ord\big[\forall a\in \acc\big(\forall b\prec a\ B(b)\rightarrow  B(a)\big)\rightarrow \forall a\in\acc\ B(a)\big]$.
    \end{center}
\end{Lemma}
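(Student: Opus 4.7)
The plan is to prove both items in parallel by reducing accessibility-induction to ordinary ordinal ($\in$-) induction, using the witness function $f_a$ supplied by Definition~\ref{of}. Fix $\theta\in\mathrm{Ord}$ and assume the progressiveness hypothesis $\forall a\in\mathrm{Acc}_\theta\,\bigl(\forall b\prec a\,A(b)\rightarrow A(a)\bigr)$. Given an arbitrary $a\in\mathrm{Acc}_\theta$, Definition~\ref{of} gives a unique ordinal $\alpha=o(a)$ and a unique order-isomorphism $f_a:\alpha\to\{b:b\preceq a\}$. Note that $a$ itself is in the range of $f_a$, say $a=f_a(\delta_a)$ for some $\delta_a<\alpha$, so it is enough to prove $A(f_a(\delta))$ for every $\delta<\alpha$.

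I would obtain the latter by induction on $\delta<\alpha$, applied to the formula
\[
\varphi(\delta)\;\equiv\;\bigl(\delta\in\alpha\rightarrow A(f_a(\delta))\bigr).
\]
For the inductive step, assume $\varphi(\gamma)$ for all $\gamma<\delta$. Because $f_a$ is an order-isomorphism onto $\{b:b\preceq a\}$, every $b\prec f_a(\delta)$ satisfies $b\preceq a$ and hence lies in the range of $f_a$, necessarily as $b=f_a(\gamma)$ for some $\gamma<\delta$. Next I would check $f_a(\delta)\in\mathrm{Acc}_\theta$: the restriction $f_a\restriction\delta$ is an order-isomorphism from $\delta$ onto $\{b:b\prec f_a(\delta)\}$, and $f_a(\delta)\preceq a\prec\Omega_0$. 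Progressiveness applied to $f_a(\delta)$ then yields $A(f_a(\delta))$, as every $b\prec f_a(\delta)$ has $A(b)$ by the induction hypothesis. Taking $\delta=\delta_a$ gives $A(a)$.

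The crucial point for the distinction between the two items is the logical complexity of $\varphi$. Since $a$, $\alpha$, $f_a$ are fixed parameters and the operations $\delta\mapsto f_a(\delta)$, $\delta\in\alpha$ are $\Delta_0$ in those parameters, $\varphi$ has the same quantifier complexity as $A$ (resp.\ $B$). Thus when $A$ is $\Delta_0$, $\varphi$ is $\Delta_0$ and the $\in$-induction is available in $\mathsf{KPl^r}$ (which retains $\Delta_0$-Foundation); when $B$ is arbitrary, Class Induction of $\mathsf{KPl}$ handles $\varphi$ without restriction. Nothing in the argument uses the collection scheme beyond what is already in both theories.

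I do not anticipate a serious obstacle: the only small verifications are that accessibility is inherited by $f_a(\delta)$ (witnessed explicitly by $f_a\restriction\delta$) and that $f_a(\delta)\prec\Omega_0$ (inherited from $a\prec\Omega_0$). The whole argument lives entirely within the theories, and the proof is uniform in $\theta$, giving the outer universal quantifier $\forall\theta\in\mathrm{Ord}$ for free.
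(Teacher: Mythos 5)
Your proposal is correct and is essentially the paper's argument: both reduce $\prec$-induction on $\acc$ to ordinal $\in$-induction by passing through the order-type data $o(\cdot)$, $f_a$, using the witnessing function as a fixed set parameter so that the induction formula keeps the complexity of $A$ (hence $\Delta_0$-Induction suffices in ${\sf KPl^r}$ and full Class Induction covers arbitrary $B$ in ${\sf KPl}$). The only cosmetic difference is that you run the induction directly along $\delta<o(a)$ while the paper takes a minimal counterexample via the least-ordinal principle, which for $\Delta_0$-formulas is just the contrapositive of the same scheme.
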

\begin{proof}
    We argue informally in ${\sf KPl^r}$. We fix $\theta$ and, for contradiction, we suppose that 
    \begin{equation}\label{pind1}
        \forall a\in\acc[\forall b\prec a\ A(b)\rightarrow A(a)]
    \end{equation}
    but $\neg A(a)$ for some $\Delta_0$-formula $A$ and some $a\in \acc$. We claim that there is an $a_0$ such that 
$\neg A(a_0)$ and $o(a_0)$ is minimal among $\{o(a):a\in \acc\wedge \neg A(a)\}$. To see this, let $a$ be such that $\neg A(a)$. 
Let $\rho$ be the restriction of the mapping $b \mapsto o(b)$ to terms $b \prec a$. Then $\rho$ is a total function on its domain and thus $\rho$ is $\Delta_1$. Moreover, its range is a subset of $\Omega_0$, so $\rho \in \Delta_0^{L_{\Omega_0}(X)}$. Thus, by the axiom of Induction restricted to $\Delta_0$-formulas, there is a least ordinal $\alpha_0$ such  that
\[\exists a_0 \in T(\theta)\, (\alpha_0 = o(a_0) \wedge \neg A(a_0)).\]
Thus, if $a_0$ is such that $\alpha_0 = o(a_0)$, then $a_0$ is
as claimed.

We have that if $b\prec a_0$ then $o(b)<o(a_0)$. Therefore, we obtain that $A(b)$ holds for any $b\prec a_0$, by the minimality of $a_0$. That is, we have
    \begin{center}
        $\forall b\prec a_0\ A(b)$.
    \end{center}
    By \eqref{pind1}, we get $A(a_0)$, a contradiction with $\neg A(a_0)$.
    
    For the unrestricted case in {\sf KPl}, the argument is the same but using full Class Induction.
\end{proof} 
$\setcounter{equation}{0}$
The next lemma gives two criteria to verify that some string belongs to $\acc$.
\begin{Lemma}$\label{close}$The following are provable in ${\sf KP}$. For any $X$ with set-theoretic rank $\theta\in\ord$, assuming that $L_{\Omega_0}(X)$ exists:
    \begin{enumerate}
        \item for any $a,b\in T(\theta)$, if $a\prec b\in\acc$ then $a\in \acc$,
        \item for any $b\in T(\theta)$, if for all $ a\prec b$ we have that $a\in \acc$, then $b\in \acc$.
    \end{enumerate}
\end{Lemma}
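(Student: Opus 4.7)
The plan is to prove both closures by direct construction, using only the axioms available in ${\sf KP}$ (in particular $\Delta_0$-Separation and $\Delta_0$-Collection, the latter yielding $\Sigma$-Collection by the standard argument). The key feature of the definition of $\acc$ is that it encodes the existence of a $\prec$-order-isomorphism from an ordinal onto a downward $\prec$-segment, together with the uniqueness of such isomorphisms between a well-ordering and an ordinal; both closure properties follow from standard manipulations of these witnesses.

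For item~1, I would fix $a \prec b \in \acc$ and a witness $f_b$ with $\mathrm{dom}(f_b) = o(b)$ enumerating $\{c : c \prec b\}$ in $\prec$-increasing order. Using $\Delta_0$-Separation, form
\[
\beta := \{\delta \in \mathrm{dom}(f_b) : f_b(\delta) \prec a\}.
\]
Since $f_b$ is order-preserving and $\{c : c \prec a\}$ is an initial $\prec$-segment of $\{c : c \prec b\}$, the set $\beta$ is an initial segment of $\mathrm{dom}(f_b)$, hence an ordinal. Then $f_b \restriction \beta$ is an order-isomorphism from $\beta$ onto $\{c : c \prec a\}$, witnessing $a \in \acc$.

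For item~2, assume every $a \prec b$ lies in $\acc$. Item~1, combined with the uniqueness of order-isomorphisms between a well-ordering and an ordinal, shows that the witnesses $\{f_a : a \prec b\}$ cohere: whenever $a_1 \prec a_2 \prec b$, the function $f_{a_1}$ is an initial segment of $f_{a_2}$. Applying $\Sigma$-Collection (noting that each $f_a$ is a single set lying in $L_{\Omega_0}(X)$, since $o(a) < \Omega_0$) to the $\Sigma_1$-functional relation ``$g = f_a$'' produces the family $F = \{f_a : a \prec b\}$ as a set. Then $f := \bigcup F$ is a well-defined $\prec$-order-preserving function whose domain is the ordinal $\alpha := \sup_{a \prec b} o(a)$ and whose range is $\bigcup_{a \prec b}\{c : c \prec a\}$. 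If $b$ has no immediate $\prec$-predecessor, this range is exactly $\{c : c \prec b\}$ and $f$ witnesses $b \in \acc$. Otherwise, if $b^- \prec b$ is the immediate predecessor, extend $f$ by sending $\alpha$ to $b^-$ to obtain the required witness on $\alpha + 1$.

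The main obstacle is the limit/successor dichotomy for $b$ under $\prec$, which forces the small extension step in the successor case; checking that the coherence really makes $\bigcup F$ into a function (rather than just a relation), and keeping everything inside $L_{\Omega_0}(X)$ so that $\Sigma$-Collection is applicable, is the only place where the argument needs genuine care. Everything else is bookkeeping on ordinals and restrictions of order-isomorphisms.
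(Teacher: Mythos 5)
Your proposal is correct and follows essentially the same route as the paper: restriction of the witnessing isomorphism for downward closure, and $\Delta_0$/$\Sigma$-Collection plus Union inside $L_{\Omega_0}(X)$ to assemble the witness at $b$ (your successor/limit case split is only an artifact of using the strict range $\{c:c\prec a\}$ where the paper's Definition of $f_a$ uses $\{c:c\preceq a\}$). The one place you are quicker than the paper is the parenthetical claim that each $f_a$ lies in $L_{\Omega_0}(X)$ with $o(a)<\Omega_0$: the paper proves this by a separate induction along $\acc$ carried out inside $L_{\Omega_0}(X)$, using the $\Sigma$-definability of $a\mapsto o(a)$ and $a\mapsto f_a$, and that induction is genuinely needed to license the Collection step.
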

 \begin{proof}
     1. We suppose that $b\in\acc$ and $a\prec b$. Let $o(b)$ and $f_b$ be as in Definition $\ref{of}$. We define
     \begin{equation*}
         o(a)=\{\delta\in o(b):f_b(\delta)\preceq a\}\text{\ and\ }f_a=f_b\upharpoonright(o(a)+1).
     \end{equation*}
     Then $f_a:o(a)+1\rightarrow \{c:c\preceq a\}$ well-orders $\{c:c\preceq a\}$, showing that $a\in \acc$.\\
     \ \\
     2. We assume that $\forall a\prec b(a\in\acc)$, and want to show that $b\in\acc$. Using $\Delta_0$-Separation, we define the set $\{a\in T(\theta):a\prec b\}=\{a:a\prec b\}$. We define by $\Delta_0$-Collection the set
     \begin{equation*}
         F=\{f_a:a\prec b\}.
     \end{equation*}
     Finally, let $f_b=\bigcup_{a\prec b}F\cup\{(\sup(o(a)+1:a\prec b),b)\}$ by Union, with $\mathrm{dom}(f_b)=\sup(o(a)+1:a\prec b)=:o(b)$. Then, clearly $f_b$ well-orders $\{a:a\preceq b\}$, showing that $b\in\acc$.

 We are allowed to use $\Delta_0$-Collection because all the elements that appear in the proof of Lemma $\ref{close}$.2 belong to $L_{\Omega_0}(X)$, which is admissible. Indeed, we show the following. 
 \begin{claim}
     For any $a\in\acc$, the ordinal $o(a)$ and the function $f_a$ belong to  $L_{\Omega_0}(X)$.
 \end{claim}
 \renewcommand\qedsymbol{$\blacksquare$}
 \begin{proof}
     We proceed by induction in $\acc$, by Lemma $\ref{indacc}$, in the admissible set $L_{\Omega_0}(X)$. Let $a\in\acc$. We suppose $\forall b\prec a\big(o(b),f_b\in L_{\Omega_0}(X)\big)$ and we show that $o(a),f_a\in L_{\Omega_0}(X)$.\\
     We can form the set $\{o(b):b\prec a\}$ by $\Sigma$-Collection (in virtue of Theorem 4.4 in \cite{barwise}) within $L_{\Omega_0}(X)$ since the function $F:b\mapsto o(b)$ can be defined in a $\Sigma$ way as follows:
     \begin{center}
         $F$ is a function $\wedge \mathrm{dom}(F)=\{b:b\prec a\}\wedge \mathrm{ran}(F)\in\mathrm{Ord}\wedge \forall \big(b,o(b)\big)\in F\exists f\big[f\text{ is a function from $o(b)$ to $\{c:c\prec b\}$}\wedge \forall \delta,\gamma<o(b)\big(\delta<\gamma\rightarrow f(\delta)<f(\gamma)\big)\big]$. 
     \end{center}
     Then, we have that $o(a)=\sup\{o(b)+1:b\prec a\}$. Since each $o(b)\prec\Omega_0$, we also have $o(a)\prec\Omega_0$ and so $o(a)\in L_{\Omega_0}(X)$.\\
     Now, $f_a=\bigcup_{b\prec a}f_b\cup \{(o(a),a)\}$. By the induction hypothesis, each $f_b\in L_{\Omega_0}(X)$. We observe that the function $F:b\mapsto f_b$ can be defined by a $\Sigma$-formula, as follows.
     \begin{center}
    $F\text{ is a function }\wedge \mathrm{dom}(F)=\{b:b\prec a\}\wedge \mathrm{ran}(F)=\{\text{partial functions from $o(a)$ to $\{b:b\prec a\}$}\}\wedge \forall (b,f_b)\in F\big[\mathrm{dom}(f_b)=o(b)\wedge \mathrm{ran}(f_b)=\{c:c\prec b\}\wedge\forall \delta,\gamma<o(b)\big(\delta<\gamma\rightarrow f_b(\delta)<f_b(\gamma)\big)\big]\}$.
\end{center}
     Therefore, we can use again $\Sigma$-Collection in $L_{\Omega_0}(X)$ to form the set $\{f_b:b\prec a\}$. Thus, by Union, we form the set $\bigcup_{b\prec a}f_b$. Moreover, since $o(a)$ and $a$ belong to $L_{\Omega_0}(X)$, then also $\{(o(a),a)\}\in L_{\Omega_0}(X)$ by Pair. Hence, we obtain $f_a\in L_{\Omega_0}(X)$.
 \end{proof}
 \renewcommand\qedsymbol{$\square$}
 This concludes the proof.
\end{proof}
We notice that any model of {\sf KPl} that contains a set $X$ also contains $L_{\Omega_0}(X)$. Therefore, Lemma \ref{close} is also provable in {\sf KPl}.\\
We will show that $\acc$ is closed under addition and under the Veblen function. Even if the addition of two elements in $T(\theta)$ is not well-defined, we can take the convention to let $a+b$ denote the term that $a+b$ intends to mean, as follows.
 \begin{readingconvention}
     Let $a,b\in T(\theta)$. We write $a+b$ to denote the term in $T(\theta)$ that represents the ordinal obtained by the addition of the ordinal that the term $a$ represents with the ordinal that the term $b$ represents. Formally, we could define a $T(\theta)$-primitive recursive function $+:T(\theta)\times T(\theta)\rightarrow T(\theta)$ that identifies $a+b$ with the string in $T(\theta)$ representing $a+b$.\\
     Similarly, we write $\phi ab$ to denote the term in $T(\theta)$ representing the ordinal obtained by applying the Veblen function to the ordinal that the term $a$ represents and the ordinal that the term $b$ represents. Again, we can do this in a $T(\theta)$-primitive recursive way with a function $\phi:T(\theta)\times T(\theta)\rightarrow T(\theta)$.
 \end{readingconvention}
 \begin{Lemma}$\label{closeop}$
     The following is provable in ${\sf KP}$. For any $X$ with set-theoretic rank $\theta\in\ord$, assuming that $L_{\Omega_0}(X)$ exists:
     \begin{enumerate}
         \item $a,b\in\acc\rightarrow a+b\in\acc$,
         \item $a,b\in\acc\rightarrow\phi ab\in\acc$.
     \end{enumerate}
 \end{Lemma}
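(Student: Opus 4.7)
The plan is to apply the closure criterion Lemma \ref{close}.2 in both parts: to show $a+b\in\acc$ (respectively $\phi ab\in\acc$) it suffices to verify that every term $c\prec a+b$ (resp.\ $c\prec\phi ab$) already belongs to $\acc$. The required inductions range over $\Sigma$ rather than $\Delta_0$ predicates, but {\sf KP} supplies the full Class Induction schema, so this is unproblematic.

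For Part 1, I fix $a\in\acc$ and argue by induction on $b\in\acc$ that $a+b\in\acc$. Given $c\prec a+b$ in normal form, the structure of the ordering on $T(\theta)$ forces either $c\preceq a$, in which case Lemma \ref{close}.1 applied to $a$ yields $c\in\acc$, or $c=a+d$ for some $d\prec b$. In the second case, Lemma \ref{close}.1 applied to $b$ gives $d\in\acc$, and then the induction hypothesis for $d\prec b$ delivers $a+d\in\acc$. An appeal to Lemma \ref{close}.2 finishes.

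For Part 2, I carry out a nested induction, outer on $a\in\acc$ and inner on $b\in\acc$, with goal $\phi ab\in\acc$. To show $\phi ab\in\acc$, take $c\prec\phi ab$ in normal form and dissect using Lemma \ref{fixbelow}. Modulo the structural possibilities for $c$, one of the following holds: (i) $c\prec b$, giving $c\in\acc$ by Lemma \ref{close}.1; (ii) $c=\phi a b_1$ with $b_1\prec b$, where $b_1\in\acc$ by Lemma \ref{close}.1 and the inner IH supplies $\phi a b_1\in\acc$; or (iii) $c=\phi a_1 b_1$ with $a_1\prec a$ and $b_1\prec\phi ab$, where the outer IH gives $\phi a_1 b'\in\acc$ for every $b'\in\acc$, so that we are reduced to placing $b_1$ inside $\acc$.

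The main obstacle is case (iii): $b_1$ need not satisfy $b_1\prec b$, so the inner IH does not apply, and $b_1$ may itself be of a form identical to $c$ so the outer IH cannot be applied naively either. I would circumvent this by an innermost subsidiary induction on the complexity $Cc\in\omega$ of $c$ from Definition \ref{setrtheta}: since $b_1$ is a proper subterm of $c$, it satisfies $Cb_1<Cc$, while $b_1\prec\phi ab$ is inherited from case (iii), so the subsidiary IH yields $b_1\in\acc$. Chaining the three layered inductions with Lemma \ref{close}.2 closes the argument. The only routine side step is to justify the structural case analyses — that every $c\prec a+b$ has the form $c\preceq a$ or $c=a+d$ with $d\prec b$, and the analogous trichotomy for $c\prec\phi ab$ — which follows from a primitive-recursive analysis of the comparison algorithm for $T(\theta)$ already implicit in the preceding section.
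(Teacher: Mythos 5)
Your overall architecture is exactly the paper's: Part 1 by induction on $b$ over $\acc$ (via Lemma \ref{indacc}) with the dichotomy $d\preceq a$ versus $d=a+c$ for some $c\prec b$, and Part 2 by a triple induction (outer on $a$ in $\acc$, inner on $b$ in $\acc$, innermost on the term complexity $C$), with your cases (i), (ii), (iii) matching the paper's Subcases 2.3, 2.2 and 2.1 respectively — including the use of the complexity induction to place $b_1$ in $\acc$ in case (iii). Part 1 is correct as written.

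However, the ``trichotomy'' you assert for $c\prec\phi ab$ is not exhaustive, and the missing cases are not a routine by-product of the comparison algorithm. First, if the normal form of $c$ is an additive decomposition $c=c_1+\cdots+c_k$ with $k>1$ (e.g.\ $c=\omega+1\prec\phi 1 2$ with $a=1$, $b=2$), then $c$ need not satisfy $c\prec b$ and is not of the form $\phi a_1 b_1$; here one must apply the innermost complexity induction to the components $c_i$ (each $c_i\prec\phi ab$ since $\phi ab$ is additive principal, and $Cc_i<Cc$) and then invoke \emph{Part 1 of the present lemma} to conclude $c\in\acc$. This is the paper's Case 1, and it is precisely why Part 1 must feed into the proof of Part 2 — an ingredient absent from your sketch. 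Second, if $c$ is strongly critical (an initial term $\Gamma_\beta$ or a $\psi$-term, hence not $=_{\sf NF}\phi a_1b_1$), the correct dichotomy is: either $c\preceq a$, in which case Lemma \ref{close}.1 applied to $a$ gives $c\in\acc$, or $a\prec c$, in which case $c=\phi ac$ forces $c\prec b$ and Lemma \ref{close}.1 applied to $b$ does the job; your case (i) covers only the second branch. Both repairs fit inside the inductions you have already set up, so the argument closes once the case analysis is completed.
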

 \begin{proof}
     1. Given any $a\in \acc$, we show that $\forall b\in\acc (a+b\in\acc)$ by induction on $b$ using Lemma \ref{indacc}. So we assume $\forall c\prec b(a+c\in\acc)$ and we show $a+b\in\acc$. If we show $\forall d\prec a+b(d\in \acc)$ then it will follow that $a+b\in\acc$ by Lemma \ref{close}.2, and we will be done.\\
     So, let $d\prec a+b$. If $d\preceq a$, then $d\in\acc$ by Lemma $\ref{close}$.1 since $a\in\acc$. So we assume $a\prec d\prec a+b$. This means that $d=a+c$ for some $c\prec b$. Therefore, we get $d\in\acc$ by the induction hypothesis.\\
     \ \\
     2. We show that $\forall a\in\acc\forall b\in\acc (\phi ab\in\acc)$ by induction in $\acc$ using Lemma $\ref{indacc}$. So we assume
     \begin{equation}\label{funac1}
         \forall c\prec a[\forall b\in\acc(\phi cb\in\acc)]
     \end{equation}
     and we show $\forall b\in\acc (\phi ab\in\acc)$. We proceed by a subsidiary induction on $b$ by Lemma $\ref{indacc}$, and so we assume that
     \begin{equation}\label{funac2}
         \forall e\prec b(\phi ae\in\acc)
     \end{equation}
     to show $\phi ab\in \acc$. We will show that $\forall x\prec\phi ab(x\in\acc)$ and use Lemma $\ref{close}$.2 to conclude $\phi ab\in\acc$, finishing the proof.\\
     So let $x\prec \phi ab$. We show $x\in \acc$ by induction on the complexity $Cx$ of $x$.\\
     \ \\
     Case 1. We suppose that $x\equiv x_0+\cdots+x_n$ with $n>0$. Since $Cx_i<Cx$ for any $i\in\{1,\dots,n\}$, by the induction hypothesis we have $x_i\in \acc$ for all $i\in\{1,\dots,n\}$. Hence, by Item 1. of this lemma we obtain that $x\in\acc$.\\
     \ \\
     Case 2. We suppose that $x\equiv \phi yz$.\\
     Subcase 2.1. If $y\prec a$ and $z\prec\phi ab$ then $Cy<Cx$ and so $y\in\acc$ by the induction hypothesis, and therefore $x\in\acc$ by \eqref{funac1}.\\
     \ \\
     Subcase 2.2. If $y=a$, that is, if $x\equiv \phi az$, then necessarily $z\prec b$ (otherwise $x=\phi ab$). Then $x\in \acc$ by \eqref{funac2}.\\
     \ \\
     Subcase 2.3. If $a\prec y$ then $x=\phi yz\prec b$ (otherwise $\phi ab\prec x$) and so, by Lemma $\ref{close}$.1 we get $x\in\acc$.\\
     \ \\
     Case 3. We suppose that $x$ is strongly critical. If $x\preceq a$ then $x\in\acc$ by \eqref{funac1}.\\
     So we assume $a\prec x$. Then, we have $x\prec b$ (otherwise we would have $\phi ab\prec x$ as $x$ is strongly critical). Hence, by \eqref{funac2} we get $x\in\acc$.
 \end{proof}
 $\setcounter{equation}{0}$
The initial ordinals also belong to $\acc$ (except, obviously, for the ordinals $\Omega_n$, since $\acc\subseteq \Omega_0$). By Lemma $\ref{close}$.2 we vacuously have $0\in\acc$. We show that the strongly critical ordinals up to $\Gamma_\theta$ belong to $\acc$.
 \begin{Lemma}$\label{gammaacc}$
 $\forall\theta\in\ord\ \forall\beta\leq\theta\ \Gamma_\beta\in\acc.$

 \end{Lemma}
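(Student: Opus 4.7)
The plan is to proceed by transfinite induction on $\beta \leq \theta$, with the inductive hypothesis $\forall\gamma<\beta\,(\Gamma_\gamma\in\acc)$. To conclude $\Gamma_\beta\in\acc$ it suffices, by Lemma $\ref{close}$.2, to show that every $a\in T(\theta)$ with $a\prec\Gamma_\beta$ already lies in $\acc$. Note first that $\Gamma_\beta\prec\Omega_0$: since $\theta<\Omega_0$ and $\Omega_0$ is $X$-admissible, the strongly critical hierarchy $\lambda\gamma.\Gamma_\gamma$ stays below $\Omega_0$ up to index $\theta$. Also $\Gamma_\beta\leq\Gamma_\theta<\Gamma_{\theta+1}$.

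The inner claim is proved by a subsidiary induction on the complexity $Ca$ given by Definition $\ref{setrtheta}$, running through the possible normal forms. If $a=0$, we have $a\in\acc$ vacuously. The cases $a=\Omega_m$ and $a=_{\sf NF}\phi 0 a_1$ with $a_1>\Omega_\omega$ cannot arise because $a\prec\Gamma_\beta\prec\Omega_0$; similarly, the case $a=_{\sf NF}\psi_n(a_1)$ is excluded since by Lemma~2.10 (Item 3) we have $\psi_0(a_1)\geq\Gamma_{\theta+1}>\Gamma_\beta$ and $\psi_{n+1}(a_1)>\Omega_n\geq\Omega_0$. If $a=\Gamma_\gamma$ with $\gamma\leq\theta$, then $a\prec\Gamma_\beta$ forces $\gamma<\beta$, so the outer inductive hypothesis gives $a\in\acc$. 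Finally, if $a=_{\sf NF}a_1+\cdots+a_n$ or $a=_{\sf NF}\phi a_1 a_2$, each constituent $a_i$ satisfies $a_i\prec a\prec\Gamma_\beta$ and has strictly smaller complexity, so by the subsidiary inductive hypothesis $a_i\in\acc$; Lemma $\ref{closeop}$ then yields $a\in\acc$.

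The main obstacle is the formula complexity of $a\in\acc$, which is properly $\Sigma_1$: the outer induction on $\beta$ is transfinite induction for a non-$\Delta_0$ predicate, so within ${\sf KPl^r}$ the restricted Class Induction does not apply directly. In {\sf KPl} full Class Induction dispatches it immediately, and this suffices for the intended use in subsequent well-ordering results. To make the argument go through in ${\sf KPl^r}$ when needed, one observes that the witness data $(o(\Gamma_\beta),f_{\Gamma_\beta})$ can actually be constructed by $\Sigma$-recursion inside $L_{\Omega_0}(X)$: given $f_{\Gamma_\gamma}$ for $\gamma<\beta$, one unions them with the values generated by the closure lemma $\ref{closeop}$ applied to constituents of each $a\prec\Gamma_\beta$, and the existence of this recursion is $\Sigma$ over $L_{\Omega_0}(X)$, hence available by $\Sigma$-Recursion in an admissible set. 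This packages the outer induction into a single $\Sigma$-statement about $L_{\Omega_0}(X)$ that ${\sf KPl^r}$ can prove.
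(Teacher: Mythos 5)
Your proof is correct and follows essentially the same route as the paper's: an outer induction on $\beta$, an inner induction on the complexity $Cb$ of each $b\prec\Gamma_\beta$, exclusion of the $\Omega_m$ and $\psi_n$ cases via $\Gamma_\beta\preceq\Gamma_\theta\prec\Gamma_{\theta+1}\leq\psi_0(\cdot)$, and closure under $+$ and $\phi$ via Lemma~\ref{closeop} followed by Lemma~\ref{close}.2. Your closing remarks on the $\Sigma$-complexity of the predicate $a\in\acc$ concern a point the paper's proof passes over silently (it simply performs the induction on $\beta$ without comment), and your proposed repair via $\Sigma$-recursion in $L_{\Omega_0}(X)$ is a reasonable way to make that step explicit.
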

 \begin{proof}
     We proceed by induction on $\beta$. That is, we show $\forall \beta\leq\theta[\forall \alpha<\beta(\Gamma_\alpha\in\acc)\rightarrow \Gamma_\beta\in\acc]$. So let $\beta\leq\theta$ and we assume $\forall \alpha<\beta(\Gamma_\alpha\in\acc)$. To prove that $\Gamma_\beta\in\acc$, we show that if $b\prec\Gamma_\beta$ then $b\in\acc$ to apply Lemma $\ref{close}$.2 and end the proof.\\
     So let $b\prec\Gamma_\beta$. We show $b\in\acc$ by induction on the complexity $Cb$ of $b$.\\
     \ \\
     Case 1. If $b\equiv 0$ then Lemma $\ref{close}$.2 vacuously yields $0\in\acc$. If $b\equiv\Gamma_\delta$ for some $\delta<\beta$ then $b\in\acc$ by assumption. It cannot be the case that $b\equiv \Omega_n$ for some $n\leq\omega$ because $\Gamma_\theta\prec\Omega_n$ for any $n\leq\omega$.\\
     \ \\
     Case 2. If $b\equiv b_1+\cdots+b_n$ for some $n>0$ then by induction hypothesis $b_1,\dots,b_n\in\acc$ and by Lemma $\ref{closeop}$.1 we get $b\in \acc$. Similarly, if $b\equiv\phi cd$ then by induction hypothesis $c,d\in\acc$ and so $b\in\acc$ by Lemma $\ref{closeop}$.2.\\
     \ \\
     Case 3. It cannot be the case that $b\equiv \psi_nc$ since $\psi_nc>\Gamma_\theta$ for any $n<\omega$ and any $c$.
 \end{proof}
 $\setcounter{equation}{0}$
We will use the following result many times to show that some ordinal of the form $\psi_0(a)$ is accessible.
\begin{Lemma}\label{psi0acc}
    The following is provable in {\sf KP}. Let $X$ with set-theoretic rank $\theta\in\ord$. We assume that $L_{\Omega_0}(X)$ exists. Let $\psi_0(a)\in T(\theta)$ such that for any $\psi_0(b)\in T(\theta)$ with $b\prec a$ we have $\psi_0(b)\in\acc$. Then $\psi_0(a)\in\acc$.
\end{Lemma}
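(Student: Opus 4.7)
The plan is to apply Lemma \ref{close}.2: it suffices to show that every $c \in T(\theta)$ with $c \prec \psi_0(a)$ belongs to $\acc$. I would prove this by an external induction on the complexity $Cc$, which is a natural number, so the induction is available in {\sf KP}. The proof splits into cases according to the clause of Definition \ref{setrtheta} that produced the normal form of $c$.

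The easy cases dispose of $c \equiv 0$ (trivial), $c \equiv \Gamma_\beta$ for $\beta \leq \theta$ (immediate from Lemma \ref{gammaacc}), and $c \equiv \Omega_n$ for $n \leq \omega$ (impossible, since we would have $\Omega_0 \preceq c \prec \psi_0(a) \prec \Omega_0$). For $c \equiv_{\sf NF} c_1 + \cdots + c_n$ with $n > 1$, each summand has strictly smaller complexity and satisfies $c_i \prec c \prec \psi_0(a)$, so the induction hypothesis gives $c_i \in \acc$, and Lemma \ref{closeop}.1 yields $c \in \acc$. Similarly, for $c \equiv_{\sf NF} \phi d e$, both $d$ and $e$ have strictly smaller complexity and are $\prec c \prec \psi_0(a)$, so the induction hypothesis gives $d, e \in \acc$ and Lemma \ref{closeop}.2 yields $c \in \acc$.

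The only genuinely informative case, and the one I expect to be the main obstacle, is $c \equiv_{\sf NF} \psi_m(d)$. The first step is to argue that $m$ must equal $0$: by the earlier lemma on collapsing functions (item 3, giving $\Omega_k < \psi_{k+1}(\gamma)$), any $\psi_m$ with $m \geq 1$ collapses above $\Omega_0$, which is incompatible with $c \prec \psi_0(a) \prec \Omega_0$. Hence $c \equiv \psi_0(d)$. The next step is to deduce $d \prec a$. By Lemma \ref{311}.1, $\psi_0$ is weakly increasing, so the assumption $\psi_0(d) \prec \psi_0(a)$ rules out $d \succeq a$ and therefore forces $d \prec a$. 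Since $\psi_0(d) \in T(\theta)$, the hypothesis of the present lemma applies and delivers $\psi_0(d) \in \acc$, completing the induction and hence, via Lemma \ref{close}.2, the proof that $\psi_0(a) \in \acc$.
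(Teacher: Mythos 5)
Your proof is correct and follows essentially the same route as the paper's: both reduce to Lemma \ref{close}.2 by showing that every term below $\psi_0(a)$ is accessible, using closure of $\acc$ under $+$ and $\phi$ (Lemma \ref{closeop}) together with the lemma's hypothesis to handle subterms of the form $\psi_0(d)$. The only organizational difference is that the paper first identifies $\psi_0(a)$ as $\big(\psi_0(b)\big)^\Gamma$ for a suitable $b\prec a$ and then only analyzes the interval above $\psi_0(b)$, whereas you run a uniform induction on term complexity over all of $[0,\psi_0(a))$; the content is the same.
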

\begin{proof}
    If $a=0$ then obviously $\psi_0(0)\in\acc$. So we assume $\alpha\succ0$. Then, there is some $b\in T(\theta)$ such that $\psi_0(b)\in T(\theta)$ and $\psi_0(a)=\big(\psi_0(b)\big)^\Gamma$ by Lemma $\ref{311}$. By hypothesis, we have $\psi_0(b)\in\acc$. Let $\psi_0(b)<\xi<\psi_0(a)$. Then, as $\psi_0(a)$ is the first strongly critical above $\psi_0(b)$, we have that $\xi$ is written from lower ordinals using $+$ and $\phi$. Therefore, an easy induction using Lemma $\ref{closeop}$ gives $\xi\in\acc$. Hence, we conclude by Lemma $\ref{close}$.2 that $\psi_0(a)\in \acc$.
\end{proof}
We now give the well-ordering proof for ${\sf KPl^r}$. We want to show that ${\sf KPl^r}$ proves that $\psi_0(\Omega_n)$ is accessible for any $n<\omega$ by first showing that $\psi_0(b)$ is accessible for any $b\prec\Omega_n$. Unfortunately, to conclude $\psi_0(\Omega_n)\in\acc$ we should use Lemma $\ref{psi0acc}$, which needs Class Induction. To avoid this problem and carry the proof within ${\sf KPl^r}$, we redefine the terms we employ using only finitely-many $\Omega_i$'s. So we fix $X$ with set-theoretic rank $\theta$ and we fix $n<\omega$.
\begin{definition}
    We define $\hat T^n(\theta)$ as the closure of $\{0\}\cup\{\Gamma_\beta:\beta\leq\theta\}\cup\{\Omega_i:i\leq n\}$ under $+,\phi,\hat\psi^n_i$ for $i\leq n$, where $\hat\psi^n_i$ is defined as $\psi_i$ but using as a closure set $\hat B^n_i(\alpha)$, defined from initial ordinals $\{0\}\cup\{\Gamma_\beta:\beta\leq\theta\}\cup\{\Omega_i:i\leq n\}$ as in Definition \ref{psi}.
\end{definition}
The key idea is that, in $T(\theta)$, ordinals larger that $\Omega_n$ do not give rise to terms below $\psi_0(\Omega_n)$, as we will show in Claim \ref{claimisomterms}. From this fact follows that the terms in $\hat T^n(\theta)\upharpoonright\hat\psi_0^n(\Omega_n)$ are order-isomorphic to the terms in $T(\theta)\upharpoonright\psi_0(\Omega_n)$, as we show next.
\begin{Lemma}\label{isomterms}Let $\beta\in T(\theta)$. Then,
    $\hat T^n(\theta)\upharpoonright\hat\psi_0^n(\Omega_n)\cong T(\theta)\upharpoonright\psi_0(\Omega_n)$.
\end{Lemma}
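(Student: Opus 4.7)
The plan is to realize the claimed isomorphism via the natural inclusion $\iota:\hat T^n(\theta)\hookrightarrow T(\theta)$ that interprets each symbol of the restricted vocabulary as its counterpart in the full system. Since both systems define the ordering $\prec$ by the same recursion on normal forms, $\iota$ is automatically injective and order-preserving; the entire content of the lemma lies in identifying the image of $\iota$ restricted to $\hat T^n(\theta)\upharpoonright\hat\psi_0^n(\Omega_n)$ with $T(\theta)\upharpoonright\psi_0(\Omega_n)$.

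I would first prove, by induction on the complexity $C\alpha$, the following strengthened statement: \emph{if $\alpha\in T(\theta)$ and either $\alpha\prec\psi_0(\Omega_n)$ or $\alpha\in B_0(\Omega_n)\cap\Omega_n$, then $\alpha$ lies in the range of $\iota$.} The strengthening past the naive first alternative is essential, because the argument of a top-level $\psi_0$ in the normal form of some $\alpha\prec\psi_0(\Omega_n)$ need not itself lie below $\psi_0(\Omega_n)$ (it can equal, e.g., $\Omega_{n-1}$), yet it must still be available in the restricted system.

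For the induction, the atomic, additive, and $\phi$ cases are routine: Lemma~\ref{normal form} and the fact that $\psi_0(\Omega_n)$ is strongly critical immediately propagate the hypothesis to the components, while atoms $\Omega_i$ with $i>n$ are excluded from the first alternative (they exceed $\Omega_0$) and from the second (they exceed $\Omega_n$). The crucial case is $\alpha=_{\sf NF}\psi_m(\beta)$. Under the first alternative $\alpha\prec\psi_0(\Omega_n)\prec\Omega_0$ forces $m=0$, since $\psi_m(\beta)\succ\Omega_{m-1}$ whenever $m\geq 1$ by Item~3 of the lemma following Definition~\ref{psi}; then Lemma~\ref{311}.4 together with Lemma~\ref{normal form}.3 (applied with $\gamma=\Omega_n$) gives $\beta\in B_0(\Omega_n)\cap\Omega_n$, which is precisely the second alternative of the inductive hypothesis at $\beta$. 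Under the second alternative $\psi_m(\beta)\prec\Omega_n$ forces $m\leq n$ by the same monotonicity argument, while Definition~\ref{psi} yields $\beta\in B_0(\Omega_n)\cap\Omega_n$ directly; the induction closes on the second alternative again.

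With this inductive claim in hand, the identification $\hat\psi_0^n(\Omega_n)=\psi_0(\Omega_n)$ reduces to the equality $\hat B_0^n(\Omega_n)\cap\Omega_0=B_0(\Omega_n)\cap\Omega_0$: the inclusion $\subseteq$ is transparent from the fact that $\hat T^n(\theta)$ uses a subset of the symbols, and the reverse inclusion is exactly the inductive claim (first alternative) transported through $\iota^{-1}$, since any such $\alpha$ is built in the restricted vocabulary by the case analysis above. The main obstacle is precisely the need to close the induction under two simultaneous alternatives; a naive induction driven only by ``$\alpha\prec\psi_0(\Omega_n)$'' stalls at the $\psi_0(\beta)$ step, and the shape of the strengthening is dictated by the recursive clause for $\psi_m$ in Definition~\ref{psi}.
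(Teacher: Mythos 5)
Your proof is correct and follows essentially the same route as the paper's: a structural induction on notations showing that every term of $T(\theta)$ below $\psi_0(\Omega_n)$ is generated from the symbols $0$, $\Gamma_\beta$ ($\beta\le\theta$), $\Omega_i$ ($i\le n$) and $\psi_i$ ($i\le n$) alone, after which the natural symbol-for-symbol map is the desired order isomorphism. Your disjunctive induction hypothesis ($\alpha\prec\psi_0(\Omega_n)$ or $\alpha\in B_0(\Omega_n)\cap\Omega_n$) is in fact a cleaner and more robust packaging of what the paper accomplishes with a nested sub-induction on the argument of the outer $\psi_0$, whose stated invariant (``$b\prec\Omega_n$ implies no ordinal above $\Omega_n$ occurs in $b$'') only treats the subcase $\psi_i(b_1)$ with $i>n$ and implicitly relies on exactly the $B_0$-membership side condition that your second alternative makes explicit.
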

\begin{proof}
    First, we prove that no term in $T(\theta)$ below $\psi_0(\Omega_n)$ has any ordinal above $\Omega_\omega\cdot\beta+\Omega_n$ in its construction.
    \begin{claim}\label{claimisomterms}
        Let $a\in T(\theta)$ such that $a\prec \psi_0(\Omega_n)$. Then, no ordinal above $\Omega_n$ occurs in the construction of $a$.
    \end{claim}
    \renewcommand\qedsymbol{$\blacksquare$}
    \begin{proof}
        Case 1. We assume that $a$ is an initial ordinal. Since $a\prec\psi_0(\Omega_n)\prec\Omega_0$, we have that $a$ is either $0$ or some $\Gamma_\delta$ for $\delta\leq\theta$ and so $a\prec\Omega_n$.\\
    Case 2. If $a=b+c$ then $b,c\prec\psi_0(\Omega_n)$ as $\psi_0(\Omega_n)$ is additive principal, and by induction hypothesis we get the result.\\
    Case 3. If $a=\phi bc$ then $b,c\prec\psi_0(\Omega_n)$ as $\psi_0(\Omega_n)$ is strongly critical and by induction hypothesis we get the result.\\
    Case 4. We assume that $a=\psi_0(b)$. We cannot use the induction hypothesis on $b$ because $b$ may be above $\psi_0(\Omega_n)$. But the key here is that $b$ is the argument of the $\psi_0$ function. We note that $b\prec\Omega_n$ (we have that $a=\psi_0(b)\prec\psi_0(\Omega_n)$ and so $b\prec\Omega_n$). We show by induction on $b$ that $b$ does not contain any ordinal above $\Omega_{n}$.\\
    \ \\
    Case 4.1. We assume that $b$ is an initial ordinal. If $b\succ \Omega_n$ then $a\prec\psi_0(\Omega_n)\prec\psi_0(b)=a$, a contradiction.\\
    Case 4.2. We assume $b=b_1+b_2$. Then, by induction hypothesis we get the result.\\
    Case 4.3. We assume $b=\phi b_1b_2$. Then, by induction hypothesis we get the result.\\
    Case 4.4. We assume $b=\psi_i(b_1)$ for some $i>n$. Since terms are written in normal form, by Definition \ref{normal form} we have $\psi_i(b_1)\in B_0\big(\psi_i(b_1)\big)$, as $\psi_i(b_1)$ is taken as an argument of the function $\psi_0$. However, the only way to obtain $\psi_i(b_1)$ in $B_0\big(\psi_i(b_1)\big)$ is to apply $\psi_i$ to $b_1$. So, by Definition $\ref{psi}$, we need $b_1\prec\psi_i(b_1)=b\prec\Omega_n$. Hence, $b_1\prec\Omega_n$ and by the induction hypothesis we have that no ordinal above $\Omega_{n}$ appears in $b_1$. Moreover, since $\psi_i(b_1)=b\prec\Omega_n$, we also get that $i\leq n$ because $\psi_i$ collapses $b_1$ below $\Omega_n$.
    \end{proof}
    In particular, we have shown that all the $\psi_i$ that appear in the construction of $a\prec \psi_0(\Omega_n)$ are such that $i\leq n$.
    Hence, the isomorphism is obtained naturally as follows.\\
    \renewcommand\qedsymbol{$\square$}
    We define $f:T(\theta)\upharpoonright\psi_0(\Omega_\omega\cdot\beta+\Omega_n)\longrightarrow\hat T^n(\theta)\upharpoonright\hat \psi^n_0(\Omega_\omega\cdot\beta+\Omega_n)$ as
    \begin{itemize}
        \item $f(0)=0$, $f(\Gamma_\beta)=\Gamma_\beta$ for all $\beta\leq\theta$ and $f(\Omega_i)=\Omega_i$ for all $i\leq n$;
        \item $f(a+b)=f(a)+f(b)$;
        \item $f(\phi ab)=\phi f(a)f(b)$;
        \item $f(\psi_i(a))=\hat\psi_i^n(f(a))$ for all $i\leq n$.
    \end{itemize}
\end{proof}

We obtain a version of Lemma $\ref{psi0acc}$ for $\hat T^n(\theta)$. We notice that the next lemma makes sense since the construction of $\hat T^n(\theta)$ uses ordinals below $\Omega_{n+1}$ and can be carried out in the model $L_{\Omega_{n+1}}(X)$, where $X$ has rank $\theta$.
\begin{Lemma}\label{rpsi0acc}
    Fix $X$ with rank $\theta$. Then,
    \begin{equation*}
        L_{\Omega_{n+1}}(X)\vDash\forall \hat\psi_0^n(a)\in \hat T^n(\theta) \big[\big(\forall b\prec a\ \hat\psi^n_0(b)\in\acc\big)\rightarrow\hat\psi_0^n(a)\in \acc\big].
    \end{equation*}
\end{Lemma}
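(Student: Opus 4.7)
The proof will essentially mimic the argument of Lemma \ref{psi0acc}, but carried out inside the admissible set $L_{\Omega_{n+1}}(X)$ using the restricted notation system $\hat T^n(\theta)$. The first step is to observe that $\hat T^n(\theta)$, being built from initial ordinals $\{0\}\cup\{\Gamma_\beta:\beta\leq\theta\}\cup\{\Omega_i:i\leq n\}$ closed under $+,\phi$ and $\hat\psi_i^n$ for $i\leq n$, is $\Sigma_1$-definable inside $L_{\Omega_{n+1}}(X)$, as are the functions $f_a$ and ordinals $o(a)$ of Definition \ref{of}. Admissibility of $L_{\Omega_{n+1}}(X)$ thus gives us access to $\Delta_0$-Collection and $\Sigma_1$-recursion for all the bookkeeping; in particular, Lemmas \ref{close}, \ref{closeop}, and \ref{gammaacc} are all available, once we interpret them for $\hat T^n(\theta)$ inside $L_{\Omega_{n+1}}(X)$.

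I would then fix $\hat\psi_0^n(a)\in \hat T^n(\theta)$ and assume the hypothesis that $\hat\psi_0^n(b)\in \acc$ for every $b\prec a$. The case $a=0$ is handled directly, since every $\xi\prec\hat\psi_0^n(0)=\Gamma_{\theta+1}$ is built from $0$ and $\Gamma_\beta$ ($\beta\leq\theta$) using $+$ and $\phi$, and so Lemmas \ref{gammaacc} and \ref{closeop}, together with an induction on complexity $C\xi$, show $\xi\in\acc$; then Lemma \ref{close}.2 yields $\hat\psi_0^n(0)\in\acc$. For $a\succ 0$, apply the analogue of Lemma \ref{311}.6--8 to obtain $b\prec a$ with $\hat\psi_0^n(a)=(\hat\psi_0^n(b))^\Gamma$, whence $\hat\psi_0^n(b)\in\acc$ by hypothesis.

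The crux of the argument is showing that every $\xi$ with $\hat\psi_0^n(b)\prec\xi\prec \hat\psi_0^n(a)$ is in $\acc$, by induction on the natural-number complexity $C\xi$. The cases $\xi=\xi_1+\xi_2$ and $\xi=\phi\xi_1\xi_2$ are handled by induction hypothesis and Lemma \ref{closeop}, using that $\hat\psi_0^n(a)$ is strongly critical to conclude $\xi_1,\xi_2\prec\hat\psi_0^n(a)$ in the Veblen case. If $\xi$ is atomic then $\xi\prec\Omega_0$ forces $\xi\in\{0\}\cup\{\Gamma_\delta:\delta\leq\theta\}$ and Lemma \ref{gammaacc} applies. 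The remaining case is $\xi\equiv_{\sf NF}\hat\psi_i^n(\gamma)$; since $\xi\prec\hat\psi_0^n(a)\prec\Omega_0\leq\Omega_i$ we must have $i=0$, and normal form gives $\gamma\in \hat B_0^n(\gamma)$, so $\hat\psi_0^n(\gamma)\prec\hat\psi_0^n(a)$ forces $\gamma\prec a$ (otherwise monotonicity of $\hat\psi_0^n$ on normal-form arguments gives a contradiction), whence $\xi\in\acc$ by the outer hypothesis. Finally, one application of Lemma \ref{close}.2 concludes $\hat\psi_0^n(a)\in\acc$.

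The main technical obstacle is ensuring that every invocation of Lemma \ref{closeop} stays within $L_{\Omega_{n+1}}(X)$. This amounts to verifying that the auxiliary sets $\{f_b:b\prec \xi\}$ collected when proving $\xi\in\acc$ remain in $L_{\Omega_{n+1}}(X)$. Since $o(\xi)\prec\Omega_0\prec\Omega_{n+1}$ and $\hat T^n(\theta)\in L_{\Omega_{n+1}}(X)$, $\Sigma$-Collection inside $L_{\Omega_{n+1}}(X)$ suffices, exactly as in the internal claim proved inside Lemma \ref{close}. No appeal to Class Induction is needed: the induction on $C\xi$ is ordinary induction on $\omega$, and the outer induction happens at the meta-level in the statement's universal quantifier over $a$. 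This is precisely why the lemma is formulated as an internal statement in $L_{\Omega_{n+1}}(X)$, enabling its use in the ${\sf KPl^r}$ well-ordering proof.
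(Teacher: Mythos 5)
Your proposal is correct and follows essentially the same route as the paper, whose proof of this lemma is simply the observation that $\hat T^n(\theta)$ is constructible in $L_{\Omega_{n+1}}(X)$ (a model of ${\sf KP}$ in which $L_{\Omega_0}(X)$ exists) so that the argument of Lemma~\ref{psi0acc} can be repeated internally. You carry out that repetition explicitly, including the case analysis on the normal form of $\xi$ and the verification that the auxiliary objects stay inside $L_{\Omega_{n+1}}(X)$, which is more detail than the paper gives but the same idea.
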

\begin{proof}
    Fixing $X$ with rank $\theta$, the set $\hat T^n(\theta)$ is fully constructible in $L_{\Omega_{n+1}}(X)$ as it needs ordinals strictly below $\Omega_{n+1}$. Since $L_{\Omega_{n+1}}(X)$ is a model of {\sf KP}, where $L_{\Omega_0}(X)$ exists, we can repeat the arguments in Lemma \ref{psi0acc} inside of $L_{\Omega_{n+1}}(X)$.
\end{proof}

\begin{Theorem}\label{hatacc}
    For every $n<\omega$,
    \begin{equation*}
        {\sf KPl^r}\vdash\forall \theta\in \mathrm{Ord}\ \hat\psi^n_0(\Omega_n)\in \acc.
    \end{equation*}
\end{Theorem}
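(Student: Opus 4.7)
The plan is to reduce the theorem to a single application of Lemma \ref{rpsi0acc} bridged by a transfinite induction carried out inside an admissible set. Working informally in ${\sf KPl^r}$, I will fix $\theta \in \ord$ and (following Reading Convention \ref{read}) a set $X$ of rank $\theta$. By the axiom Lim, the admissible set $L_{\Omega_{n+1}}(X)$ exists; it contains $\Omega_n$, the set $\hat T^n(\theta) \cap \Omega_n$ of notations, and the restriction of $\prec$ to this set (which there agrees with the ordinal ordering).

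Next, I will work internally in $L_{\Omega_{n+1}}(X)$, which is a model of ${\sf KP}$, so Barwise's $\Sigma$-foundation is available. The predicate $\hat\psi^n_0(a) \in \acc$ is $\Sigma_1$, and Lemma \ref{rpsi0acc} gives its progressivity with respect to $\prec$: namely, if $\hat\psi^n_0(b) \in \acc$ for every $b \prec a$, then $\hat\psi^n_0(a) \in \acc$. Applying $\Sigma$-induction along $\prec$ on the set $\hat T^n(\theta) \cap \Omega_n$, I will conclude that $\hat\psi^n_0(a) \in \acc$ for every $a \prec \Omega_n$ in $\hat T^n(\theta)$. A final application of Lemma \ref{rpsi0acc} at $a = \Omega_n$ then yields $\hat\psi^n_0(\Omega_n) \in \acc$ inside $L_{\Omega_{n+1}}(X)$. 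Upward $\Sigma_1$-absoluteness between the transitive $L_{\Omega_{n+1}}(X)$ and the ambient model will transfer this fact back to ${\sf KPl^r}$.

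The main obstacle is the unavailability of $\Sigma$-induction in ${\sf KPl^r}$ itself: the theory provides only $\Delta_0$-induction, whereas ``$\hat\psi^n_0(a) \in \acc$'' is genuinely $\Sigma_1$. The critical move is thus the descent into $L_{\Omega_{n+1}}(X)$, whose existence is guaranteed by Lim and whose admissibility provides the needed $\Sigma$-foundation. A secondary, lesser concern is that the induction is performed along $\prec$ on a set of ordinals rather than along $\in$; but since $\prec$ coincides with the ordinal ordering on $\hat T^n(\theta)$, this reduces to the standard $\Sigma$-induction on ordinals that is available in any admissible set.
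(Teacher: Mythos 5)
Your proposal is correct and follows essentially the same route as the paper: descend into $L_{\Omega_{n+1}}(X)$ (available by Lim), where the whole system $\hat T^n(\theta)$ can be evaluated on genuine ordinals so that induction along $\prec$ reduces to foundation in the admissible set, use Lemma \ref{rpsi0acc} as the progressivity step, and transfer the $\Sigma$ conclusion back up by absoluteness. The paper phrases the induction as a minimal-counterexample argument over all of $\hat T^n(\theta)$ rather than stopping at $\Omega_n$ and applying Lemma \ref{rpsi0acc} once more, but this is only a cosmetic difference.
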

\begin{proof}
    We fix $n<\omega$ and we show the following claim.
    \begin{claim}$\label{clLnacc}$Let $X$ with rank $\theta$. Then,
        \begin{equation*}
            L_{\Omega_{n+1}}(X)\vDash \forall a\in \hat T^n(\theta)[\hat\psi^n_0(a)\in \hat T^n(\theta)\rightarrow \hat \psi^n_0(a)\in \acc].
        \end{equation*}
    \end{claim}
    \renewcommand\qedsymbol{$\blacksquare$}
    \begin{proof}
    We suppose the claim is false. Then, as $L_{\Omega_{n+1}}(X)$ is admissible, there is a minimum $a$ such that $L_{\Omega_{n+1}}\vDash\psi_0(a)\in T(\theta)$ but $L_{\Omega_{n+1}}(X)\nvDash \psi_0(a)\in\acc$. Therefore,  for all $b\prec a$ with $L_{\Omega_{n+1}}(X)\vDash \psi_0(b)\in T(\theta)$ we have
    \begin{equation*}
        L_{\Omega_{n+1}}(X)\vDash \psi_0(b).
    \end{equation*}
    By Lemma \ref{rpsi0acc}, we get $L_{\Omega_{n+1}}(X)\vDash \psi_0(a)\in\acc$, a contradiction. So we conclude that Claim $\ref{clLnacc}$ holds.
    \end{proof}
    \renewcommand\qedsymbol{$\square$}
    Now, by Claim \ref{clLnacc} and since the formula $\forall a<\Omega_{n+1}$ is $\Sigma$, we get
    \begin{equation*}
        {\sf KPl^r}\vdash\forall \theta\in\mathrm{Ord}\  \forall a\prec \Omega_{n+1}\ \hat\psi^n_0(a)\in\acc.
    \end{equation*}
    In particular, we have shown that ${\sf KPl^r}\vdash \forall\theta\in\mathrm{Ord}\ \hat\psi^n_0(\Omega_n)\in\acc$.
\end{proof}
By Lemma \ref{isomterms}, we immediately obtain that $\psi_0(\Omega_n)$ is provably wellfounded in ${\sf KPl^r}$ from Theorem \ref{hatacc}.
\begin{Theorem}\label{wellordkplr}
    For any $n<\omega$,
    \begin{equation*}
        {\sf KPl^r}\vdash \forall \theta\in \mathrm{Ord}\ \psi_0(\Omega_n)\in\acc.
    \end{equation*}
\end{Theorem}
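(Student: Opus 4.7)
The plan is to reduce this directly to Theorem \ref{hatacc} using the order-isomorphism from Lemma \ref{isomterms}. I will argue informally in ${\sf KPl^r}$, fixing an arbitrary set $X$ with set-theoretic rank $\theta\in\mathrm{Ord}$. By Theorem \ref{hatacc}, there exist an ordinal $\alpha$ and a function $g:\alpha\to\{b\in \hat T^n(\theta):b\preceq \hat\psi_0^n(\Omega_n)\}$ which is an order-isomorphism witnessing $\hat\psi_0^n(\Omega_n)\in \acc$.

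The key step is then to transport this witness along the isomorphism $f:T(\theta)\upharpoonright\psi_0(\Omega_n)\to \hat T^n(\theta)\upharpoonright \hat\psi_0^n(\Omega_n)$ constructed in the proof of Lemma \ref{isomterms}. More precisely, I would define $h:\alpha+1 \to \{a\in T(\theta):a\preceq\psi_0(\Omega_n)\}$ by setting $h(\delta)=f^{-1}(g(\delta))$ for $\delta<\alpha$ and $h(\alpha)=\psi_0(\Omega_n)$. Since $f$ is an order-isomorphism of the predecessor segments (which is the content of Lemma \ref{isomterms}), and since $g$ is order-preserving, $h$ is order-preserving as well; and the range of $h$ is exactly $\{a\in T(\theta):a\preceq \psi_0(\Omega_n)\}$ because every $a\prec \psi_0(\Omega_n)$ lies in the image of $f$ by the argument of Claim \ref{claimisomterms}. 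This shows $\psi_0(\Omega_n)\in \acc$.

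The only subtlety—which I do not expect to be serious—is checking that the transfer can be formalized in ${\sf KPl^r}$. This amounts to verifying that the isomorphism $f$ from Lemma \ref{isomterms} is $\Delta_0$-definable in $L_{\Omega_{n+1}}(X)$ (or can be obtained by a $\Sigma_1$-recursion inside an admissible set), so that the composition $f^{-1}\circ g$ exists as a set; this is routine, since $f$ is just the identity on initial ordinals together with the natural renaming of $\hat\psi_i^n$ as $\psi_i$ for $i\leq n$. Once $h$ is formed, the conclusion $\psi_0(\Omega_n)\in\acc$ follows by its very definition.
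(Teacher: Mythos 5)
Your proposal is correct and takes essentially the same route as the paper: the paper's proof of Theorem~\ref{wellordkplr} consists precisely of invoking Theorem~\ref{hatacc} and transferring the result along the order-isomorphism of Lemma~\ref{isomterms}. You merely make explicit the transport of the accessibility witness $g$ through $f^{-1}$ (and the routine definability check inside $L_{\Omega_{n+1}}(X)$), details the paper leaves implicit in its one-line argument.
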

We can finally prove the totality of $\hat G_n$ in ${\sf KPl^r}$.
\begin{Theorem}
    For every $n<\omega$ and for any set $X$, $\hat G_n$ is a provably total set-recursive-from-$\omega$ function in ${\sf KPl^r}$.
\end{Theorem}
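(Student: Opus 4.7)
The plan is to combine Slaman's characterization of set-recursion-from-$\omega$ with the well-ordering result Theorem~\ref{wellordkplr}. First I would argue that $\hat G_n$ is set-recursive-from-$\omega$ by exhibiting a $\Sigma_1$ definition with parameter $\omega$ that works uniformly in every admissible set. The formula asserts: ``$\theta$ is the set-theoretic rank of $x$; $T(\theta)$ is the associated notation system (whose construction is primitive recursive in $\theta$, so $\Sigma_1$ in any admissible set); there exist admissible ordinals $\Omega_0,\dots,\Omega_{n+1}$ relative to $x$; there exists an ordinal $\alpha$ and an order-isomorphism between $\alpha$ and the $\prec$-initial segment of $T(\theta)$ below $\psi_0(\Omega_{n+1})$; and $y = L_\alpha(x)$''. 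Each conjunct is $\Sigma_1$, so by Slaman's theorem $\hat G_n$ is set-recursive-from-$\omega$.

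For totality in ${\sf KPl^r}$, fix an arbitrary set $x$ with $\theta = \mathrm{rank}(x)$. Theorem~\ref{wellordkplr} (applied with $n+1$ in place of $n$) gives
\[
{\sf KPl^r} \vdash \psi_0(\Omega_{n+1}) \in \acc,
\]
which unwinds to the existence of a genuine ordinal $\alpha = o\bigl(\psi_0(\Omega_{n+1})\bigr)$ together with an order-preserving bijection $f_{\psi_0(\Omega_{n+1})} \colon \alpha \to \{b \preceq \psi_0(\Omega_{n+1})\}$. The Limit axiom of ${\sf KPl^r}$ then supplies an admissible set $M$ containing $x$, $\omega$ and $\alpha$ (for instance, $M = L_{\Omega_{n+1}}(x)$, since $\psi_0(\Omega_{n+1}) \prec \Omega_0$). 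Inside $M$, the $\Sigma_1$-recursion defining $\beta \mapsto L_\beta(x)$ can be carried out on the initial segment $\alpha$ of $\mathrm{Ord}^M$ by $\Sigma$-recursion, which is available in every admissible set, so $L_\alpha(x) = \hat G_n(x)$ exists as an element of $M$.

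The main obstacle is the interplay between the external interpretation of $\psi_0(\Omega_{n+1})$ as a formal term in $T(\theta)$ and its internal interpretation as a concrete ordinal of the model: Theorem~\ref{wellordkplr} is precisely what bridges this gap, producing the ordinal witness needed for the $\Sigma$-recursion. A subsidiary subtlety is that $\Omega_{n+1}$ itself lies outside $\acc$, but only its collapse $\psi_0(\Omega_{n+1})$ is required to index the constructible hierarchy, and that is exactly what the well-ordering proof delivers.
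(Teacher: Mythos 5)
Your overall strategy is the same as the paper's: invoke Theorem~\ref{wellordkplr} to obtain, provably in ${\sf KPl^r}$, an ordinal $\alpha$ order-isomorphic to the initial segment of the notation system below $\psi_0(\Omega_{n+1})$, then build $L_\alpha(x)$ by $\Sigma$-recursion inside an admissible set, and use the fact that this set is already constructible in $L_{\Omega_0}(x)$ to get the uniformity needed for Slaman's characterization. The totality half of your argument is fine.

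There is, however, one step in your proposed $\Sigma_1$ definition that would fail: you include the conjunct ``there exist admissible ordinals $\Omega_0,\dots,\Omega_{n+1}$ relative to $x$.'' Slaman's theorem requires the $\Sigma_1$ definition to yield a total function in \emph{every} admissible set containing the parameters, and in the least such admissible set, $L_{\Omega_0}(x)$, there are no $x$-admissible ordinals at all, so this conjunct has no witness and your formula defines the empty function there. The point --- which is exactly the distinction between ``$\Sigma_1$-definable'' and ``set-recursive-from-$\omega$'' that motivates the paper --- is that the $\Omega_i$ must be treated purely as formal symbols of the primitive-recursive-in-$\theta$ notation system $T(\theta)$ (or $\hat T^{n+1}(\theta)$), never as actual ordinals of the ambient admissible set. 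The definition should only assert the existence of an ordinal $\alpha$ and an order-isomorphism with the $\prec$-initial segment of $T(\theta)$ determined by the \emph{string} $\psi_0(\Omega_{n+1})$, followed by the $\Sigma$-recursion for $L_\alpha(x)$; since $\psi_0(\Omega_{n+1})<\Omega_0$, this succeeds already in $L_{\Omega_0}(x)$ and hence in every admissible set containing $x$. With that conjunct removed, your argument coincides with the paper's. (A cosmetic remark: your choice $M=L_{\Omega_{n+1}}(x)$ is harmless but unnecessary for the same reason; $L_{\Omega_0}(x)$ suffices, and appealing to the Limit axiom is not needed for this direction.)
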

\begin{proof}
    For any set $X$, we can specify the rank $\theta$ of $X$ in a $\Delta_0$-way, as in \cite{barwise}. Now, by Theorem \ref{wellordkplr}, we can find in ${\sf KPl^r}$ an ordinal that is order-isomorphic to $\psi_0(\Omega_n)$ and define $L_{\psi_0(\Omega_n)}(X)$ by $\Sigma$-recursion, which shows that $\hat G_n$ is set-recursive-from-$\omega$. Moreover, the set $L_{\psi_0(\Omega_n)}(X)$ is constructible in $L_{\Omega_0}(X)$, which shows that $\hat G_n(X)$ is constructible in any admissible set containing $X$.
\end{proof}

\subsection{The well-ordering and totality proof for {\sf W-KPl}}
Now, we prove that {\sf W-KPl} shows the well-ordering of $\psi_0(\Omega_\omega\cdot \omega_n)$ for every $n<\omega$. We will use greek letters $\alpha,\beta,\dots$ to denote both real ordinals and elements in $T(\theta)$ and the relation symbol $<$ to denote both the real ordinal order and the order $\prec$ on $T(\theta)$.\\
We begin by stating two results that characterize {\sf W-KPl} in the sense that ${\sf KPl^r}$ does not prove them.
\begin{Lemma}\label{wkpllon}
    \begin{equation*}
        {\sf W-KPl}\vdash \forall X\forall n<\omega\ L_{\Omega_n}(X)\text{ exists}.
    \end{equation*}
    Moreover, 
    \begin{equation*}
        {\sf W-KPl}\vdash \forall \gamma\in T(\theta)[\gamma<\Omega_\omega\rightarrow \exists n<\omega(\gamma<\Omega_n)].
    \end{equation*}
\end{Lemma}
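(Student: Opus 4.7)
My plan is to prove both claims by Mathematical Induction on $\omega$, which is precisely the distinguishing strength of ${\sf W{-}KPl}$ over ${\sf KPl^r}$. In each case, the induction formula is genuinely $\Sigma$ (not $\Delta_0$), so the restricted induction of ${\sf KPl^r}$ cannot deliver the result, while full Mathematical Induction in ${\sf W{-}KPl}$ does.

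For the first claim, I would induct on $n$. For the base case, applying $(Lim)$ to $X$ produces an admissible set $A$ with $X \in A$; by the convention that $Ad$ ranges over sets of the form $L_\alpha(X)$, we have $A = L_\alpha(X)$ for some $\alpha$. Using $(Ad2)$ (linear ordering of admissibles) together with $(Ad3)$ within $A$, we can locate the least such $\alpha$, and this is by definition $\Omega_0$, giving $L_{\Omega_0}(X)$. For the inductive step, assuming $L_{\Omega_n}(X)$ exists, I apply $(Lim)$ to this set to obtain some admissible $B \ni L_{\Omega_n}(X)$; again by $(Ad2)$ and $(Ad3)$ one selects the least such admissible, yielding $L_{\Omega_{n+1}}(X)$. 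The induction formula, ``there exists $A$ which is the $(n{+}1)$-st $X$-admissible of the form $L_\alpha(X)$'', is $\Sigma$, so full $\omega$-induction is needed.

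For the second claim, I would induct on the complexity $C\gamma$, proving the statement
\[\forall \gamma \in T(\theta)\,(C\gamma = n \wedge \gamma < \Omega_\omega \rightarrow \exists m<\omega\,\gamma < \Omega_m).\]
The cases correspond directly to Definition \ref{setrtheta}: for $\gamma = 0$ or $\Gamma_\beta$ with $\beta \leq \theta$, take $m=0$; for $\gamma = \Omega_k$ with $k<\omega$, take $m=k+1$ (the case $\gamma = \Omega_\omega$ is vacuous by hypothesis); for a normal-form sum $\alpha_1 + \cdots + \alpha_k$ the inductive hypothesis provides bounds $\Omega_{m_i}$ for each $\alpha_i$, and since every admissible ordinal is additively closed (as $+$ is $\Sigma_1$-definable, by Part~1 and $(Ad3)$), $\gamma < \Omega_{\max m_i}$; the case $\gamma =_{\sf NF} \phi \alpha_1 \alpha_2$ is analogous using closure of admissibles under the Veblen function; and for $\gamma =_{\sf NF} \psi_k \alpha$, Lemma~\ref{311} gives $\gamma < \Omega_k$ directly, with no need to invoke the inductive hypothesis on $\alpha$.

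The main obstacle I anticipate is the proper formalization of the induction predicate, particularly for Part 2. The predicate ``$\exists m<\omega\, \gamma < \Omega_m$'' implicitly references the $X$-relative admissible sequence, which only exists as an internal object once Part 1 has been applied; the witness must be produced as a $\Sigma$-definable sequence $(L_{\Omega_m}(X))_{m<\omega}$ rather than as a single closed term. Once Part 1 is secured inside ${\sf W{-}KPl}$, the predicate for Part 2 becomes a bona fide $\Sigma$-formula in $n$ (and parameter $X$), and full Mathematical Induction applies without further issue. The routine verifications of additive principality and Veblen-closure for admissibles are standard consequences of $\Sigma$-recursion inside each $L_{\Omega_m}(X)$ and do not require any ingredients unavailable in ${\sf W{-}KPl}$.
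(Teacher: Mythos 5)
The paper states Lemma \ref{wkpllon} without proof, so there is nothing to compare against directly; judged on its own, your argument is essentially correct and is surely the intended one, since the paper's only comment is that these two facts ``characterize ${\sf W{-}KPl}$'' as against ${\sf KPl^r}$ --- which is exactly your observation that both claims are instances of Mathematical Induction on $\omega$ applied to a genuinely $\Sigma$ (not $\Delta_0$) formula. Your structural induction on $C\gamma$ for Part 2 is also the right mechanism and not avoidable by a cheap ``$\Omega_\omega=\sup_n\Omega_n$'' argument: the $(\omega{+}1)$-st admissible need not equal the supremum of the first $\omega$ of them, so one really must check that no \emph{notation} of $T(\theta)$ lands in the gap, which is precisely what your case analysis over Definition \ref{setrtheta} does. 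Two small points deserve tightening. First, in the base and successor steps of Part 1, ``locating the least such $\alpha$'' is not delivered by $(Ad2)$ and $(Ad3)$ alone; you need $\in$-minimality for the class $\{y: Ad(y)\wedge X\in y\}$, which is available because this class is $\Delta_0$-definable and ${\sf KPl^r}$ retains Class Induction for $\Delta_0$-formulas. Second, the identification of an $Ad$-set $A$ with some $L_\alpha(X)$ is a convention about the intended interpretation rather than a theorem of the system; to be fully internal one should either adopt that reading (as the paper does) or extract $L_{\mathrm{ord}(A)}(X)$ from a given admissible $A$ by $\Sigma$-recursion inside a larger admissible supplied by $(Lim)$. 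Neither point affects the architecture of your proof.
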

Now, given some $\alpha$, showing that $\psi_0(\Omega_\omega\cdot \alpha)\in\acc$ amounts to showing that $\alpha$ belongs to the following class.
\begin{definition}
    $X_\theta=\{\alpha\in T(\theta)\ | \psi_0(\Omega_\omega\cdot\alpha)\in T(\theta)\rightarrow \psi_0(\Omega_\omega\cdot\alpha)\in \acc\}$.
\end{definition}
So, our objective is to show $\omega_n\in X_\theta$ for all $n<\omega$. We have to do some work first. For a definable class $U$, we define the formula $\prog(U)$, that stands for $U$ is \textit{progressive}. We notice that $\prog(U)$ is the hypothesis of the induction principle.
\begin{definition}
    Let $U$ be a definable class. We define the formula $\prog(U)$ as follows:
    \begin{equation*}
        \prog(U)=\forall \alpha[\forall \beta<\alpha(\beta\in U)\rightarrow \alpha\in U].
    \end{equation*}
    We also define the jump of $U$ as
    $\mathcal{J}(U)=\{\beta\in T(\theta)\ |\forall \alpha(\alpha\subseteq U\rightarrow \alpha+\omega^\beta\subseteq U)\}$.
\end{definition}

The sketch of the proof of $\omega_n\in X_{\theta}$ is the following. First, we will show that $X_{\theta}$ is progressive. Afterwards, we will prove that whenever a class $U$ is progressive, then the jump $\mathcal{J}(U)$ is also progressive. We will use this last result to show that if $U$ is progressive, then $\omega_n\in U$ for all $n<\omega$ (we will use $\mathcal{J}(U)$ in this proof). We will conclude that $\prog(X_{\theta})\rightarrow \omega_n\in X_{\theta}$, showing that $\omega_n\in X_{\theta}$. So we start by proving that $X_{\theta}$ is progressive. First, we need the following result.
\begin{Lemma}\label{nextpsi}
    Let $n<\omega$ such that $L_{\Omega_n}(X)\vDash \psi_0(d)\in\acc$. Then, if $\psi_0(d+1)\in T(\theta)$, we also have $L_{\Omega_n}(X)\vDash \psi_0(d+1)\in \acc$.
\end{Lemma}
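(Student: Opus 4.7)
The plan is to mimic the proof of Lemma \ref{psi0acc} internally to $L_{\Omega_n}(X)$. By Lemma \ref{311}.6, either $\psi_0(d+1) = \psi_0(d)$ or $\psi_0(d+1) = (\psi_0(d))^\Gamma$, the first strongly critical ordinal above $\psi_0(d)$. In the first case the conclusion is immediate from the hypothesis, so we focus on the second.

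Working inside $L_{\Omega_n}(X)$, which is admissible and hence models $\sf KP$, I would first establish that every $\xi$ with $\psi_0(d) \prec \xi \prec \psi_0(d+1)$ lies in $\acc$. The key point is that $\psi_0(d+1)$ is by assumption the next strongly critical ordinal after $\psi_0(d)$, so any such $\xi$ admits a normal form built from constituents strictly below $\psi_0(d+1)$ using only $+$ and $\phi$ (no new collapsing occurs in this gap, since any $\psi_m(b)$ occurring in a normal form in $T(\theta)$ below $\psi_0(d+1)$ either equals $\psi_0(d)$ or lies weakly below it). Thus an induction on the complexity $C\xi$, using Lemma \ref{close}.1 for the base cases (terms $\preceq \psi_0(d)$, which are accessible by hypothesis) together with Lemma \ref{closeop} for the inductive steps on $+$ and $\phi$, shows $\xi \in \acc$.

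Having established $\forall \xi \prec \psi_0(d+1)\ (\xi \in \acc)$, I would then apply Lemma \ref{close}.2 to conclude $\psi_0(d+1) \in \acc$. All the set-formation steps required for Lemma \ref{close}.2 (forming $\{\xi : \xi \prec \psi_0(d+1)\}$ by $\Delta_0$-Separation in $T(\theta)$, collecting witnessing orderings by $\Sigma$-Collection, and taking a union) are available in $L_{\Omega_n}(X)$ because it is admissible.

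The potentially delicate point — the step I expect to require the most care — is the internal induction on $C\xi$ within $L_{\Omega_n}(X)$. Since $C\xi \in \omega$, this is an induction on natural numbers with a $\Sigma$ predicate (accessibility), which is unproblematic in $\sf KP$; but one must be careful that the claim $\xi \in \acc$ being proved by this induction is $\Sigma$ rather than requiring full class induction. Because accessibility asks for the existence of a well-ordering function, the statement is genuinely $\Sigma$, so the induction goes through in the admissible set $L_{\Omega_n}(X)$ without appeal to foundation beyond what is available.
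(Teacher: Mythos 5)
Your proposal is correct and follows essentially the same route as the paper: both observe that every term in the gap $(\psi_0(d),\psi_0(d+1))$ is built from accessible constituents using only $+$ and $\phi$ (since $\psi_0(d+1)$ is the next strongly critical ordinal), and both conclude via Lemma~\ref{close}.2 inside $L_{\Omega_n}(X)$. The only cosmetic difference is that the paper re-derives the closure of $\acc\upharpoonright\psi_0(d+1)$ under $+$ and $\phi$ by explicitly constructing the order-isomorphisms $f_{a+b}$ and $f_{\phi ab}$ from $f_a,f_b$ within $L_{\Omega_n}(X)$, whereas you invoke Lemma~\ref{closeop} relativized to $L_{\Omega_n}(X)$ --- which amounts to the same computation.
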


\begin{proof}
    We show that, in $L_{\Omega_n}(X)$ where $\psi_0(d)\in\acc$, we have that $\acc\upharpoonright\psi_0(d+1)$ is closed under $+$ and $\phi$.\\
    Let $a,b\in T(\theta)$ such that $a,b\prec\psi_0(d+1)$ and $L_{\Omega_n(X)}\vDash a,b\in\acc$. This means that $a,b,o(a),o(b),f_a,f_b\in L_{\Omega_n}(X)$. Therefore, we have $o(a)+o(b),\phi\big(o(a)\big)\big(o(b)\big)\in L_{\Omega_n}(X)$, and, as $\psi_0(d+1)$ is strongly critical, we get $o(a)+o(b),\phi\big(o(a)\big)\big(o(b)\big)<\psi_0(d+1)$. In $L_{\Omega_n}(X)$, we show that $a+b\in\acc$ and $\phi ab\in\acc$.\\
    \ \\
    We define $f_{a+b}:\{c\in T(\theta):c\prec a+b\}\longrightarrow o(a)+o(b)$ as
    \begin{equation*}
        f_{a+b}(c)=\begin{cases}
      f_a(c) & \text{if  $c\prec a$,}\\
      o(a)+f_b(b_1) & \text{if $c=a+b_1$ with $b_1\prec b$.}
    \end{cases}   
    \end{equation*}
    We get $f_{a+b}\in L_{\Omega_n(X)}$. Clearly, $f_{a+b}$ is an isomorphism, showing that $o(a+b)=o(a)+o(b)$ and $L_{\Omega_n}(X)\vDash a+b\in\acc$.\\
    \ \\
    We obtain analogously that $L_{\Omega_n}(X)\vDash\phi ab\in \acc$ by defining $f_{\phi ab}:\{c\in T(\theta):c\prec \phi ab\}\longrightarrow \phi\big(o(a)\big)\big(o(b)\big)$ as follows.\\
    If $c\prec b$ we put $f_{\phi ab}(c)=f_b(c)$.\\
    If $c=c_0+\cdots+c_m$ we put $f_{\phi ab}(c_0)+\cdots f_{\phi ab}(c_m)$ (by induction).\\
    \ \\
    If $c=\phi c_0c_1$. We have three cases.\\
    If $c_0=a$, then $c_1\prec b$ and we put $f_{\phi ab}(c)=\phi\big(o(a)\big)\big(f_b(c_1)\big)$.\\
    If $c_0\succ a$ then $c\prec b$ (otherwise we would have $c=\phi c_0c_1\succ\phi ab$), and we have $f_{\phi ab}(c)=f_b(c)$.\\
    Finally, if $c_0\prec a$ then, as $c_1<c$, we put $f_{\phi ab}(c)=\phi\big(f_{\phi ab}(c_0)\big)\big(f_{\phi ab}(c_1)\big)$ (by induction).\\
    \ \\
    The key idea is that in the construction of every $c\in[\psi_0(d),\psi_0(d+1))$ we only use $+$ and $\phi$, applied to lower ordinals. So now, by Lemma \ref{close}, we get that $\psi_0(d+1)\in\acc$.
    \end{proof}

\begin{Lemma}$\label{progx}$
    ${\sf W-KPl}\vdash\forall\theta\in\ord\ \prog(X_\theta)$.
\end{Lemma}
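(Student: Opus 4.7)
The plan is to argue informally in ${\sf W-KPl}$. Fix $\theta \in \mathrm{Ord}$ and an element $\alpha$; assume the progressivity hypothesis $\forall \beta < \alpha\, \beta \in X_\theta$, assume $\psi_0(\Omega_\omega \cdot \alpha) \in T(\theta)$, and aim to show $\psi_0(\Omega_\omega \cdot \alpha) \in \acc$. By Lemma \ref{close}.2, this reduces to showing $\gamma \in \acc$ for every $\gamma \in T(\theta)$ with $\gamma \prec \psi_0(\Omega_\omega \cdot \alpha)$; I would prove this by a subsidiary induction on the complexity $C\gamma$, in the style of Lemma \ref{gammaacc}. The initial-ordinal, addition, and Veblen cases follow at once from Lemmas \ref{gammaacc} and \ref{closeop}, and since $\gamma \prec \Omega_0$ no initial $\Omega_m$ can arise and the only collapsing function that can appear at the outermost constructor is $\psi_0$.

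The real work is the case $\gamma \equiv_{\sf NF} \psi_0(\delta)$ with $\delta \in B_0(\delta)$ and $\delta < \Omega_\omega \cdot \alpha$, where the subsidiary IH on $C\gamma$ gives no immediate leverage on $\delta$. I would split on the shape of $\alpha$. If $\alpha$ is a limit, then since the map $\beta \mapsto \Omega_\omega \cdot \beta$ is continuous, $\delta$ is already below $\Omega_\omega \cdot \beta$ for some suitably chosen $\beta < \alpha$ with $\psi_0(\Omega_\omega \cdot \beta) \in T(\theta)$; the main IH gives $\psi_0(\Omega_\omega \cdot \beta) \in \acc$, Lemma \ref{311}.1 gives $\psi_0(\delta) \leq \psi_0(\Omega_\omega \cdot \beta)$, and Lemma \ref{close}.1 closes the case. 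If $\alpha = \alpha' + 1$ is a successor, write $\delta = \Omega_\omega \cdot \alpha' + \eta$ with $\eta < \Omega_\omega$; by Lemma \ref{wkpllon} there is $n < \omega$ with $\eta < \Omega_n$, and then full Mathematical Induction on $n$---using Lemma \ref{nextpsi} at successor steps and Lemma \ref{close}.2 at limits within each $\Omega_n$-block---propagates accessibility from the hypothesis $\psi_0(\Omega_\omega \cdot \alpha') \in \acc$ up through $\psi_0(\Omega_\omega \cdot \alpha' + \Omega_n) \in \acc$ for every $n$, and in particular delivers $\psi_0(\delta) \in \acc$.

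The main obstacle is precisely this successor case. The hypothesis only supplies accessibility at $\Omega_\omega \cdot \alpha'$, but we must climb a full $\Omega_\omega$-block to reach $\Omega_\omega \cdot \alpha$, and this climb requires Mathematical Induction on $\omega$---the axiom whose addition to ${\sf KPl^r}$ produces ${\sf W-KPl}$ and whose strength accounts for the gap between $\psi_0(\Omega_n)$ and $\psi_0(\Omega_\omega \cdot \omega_n)$ in Theorems \ref{wellordkplr} and the {\sf W-KPl} analogue. Some care is also needed because the auxiliary assertion to which MI is applied contains a $\Sigma_1$ occurrence of ``$\in \acc$'', so one must rely on the fact that in ${\sf W-KPl}$ Mathematical Induction is assumed for all formulas rather than only $\Delta_0$ ones; everything else in the argument stays inside the admissible $L_{\Omega_{n+1}}(X)$ supplied by Lemma \ref{wkpllon} and so can be carried out using only the set-theoretic principles already exploited in the ${\sf KPl^r}$ proof.
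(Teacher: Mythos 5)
Your proposal is correct and follows essentially the same route as the paper: both split on whether $\alpha$ is a successor or a limit, both use full Mathematical Induction on $n<\omega$ (the axiom distinguishing ${\sf W\text{-}KPl}$ from ${\sf KPl^r}$) to climb through the ordinals $\Omega_\omega\cdot\alpha'+\Omega_n$ in the successor case, and both delegate the transfinite induction inside each $\Omega_n$-block to an admissible level $L_{\Omega_{n+1}}(X)$ where full induction is available. The paper packages the climb as a minimal-counterexample argument (Claim~\ref{lon}) and invokes Lemma~\ref{psi0acc} where you invoke Lemma~\ref{nextpsi} together with Lemma~\ref{close}, and it attacks $\psi_0(\Omega_\omega\cdot\alpha)$ directly rather than via your preliminary induction on term complexity, but these are presentational differences only.
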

\begin{proof}
    We argue informally in {\sf W-KPl}. We suppose that $\forall \beta<\alpha(\beta\in X_\theta)$. That is, we assume that 
    \begin{equation}\label{wprogx1}
        \forall \beta<\alpha[\psi_0(\Omega_\omega\cdot\beta)\in T(\theta)\rightarrow\psi_0(\Omega_\omega\cdot\beta)\in \acc]
    \end{equation}
    and show $\alpha\in X_\theta$. So, we assume $\psi_0(\Omega_\omega\cdot\alpha)\in T(\theta)$ and we show $\psi_0(\Omega_\omega\cdot\alpha)\in \acc$. First, we suppose that $\alpha$ is a successor: $\alpha=\beta+1$. Then
    \begin{equation*}
        \psi_0(\Omega_\omega\cdot\alpha)=\psi_0(\Omega_\omega\cdot \beta+\Omega_\omega).
    \end{equation*}
    \renewcommand\qedsymbol{$\blacksquare$}
    Now, we prove the following claim
    \begin{claim}\label{lon}
        ${\sf W-KPl}\vdash \forall \gamma\in T(\theta)\ [\psi_0(\Omega_\omega\cdot\beta+\gamma)\in T(\theta)\rightarrow \psi_0(\Omega_\omega\cdot \beta+\gamma)\in \acc]$.
    \end{claim}
    \begin{proof}

        We work informally in {\sf W-KPl}. Towards a contradiction, we assume the claim does not hold. Then, the set $A=\{n<\omega:\exists\gamma<\Omega_n[\psi_0(\Omega_\omega\cdot\beta+\gamma)\notin\acc]\}$ is not empty. By natural induction, which we have access to in this system, there is a least $n\in A$. This means that, for some $\gamma<\Omega_n$,
        \begin{equation}\label{n+1}
            L_{\Omega_{n+1}}(X)\vDash\psi_0(\Omega_\omega\cdot\beta+\gamma)\notin \acc.
        \end{equation}
        Working in $L_{\Omega_n}(X)$, which is admissible and thus satisfies full Induction, let $\gamma$ be minimal satisfying \eqref{n+1}. But then for all $\delta<\gamma$ we have that $\psi_0(\Omega_\omega\cdot\beta+\delta)\in\acc$. But by Lemma \ref{psi0acc} (which is provable in {\sf KP}), we obtain that $\psi_0(\Omega_\omega\cdot\beta+\gamma)\in\acc$, a contradiction.
     \end{proof}
    \renewcommand\qedsymbol{$\square$}
    By Claim $\ref{lon}$ and using natural induction within {\sf W-KPl}, we get
    \begin{equation*}
        {\sf W-KPl}\vdash \forall n<\omega[\psi_0(\Omega_\omega\cdot \beta+\Omega_n)\in \acc],
    \end{equation*}
    which leads to
    \begin{equation*}
        {\sf W-KPl}\vdash \psi_0(\Omega_\omega\cdot \beta+\Omega_\omega)\in\acc.
    \end{equation*}
    Now, we assume that $\alpha$ is a limit and argue informally within {\sf W-KPl}. Let $\delta<\psi_0(\Omega_\omega\cdot\alpha)$. Then, there is some $\beta<\alpha$ such that $\delta<\psi_0(\Omega_\omega\cdot \beta)\in \acc$. By Lemma $\ref{close}$.1, we get $\delta \in \acc$. Hence, by Lemma $\ref{close}$.2, we obtain $\psi_0(\Omega_\omega\cdot\alpha)\in\acc$.
\end{proof}
$\setcounter{equation}{0}$
Now, we show that $\mathcal{J}(U)$ is progressive whenever $U$ is progressive.
\begin{Lemma}$\label{progju}$
    ${\sf W-KPl}\vdash \prog(U)\rightarrow \prog\big(\mathcal{J}(U)\big)$.
\end{Lemma}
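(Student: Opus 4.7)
The plan is to argue informally in $\mathsf{W\text{-}KPl}$. Suppose $\prog(U)$ holds and that $\forall \delta < \beta\,(\delta \in \mathcal{J}(U))$; we aim to show $\beta \in \mathcal{J}(U)$. Fix $\alpha$ with $\alpha \subseteq U$; the goal is $\alpha + \omega^\beta \subseteq U$, i.e., every ordinal $\gamma < \alpha + \omega^\beta$ lies in $U$. Such a $\gamma$ is either below $\alpha$ (hence in $U$ by assumption) or of the form $\alpha + \eta$ with $\eta < \omega^\beta$, so it suffices to establish $\alpha + \eta \in U$ for every $\eta < \omega^\beta$.

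The core of the argument is a finite induction on $k < \omega$. I claim that for every finite descending sequence $\beta > \beta_1 \geq \beta_2 \geq \cdots \geq \beta_k$ of ordinals,
\[
\alpha + \omega^{\beta_1} + \cdots + \omega^{\beta_k} \subseteq U.
\]
The base $k = 0$ reduces to the assumption $\alpha \subseteq U$. For the successor step, put $\alpha' := \alpha + \omega^{\beta_1} + \cdots + \omega^{\beta_k}$; by the inductive hypothesis $\alpha' \subseteq U$, and since $\beta_{k+1} < \beta$ yields $\beta_{k+1} \in \mathcal{J}(U)$ by assumption, unfolding the definition of the jump gives $\alpha' + \omega^{\beta_{k+1}} \subseteq U$, which is exactly the statement for $k+1$.

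To conclude, take any $\eta < \omega^\beta$ and write it in Cantor normal form as $\eta = \omega^{\beta_1} + \cdots + \omega^{\beta_k}$ with $\beta > \beta_1 \geq \cdots \geq \beta_k$; the existence of such a decomposition is a standard $\Delta_0$-fact carried out inside $L_{\Omega_\omega}(X)$, whose existence is guaranteed by Lemma~$\ref{wkpllon}$. The claim then gives $\alpha + \omega^{\beta_1} + \cdots + \omega^{\beta_k} \subseteq U$, and a final appeal to $\prog(U)$ promotes this $\subseteq$ to $\in$, so $\alpha + \eta \in U$. Hence $\alpha + \omega^\beta \subseteq U$, and therefore $\beta \in \mathcal{J}(U)$.

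The delicate point is that the induction on $k$ is applied to a formula in which the definable class $U$ appears with arbitrary quantifier complexity (through $\mathcal{J}(U)$). The argument therefore relies on the scheme of full Mathematical Induction and not merely its $\Delta_0$-fragment—this is precisely the extra strength that distinguishes $\mathsf{W\text{-}KPl}$ from $\mathsf{KPl}^r$.
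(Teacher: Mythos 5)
Your proof is correct and follows essentially the same route as the paper's: fix $\alpha\subseteq U$, decompose any $\eta<\omega^\beta$ into Cantor normal form $\omega^{\beta_1}+\cdots+\omega^{\beta_k}$ with $\beta_1\prec\beta$, absorb the summands one at a time using the hypothesis $\beta_i\in\mathcal{J}(U)$, and finish with an appeal to $\prog(U)$ to pass from $\alpha+\eta\subseteq U$ to $\alpha+\eta\in U$. The only differences are cosmetic: you make explicit the internal induction on the number $k$ of CNF summands (which the paper compresses into ``by the same reasoning $n-1$ times'') and correctly note that this induction is over a formula of the same complexity as $U$, hence uses full Mathematical Induction; your side remark about $L_{\Omega_\omega}(X)$ is unnecessary, since the CNF decomposition is syntactic for terms of $T(\theta)$, but this is harmless.
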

\begin{proof}
    We assume 
    \begin{equation}\label{wprog1}
        \prog(U)
    \end{equation}
    and we want to show $\prog\big(\mathcal{J}(U)\big)$. So suppose that
    \begin{equation}\label{wprog2}
        \forall \beta<\alpha\big(\beta\in \mathcal{J}(U)\big).
    \end{equation}
    Our aim is to prove that $\alpha\in\mathcal{J}(U)$. This means that we need to show
    \begin{equation*}
        \forall \xi[\xi\subseteq U\rightarrow (\xi+\omega^\alpha)\subseteq U].
    \end{equation*}
    So, let $\xi$ such that
    \begin{equation}\label{wprog3}
        \xi \subseteq U.
    \end{equation}
    Let $\delta\in \xi+\omega^\alpha$. We show $\delta\in U$ by splitting cases based on the relation between $\delta$ and $\xi$.\\
    \ \\
    Case 1. If $\delta<\xi$ then $\delta\in U$ by \eqref{wprog3}.\\
    \ \\
    Case 2. We suppose $\delta=\xi$. Then, by \eqref{wprog3}, we have $\forall \gamma<\delta(\gamma\in U)$. But then, by \eqref{wprog1}, we get $\delta\in U$.\\
    \ \\
    Case 3. We suppose $\xi<\delta$. Therefore, we have $\xi<\delta< \xi+\omega^\alpha$. Thus, there are $\delta_1,\dots,\delta_n$ such that $\delta_n\leq\dots\leq \delta_1\prec \alpha$ and
    \begin{equation*}
        \delta=\xi+\omega^{\delta_1}+\cdots+\omega^{\delta_n}
    \end{equation*}
    By \eqref{wprog2}, we have $\delta_1\in \mathcal{J}(U)$. This means that $\xi+\omega^{\delta_1}\subseteq U$. By the same reasoning $n-1$ times we get
    \begin{equation*}
        \delta=\xi+\omega^{\delta_1}+\cdots\omega^{\delta_{n}}\subseteq U.
    \end{equation*}
    Now, by \eqref{wprog1}, we obtain that $\delta\in U$.
\end{proof}
$\setcounter{equation}{0}$
Finally, we show that if $U$ is progressive then $\omega_n\in U$ for any $n<\omega$ by induction on $n$. Simultaneously, we show $\prog(U)\rightarrow \omega_n\subseteq U$ as we need this result to show the inductive step.
\begin{Lemma}$\label{omeganu}$
    For any $n<\omega$ and for any definable class $U$
    \begin{equation*}
        {\sf W-KPl}\vdash \prog(U)\rightarrow \omega_n\subseteq U\wedge \omega_n\in U.
    \end{equation*}
\end{Lemma}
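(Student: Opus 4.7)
The plan is to prove both conjuncts simultaneously by (external) induction on $n<\omega$, applying the induction hypothesis not to $U$ itself at stage $n+1$ but to the jump $\mathcal{J}(U)$. The key leverage is Lemma~\ref{progju}, which guarantees that progressiveness is preserved under passing to $\mathcal{J}(U)$; this lets us cascade the conclusion at stage $n$ (for an arbitrary progressive class) to $\mathcal{J}(U)$ and then unfold the definition of $\mathcal{J}$ to obtain $\omega_{n+1} = \omega^{\omega_n} \subseteq U$.

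For the base case $n=0$, I would argue informally in {\sf W-KPl}: assume $\prog(U)$. Since there is no $\beta<0$, the hypothesis $\forall\beta<0(\beta\in U)$ is vacuous, so $\prog(U)$ yields $0\in U$. Thus $\omega_0 = 1 = \{0\}\subseteq U$. Applying $\prog(U)$ once more with $\alpha=1$, using that the only $\beta<1$ is $0\in U$, gives $1=\omega_0\in U$.

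For the inductive step, I would assume the lemma holds for $n$ (for every definable class), and assume $\prog(U)$. By Lemma~\ref{progju}, $\prog(\mathcal{J}(U))$ holds, so the induction hypothesis applied to the class $\mathcal{J}(U)$ yields both $\omega_n\subseteq \mathcal{J}(U)$ and $\omega_n\in \mathcal{J}(U)$. Unpacking the definition of $\mathcal{J}$ at $\beta=\omega_n$: for all $\xi$, if $\xi\subseteq U$ then $\xi+\omega^{\omega_n}\subseteq U$. Taking $\xi=0$ (which is trivially a subset of $U$) gives $\omega^{\omega_n}=\omega_{n+1}\subseteq U$. Finally, one more application of $\prog(U)$ to $\alpha=\omega_{n+1}$ yields $\omega_{n+1}\in U$, completing the induction.

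I do not expect any genuine obstacle here: the argument is a clean external induction in which Lemma~\ref{progju} does the real work, and the jump operator is engineered precisely so that progressiveness combined with $\omega_n\in\mathcal{J}(U)$ forces closure of $U$ under the next tower step. The only minor point to keep in mind is formal: the statement is a schema indexed by $n$, so what is proved is, for each fixed $n$, a {\sf W-KPl}-theorem, with the external induction providing the proof.
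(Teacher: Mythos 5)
Your proof is correct and follows essentially the same route as the paper's: an external induction on $n$ whose inductive step applies the hypothesis to the class $\mathcal{J}(U)$, uses Lemma~\ref{progju} to transfer progressiveness, and then instantiates the definition of the jump at $\xi=0$ to get $\omega_{n+1}\subseteq U$, with one final application of $\prog(U)$ for membership. The base case is likewise handled identically (the paper is slightly terser, noting only that $\emptyset\subseteq U$ and $\prog(U)$ give $\emptyset\in U$, hence $1\subseteq U$ and $1\in U$).
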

\begin{proof}
    We proceed by induction on $n$.\\
    Case $n=0$. By definition, we have $\omega_0=1$. But, as $\emptyset\subseteq U$, we have $\emptyset\in U$ since $\prog(U)$ holds and so $1\subseteq U$, giving $1\in U$.\\
    \ \\
    Inductive case. We suppose that for any definable class $U$ the result holds for $\omega_n$. This means that it holds for $\mathcal{J}(U)$, and so
    \begin{equation}\label{wnu1}
        \prog(\mathcal{J}(U))\rightarrow \omega_n\subseteq\mathcal{J}(U)\wedge \omega_n\in\mathcal{J}(U).
    \end{equation}
    But since we have $\prog(U)$, by Lemma $\ref{progju}$ we also have $\prog\big(\mathcal{J}(U)\big)$, and therefore we get by \eqref{wnu1}
    \begin{equation*}
        \omega_n\subseteq\mathcal{J}(U)\wedge \omega_n\in\mathcal{J}(U).
    \end{equation*}
    Since $\omega_n\in\mathcal{J}(U)$, we obtain by definition
    \begin{equation*}
        \forall \xi(\xi\subseteq U\rightarrow \xi+\omega^{\omega_n}\subseteq U).
    \end{equation*}
    Taking $\xi=0$ gives $\omega^{\omega_n}=\omega_{n+1}\subseteq U$. Since we assume $\prog(U)$, we also obtain $\omega_{n+1}\in U$.
\end{proof}
$\setcounter{equation}{0}$
Finally, we give the well-ordering proof for {\sf W-KPl}.
\begin{Theorem}\label{wellordw}
    For every $n<\omega$,
    \begin{equation*}
        {\sf W-KPl}\vdash\forall\theta\in\ord\ \psi_0(\Omega_\omega\cdot \omega_n)\in \acc.
    \end{equation*}
\end{Theorem}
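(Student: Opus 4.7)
The proof should be a short combination of the two preceding lemmas, so the work is in setting up the right instance and handling a small bookkeeping issue about membership in $T(\theta)$. I would argue informally in ${\sf W-KPl}$, fixing an arbitrary ordinal $\theta$ and reading Convention \ref{read} as usual.

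First I would invoke Lemma \ref{progx} to conclude $\prog(X_\theta)$. Then, applying Lemma \ref{omeganu} to the definable class $U = X_\theta$, I obtain for every fixed natural number $n$ the conclusion $\omega_n \in X_\theta$. By the very definition of $X_\theta$, this says
\[\psi_0(\Omega_\omega \cdot \omega_n) \in T(\theta)\; \longrightarrow\; \psi_0(\Omega_\omega \cdot \omega_n) \in \acc.\]
Thus it suffices to verify that the term $\psi_0(\Omega_\omega \cdot \omega_n)$ actually lies in $T(\theta)$, so that the implication discharges.

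The verification of $\psi_0(\Omega_\omega \cdot \omega_n) \in T(\theta)$ is where one has to be slightly careful, since $T(\theta)$ is the relativized notation system of \S\ref{SectOrdinalNotation} and its elements must be in normal form. But each $\omega_n$ is a fixed finite iterate of $\omega$, hence below $\epsilon_0$, hence well below $\Omega_\omega$; so $\Omega_\omega \cdot \omega_n$ is a sum of $\omega_n$ copies of $\Omega_\omega$ and can be written in normal form using $+$ and the initial symbols of $T(\theta)$. In particular, $\Omega_\omega \cdot \omega_n \in B_0(\Omega_\omega \cdot \omega_n)$ since its construction only uses ordinals in $B_0(\Omega_\omega \cdot \omega_n)$, so Definition \ref{nfdef}.3 applies and $\psi_0(\Omega_\omega \cdot \omega_n) =_{\sf NF} \psi_0(\Omega_\omega \cdot \omega_n)$ is a legitimate term. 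This last check can be done internally in ${\sf W-KPl}$ because Lemma \ref{wkpllon} guarantees the existence of each $L_{\Omega_k}(X)$ and full induction on $\omega$ is available.

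The only genuine obstacle is making sure this final well-formedness check is carried out inside ${\sf W-KPl}$ rather than at the metatheoretic level. Since $n$ is a fixed external natural number, the statement ``$\psi_0(\Omega_\omega \cdot \omega_n) \in T(\theta)$'' is for each fixed $n$ a concrete assertion about a concrete string, and it can be verified by straightforward unfolding of the definition of $T(\theta)$ combined with full Mathematical Induction (available in ${\sf W-KPl}$) to handle $\omega_n$. Combining this with $\omega_n \in X_\theta$ yields $\psi_0(\Omega_\omega \cdot \omega_n) \in \acc$, as required.
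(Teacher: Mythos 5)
Your proof is correct and follows exactly the paper's own argument: apply Lemma \ref{progx} to get $\prog(X_\theta)$, then Lemma \ref{omeganu} with $U = X_\theta$ to conclude $\omega_n \in X_\theta$, and unfold the definition of $X_\theta$. Your additional check that $\psi_0(\Omega_\omega\cdot\omega_n)$ is a legitimate term of $T(\theta)$ (so that the implication in the definition of $X_\theta$ actually discharges) is a point the paper leaves implicit, and is a reasonable bit of extra care rather than a different approach.
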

\begin{proof}
    By Lemma $\ref{progx}$, we have $\prog(X_\theta)$. Applying Lemma $\ref{omeganu}$ to $X_\theta$, we get $\omega_n\in X_\theta$ for any $n<\omega$. But this means
    \begin{equation*}
        \psi_0(\Omega_\omega\cdot\omega_n)\in\acc,
    \end{equation*}
as desired.
\end{proof}

We can finally prove the totality of $\check G_\alpha$ in {\sf W-KPl}.
\begin{Theorem}
    For every $\alpha<\epsilon_0$ and for any set $X$, $\check G_\alpha$ is a provably total set-recursive-from-$\omega$ function in {\sf W-KPl}.
\end{Theorem}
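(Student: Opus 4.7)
The plan is to mirror the proof of the analogous totality result for ${\sf KPl^r}$ given just after Theorem \ref{wellordkplr}, substituting Theorem \ref{wellordw} in place of Theorem \ref{wellordkplr}. First, since $\alpha < \epsilon_0 = \sup_{n<\omega} \omega_n$ and $\epsilon_0$ is closed under $\beta \mapsto \omega^\beta$, we have $\omega^{\alpha+2} < \epsilon_0$, so a metatheoretically fixed $n < \omega$ will satisfy $\omega^{\alpha+2} < \omega_n$. For any set $X$ with set-theoretic rank $\theta$ (which is $\Delta_0$-definable from $X$), this will yield $\Omega_\omega \cdot \omega^{\alpha+2} \prec \Omega_\omega \cdot \omega_n$ and hence $\psi_0(\Omega_\omega \cdot \omega^{\alpha+2}) \prec \psi_0(\Omega_\omega \cdot \omega_n)$ in $T(\theta)$.

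Next, by Theorem \ref{wellordw}, ${\sf W-KPl}$ will prove $\psi_0(\Omega_\omega \cdot \omega_n) \in \mathrm{Acc}_\theta$; a direct application of Lemma \ref{close}.1 will then downward-close this to $\psi_0(\Omega_\omega \cdot \omega^{\alpha+2}) \in \mathrm{Acc}_\theta$. Thus ${\sf W-KPl}$ will prove that the notation $\psi_0(\Omega_\omega \cdot \omega^{\alpha+2})$ corresponds to a genuine ordinal $\lambda := o(\psi_0(\Omega_\omega \cdot \omega^{\alpha+2}))$ below $\Omega_0$, via the $\Sigma_1$-definable ordinal-assignment $o$ from Definition \ref{of}. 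With $\lambda$ in hand, $\check G_\alpha(X) = L_\lambda(X)$ can be constructed by the standard $\Sigma_1$-recursion for the constructible hierarchy, which is available in any admissible set containing $X$, since any such set has height at least $\Omega_0 > \lambda$ and supports $\Sigma_1$-recursion.

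The upshot is a single uniform $\Sigma_1$-definition of $\check G_\alpha$ (with parameter $\omega$) that works in every admissible set containing the input, and Slaman's theorem then certifies $\check G_\alpha$ to be set-recursive-from-$\omega$; provable totality in ${\sf W-KPl}$ follows from the accessibility step. The main delicate point to address will be checking the uniformity clause required by Slaman: namely, that the term construction inside $T(\theta)$, the evaluation of the map $o$, and the ensuing $\Sigma_1$-recursion building $L$ all assemble into a single $\Sigma_1$-formula that is independent of the particular admissible set used. Since $T(\theta)$ is $\theta$-primitive recursive, $o$ is $\Sigma_1$, and $\lambda < \Omega_0$, each ingredient will fit within any admissible set containing $X$, so the argument reduces to routine bookkeeping once Theorem \ref{wellordw} is in place.
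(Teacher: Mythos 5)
Your proposal is correct and follows essentially the same route as the paper: invoke Theorem \ref{wellordw} for the accessibility of $\psi_0(\Omega_\omega\cdot\omega_n)$, transfer it to $\psi_0(\Omega_\omega\cdot\omega^{\alpha+2})$, and then build $L_{\psi_0(\Omega_\omega\cdot\omega^{\alpha+2})}(X)$ by $\Sigma$-recursion inside any admissible set containing $X$, appealing to Slaman's characterization. Your explicit bridging step via $\omega^{\alpha+2}<\omega_n$ and downward closure of $\mathrm{Acc}_\theta$ (Lemma \ref{close}.1) is a detail the paper leaves implicit, but the argument is the same.
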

\begin{proof}
    For any set $X$, we can specify the rank $\theta$ of $X$ in a $\Delta_0$-way, as in \cite{barwise}. Now, by Theorem \ref{wellordw}, we can find in {\sf W-KPl} an ordinal that is order-isomorphic to $\psi_0(\Omega_\omega\cdot\omega^{\alpha+2})$ and define $L_{\psi_0(\Omega_\omega\cdot\omega^{\alpha+2})}(X)$ by $\Sigma$-recursion, which shows that $\check G_n$ is set-recursive-from-$\omega$. Moreover, the set $L_{\psi_0(\Omega_\omega\cdot\omega^{\alpha+2})}(X)$ is constructible in $L_{\Omega_0}(X)$, which shows that $\check G_n(X)$ is constructible in any admissible set containing $X$.
\end{proof}

\begin{remark}
The class $X_\theta$ is $\Sigma_1$-definable  (uniformly on $X$). One can check by direct computation that $\mathcal{J}^n(X_\theta)$ is $\Pi_{n+1}$-definable. 
\end{remark}
\subsection{The well-ordering and totality proof for {\sf KPl}}
We will show now that {\sf KPl} proves the well-ordering of $\psi_0(e_n)$ for any $n<\omega$. We will proceed in a similar way to the previous subsection. To show $\psi_0(e_n)\in \acc$, we will show that $e_n$ belongs to the following class.
\begin{definition}
    $Y_\theta=\{\alpha\in T(\theta)|\psi_0(\alpha)\in T(\theta)\rightarrow \psi_0(\alpha)\in \acc\}$.
\end{definition}
The following result is analogous to Lemma $\ref{progx}$.
\begin{Lemma}\label{progy}
    ${\sf KPl}\vdash\forall\theta\in\ord\ \prog(Y_\theta)$.
\end{Lemma}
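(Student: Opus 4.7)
The plan is to mirror the structure of the proof of Lemma \ref{progx}, but for $\psi_0(\alpha)$ in place of $\psi_0(\Omega_\omega\cdot\alpha)$. The payoff is twofold: the successor step is now just the jump from $\psi_0(\beta)$ to the next strongly critical $(\psi_0(\beta))^\Gamma$ (no $\Omega_\omega$-indexed sub-induction required), and on the other hand the proof has to work for \emph{every} ordinal $\alpha\in T(\theta)$, including those containing $\Omega_n$'s, so we need the full Class Induction available in ${\sf KPl}$ (rather than only the natural-number induction of ${\sf W\text{-}KPl}$).

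I would argue informally in ${\sf KPl}$. Fix $\theta\in\ord$ and assume $\forall \beta\prec\alpha\,(\beta\in Y_\theta)$. Suppose $\psi_0(\alpha)\in T(\theta)$; we must derive $\psi_0(\alpha)\in \acc$. If $\alpha=0$, then $\psi_0(0)=\Gamma_{\theta+1}$ and Lemma \ref{gammaacc} gives $\psi_0(0)\in\acc$. If $\alpha=\beta+1$ is a successor, the hypothesis $\alpha\in B_0(\alpha)$ forces $\beta\in B_0(\beta+1)$, so in particular $\psi_0(\beta)\in T(\theta)$; by the induction hypothesis, $\psi_0(\beta)\in\acc$. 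Then, adapting the argument of Lemma \ref{nextpsi} (which only needs that $L_{\Omega_0}(X)$ exists, and this is granted by $({\sf Lim})$), every $\xi\prec \psi_0(\beta+1)$ is either $\prec \psi_0(\beta)$ (hence in $\acc$ by Lemma \ref{close}.1) or lies in the interval $[\psi_0(\beta),(\psi_0(\beta))^\Gamma)$ and is built from lower ordinals by $+$ and $\phi$, hence is in $\acc$ by induction on $C\xi$ together with Lemma \ref{closeop}. Lemma \ref{close}.2 then yields $\psi_0(\alpha)\in\acc$.

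If $\alpha$ is a limit, Lemma \ref{311}.5 gives $\psi_0(\alpha)=\sup\{\psi_0(\beta):\beta\prec\alpha\}$. Pick any $\delta\prec\psi_0(\alpha)$; there is some $\beta\prec\alpha$ with $\delta\prec\psi_0(\beta)$. To apply the induction hypothesis we need $\psi_0(\beta)\in T(\theta)$, i.e.\ $\beta\in B_0(\beta)$. If the chosen $\beta$ fails this, replace it by the largest $\beta^{*}\le\beta$ with $\beta^{*}\in B_0(\beta^{*})$ (such $\beta^{*}$ exists because $\alpha\in B_0(\alpha)$ guarantees that the constructors of $\alpha$ witness cofinally many admissible arguments $\prec\alpha$, and by Lemma \ref{311}.3,8 the function $\psi_0$ is constant on each ``plateau''): by Lemma \ref{311}.3, $\psi_0(\beta^{*})=\psi_0(\beta)\succ\delta$. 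The induction hypothesis then gives $\psi_0(\beta^{*})\in\acc$, and Lemma \ref{close}.1 delivers $\delta\in\acc$. Applying Lemma \ref{close}.2 once more yields $\psi_0(\alpha)\in\acc$.

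The main obstacle is precisely the subtlety just described: the induction hypothesis only applies to $\beta$ with $\psi_0(\beta)\in T(\theta)$, so one must verify that on each ``plateau'' of $\psi_0$ a representative $\beta^{*}\in B_0(\beta^{*})$ can be found with the same $\psi_0$-value. This is a bookkeeping issue about normal forms (captured by Lemma \ref{311} and Definition \ref{nfdef}) and does not require any new proof-theoretic input beyond what is already developed in \S\ref{SectOrdinalNotation}. Everything else — the closure of $\acc$ under $+$ and $\phi$, the use of Class Induction, and the extraction of $o(a)$ and $f_a$ inside $L_{\Omega_0}(X)$ — is already at our disposal.
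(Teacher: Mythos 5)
Your case analysis reaches the right conclusion, but it is both more complicated than necessary and contains two local missteps. The decisive observation is that once you unfold the definitions, $\prog(Y_\theta)$ \emph{is} Lemma~\ref{psi0acc}: the antecedent $\forall\beta\prec\alpha\,(\beta\in Y_\theta)$ says precisely that $\psi_0(b)\in\acc$ for every $\psi_0(b)\in T(\theta)$ with $b\prec\alpha$, and the consequent $\alpha\in Y_\theta$ says precisely that $\psi_0(\alpha)\in T(\theta)$ implies $\psi_0(\alpha)\in\acc$. Since Lemma~\ref{psi0acc} is proved in ${\sf KP}$ under the assumption that $L_{\Omega_0}(X)$ exists, and ${\sf KPl}$ supplies $L_{\Omega_0}(X)$ via $(\mathrm{Lim})$, the lemma is immediate. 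The genuine work in Lemma~\ref{progx} comes entirely from the factor $\Omega_\omega$ (one must bridge the gap between $\psi_0(\Omega_\omega\cdot\beta)$ and $\psi_0(\Omega_\omega\cdot\beta+\Omega_\omega)$ via Claim~\ref{lon}); that gap simply does not exist for $Y_\theta$, so importing the successor/limit machinery of \ref{progx} is the wrong instinct here.

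Two steps of your argument do not go through as written. In the successor case, you infer $\psi_0(\beta)\in T(\theta)$ from $\beta\in B_0(\beta+1)$; but by Definition~\ref{nfdef}.3 the normal-form condition is $\beta\in B_0(\beta)$, which is strictly stronger, so the reduction to (an adaptation of) Lemma~\ref{nextpsi} is not licensed — indeed when $\beta\notin B_0(\beta)$ one has $\psi_0(\beta+1)=\psi_0(\beta)$ by Lemma~\ref{311}.8 rather than $\psi_0(\beta+1)=(\psi_0(\beta))^\Gamma$. In the limit case, the existence of a ``plateau representative'' $\beta^{*}\preceq\beta$ with $\beta^{*}\in T(\theta)$, $\beta^{*}\in B_0(\beta^{*})$ and $\psi_0(\beta^{*})=\psi_0(\beta)$ is asserted but not established; your parenthetical justification does not show that such a $\beta^{*}$ is a term of $T(\theta)$ at all. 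Both problems evaporate if, instead of locating $\delta$ below some $\psi_0(\beta)$, you analyze the \emph{term} $\delta\prec\psi_0(\alpha)$ directly by induction on $C\delta$: since $\delta\prec\Omega_0$, it is $0$, some $\Gamma_{\beta'}$, a sum, a $\phi$-term, or $\psi_0(\gamma)$ with $\gamma\in B_0(\gamma)$ and $\gamma\prec\alpha$; the first four cases are handled by Lemmas~\ref{gammaacc} and \ref{closeop}, and the last is exactly an instance of the progressiveness hypothesis. This is the content of Lemma~\ref{psi0acc}, and it treats successor and limit $\alpha$ uniformly.
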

Now, given a definable class $U$ such that $e_0\in U$ and $U$ is progressive, we show that $e_n\in U$ for every $n<\omega$.
\begin{Lemma}\label{enu}
    For any $n<\omega$ and any definable class $U$
    \begin{equation*}
        {\sf KPl}\vdash \forall\theta\in\ord[\prog(U)\rightarrow e_n\in U\wedge e_n\subseteq U].
    \end{equation*}
\end{Lemma}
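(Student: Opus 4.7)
My plan is to exploit the fact that, unlike in $\mathsf{KPl^r}$ and $\mathsf{W}{-}\mathsf{KPl}$, the theory $\mathsf{KPl}$ proves full Class Induction for arbitrary formulas of the language. From this one immediately obtains the much stronger statement that $\prog(U)$ implies $U$ contains every ordinal, which trivializes the claim and renders the jump-style argument used in Lemma \ref{omeganu} unnecessary.

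More precisely, I would argue informally in $\mathsf{KPl}$. Fix a definable class $U$ and assume $\prog(U)$. Consider the formula $C(x)\equiv (x\in\ord\rightarrow x\in U)$ and verify its $\in$-progressivity: if $\forall y\in x\, C(y)$ holds and $x\in\ord$, then every $y\in x$ is itself an ordinal (since $\ord$ is $\in$-transitive), hence $y\in U$ by the inductive hypothesis; this gives $\forall \beta<x(\beta\in U)$, so $\prog(U)$ yields $x\in U$. Applying the Class Induction scheme to the formula $C$ then delivers $\forall x(x\in\ord\rightarrow x\in U)$, so every ordinal lies in $U$.

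For each $n<\omega$, the ordinal $e_n$ is provably defined in $\mathsf{KPl}$: $\Omega_\omega$ is obtained as the supremum of the sequence $\langle\Omega_n:n<\omega\rangle$ of successive $X$-admissible ordinals, whose existence follows from (Lim) together with the $\Sigma$-recursion available in $\mathsf{KPl}$, and $e_{n+1}=\omega^{e_n}$ is routine ordinal arithmetic. From $\forall\alpha\in\ord(\alpha\in U)$ we then read off both $e_n\in U$ and $e_n\subseteq U$, the latter just asserting that every element of the ordinal $e_n$ belongs to $U$. There is no real obstacle here; the key asymmetry with the previous two subsections is precisely the availability of unrestricted Class Induction in $\mathsf{KPl}$, which collapses the level-by-level inductive machinery (jump operator, progressivity transfer, etc.) that was forced on us in the weaker theories.
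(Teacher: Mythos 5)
There is a genuine gap, and it is fatal. You have misread what $\prog(U)$ quantifies over. By the reading convention at the start of \S\ref{SectWellOrderingProofs}, the variables $\alpha,\beta$ in $\prog(U)=\forall\alpha[\forall\beta<\alpha(\beta\in U)\rightarrow\alpha\in U]$ range over the \emph{notation system} $T(\theta)$ and $<$ denotes the primitive recursive ordering $\prec$ on notations, not the membership relation on genuine ordinals. Class Induction in ${\sf KPl}$ gives $\in$-induction for arbitrary formulas; it does \emph{not} give induction along an arbitrary primitive recursive linear ordering of a set of strings. To transfer $\in$-induction to $\prec$-induction you would need to know in advance that every notation is accessible (i.e.\ that $\prec$ is well-founded), which is precisely what is being established and which provably cannot be established for all of $T(\theta)$: if ${\sf KPl}$ proved $\prog(U)\rightarrow\forall\alpha\in T(\theta)(\alpha\in U)$ for every definable $U$, it would prove transfinite induction up to its own proof-theoretic ordinal $\sup_n\psi_0(e_n)$ and hence, via the cut-elimination and boundedness results of \S\ref{SectCutElimination}, its own consistency. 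The legitimate version of the transfer you want is Lemma \ref{indacc}, and it only yields $\prec$-induction \emph{restricted to} $\acc$ --- which is exactly why the paper still needs the progressivity/jump machinery ($\mathcal{J}$, Lemma \ref{progju}, and an external induction on $n$ as in Lemma \ref{omeganu}) for ${\sf KPl}$ as well. The role of full Class Induction, as opposed to $\Delta_0$-induction in ${\sf KPl^r}$, is only to make Lemma \ref{indacc} available for formulas of arbitrary complexity, since $\mathcal{J}^n(U)$ has logical complexity growing with $n$ (see the Remark at the end of \S\ref{SectWellOrderingProofs}); it does not collapse the argument.

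A second, independent problem: even under your reading, the conclusion would not make sense. The term $e_n$ (and already $\Omega_\omega=e_0-1$) is not provably an ordinal of the universe of ${\sf KPl}$: the theory has no $\Sigma$- or even $\Delta_0$-Collection on the universe (only inside each admissible, via $Ad3$), so the set $\{\Omega_n:n<\omega\}$ and its supremum need not exist --- in the intended model $L_{\Omega_\omega}(X)$, $\Omega_\omega$ is the class of all ordinals. The statement $e_n\in U\wedge e_n\subseteq U$ is a statement about the \emph{notation} $e_n\in T(\theta)$ and the class $U\subseteq T(\theta)$, where $e_n\subseteq U$ abbreviates $\forall\beta\prec e_n(\beta\in U)$; it cannot be read off from ``every ordinal lies in $U$.'' The paper's actual proof establishes the base case $e_0=\Omega_\omega+1\in U$ by first proving $\psi_0(\Omega_\omega+1)\in\acc$ (using Theorem \ref{wellordkplr} and Lemmas \ref{close} and \ref{psi0acc}) and then showing by induction over $\acc$ that $\forall\alpha\in\acc[\alpha=\psi_0(\beta)\rightarrow\beta\in U]$; the step from $e_n$ to $e_{n+1}=\omega^{e_n}$ is exactly one application of the jump. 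None of this is avoidable by your route.
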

\begin{proof}
    We fix $\theta$ and we proceed by induction on $n$.\\
    Case $n=0$. We have $e_0=\Omega_\omega+1$ by definition. We need the following claim.
    \renewcommand\qedsymbol{$\blacksquare$}
    \begin{claim}\label{psi0omega1}
        $\psi_0(\Omega_\omega+1)\in\acc$
    \end{claim}
    \begin{proof}
        We argue informally in ${\sf KPl}$. We first show $\psi_0(\Omega_\omega)\in\acc$. So let $\alpha<\psi_0(\Omega_\omega)$. Then, $\alpha<\psi_0(\Omega_n)$ for some $n<\omega$. But by Theorem $\ref{wellordkplr}$, we have $\psi_0(\Omega_n)\in \acc$. Hence, $\alpha\in \acc$ by Lemma $\ref{close}$.1. Therefore, by Lemma $\ref{close}$.2, we obtain $\psi_0(\Omega_\omega)\in\acc$.\\
        Now, we have $\psi_0(\Omega_\omega+1)=\big(\psi_0(\Omega_\omega)\big)^\Gamma$ by Lemma $\ref{311}$.7, as $\Omega_\omega\in B_0(\Omega_\omega)$ by Definition $\ref{psi}$. Therefore, by Lemma $\ref{psi0acc}$, we get $\psi_0(\Omega_\omega+1)\in\acc$.
    \end{proof}
    \renewcommand\qedsymbol{$\square$}
    Now, we show that 
    \begin{equation}\label{enu1}
      \forall \alpha\in\acc[\alpha=\psi_0(\beta)\rightarrow \beta\in U]
    \end{equation}
    by induction in $\acc$, owing to Lemma $\ref{indacc}$. We suppose that $\forall \delta<\alpha(\delta=\psi_0(\gamma)\rightarrow \gamma\in U)$, and we suppose that $\alpha=\psi_0(\beta)$. Then $\psi_0(\beta)$ is the first strongly critical ordinal after $\psi_0(\gamma)$, for some $\gamma<\beta$. Therefore, by the induction hypothesis, we have $\psi_0(\gamma)\in\acc$. Lemma $\ref{psi0acc}$ yields $\psi_0(\beta)\in \acc$.\\
    By Claim $\ref{psi0omega1}$, we have $\psi_0(\Omega_\omega+1)\in\acc$. Hence, by \eqref{enu1} with $\alpha=\psi_0(\Omega_\omega+1)$, we get $e_0=\Omega_\omega+1\in U$.\\
    \ \\
    Inductive case. We suppose that for any definable class $U$, the result holds for $e_n$. Then, it holds for $\mathcal{J}(U)$. But by Lemma $\ref{progju}$, we have $\prog\big(\mathcal{J}(U)\big)$, and so we get
    \begin{equation*}
        e_n\subseteq \mathcal{J}(U)\wedge e_n\in\mathcal{J}(U).
    \end{equation*}
    Since $e_n\in\mathcal{J}(U)$, we obtain by definition 
    \begin{equation*}
        \forall \xi(\xi\subseteq U\rightarrow\xi+\omega^{e_n}\subseteq U).
    \end{equation*}
    Hence, taking $\xi=0$, we obtain $e_{n+1}=\omega^{e_n}\subseteq U$. Finally, as $U$ is progressive, we also obtain $e_n\in U$.
\end{proof}
$\setcounter{equation}{0}$
\begin{Theorem}\label{TheoremWOKPl}
    For every $n<\omega$
    \begin{equation*}
        {\sf KPl}\vdash\forall\theta\in\ord\ \psi_0(e_n)\in\acc.
    \end{equation*}
\end{Theorem}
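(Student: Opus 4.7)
The plan is to combine the two preceding lemmas, namely Lemma \ref{progy} (progressivity of $Y_\theta$ in {\sf KPl}) and Lemma \ref{enu} (every $e_n$ belongs to any progressive definable class, provably in {\sf KPl}). The strategy is perfectly parallel to the proof of Theorem \ref{wellordw} for {\sf W-KPl}: there, one established $\prog(X_\theta)$ and concluded $\omega_n\in X_\theta$ from Lemma \ref{omeganu}; here, one establishes $\prog(Y_\theta)$ and concludes $e_n\in Y_\theta$ from Lemma \ref{enu}.

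More explicitly, I would argue informally in {\sf KPl}, fixing $\theta\in\ord$ and $n<\omega$. By Lemma \ref{progy}, the class $Y_\theta$ is progressive. Since $Y_\theta$ is definable (it is in fact $\Sigma_1$-definable uniformly in $\theta$), Lemma \ref{enu} applies with $U = Y_\theta$, yielding $e_n \in Y_\theta$. Now unpacking the definition of $Y_\theta$, this says precisely
\[
\psi_0(e_n) \in T(\theta)\ \rightarrow\ \psi_0(e_n) \in \acc.
\]
It remains to observe that $\psi_0(e_n)\in T(\theta)$ is immediate from Definition \ref{setrtheta} together with the recursive construction of $e_n$ (each $e_n$ is built by the clauses permitted in $R(\theta)$: starting from $\Omega_\omega+1$ and iterating $\omega^{(\cdot)} = \phi0(\cdot)$, noting that $e_n > \Omega_\omega$ so the clause requiring $\phi0\alpha_1$ with $\alpha_1 > \Omega_\omega$ applies). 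Hence $\psi_0(e_n) \in \acc$, as required.

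I do not expect any genuine obstacle in this proof: all the work has been done in Lemmas \ref{progy} and \ref{enu}, whose proofs in turn rely on the closure properties of $\acc$ (Lemmas \ref{close}, \ref{closeop}), on $\psi_0$-accessibility under successor steps (Lemma \ref{psi0acc} together with the base case $\psi_0(\Omega_\omega+1)\in\acc$ established inside the proof of Lemma \ref{enu}), and on the progressivity transfer through the jump $\mathcal{J}$ (Lemma \ref{progju}). The only point that deserves a sentence of verification is that the application of Lemma \ref{enu} is legitimate, i.e.\ that $Y_\theta$ is indeed the kind of definable class for which the induction arguments of Lemma \ref{enu} go through; this is fine since the proof of Lemma \ref{enu} only uses progressivity of $U$ together with induction along $\acc$, and both apply uniformly to any (parameter-definable) class including $Y_\theta$.
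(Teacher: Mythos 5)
Your proof is correct and follows the same route as the paper: invoke Lemma \ref{progy} for $\prog(Y_\theta)$, apply Lemma \ref{enu} with $U=Y_\theta$ to get $e_n\in Y_\theta$, and unpack the definition of $Y_\theta$. Your additional explicit verification that $\psi_0(e_n)\in T(\theta)$ (via $e_n\in B_0(e_n)$, since $e_n$ is built from $\Omega_\omega$, $0$, $+$ and $\phi$ alone) is a point the paper leaves implicit, but it is a correct and welcome detail rather than a different argument.
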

\begin{proof}
    By Lemma $\ref{progy}$, we have $\prog(Y_\theta)$. Applying Lemma $\ref{enu}$ to $Y_\theta$, we get $e_n\in Y_\theta$ for any $n<\omega$. But this means $\psi_0(e_n)\in\acc$, as desired.
\end{proof}

We can finally prove the totality of $G_n$ in {\sf W-KPl}.
\begin{Theorem}
    For every $n<\omega$ and for any set $X$, $G_\alpha$ is a provably total set-recursive-from-$\omega$ function in {\sf KPl}.
\end{Theorem}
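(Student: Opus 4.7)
The plan is to mirror the proofs given in the previous two subsections for $\hat G_n$ and $\check G_\alpha$, substituting the appropriate well-ordering result. First I would invoke the standard fact that for an arbitrary set $X$ one can specify its set-theoretic rank $\theta$ via a $\Delta_0$-definition (as in \cite{barwise}), so that Reading Convention \ref{read} is internally available to {\sf KPl}. The key input will then be Theorem \ref{TheoremWOKPl}, applied with parameter $n+3$, which gives that {\sf KPl} proves $\psi_0(e_{n+3}) \in \acc$ for every set $X$ of rank $\theta$.

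Next I would argue that, since $\psi_0(e_{n+3}) \in \acc$, there exists (uniformly in $X$) an ordinal $o\big(\psi_0(e_{n+3})\big)$ together with a wellordering isomorphism $f_{\psi_0(e_{n+3})}$ as in Definition \ref{of}. Using this genuine ordinal, I would form $L_{\psi_0(e_{n+3})}(X)$ by transfinite $\Sigma$-recursion along this wellordering, following Definition \ref{const}. Because $L_{\Omega_0}(X)$ is admissible and contains the wellordering datum (by the constructibility claim shown along the way in the proof of Lemma \ref{close}), the recursion producing $L_{\psi_0(e_{n+3})}(X)$ is carried out inside $L_{\Omega_0}(X)$, so that the value $G_n(X) = L_{\psi_0(e_{n+3})}(X)$ is itself an element of $L_{\Omega_0}(X)$, hence of any admissible set containing $X$.

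Finally I would observe that the entire construction is uniformly $\Sigma_1$ in the parameter $X$ (and uses $\omega$ as a further parameter to evaluate the $e_k$), so by Slaman's theorem it corresponds to a set-recursive-from-$\omega$ function. Putting these pieces together, {\sf KPl} proves that $G_n$ is total and that $G_n(X)$ is uniformly $\Sigma$-definable with parameter $\omega$ in every admissible set containing $X$, which is precisely what is required.

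The only nontrivial step is the verification that the $\Sigma$-recursion defining $L_{\psi_0(e_{n+3})}(X)$ can be carried out inside a single admissible set, i.e.\ that the length $o\big(\psi_0(e_{n+3})\big)$ and the rank-assignments are bounded in $L_{\Omega_0}(X)$. This is precisely the content of the auxiliary claim embedded in the proof of Lemma \ref{close}, asserting that for every $a \in \acc$ the data $o(a)$ and $f_a$ live in $L_{\Omega_0}(X)$; so no new work is needed, and the proof reduces to citing Theorem \ref{TheoremWOKPl} and replaying the closing argument of the ${\sf KPl^r}$ and {\sf W-KPl} cases verbatim.
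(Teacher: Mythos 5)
Your proposal is correct and follows essentially the same route as the paper: cite the $\Delta_0$-definability of the rank $\theta$, invoke Theorem \ref{TheoremWOKPl} to get the wellfoundedness of $\psi_0(e_{n+3})$ provably in $\mathsf{KPl}$, build $L_{\psi_0(e_{n+3})}(X)$ by $\Sigma$-recursion inside $L_{\Omega_0}(X)$, and conclude uniform $\Sigma_1$-definability with parameter $\omega$ in every admissible set. Your version is in fact slightly more explicit than the paper's (correctly using the index $n+3$ matching Definition \ref{en} and spelling out the appeal to the claim inside Lemma \ref{close} and to Slaman's theorem), but no new ideas are involved.
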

\begin{proof}
    For any set $X$, we can specify the rank $\theta$ of $X$ in a $\Delta_0$-way, as in \cite{barwise}. Now, by Theorem \ref{TheoremWOKPl}, we can find in {\sf W-KPl} an ordinal that is order-isomorphic to $\psi_0(e_n)$ and define $L_{\psi_0(e_n)}(X)$ by $\Sigma$-recursion, which shows that $G_n$ is set-recursive-from-$\omega$. Moreover, the set $L_{\psi_0(e_n)}(X)$ is constructible in $L_{\Omega_0}(X)$, which shows that $G_n(X)$ is constructible in any admissible set containing $X$.
\end{proof}

\section{Conclusions}\label{SectConclusions}
According to the results in \S\ref{SectTheoremMainKPl} and \S\ref{SectTheoremMainWKPl}, there exist families $\{g_i\}_{i\in\mathbb{N}}$ of ordinal-valued set-recursive-from-$\omega$ functions such that 
\[\forall x\, \big(f(x) \in L_{g_i(x)}(x)\big)\]
holds whenever $f$ is a provably total function of $\mathsf{KPl}$, $\mathsf{KPl}^r$, or $\mathsf{W-KPl}$, respectively. Thus, the methods of ordinal analysis can be employed to bound the growth rates of set-recursive-from-$\omega$ functions similarly to how they yield bounds on the growth rates of recursive functions on $\mathbb{N}$. Moreover, these results are optimal according to \S\ref{SectWellOrderingProofs} in the sense that each of these functions $g_i$ is a provably total set-recursive-from-$\omega$ function in the respective theory. 

Combining these facts with the \textit{proofs} of the results from \S\ref{SectTheoremMainKPl} and \S\ref{SectTheoremMainWKPl} yields slight sharpenings. We mention the case of $\mathsf{KPl}$; the others are analogous.

\begin{Theorem}\label{TheoremKPlInternal}
Let $G_n(X) = L_{\psi_0(e_{n+3})}(X)$ be defined as in \S\ref{SectTheoremMainKPl}.
Suppose that $f$ is a set-recursive-from-$\omega$ function which $\mathsf{KPl}$ proves is total. Then, there exists $n\in\mathbb{N}$ such that
\[\mathsf{KPl}\vdash \forall x\, \big( f(x) \in G_n(x)\big).\]
\end{Theorem}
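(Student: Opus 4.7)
Given a $\mathsf{KPl}$-proof $\pi$ of the totality of $f$, inspection of the Embedding Theorem (Theorem \ref{Embed}) yields a specific $m<\omega$, determined constructively by $\pi$, such that for any set $x$ the infinitary derivation produced has ordinal bound $\Omega_\omega\cdot\omega^m$ and cut-complexity bound $\Omega_\omega+m$. The plan is to take $n=m+3$ and show that the entire argument of Theorem \ref{main} can be internalized in $\mathsf{KPl}$; that is, that $\mathsf{KPl}$ itself proves $\forall x\,(f(x)\in G_n(x))$.

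The internalization rests on three observations. First, by Theorem \ref{TheoremWOKPl}, $\mathsf{KPl}$ proves that for every set $x$ of rank $\theta$, the ordinal $\psi_0(e_{m+3})$ is well-founded, so the set $G_{m+3}(x)=L_{\psi_0(e_{m+3})}(x)$ exists uniformly in $x$ by $\Sigma$-recursion inside $L_{\Omega_0}(x)$. Second, the notation system $T(\theta)$ is primitive recursive in $\theta$, so the infinitary derivability relation $\H\dash{\rho}{\alpha}\Gamma$ and the controlling operators $\H_\beta$ admit a standard encoding as a definable predicate in $\mathsf{KPl}$. Third, $\mathsf{KPl}$ has full Class Induction, which permits formalization of the proofs of Predicative Cut Elimination (Theorem \ref{cut elim}), Collapsing (Theorem \ref{collapsing}), Boundedness (Lemma \ref{boundedness}), and the final soundness step (Claim \ref{fin cl}), all of which proceed by induction on ordinals in $T(\theta)$ below $\psi_0(e_{m+3})$.

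The argument would then proceed as follows, working informally in $\mathsf{KPl}$. Fix an arbitrary set $x$ of rank $\theta$; by the $(Lim)$ axiom the admissibles $L_{\Omega_k}(x)$ exist for all $k\leq\omega$. The Embedding Theorem, whose proof is a meta-induction on the finite $\mathsf{KPl}$-proof $\pi$, is itself a scheme provable in $\mathsf{KPl}$ and yields the derivation
\[\H_0\dash{\Omega_\omega+m}{\Omega_\omega\cdot\omega^m}\exists y\in\mathbb{L}_{\Omega_0}(x)\,A_f(\overline{x},y)^{\mathbb{L}_{\Omega_0}(x)}\]
after cutting against $Ad(\mathbb{L}_{\Omega_0}(x))$ and applying Inversion. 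Applying the internalized cut-elimination/collapsing chain in $\mathsf{KPl}$ produces a cut-free derivation with ordinal bound $\alpha=\phi(\psi_0(e_{m+2}))(\psi_0(e_{m+2}))<\psi_0(e_{m+3})$, and Boundedness collapses the existential witness to $L_\alpha(x)$. The internalized Claim \ref{fin cl} then gives $L_\alpha(x)\vDash\exists y\,A_f(x,y)$, and since $\alpha<\psi_0(e_{m+3})$ we obtain $f(x)\in L_\alpha(x)\subseteq G_{m+3}(x)$, completing the argument with $n=m+3$.

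The main obstacle will be carefully checking that the infinitary proof system, whose derivations have external length potentially as large as $\Omega_\omega\cdot\omega^m$, can be manipulated in $\mathsf{KPl}$ before collapsing. The strategy is not to form the derivations themselves as set-theoretic objects, but rather to define the derivability relation $\H\dash{\rho}{\alpha}\Gamma$ as a predicate recursively along $T(\theta)$ and prove the relevant closure lemmas by Class Induction. The other delicate point is the Embedding Theorem: although $\Omega_\omega$ is not a $\mathsf{KPl}$-definable ordinal in the naive sense, Class Induction together with the primitive recursive definition of the system $\RS$ is enough to carry out its proof as a scheme in $\mathsf{KPl}$, uniformly in the finite $\mathsf{KPl}$-proof $\pi$ from which $m$ is extracted. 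The analogous internal statements for $\mathsf{KPl}^r$ and $\mathsf{W}$-$\mathsf{KPl}$ would require more care, since those theories lack full Class Induction, but the $\mathsf{KPl}$ case goes through directly.
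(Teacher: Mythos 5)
Your proposal is correct and follows essentially the same route as the paper's own proof: extract $m$ from the Embedding Theorem, observe that the proof of Theorem \ref{main} only involves notations below $e_{m+3}$, invoke Theorem \ref{TheoremWOKPl} to get $\mathsf{KPl}$-provable wellfoundedness of $\psi_0(e_{m+3})$, and internalize the entire cut-elimination/collapsing/boundedness chain in $\mathsf{KPl}$. The paper states this in two sentences where you spell out the formalization details (encoding the derivability relation as a definable predicate, using Class Induction for the transfinite inductions), but the underlying argument is identical.
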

\proof
Let $f$ be a set-recursive-from-$\omega$ function which $\mathsf{KPl}$ proves is total.
Let $n$ be obtained as in the proof of Theorem \ref{main}. By observing the proof, we see that only terms below $e_{n+3}$ in $T(\theta)$ occur in it. By Theorem \ref{TheoremWOKPl}, $\mathsf{KPl}$ proves that $\psi_0(e_{n+3})$ is wellfounded and thus can carry out all steps of the proof. Thus, working in $\mathsf{KPl}$, we can prove the instance of Theorem \ref{main} for $f$.

Working in $\mathsf{KPl}$, we see that $f$ is a provably total set-recursive-from-$\omega$ function of $\mathsf{KPl}$ and thus we derive the conclusion of Theorem \ref{main}, i.e., that 
\[V \models \forall x\,\big(f(x) \in G_n(x)\big),\]
which is what we wanted to show.
\endproof

\subsection{Acknowledgments}
The results up to and including \S\ref{SectTheoremMainKPl} were part of the second author's Master's thesis \cite{tfm}, written under the supervision of the other two authors. The work of the first and second authors was partially supported by FWF grant STA-139. The third author acknowledges the support of the Generalitat de Catalunya (Government of Catalonia) under grants 2022 DI 051 (Departament d’Empresa i Coneixement) and 2021 SGR 00348 and support of the Spanish Ministry of Science and Innovation under grant PID2023-149556NB-I00, \emph{Dynamics of Gödel Incompleteness} (DoGI). The authors would like to acknowledge the support of the Erwin Schrödinger Institute in Vienna as part of the thematic program ``Reverse Mathematics'' in 2025.

\printbibliography

\end{document}